\documentclass[final]{siamart220329}

\usepackage{amsmath,amssymb,mathtools,amsfonts}
\usepackage{bm}
\usepackage{booktabs}
\usepackage{array}
\usepackage{enumitem}
\usepackage{microtype}
\usepackage{url}
\usepackage{tikz}
\usetikzlibrary{arrows.meta,positioning,calc}

\newcommand{\EPO}{EPO}
\newcommand{\COSDG}{\textsf{COS(DG)}}

\newsiamthm{assumption}{Assumption}
\newsiamremark{remark}{Remark}
\newsiamremark{example}{Example}
\newsiamremark{notation}{Notation}
\newsiamremark{procedure}{Procedure}

\numberwithin{equation}{section}

\headers{\EPO{}: Entropy Stability, Positivity, and Oscillation Suppression}{K. Wu}

\title{\EPO{}: A Unified Framework for Entropy Stability, Positivity, and Oscillation Suppression}

\author{Kailiang Wu\thanks{Department of Mathematics, Southern University of Science and Technology, Shenzhen, Guangdong 518055, P.R.~China (\email{wukl@sustech.edu.cn}). }}

\begin{document}

\maketitle

\begin{abstract}
High-order finite volume and discontinuous Galerkin methods are often stabilized by separate nonlinear devices for admissibility, entropy control, and oscillation suppression. This separation hides a simple geometric fact: all three act on the same cellwise candidate state. We propose a general framework (termed EPO) unifying fully discrete entropy stability, positivity/bound preservation, and spurious oscillation elimination. Starting from a candidate update, we scale along the ray anchored at its updated cell average. The admissible-state constraint, the entropy constraint, and the oscillation-suppressing constraint each define an admissibility radius on that ray, and the applied limiter is their minimum.

The decisive analytical ingredient is a {\em weak entropy stability} at the level of the updated cell average. A two-point Lax--Friedrichs/Riemann-average entropy inequality yields local cell-average entropy budgets, and the same radial scaling mechanism behind Zhang--Shu positivity preservation lifts these weak budgets to strong quadrature-based entropy inequalities. The framework is therefore not a summation-by-parts, split-form, or flux-differencing construction: EPO acts on a candidate finite volume or discontinuous Galerkin update and converts weak average information into fully discrete nodal entropy stability. {\em The construction also works for any prescribed finite family of convex entropy pairs. Each pair yields its own entropy radius, and taking the minimum enforces fully discrete entropy stability for all of them simultaneously.} We prove the preservation of cell averages, invariant-set preservation, local and global strong entropy inequalities, stagewise budgets for strong-stability-preserving (SSP) Runge--Kutta methods, an SSP multistep variant that retains the designed high-order temporal accuracy, and extensions on rectangular and unstructured triangular meshes.
\end{abstract}

\begin{keywords}
hyperbolic conservation laws, weak entropy stability, fully discrete entropy stability, positivity preservation, discontinuous Galerkin methods, finite volume methods, convex scaling limiter
\end{keywords}

\begin{MSCcodes}
35L65, 65M08, 65M12, 65M60
\end{MSCcodes}

\tableofcontents

\section{Introduction}

High-order finite volume (FV) and discontinuous Galerkin (DG) methods for hyperbolic conservation laws usually need three kinds of nonlinear stability properties: invariant-domain or positivity preservation, entropy stability, and suppression of spurious oscillations near shocks. In many implementations these are introduced by separate devices, even though all three act on the same cellwise candidate state. The purpose of this paper is to formulate a single local framework for them.

The starting point is the weak-to-strong mechanism, first proposed by Zhang and Shu for positivity preservation  \cite{ZhangShuMPP2010,ZhangShuPP2010,ZhangShuSurvey2011}. One first proves a weak statement for the updated cell average and then restores a nodal or pointwise statement by scaling the candidate state toward that average. This idea is now standard in positivity and invariant-domain preservation, and recent work has clarified the geometric content of nonlinear constraints, optimal cell average decomposition, and related frameworks \cite{WuShuGQL2023,CuiDingWuOCAD2024,CuiKurganovWuBPFramework2024,WuZhangShuSurvey2025}. The present paper uses the same cell-average-anchored scaling to treat entropy stability.

For entropy stability, however,  our approach proposed  here is different from the classical one. Tadmor's flux-differencing theory and its high-order descendants, including summation-by-parts, split-form, and suitable-quadrature constructions, build entropy conservation or entropy dissipation directly into the spatial discretization \cite{Tadmor1987,Tadmor2003,FisherCarpenter2013,Gassner2013,CarpenterFisherNielsenFrankel2014,SvardNordstrom2014,DelReyFernandezHickenZingg2014,ChenShu2017,ChanFernandezCarpenter2019}. The framework developed below is not another summation-by-parts (SBP) construction. Instead, it starts from a candidate FV/DG update and asks what weak cell-average information is sufficient to recover a strong quadrature-based entropy inequality after a local correction. The main answer is a \emph{weak entropy stability} for the updated cell average, which can then be lifted by the same radial scaling used in positivity preservation.

This distinction also matters at the level of entropy pairs. Most classical entropy-stable constructions are designed for one prescribed convex entropy pair. In contrast, the EPO geometry is local and constraint based. Any prescribed finite family of convex entropy pairs may be enforced simultaneously: each pair yields an entropy budget and an entropy radius, and the final limiter uses the minimum of those radii.

The local construction is simple. Let $\bar{\mathbf U}_j^\star$ be the candidate updated cell average in cell $I_j$ and let $\mathbf U_j^\star$ denote the candidate DG evolved polynomial, FV reconstruction polynomial, or equivalent nodal array. The admissible modifications considered in this paper lie on the cell-average-anchored scaling ray
\[
\mathcal S_{\bar{\mathbf U}_j^\star}(\theta;\mathbf U_j^\star)
=
\bar{\mathbf U}_j^\star
+
\theta\bigl(\mathbf U_j^\star-\bar{\mathbf U}_j^\star\bigr),
\qquad
0\le \theta\le 1.
\]
Along this ray, the geometric module contributes a radius $\theta_j^{\mathrm P}$. For one entropy pair, the entropy module contributes the positivity-first radius $\theta_j^{\mathrm{PE}}$; when the full candidate ray is already contained in the entropy domain, this reduces to $\theta_j^{\mathrm{PE}}=\min\{\theta_j^{\mathrm P},\theta_j^{\mathrm E}\}$. The oscillation module contributes a radius $\theta_j^{\mathrm O}$. The final limiter uses
\[
\theta_j^{\mathrm{EPO}}
=
\min\bigl\{\theta_j^{\mathrm{PE}},\theta_j^{\mathrm O}\bigr\}.
\]
For several prescribed entropy pairs $\{(\eta^{(r)},\mathcal Q^{(r)})\}_{r=1}^M$, one computes one entropy radius for each pair and then replaces $\theta_j^{\mathrm{PE}}$ by
\[
\theta_j^{\mathrm{PE,all}}:=\min_{1\le r\le M}\theta_j^{\mathrm{PE},(r)}.
\]
The geometry is therefore unchanged: there is still one scaling ray and one final minimum.

In this paper, we propose a novel framework, termed EPO, which unifies fully discrete entropy stability, positivity/bound preservation, and spurious oscillation suppression. 
Our main results can be summarized as follows.
\begin{enumerate}[label=(\roman*),leftmargin=2em]
\item A two-point Lax--Friedrichs/Riemann-average entropy inequality yields a local weak entropy stability for candidate cell averages.
\item The convex scaling limiter similar to that used in Zhang--Shu positivity preservation lifts this weak statement to a strong quadrature-based entropy inequality.
\item The local EPO admissible set is closed and convex, so the final limiter is the maximal radial projection into that set.
\item For explicit strong-stability-preserving (SSP)  Runge--Kutta methods the correct entropy input can be stagewise. For SSP multistep methods we propose a single end-of-step entropy budget and therefore a one-shot end-of-step entropy correction while retaining the designed temporal order.
\item The same local geometry extends to rectangular and unstructured triangular meshes.
\end{enumerate}

\begin{table}[t]
\caption{The three local constraints combined by EPO.}
\label{tab:epo-modules}
\centering
\footnotesize
\renewcommand{\arraystretch}{1.15}
\begin{tabular}{>{\raggedright\arraybackslash}p{0.15\textwidth} >{\raggedright\arraybackslash}p{0.28\textwidth} >{\raggedright\arraybackslash}p{0.14\textwidth} >{\raggedright\arraybackslash}p{0.21\textwidth}}
\toprule
Module & Weak input / candidate information & Radius on the ray & Strong output after limiting \\
\midrule
Geometry &
Updated cell average lies in the admissible convex set $G$ &
$\theta_j^{\mathrm P}$ &
All limited nodal states lie in $G$ \\

Entropy &
For each prescribed entropy pair, the updated cell average satisfies a local budget $\eta^{(r)}(\bar{\mathbf U}_j^\star)\le B_j^{n,(r)}$ &
$\theta_j^{\mathrm{PE},(r)}$ &
For each prescribed pair, $\mathcal E^{(r)}(\mathbf U_j^{n+1})\le B_j^{n,(r)}$ \\

Oscillation &
A COS-compatible closed convex oscillation set $\mathcal O_j^\star$ is available &
$\theta_j^{\mathrm O}$ &
The limited state belongs to $\mathcal O_j^\star$ \\
\bottomrule
\end{tabular}
\end{table}

The framework is local. A concrete method must still provide candidate nodal values, candidate cell averages, and weak budgets appropriate to its quadrature, numerical flux, and mesh. Once those inputs are available, the conclusions of the EPO theorem follow from convexity and scaling.

The remainder of the paper is organized as follows. Section~\ref{sec:preliminaries} introduces the notation, quadrature representation, admissible set, entropy pair, and cell-average-anchored scaling rays. Section~\ref{sec:weak} develops the weak cell-average budgets, including the two-point entropy inequality and an abstract weak entropy theorem. Section~\ref{sec:oscillation} presents the oscillation module and general convex-oscillation-suppressing (COS)-compatible convex sets. Section~\ref{sec:entropy_limiter} formulates the entropy limiter and the local EPO admissible set. Section~\ref{sec:main} contains the one-dimensional EPO theorem, and Section~\ref{sec:time} discusses the stagewise SSP Runge--Kutta and SSP multistep realizations. Section~\ref{sec:2d} extends the framework to two-dimensional rectangular and triangular meshes. Section~\ref{sec:realizations} records representative realizations and implementation formulas, Section~\ref{sec:num} presents several numerical examples, and Appendix~\ref{app:explicit} collects auxiliary explicit expressions.

\section{Preliminaries}\label{sec:preliminaries}

We consider the one-dimensional hyperbolic conservation law system
\begin{equation}\label{eq:pde}
\partial_t \mathbf U + \partial_x \mathbf F(\mathbf U)=0,
\qquad
x\in \mathbb R,\quad t>0,
\end{equation}
where $\mathbf U(x,t)\in \mathbb R^m$ is the conservative variable and $\mathbf F:\mathbb R^m\to\mathbb R^m$ is the flux. We work at the level of a generic high-order spatial discretization on a uniform mesh
\[
I_j = [x_{j-\frac12},x_{j+\frac12}],
\qquad
\Delta x = x_{j+\frac12}-x_{j-\frac12}.
\]
The numerical solution in cell $I_j$ is represented either by a polynomial, a reconstruction, or any equivalent finite-dimensional object determined by a collection of nodal values. Since the EPO framework is local and algebraic, the precise representation space is less important than the following positive quadrature structure.

\subsection{Cell averages, quadrature nodes, and nodal arrays}

Fix a positive quadrature rule on the reference cell, represented in each physical cell by nodes
\[
x_j^{(1)},x_j^{(2)},\dots,x_j^{(L)}\in I_j
\]
and weights
\[
\omega_1,\omega_2,\dots,\omega_L >0,
\qquad
\sum_{\nu=1}^L \omega_\nu = 1.
\]
We assume that the discrete representation under consideration satisfies the exact cell-average identity
\begin{equation}\label{eq:cellavg-quad}
\bar{\mathbf U}_j
=
\sum_{\nu=1}^L \omega_\nu \mathbf U_{j,\nu},
\end{equation}
where $\mathbf U_{j,\nu}$ denotes the nodal value at $x_j^{(\nu)}$ and
\[
\bar{\mathbf U}_j
=
\frac{1}{\Delta x}\int_{I_j}\mathbf U_h(x)\,dx
\]
is the cell average. In a nodal DG method on Gauss--Lobatto points, \eqref{eq:cellavg-quad} is immediate; in FV or modal DG, we also use the Gauss--Lobatto points and take $L = \lceil \frac{k+3}2 \rceil$ for solution polynomials of degree  $k$. In FV/DG positivity analyses, the same structure appears through classical or optimal cell average decomposition (CAD/OCAD) \cite{ZhangShuMPP2010,CuiDingWuOCAD2024}.


\subsection{Admissible state set}

The EPO framework separates the \emph{candidate high-order state} from the \emph{admissibility mechanism}. The admissible set is assumed to be a convex subset of the state space.

\begin{assumption}[Closed convex admissible set]\label{ass:G}
There is a nonempty closed convex set $G\subset \mathbb R^m$ such that all admissible numerical states must lie in $G$.
\end{assumption}

\begin{remark}
For many physical systems the genuine admissible set is open rather than closed. For instance, the Euler equations require $\rho>0$ and $p>0$. In practical positivity-preserving analysis one often works instead with a closed numerical admissible set
$
G_\varepsilon \subset G
$ 
for some small $\varepsilon>0$, for example
$ 
G_\varepsilon = \{\rho\ge \varepsilon,\ p\ge \varepsilon\}.
$ 
All the arguments in this paper apply directly to such closed sets. Since the local scaling is cell-average preserving and continuous, one can let $\varepsilon$ depend on roundoff or on the resolution in applications. We keep the notation $G$ throughout the paper for simplicity.
\end{remark}

\subsection{Entropy pair and local discrete entropy}

Let $(\eta,\mathcal Q)$ be a convex entropy pair for \eqref{eq:pde}. Convexity of $\eta$ is used throughout the analysis.

\begin{assumption}[Entropy domain and entropy pair]\label{ass:entropy}
There exists an open convex set $D_\eta\subset \mathbb R^m$ with $G\subset D_\eta$ such that the entropy function $\eta:D_\eta\to \mathbb R$ is twice continuously differentiable and convex on $D_\eta$. The associated entropy flux is denoted by $\mathcal Q$: in one space dimension $\mathcal Q:D_\eta\to \mathbb R$, while in several space dimensions we write $\mathcal Q=(\mathcal Q_1,\dots,\mathcal Q_d)$ with $\mathcal Q:D_\eta\to \mathbb R^d$.  
\end{assumption}

For ${\bf U}_h$, we define the local quadrature-based entropy
\begin{equation}\label{eq:local-entropy}
\mathcal E(\mathbf U_j)
:=
\sum_{\nu=1}^L \omega_\nu \eta(\mathbf U_{j,\nu}).
\end{equation}
When cell indices are needed we also write $\mathcal E_j(\mathbf U_j)$, but the definition is independent of $j$ when the quadrature is fixed.

\begin{remark}
The quantity $\mathcal E(\mathbf V)$ is the natural entropy functional for a nodal or CAD-based limiter, because the limiter acts directly on nodal values. In general,
\[
\mathcal E(\mathbf U_j)
\neq
\eta\!\left(\sum_{\nu=1}^L \omega_\nu \mathbf U_{j,\nu} \right)
\]
unless $\eta$ is affine. By Jensen's inequality,
\begin{equation}\label{eq:jensen-local}
\eta\!\left(\sum_{\nu=1}^L \omega_\nu \mathbf U_{j,\nu}  \right)
\le
\mathcal E(  \mathbf U_{j}   ).
\end{equation}
Accordingly, the term \emph{strong entropy stability} in this paper always means \emph{quadrature-based} or \emph{nodal} strong entropy stability. The positivity-first entropy pathway introduced below is designed precisely so that every pointwise entropy evaluation takes place on $G\subset D_\eta$, where the entropy is guaranteed to be well defined.
\end{remark}

\begin{remark}[Several entropy pairs]\label{rem:multi-entropy-prelim}
Let $\{(\eta^{(r)},\mathcal Q^{(r)})\}_{r=1}^M$ be a finite family of convex entropy pairs with $G\subset \bigcap_{r=1}^M D_{\eta^{(r)}}$. For each pair one defines
\[
\mathcal E^{(r)}(\mathbf U_j):=\sum_{\nu=1}^L \omega_\nu \eta^{(r)}( \mathbf U_{j,\nu} ).
\]
All convexity arguments below apply to each pair separately. Simultaneous enforcement of finitely many entropy pairs amounts to intersecting finitely many entropy sublevel sets, which therefore remains a closed convex constraint on the scaling ray.
\end{remark}

\subsection{Scaling rays}

The cell average will act as the anchor point for all local limiting. Given a cell average $\bar{\mathbf U}\in \mathbb R^m$ and a polynomial solution $\mathbf U_j$ (or its nodal values in $I_j$) with
\[
\sum_{\nu=1}^L \omega_\nu {\bf U}_{j,\nu} = \bar{\mathbf U},
\]
we define the scaling ray
\begin{equation}\label{eq:scaling-ray}
\mathcal S_{\bar{\mathbf U}}(\theta;\mathbf U)
:=
\bar{\mathbf U} + \theta\bigl(\mathbf U-\bar{\mathbf U}\bigr),
\qquad
0\le \theta\le 1.
\end{equation}
The scaling ray preserves the cell average:
\begin{equation}\label{eq:mean-preserving-ray}
\sum_{\nu=1}^L \omega_\nu \mathcal S_{\bar{\mathbf U}}(\theta;\mathbf U)_\nu
=
\bar{\mathbf U}
\quad
\text{for all }\theta\in[0,1].
\end{equation}
This identity is the basic reason that convex scaling is compatible with conservative discretizations.

\subsection{Candidate stage values}

Throughout the paper, a superscript $\star$ denotes the \emph{candidate} state produced by the underlying high-order method before EPO limiting. Thus, in cell $I_j$ we have candidate nodal values
\[
\mathbf U_{j,\nu}^\star,\qquad \nu=1,\dots,L,
\]
and candidate cell average
\[
\bar{\mathbf U}_j^\star
=
\sum_{\nu=1}^L \omega_\nu \mathbf U_{j,\nu}^\star.
\]
The EPO limiter will only modify the nodal deviations from $\bar{\mathbf U}_j^\star$; therefore, it will preserve the conservative cell-average update produced by the underlying method.

\begin{remark}[DG stage polynomials and FV reconstructions]
In a DG method the candidate object is the cell polynomial produced by the stage update. In a FV method the same notation denotes any conservative reconstruction, subcell profile, or reconstructed polynomial whose weighted mean equals the updated cell average. Every EPO module in this paper depends only on this cellwise candidate function or reconstruction, its weighted nodal evaluations, and compact neighboring-cell data. Consequently the framework is representation-agnostic and applies to both FV and DG.
\end{remark}

\section{Weak cell-average budgets and weak entropy stability}\label{sec:weak}

The Zhang--Shu philosophy separates the analysis into two stages. First one proves a \emph{weak} property at the level of the cell average. Then one lifts this weak property to a \emph{strong} nodal or pointwise property by local scaling. In EPO, the same logic is applied simultaneously to invariant-set preservation and entropy stability.

\subsection{Weak positivity / weak invariant-set preservation}

We begin with the standard weak geometric property.

\begin{assumption}[Weak positivity / weak invariant-set preservation]\label{ass:weakP}
For every cell $I_j$ and each candidate time step or stagewise update under consideration, the updated cell average satisfies
\begin{equation}\label{eq:weakP}
\bar{\mathbf U}_j^\star \in G.
\end{equation}
\end{assumption}

\begin{remark}
Assumption~\ref{ass:weakP} is precisely the \emph{weak positivity} or \emph{weak bound-preserving} property in the Zhang--Shu framework. In concrete schemes this property may be established by expressing $\bar{\mathbf U}_j^\star$ as a convex combination of admissible first-order building blocks and internal nodal values, typically using CAD or OCAD \cite{ZhangShuMPP2010,ZhangShuPP2010,CuiDingWuOCAD2024}. The EPO limiter is agnostic about how \eqref{eq:weakP} is obtained.
\end{remark}

\subsection{A two-point Lax--Friedrichs/Riemann-average entropy inequality}

We now prove the fundamental entropy counterpart of the first-order weak geometric property. Let $\mathbf U_L,\mathbf U_R\in G$ and let $\alpha>0$. Define
\begin{equation}\label{eq:Halpha}
H_\alpha(\mathbf U_L,\mathbf U_R)
:=
\frac{\mathbf U_L+\mathbf U_R}{2}
-
\frac{\mathbf F(\mathbf U_R)-\mathbf F(\mathbf U_L)}{2\alpha}.
\end{equation}
This is the Lax--Friedrichs (or local Lax--Friedrichs / Rusanov-type) two-point average with parameter $\alpha$.

\begin{theorem}[Two-point entropy inequality]\label{thm:2point-entropy}
Let $\mathbf U_L,\mathbf U_R\in G$. Assume that the Riemann problem for \eqref{eq:pde} with left state $\mathbf U_L$ and right state $\mathbf U_R$ admits an entropy solution of the self-similar form
\[
\mathbf U(x,t)=\mathcal R(\xi),\qquad \xi=\frac{x}{t},
\]
whose values lie in $G$, and that all waves are contained in the cone $|\xi|\le \alpha$. Then
\begin{equation}\label{eq:2point-entropy}
\eta\!\left(H_\alpha(\mathbf U_L,\mathbf U_R)\right)
\le
\frac{\eta(\mathbf U_L)+\eta(\mathbf U_R)}{2}
-
\frac{\mathcal Q(\mathbf U_R)-\mathcal Q(\mathbf U_L)}{2\alpha}.
\end{equation}
Moreover,
\begin{equation}\label{eq:Halpha-average}
H_\alpha(\mathbf U_L,\mathbf U_R)
=
\frac{1}{2\alpha}\int_{-\alpha}^{\alpha}\mathcal R(\xi)\,d\xi,
\end{equation}
and therefore $H_\alpha(\mathbf U_L,\mathbf U_R)\in G$ by convexity of $G$.
\end{theorem}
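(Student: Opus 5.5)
The argument integrates the conservation law and the entropy inequality over the space–time box $[-\alpha T,\alpha T]\times[0,T]$ for the Riemann solution; this is the classical Harten--Lax--van Leer argument, then closed with Jensen's inequality. Fix $T>0$ and let $\mathbf U(x,t)=\mathcal R(x/t)$. Because all waves lie in the cone $|\xi|\le\alpha$, we have $\mathbf U(\pm\alpha T,t)=\mathbf U_R$ or $\mathbf U_L$ for $t\in(0,T]$; more precisely $\mathcal R(\xi)=\mathbf U_L$ for $\xi<-\alpha$ and $\mathcal R(\xi)=\mathbf U_R$ for $\xi>\alpha$. At $t=0$ the data are $\mathbf U_L$ on $[-\alpha T,0)$ and $\mathbf U_R$ on $(0,\alpha T]$.

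\emph{Step 1 (the average identity).} Integrate $\partial_t\mathbf U+\partial_x\mathbf F(\mathbf U)=0$ in the weak sense over the box; the solution is a weak solution, and one can justify this with test functions approximating the indicator of the box. This gives
\[
\int_{-\alpha T}^{\alpha T}\mathbf U(x,T)\,dx=\alpha T(\mathbf U_L+\mathbf U_R)-T\bigl(\mathbf F(\mathbf U_R)-\mathbf F(\mathbf U_L)\bigr).
\]
Substituting $x=T\xi$ and dividing by $2\alpha T$ gives \eqref{eq:Halpha-average}. Since $\mathcal R(\xi)\in G$ and $G$ is closed and convex, the normalized integral (a probability average) lies in $G$. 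This follows from the separating hyperplane theorem, or from approximating the integral by Riemann sums of convex combinations and using closedness.

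\emph{Step 2 (the entropy inequality).} Since $\mathbf U$ is an entropy solution, $\partial_t\eta(\mathbf U)+\partial_x\mathcal Q(\mathbf U)\le0$ holds in the sense of distributions. Integrating over the same box with nonnegative test functions approximating its indicator, we obtain
\[
\frac{1}{2\alpha}\int_{-\alpha}^{\alpha}\eta(\mathcal R(\xi))\,d\xi\le\frac{\eta(\mathbf U_L)+\eta(\mathbf U_R)}{2}-\frac{\mathcal Q(\mathbf U_R)-\mathcal Q(\mathbf U_L)}{2\alpha}.
\]
Finally, $\eta$ is convex on $D_\eta\supset G$ and all values of $\mathcal R$ lie in $G$. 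Jensen's inequality for the uniform probability measure on $[-\alpha,\alpha]$, combined with Step 1, gives $\eta(H_\alpha)\le\frac{1}{2\alpha}\int\eta(\mathcal R)\,d\xi$, and hence \eqref{eq:2point-entropy}.

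\emph{Main obstacle.} The delicate point is justifying the box integration: passing from the distributional inequality to the integrated form, including the traces at $t=0$ and $t=T$. I would handle it with self-similarity. For a test function $\phi(x,t)=\psi(x)\chi_\epsilon(t)$, where $\chi_\epsilon$ is a smooth cutoff of $[\delta,T]$ and $\psi$ is a smooth cutoff of $[-\alpha T,\alpha T]$ whose transition region lies strictly outside the fan, the lateral fluxes are the constant states $\mathcal Q(\mathbf U_L)$ and $\mathcal Q(\mathbf U_R)$. The time traces at $t=T$ and $t=\delta$ are explicit, because $\mathbf U(\cdot,t)=\mathcal R(\cdot/t)$ is a bounded measurable function of $\xi$. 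Letting $\delta\to0$, the trace at $t=\delta$ converges to $\alpha T\,(\eta(\mathbf U_L)+\eta(\mathbf U_R))$ by dominated convergence. Boundedness of $\eta\circ\mathcal R$ requires that $\mathcal R$ take values in a compact subset of $D_\eta$; this holds for a bounded Riemann solution, and I will assume it as part of the hypothesis. The inequality direction is preserved because $\phi\ge0$.

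If waves may touch $|\xi|=\alpha$, I would enlarge the box slightly to $|\xi|\le\alpha'$ with $\alpha'>\alpha$, apply the argument there, and use the fact that $\mathcal R$ is constant outside $[-\alpha,\alpha]$ to show the formulas coincide.
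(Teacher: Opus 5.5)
Your proposal is correct and follows essentially the same route as the paper: you integrate the conservation law and the entropy inequality over a control volume bounded by lines outside the wave fan, use self-similarity to identify $H_\alpha(\mathbf U_L,\mathbf U_R)$ and the integrated entropy as averages of $\mathcal R$ over $[-\alpha,\alpha]$, and close with convexity of $G$ and Jensen's inequality. The only difference is that the paper integrates over the expanding interval $[-\alpha t,\alpha t]$ via the Reynolds transport formula and lets it collapse at $t=0$, whereas you use the fixed box $[-\alpha T,\alpha T]\times[0,T]$ with the initial trace; this is cosmetic, and your test-function justification is more careful than the paper's, including the boundedness of $\mathcal R$ and the separating-hyperplane argument for the integral average lying in $G$.
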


\begin{proof}
Fix $t>0$ and consider the Riemann entropy solution
\[
\mathbf U(x,t)=\mathcal R\!\left(\frac{x}{t}\right).
\]
Since all waves are contained in $|\xi|\le \alpha$, we have
\[
\mathbf U(-\alpha t+,t)=\mathbf U_L,
\qquad
\mathbf U(\alpha t-,t)=\mathbf U_R.
\]
Integrating \eqref{eq:pde} over the moving interval $[-\alpha t,\alpha t]$ and using the Reynolds transport formula gives
\begin{align*}
\frac{d}{dt}\int_{-\alpha t}^{\alpha t}\mathbf U(x,t)\,dx
&=
\int_{-\alpha t}^{\alpha t}\partial_t\mathbf U(x,t)\,dx
+\alpha \mathbf U(\alpha t-,t)
+\alpha \mathbf U(-\alpha t+,t) \\
&=
-\mathbf F(\mathbf U(\alpha t-,t))
+\mathbf F(\mathbf U(-\alpha t+,t))
+\alpha \mathbf U_R + \alpha \mathbf U_L \\
&=
\alpha(\mathbf U_L+\mathbf U_R)
+\mathbf F(\mathbf U_L)-\mathbf F(\mathbf U_R).
\end{align*}
The right-hand side is constant in time, and the interval collapses to a point at $t=0$, so after integration in time we obtain
\[
\int_{-\alpha t}^{\alpha t}\mathbf U(x,t)\,dx
=
t\Bigl[\alpha(\mathbf U_L+\mathbf U_R)+\mathbf F(\mathbf U_L)-\mathbf F(\mathbf U_R)\Bigr].
\]
By self-similarity,
\[
\int_{-\alpha t}^{\alpha t}\mathbf U(x,t)\,dx
=
t\int_{-\alpha}^{\alpha}\mathcal R(\xi)\,d\xi.
\]
Dividing by $2\alpha t$ yields \eqref{eq:Halpha-average}.

Next, apply the same argument to the entropy inequality
\[
\partial_t \eta(\mathbf U)+\partial_x \mathcal Q(\mathbf U)\le 0
\]
in the sense of distributions. Integrating over $[-\alpha t,\alpha t]$ gives
\begin{align*}
\frac{d}{dt}\int_{-\alpha t}^{\alpha t}\eta(\mathbf U(x,t))\,dx
&\le
-\mathcal Q(\mathbf U(\alpha t-,t))
+\mathcal Q(\mathbf U(-\alpha t+,t)) \\
&\quad
+\alpha\eta(\mathbf U(\alpha t-,t))
+\alpha\eta(\mathbf U(-\alpha t+,t)) \\
&=
\alpha\bigl(\eta(\mathbf U_L)+\eta(\mathbf U_R)\bigr) \\
&\quad
+\mathcal Q(\mathbf U_L)-\mathcal Q(\mathbf U_R).
\end{align*}
Integrating in time and using self-similarity once more,
\[
t\int_{-\alpha}^{\alpha}\eta(\mathcal R(\xi))\,d\xi
\le
t\Bigl[
\alpha\bigl(\eta(\mathbf U_L)+\eta(\mathbf U_R)\bigr)
+\mathcal Q(\mathbf U_L)-\mathcal Q(\mathbf U_R)
\Bigr].
\]
After division by $2\alpha t$ we find
\begin{equation}\label{eq:avg-entropy-bound}
\frac{1}{2\alpha}\int_{-\alpha}^{\alpha}\eta(\mathcal R(\xi))\,d\xi
\le
\frac{\eta(\mathbf U_L)+\eta(\mathbf U_R)}{2}
-
\frac{\mathcal Q(\mathbf U_R)-\mathcal Q(\mathbf U_L)}{2\alpha}.
\end{equation}
Because $\eta$ is convex and $H_\alpha(\mathbf U_L,\mathbf U_R)$ is the average of $\mathcal R$ over $[-\alpha,\alpha]$, Jensen's inequality gives
\[
\eta\!\left(H_\alpha(\mathbf U_L,\mathbf U_R)\right)
\le
\frac{1}{2\alpha}\int_{-\alpha}^{\alpha}\eta(\mathcal R(\xi))\,d\xi.
\]
Combining this with \eqref{eq:avg-entropy-bound} yields \eqref{eq:2point-entropy}. Finally, \eqref{eq:Halpha-average} shows that $H_\alpha(\mathbf U_L,\mathbf U_R)$ is an average of states in the convex set $G$, hence $H_\alpha(\mathbf U_L,\mathbf U_R)\in G$.
\end{proof}

\begin{remark}
Theorem~\ref{thm:2point-entropy} is the entropy analogue of the weak positivity of the Lax--Friedrichs first-order update. Its proof is elementary but conceptually important: it uses only three ingredients---a control-volume identity for the Riemann solution, the entropy inequality, and convexity of $\eta$. The resulting bound is valid for \emph{any} convex entropy pair and therefore serves as a flexible first-order entropy building block.
\end{remark}

\subsection{Weak entropy stability}
We now establish the weak entropy stability for the forward Euler time discretization. 
Note that the Gauss--Lobatto quadrature nodes satisfy
\[
x_j^{(1)} = x_{j-\frac12}^+,\qquad x_j^{(L)} = x_{j+\frac12}^-.
\]
Denote the endpoint nodal values at time level $n$ by
\[
\mathbf U_{j,1}^n = \mathbf U_{j-\frac12}^{+,n},
\qquad
\mathbf U_{j,L}^n = \mathbf U_{j+\frac12}^{-,n}.
\]
Let the numerical flux at $x_{j+\frac12}$ be
\begin{equation}\label{eq:LFflux}
\widehat{\mathbf F}_{j+\frac12}^n
=
\frac{\mathbf F(\mathbf U_{j,L}^n)+\mathbf F(\mathbf U_{j+1,1}^n)}{2}
-
\frac{\alpha}{2}\bigl(\mathbf U_{j+1,1}^n-\mathbf U_{j,L}^n\bigr),
\end{equation}
where $\alpha>0$ bounds the local Riemann wave speeds. (The following idea can be directly extended to local Lax--Friedrichs flux and HLL-type flux, which are detailed in the submitted version of this manuscript.) 
The candidate cell-average update is
\begin{equation}\label{eq:avg-update-LF}
\bar{\mathbf U}_j^\star
=
\bar{\mathbf U}_j^n
-
\lambda\bigl(\widehat{\mathbf F}_{j+\frac12}^n-\widehat{\mathbf F}_{j-\frac12}^n\bigr),
\qquad
\lambda := \frac{\Delta t}{\Delta x}.
\end{equation}

\begin{proposition}[Weak entropy stability]\label{prop:canonical-budget}
Under the CFL condition
\begin{equation}\label{eq:CFL-canonical}
\lambda \alpha \le \omega_1,
\end{equation}
\eqref{eq:avg-update-LF} can be rewritten as a convex combination form as 
\begin{align}
\bar{\mathbf U}_j^\star
&=
\sum_{\nu=2}^{L-1}\omega_\nu \mathbf U_{j,\nu}^n
+
\bigl(\omega_1-\lambda \alpha\bigr)\mathbf U_{j,1}^n
+
\bigl(\omega_L-\lambda \alpha\bigr)\mathbf U_{j,L}^n \notag\\
&\qquad
+\lambda\alpha
H_{\alpha}\!\left(\mathbf U_{j-1,L}^n,\mathbf U_{j+1,1}^n\right)
+\lambda\alpha
H_{\alpha}\!\left(\mathbf U_{j,1}^n,\mathbf U_{j,L}^n\right).
\label{eq:canonical-decomp}
\end{align}
Consequently, we obtain the weak entropy stability: 
\begin{equation}\label{eq:canonical-weak-local}
\eta(\bar{\mathbf U}_j^\star)\le 
B_j^n
:=
\sum_{\nu=1}^{L}\omega_\nu \eta(\mathbf U_{j,\nu}^n)
-
\lambda\bigl(\widehat{\mathcal Q}_{j+\frac12}^n-\widehat{\mathcal Q}_{j-\frac12}^n\bigr),
\end{equation}
with numerical entropy flux
\begin{equation}\label{eq:numerical-entropy-flux}
\widehat{\mathcal Q}_{j+\frac12}^n
:=
\frac{\mathcal Q(\mathbf U_{j,L}^n)+\mathcal Q(\mathbf U_{j+1,1}^n)}{2} - \frac{\alpha}{2}\bigl(\eta(\mathbf U_{j+1,1}^n)-\eta(\mathbf U_{j,L}^n)\bigr).
\end{equation}
If the domain is periodic, then
\begin{equation}\label{eq:canonical-budget-global}
\sum_j B_j^n
=
\sum_j \sum_{\nu=1}^L \omega_\nu \eta(\mathbf U_{j,\nu}^n)
-
\lambda \sum_j \bigl(\widehat{\mathcal Q}_{j+\frac12}^n-\widehat{\mathcal Q}_{j-\frac12}^n\bigr).
\end{equation}
\end{proposition}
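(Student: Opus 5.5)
The plan is to verify the decomposition \eqref{eq:canonical-decomp} by direct algebra, then combine Jensen's inequality with Theorem~\ref{thm:2point-entropy}, and finally check that the resulting entropy terms reassemble into $B_j^n$. Throughout, the nodal values $\mathbf U_{j,\nu}^n$ (including the neighbouring endpoint values $\mathbf U_{j-1,L}^n$ and $\mathbf U_{j+1,1}^n$) are taken to lie in $G$. I also assume, as implicit in the choice of $\alpha$, that the Riemann problems appearing below satisfy the hypotheses of Theorem~\ref{thm:2point-entropy} with wave speeds bounded by $\alpha$.

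\textbf{Decomposition.} Start from $\bar{\mathbf U}_j^n=\sum_{\nu=1}^L\omega_\nu\mathbf U_{j,\nu}^n$, which is \eqref{eq:cellavg-quad}, and expand $-\lambda(\widehat{\mathbf F}_{j+\frac12}^n-\widehat{\mathbf F}_{j-\frac12}^n)$ using \eqref{eq:LFflux}. This produces a flux part
\[
-\tfrac{\lambda}{2}\bigl[\mathbf F(\mathbf U_{j+1,1}^n)-\mathbf F(\mathbf U_{j-1,L}^n)+\mathbf F(\mathbf U_{j,L}^n)-\mathbf F(\mathbf U_{j,1}^n)\bigr]
\]
and a linear part
\[
\tfrac{\lambda\alpha}{2}\bigl(\mathbf U_{j-1,L}^n+\mathbf U_{j+1,1}^n\bigr)-\tfrac{\lambda\alpha}{2}\bigl(\mathbf U_{j,1}^n+\mathbf U_{j,L}^n\bigr).
\]
Expanding $\lambda\alpha H_\alpha(\mathbf U_{j-1,L}^n,\mathbf U_{j+1,1}^n)+\lambda\alpha H_\alpha(\mathbf U_{j,1}^n,\mathbf U_{j,L}^n)$ via \eqref{eq:Halpha} gives exactly the same flux part. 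Its linear part is $\frac{\lambda\alpha}{2}(\mathbf U_{j-1,L}^n+\mathbf U_{j+1,1}^n+\mathbf U_{j,1}^n+\mathbf U_{j,L}^n)$. The difference between the two linear parts is $-\lambda\alpha(\mathbf U_{j,1}^n+\mathbf U_{j,L}^n)$, and this is absorbed by reducing the endpoint weights to $\omega_1-\lambda\alpha$ and $\omega_L-\lambda\alpha$.

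The total weight is $\sum_\nu\omega_\nu-2\lambda\alpha+2\lambda\alpha=1$. Under \eqref{eq:CFL-canonical} all coefficients are nonnegative; here $\omega_L=\omega_1$ by symmetry of Gauss--Lobatto weights. Hence \eqref{eq:canonical-decomp} is a genuine convex combination. Note that the first $H_\alpha$ pairs the non-adjacent states $\mathbf U_{j-1,L}^n$ and $\mathbf U_{j+1,1}^n$. This is harmless because Theorem~\ref{thm:2point-entropy} applies to any two states in $G$.

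\textbf{Weak entropy bound.} By Theorem~\ref{thm:2point-entropy}, both $H_\alpha$ states lie in $G\subset D_\eta$. Therefore $\bar{\mathbf U}_j^\star\in G$ and $\eta$ may be evaluated at every point involved. Jensen's inequality for the convex $\eta$ applied to \eqref{eq:canonical-decomp}, followed by \eqref{eq:2point-entropy} for each $H_\alpha$ term, gives
\[
\eta(\bar{\mathbf U}_j^\star)\le\sum_{\nu}\omega_\nu\eta(\mathbf U_{j,\nu}^n)-\lambda\alpha\bigl(\eta(\mathbf U_{j,1}^n)+\eta(\mathbf U_{j,L}^n)\bigr)+\lambda\alpha\bigl[\cdots\bigr],
\]
where the bracket is the right-hand side of \eqref{eq:2point-entropy} summed over the two pairs.

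The remaining step is the same algebra as in the decomposition, run in reverse with $(\mathbf U,\mathbf F)$ replaced by $(\eta,\mathcal Q)$. The $\eta$-terms and $\mathcal Q$-terms regroup into $-\lambda(\widehat{\mathcal Q}_{j+\frac12}^n-\widehat{\mathcal Q}_{j-\frac12}^n)$, with $\widehat{\mathcal Q}$ given by \eqref{eq:numerical-entropy-flux}. The structural reason is that \eqref{eq:numerical-entropy-flux} has exactly the same Lax--Friedrichs form as \eqref{eq:LFflux}. This yields \eqref{eq:canonical-weak-local}.

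\textbf{Periodic sum and main obstacle.} Identity \eqref{eq:canonical-budget-global} follows by summing the definition of $B_j^n$ over $j$. Under periodicity the flux differences telescope to zero, so the global budget reduces to the total quadrature entropy at time level $n$. I expect no single step to be hard. The point requiring the most care is the bookkeeping of the $\pm\lambda\alpha$ linear and entropy terms, so that the endpoint weights and the flux signs match exactly. A second point of care is making explicit the standing hypotheses: nodal states in $G$, $\omega_1=\omega_L$, and $\alpha$ bounding the wave speeds of both Riemann problems, including the non-adjacent one.
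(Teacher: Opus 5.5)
Your proposal is correct and follows the paper's proof essentially step by step. You verify \eqref{eq:canonical-decomp} by expanding the Lax--Friedrichs fluxes and regrouping, apply Jensen's inequality with Theorem~\ref{thm:2point-entropy} to the two $H_\alpha$ states, and recollect the $\eta$- and $\mathcal Q$-terms into $-\lambda(\widehat{\mathcal Q}_{j+\frac12}^n-\widehat{\mathcal Q}_{j-\frac12}^n)$. Your derivation of \eqref{eq:canonical-budget-global} by summing the collected definition of $B_j^n$ over $j$ is a cleaner phrasing of the paper's endpoint-weight recombination remark, and you rightly make explicit the hypotheses the paper leaves implicit: $\omega_L=\omega_1$, nodal states in $G$, and $\alpha$ bounding the wave speeds of both Riemann problems, including the non-adjacent pair.
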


\begin{proof}
Using \eqref{eq:cellavg-quad}, \eqref{eq:avg-update-LF}, and \eqref{eq:LFflux}, we obtain
\begin{align*}
\bar{\mathbf U}_j^\star
&=
\sum_{\nu=1}^L \omega_\nu \mathbf U_{j,\nu}^n
-\frac{\lambda}{2}\Bigl[
\mathbf F(\mathbf U_{j,L}^n)+\mathbf F(\mathbf U_{j+1,1}^n)-\alpha(\mathbf U_{j+1,1}^n-\mathbf U_{j,L}^n) \\
&\qquad\qquad\qquad\qquad
-\mathbf F(\mathbf U_{j-1,L}^n)-\mathbf F(\mathbf U_{j,1}^n)+\alpha(\mathbf U_{j,1}^n-\mathbf U_{j-1,L}^n)
\Bigr].
\end{align*}
Regrouping terms yields exactly \eqref{eq:canonical-decomp}. Under \eqref{eq:CFL-canonical}, all coefficients are nonnegative and sum to one.  Using the two-point entropy inequality in Theorem~\ref{thm:2point-entropy} at the two interfaces gives
\begin{align*}
\eta(\bar{\mathbf U}_j^\star)
&\le
\sum_{\nu=2}^{L-1}\omega_\nu \eta(\mathbf U_{j,\nu}^n)
+
\bigl(\omega_1-\lambda \alpha\bigr)\eta(\mathbf U_{j,1}^n)
+
\bigl(\omega_L-\lambda \alpha\bigr)\eta(\mathbf U_{j,L}^n) \\
&\quad
+\lambda \alpha
\left[
\frac{\eta(\mathbf U_{j-1,L}^n)+\eta(\mathbf U_{j+1,1}^n)}{2}
-
\frac{\mathcal Q(\mathbf U_{j+1,1}^n)-\mathcal Q(\mathbf U_{j-1,L}^n)}{2\alpha}
\right] \\
&\quad
+\lambda \alpha
\left[
\frac{\eta(\mathbf U_{j,1}^n)+\eta(\mathbf U_{j,L}^n)}{2}
-
\frac{\mathcal Q(\mathbf U_{j,L}^n)-\mathcal Q(\mathbf U_{j,1}^n)}{2\alpha}
\right].
\end{align*}
After collecting terms we obtain \eqref{eq:canonical-weak-local}. For the global identity \eqref{eq:canonical-budget-global}, note that in the periodic case each nodal entropy at a left endpoint receives the complementary coefficient coming from the right-neighbor term of the cell to its left, and similarly each right endpoint receives the complementary coefficient from the left-neighbor term of the cell to its right. Hence the endpoint weights recombine exactly to $\omega_1$ and $\omega_L$ when summed over $j$. The entropy flux contribution telescopes in the usual way.
\end{proof}

\begin{remark}
The weak entropy stability in Proposition~\ref{prop:canonical-budget} exhibits the same structure as weak positivity: the high-order candidate average is controlled by a convex combination of internal nodal states and two-point first-order entropy-admissible interface states. The local budget involves a one-cell stencil, while its global sum reduces to the expected nodal entropy sum at time level $n$ plus a telescoping entropy flux contribution.
\end{remark}

\subsection{Compatible local entropy budgets}

For the rest of the paper we do not require a specific form of the budget; we only need a local budget and, for global results, a compatibility condition on the sum of these budgets.

\begin{assumption}[Weak entropy budget]\label{ass:weakE}
For each cell and each candidate time-step or stagewise update under consideration, there exists a real number $B_j^n$ such that
\begin{equation}\label{eq:weakE}
\eta(\bar{\mathbf U}_j^\star)\le B_j^n.
\end{equation}
\end{assumption}

\begin{assumption}[Global budget compatibility]\label{ass:budget-compat}
The local budgets satisfy
\begin{equation}\label{eq:budget-compat}
\sum_j B_j^n
\le
\sum_j \mathcal E(\mathbf U_j^n)
-
\lambda \sum_j \bigl(\widehat{\mathcal Q}_{j+\frac12}^n-\widehat{\mathcal Q}_{j-\frac12}^n\bigr)
\end{equation}
for some numerical entropy fluxes $\widehat{\mathcal Q}_{j+\frac12}^n$.
\end{assumption}

\begin{remark}
The budgets in Proposition~\ref{prop:canonical-budget} satisfy Assumptions~\ref{ass:weakE} and \ref{ass:budget-compat}. {\bf In other schemes the construction may differ.  The EPO limiter itself only needs the scalar quantities $B_j^n$, not the details of how they were obtained.}
\end{remark}

\section{The oscillation module: COS-type admissibility and oscillation radii}\label{sec:oscillation}

The third component of EPO is oscillation suppression. In this paper we model it as closely as possible on the \textsf{COS(DG)} philosophy of Cao, Huang, Li, and Wu \cite{CaoHuangLiWu2026}. The essential point is that the COS mechanism is not a flux modification, not an artificial-viscosity closure, and not a troubled-cell projection defined in a different state space. Instead, it is a \emph{cellwise convex scaling} applied after the candidate high-order update. In a DG method the input is the candidate stage polynomial on each cell. In a FV method the input is any conservative reconstruction polynomial or reconstructed subcell profile whose mean equals the candidate cell average. In both cases the oscillation module acts only through the pair $(\bar{\mathbf U}_j^\star,\mathbf U_j^\star)$, where $\bar{\mathbf U}_j^\star$ is the candidate updated cell average and $\mathbf U_j^\star$ denotes the candidate cellwise polynomial or reconstruction.

\subsection{Canonical COS operator for DG and FV candidate states}

Let $I_j$ be a one-dimensional cell of length $h_j$. Denote by $\mathbf U_j^\star(x)$ the candidate cellwise function on $I_j$ and by $\bar{\mathbf U}_j^\star$ its cell average. The canonical COS operator is defined cellwise by
\begin{equation}\label{eq:cos-op-1d}
\mathcal F^{\mathrm{COS}}(\mathbf U_h^\star)\big|_{I_j}:=(1-\lambda_j^{\mathrm{COS}})\,\bar{\mathbf U}_j^\star+\lambda_j^{\mathrm{COS}}\,\mathbf U_j^\star(x),\qquad 0\le \lambda_j^{\mathrm{COS}}\le 1.
\end{equation}
At the nodal level this is simply
\begin{equation}\label{eq:cos-op-1d-nodes}
\bigl(\mathcal F^{\mathrm{COS}}(\mathbf U_h^\star)\bigr)_{j,\nu}=\bar{\mathbf U}_j^\star+\lambda_j^{\mathrm{COS}}\bigl(\mathbf U_{j,\nu}^\star-\bar{\mathbf U}_j^\star\bigr),\qquad \nu=1,\dots,L.
\end{equation}
Thus the canonical COS process is literally a scaling along the same cell-average-anchored ray used throughout the present paper.

\subsection{Entropy-induced discrepancy measures and the canonical COS coefficient}

To define the coefficient $\lambda_j^{\mathrm{COS}}$ we follow \textsf{COS(DG)} and use a convex reference function $g$, typically chosen as a convex entropy of the underlying system. For two vector-valued functions $\mathbf W_1$ and $\mathbf W_2$ on a cell $K$, we consider the entropy-induced Jensen distance
\begin{equation}\label{eq:cos-jensen-distance}
\|\mathbf W_1-\mathbf W_2\|_{L_g^2(K)}^2:=\int_K 4\Bigl(g(\mathbf W_1)+g(\mathbf W_2)-2g\!\left(\frac{\mathbf W_1+\mathbf W_2}{2}\right)\Bigr)\,dx,
\end{equation}
and, when convenient, the frozen-Hessian surrogate
\begin{equation}\label{eq:cos-frozen-distance}
\|\mathbf W_1-\mathbf W_2\|_{L_{g,\mathrm{fr}}^2(K)}^2:=\int_K (\mathbf W_1-\mathbf W_2)^\top \nabla^2 g(\bar{\mathbf U}_j^\star)(\mathbf W_1-\mathbf W_2)\,dx.
\end{equation}
The first definition is the exact entropy-induced distance used in \textsf{COS(DG)}; the second, denoted by $L_{g,\mathrm{fr}}^2$, is the frozen-Hessian variant introduced there to avoid evaluating $g$ outside the admissible state set when extrapolated polynomials are used. In either case, the distance is nonnegative and vanishes only when the two arguments coincide almost everywhere on $K$. Whenever the frozen-Hessian realization is used, one simply replaces $L_g^2$ below by $L_{g,\mathrm{fr}}^2$ in the corresponding formulas.

In one space dimension, let $I_{j-1}$ and $I_{j+1}$ be the immediate neighbors of $I_j$. Define
\begin{equation}\label{eq:cos-sigma-1d}
\Theta_j^{\pm}:=\|\mathbf U_{j\pm1}^\star-\bar{\mathbf U}_j^\star\|_{L_g^2(I_{j\pm1})}^2,\qquad
\sigma_j^{\pm}:=\begin{cases}
C_k\dfrac{\|\mathbf U_{j\pm1}^\star-\mathbf U_j^\star\|_{L_g^2(I_{j\pm1})}^2}{\Theta_j^{\pm}},& \Theta_j^{\pm}\ge \varepsilon_j, \\[1.0em]
0,&\text{otherwise},
\end{cases}
\end{equation}
where $C_k>0$ is the polynomial-degree-dependent constant used in \textsf{COS(DG)} and $\varepsilon_j>0$ is a roundoff floor. Set
\begin{align}
\sigma_j^{\mathrm{COS}}
&:=\sigma_j^-+\sigma_j^+,
\notag\\
\lambda_j^{\mathrm{COS}}
&:=\exp\!\left(
-\frac{\alpha_j^{\mathrm{COS}}\,\Delta t}{h_j}\,\sigma_j^{\mathrm{COS}}
\right), \quad \alpha_j^{\mathrm{COS}}
:=\max_{j-1\le \ell\le j+1}
\operatorname{spr}\!\left(
\frac{\partial \mathbf F}{\partial \mathbf U}(\bar{\mathbf U}_\ell^\star)
\right). 
\label{eq:cos-lambda-1d}
\end{align}
Here $\operatorname{spr}(A)$ denotes the spectral radius. In the scalar case $\alpha_j^{\mathrm{COS}}$ reduces to a local bound on $|f'(u)|$. Formula \eqref{eq:cos-lambda-1d} is the canonical one-dimensional COS coefficient.

\begin{remark}[Local scale and evolution invariance]
The coefficient $\lambda_j^{\mathrm{COS}}$ is dimensionless. In the scalar case, when $g(u)=u^2/2$, the ratio in \eqref{eq:cos-sigma-1d} is invariant under affine rescalings $u\mapsto au+b$ with $a\neq0$, hence $\sigma_j^{\mathrm{COS}}$ and $\lambda_j^{\mathrm{COS}}$ are unchanged. Under a uniform rescaling of the wave speeds, for example replacing $\partial_t \mathbf U+\partial_x\mathbf F(\mathbf U)=0$ by $\partial_t \mathbf U+\kappa\partial_x\mathbf F(\mathbf U)=0$ at fixed CFL, the factor $\alpha_j^{\mathrm{COS}}\Delta t/h_j$ remains invariant. These are exactly the local scale- and evolution-invariant structures emphasized in \textsf{COS(DG)}.
\end{remark}

\subsection{A local COS variant in one space dimension}

The canonical coefficient \eqref{eq:cos-lambda-1d} damps every cell by using both immediate neighbors. Following \COSDG{}, one may activate only those interfaces that appear shock-like under a symmetry-preserving eigenvalue test. Define
\[
\Lambda^+(\mathbf U):=\lambda_{\max}\!\left(\frac{\partial \mathbf F}{\partial \mathbf U}(\mathbf U)\right),
\qquad
\Lambda^-(\mathbf U):=\lambda_{\min}\!\left(\frac{\partial \mathbf F}{\partial \mathbf U}(\mathbf U)\right),
\]
and fix a tolerance $0<\delta<1$. The local interface marker is
\begin{equation}\label{eq:cos-local-criterion-1d}
\chi_{j+\frac12}
:=
\begin{cases}
1,
& \Lambda^+(\bar{\mathbf U}_j^\star)-\Lambda^+(\bar{\mathbf U}_{j+1}^\star)
>
\delta\Bigl(
|\Lambda^+(\bar{\mathbf U}_j^\star)|
+
|\Lambda^+(\bar{\mathbf U}_{j+1}^\star)|
\Bigr),\\[0.7em]
1,
& \Lambda^-(\bar{\mathbf U}_j^\star)-\Lambda^-(\bar{\mathbf U}_{j+1}^\star)
>
\delta\Bigl(
|\Lambda^-(\bar{\mathbf U}_j^\star)|
+
|\Lambda^-(\bar{\mathbf U}_{j+1}^\star)|
\Bigr),\\[0.7em]
0,
& \text{otherwise.}
\end{cases}
\end{equation}
The corresponding local COS coefficient is obtained by replacing \eqref{eq:cos-sigma-1d} with
\begin{equation}\label{eq:cos-sigma-local-1d}
\widetilde \sigma_j^\pm := \chi_{j\pm\frac12}\sigma_j^\pm,
\qquad
\widetilde \sigma_j^{\mathrm{COS}}
:=
\widetilde \sigma_j^-+\widetilde \sigma_j^+,
\qquad
\widetilde \lambda_j^{\mathrm{COS}}
:=
\exp\!\left(
-\frac{\alpha_j^{\mathrm{COS}}\Delta t}{h_j}\,
\widetilde \sigma_j^{\mathrm{COS}}
\right).
\end{equation}
Equivalently, one simply multiplies the contribution of each interface by its marker before forming the total discrepancy. When both neighboring markers vanish, the oscillation module is inactive in cell $I_j$.

\begin{remark}
	The COS operator uses only the candidate cellwise function, its cell average, and immediate neighboring-cell data entering $\lambda_j^{\mathrm{COS}}$; it does not modify the conservative update of the cell average. In implementations it is often applied only at the final RK stage, but for the fully stagewise SSP-EPO theorem it may equally well be applied stage by stage.
\end{remark}

\subsection{Elementary properties of COS operator}

The next proposition records the properties of the COS step that are used later in EPO.

\begin{proposition}[Elementary properties of the canonical COS operator]\label{prop:cos-basic}
Let $\mathbf U_h^\star$ be any candidate DG or FV state and let $\mathcal F^{\mathrm{COS}}$ be defined by \eqref{eq:cos-op-1d}. Then the following hold.
\begin{enumerate}[label=(\alph*),leftmargin=2em]
\item \textbf{Mean preservation.} The cell average is preserved:
$$
\frac1{h_j}\int_{I_j}\mathcal F^{\mathrm{COS}}(\mathbf U_h^\star)\,dx=\bar{\mathbf U}_j^\star.
$$
\item \textbf{$L^2$ nonexpansiveness.} The operator is nonexpansive in $L^2$:
$$
\|\mathcal F^{\mathrm{COS}}(\mathbf U_h^\star)\|_{L^2(\Omega)}\le \|\mathbf U_h^\star\|_{L^2(\Omega)}.
$$
\item \textbf{Continuous entropy inheritance.} For every convex entropy $\eta$,
$$
\int_{I_j}\eta\!\left(\mathcal F^{\mathrm{COS}}(\mathbf U_h^\star)\right)dx\le \int_{I_j}\eta(\mathbf U_j^\star(x))\,dx.
$$
\item \textbf{Quadrature entropy inheritance.} Whenever the nodal states and the cell average lie in $D_\eta$, the local quadrature entropy also decreases:
\begin{equation}\label{eq:cos-quad-entropy-inheritance}
\mathcal E_j\!\left(\mathcal F^{\mathrm{COS}}(\mathbf U_h^\star)\right)
\le
\mathcal E_j(\mathbf U_j^\star).
\end{equation}
\end{enumerate}
\end{proposition}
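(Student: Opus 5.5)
The four properties follow from one identity. The limited state in each cell is a convex combination of the candidate function and the constant $\bar{\mathbf U}_j^\star$. We argue cell by cell, writing $\lambda=\lambda_j^{\mathrm{COS}}\in[0,1]$ and
\[
\mathbf W(x):=\mathcal F^{\mathrm{COS}}(\mathbf U_h^\star)|_{I_j}=(1-\lambda)\bar{\mathbf U}_j^\star+\lambda\mathbf U_j^\star(x).
\]

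\textbf{Mean preservation (a).} Integrate the defining formula \eqref{eq:cos-op-1d} over $I_j$. Linearity gives $\frac{1}{h_j}\int_{I_j}\mathbf W\,dx=(1-\lambda)\bar{\mathbf U}_j^\star+\lambda\bar{\mathbf U}_j^\star=\bar{\mathbf U}_j^\star$. At the nodal level this is just \eqref{eq:mean-preserving-ray}.

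\textbf{$L^2$ nonexpansiveness (b).} Split orthogonally in $L^2(I_j)$ as $\mathbf U_j^\star=\bar{\mathbf U}_j^\star+(\mathbf U_j^\star-\bar{\mathbf U}_j^\star)$. The fluctuation has zero mean, so it is orthogonal to constants. Hence
\[
\|\mathbf W\|_{L^2(I_j)}^2=h_j|\bar{\mathbf U}_j^\star|^2+\lambda^2\|\mathbf U_j^\star-\bar{\mathbf U}_j^\star\|_{L^2(I_j)}^2.
\]
This is at most the same expression with $\lambda=1$, which equals $\|\mathbf U_j^\star\|_{L^2(I_j)}^2$. Summing over $j$ gives the global bound.

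\textbf{Continuous entropy inheritance (c).} Apply pointwise convexity of $\eta$ along the segment $\mathbf W(x)$:
\[
\eta(\mathbf W(x))\le(1-\lambda)\eta(\bar{\mathbf U}_j^\star)+\lambda\eta(\mathbf U_j^\star(x)).
\]
Integrate this over $I_j$. Then use the continuous Jensen inequality $h_j\eta(\bar{\mathbf U}_j^\star)\le\int_{I_j}\eta(\mathbf U_j^\star)\,dx$. The right-hand side becomes a convex combination of two quantities, each bounded by $\int_{I_j}\eta(\mathbf U_j^\star)\,dx$. This requires $\mathbf U_j^\star(x)$ to lie in $D_\eta$ so that $\eta$ is defined. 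Convexity of $D_\eta$ then places both $\bar{\mathbf U}_j^\star$ and every $\mathbf W(x)$ in $D_\eta$. The statement implicitly assumes this.

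\textbf{Quadrature entropy inheritance (d).} Repeat the argument of (c) with the integral replaced by the positive quadrature $\sum_\nu\omega_\nu(\cdot)$. Use the discrete Jensen inequality \eqref{eq:jensen-local} together with \eqref{eq:cellavg-quad}, which says $\bar{\mathbf U}_j^\star=\sum_\nu\omega_\nu\mathbf U_{j,\nu}^\star$. Concretely,
\[
\mathcal E_j(\mathbf W)\le(1-\lambda)\eta(\bar{\mathbf U}_j^\star)+\lambda\mathcal E_j(\mathbf U_j^\star)\le\mathcal E_j(\mathbf U_j^\star).
\]
Here the hypothesis that the nodal states and the cell average lie in $D_\eta$ is used exactly once: convexity of $D_\eta$ puts the scaled nodal values in the domain.

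\textbf{Main subtlety.} No step is hard. The only care needed is domain bookkeeping in (c)–(d), meaning that every evaluation of $\eta$ takes place in the convex set $D_\eta$. For (c), one should also note that the exact cell average must be used, which holds since $\bar{\mathbf U}_j^\star$ is defined as the integral mean.
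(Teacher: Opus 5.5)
Your proof is correct. For (d) it is exactly the paper's argument. The nodewise convexity inequality, summed with the positive weights, gives $\mathcal E_j(\mathcal F^{\mathrm{COS}}(\mathbf U_h^\star))\le(1-\lambda)\eta(\bar{\mathbf U}_j^\star)+\lambda\,\mathcal E_j(\mathbf U_j^\star)$, and the discrete Jensen inequality, via the exact cell-average identity \eqref{eq:cellavg-quad}, closes it.

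For (a)--(c) the paper gives no argument and simply defers to \textsf{COS(DG)}. You instead supply short self-contained proofs:
\begin{itemize}
\item linearity of the cell average for (a);
\item the orthogonal mean/fluctuation splitting $\|\mathbf W\|^2_{L^2(I_j)}=h_j|\bar{\mathbf U}_j^\star|^2+\lambda^2\|\mathbf U_j^\star-\bar{\mathbf U}_j^\star\|^2_{L^2(I_j)}$ for (b);
\item the continuous analogue of the (d) argument for (c).
\end{itemize}
What your route buys is independence from the external reference. It also makes visible that (c) and (d) are one argument run with two different positive probability measures on the cell: normalized Lebesgue measure versus the quadrature measure. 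That is why (d) needs the quadrature exactness \eqref{eq:cellavg-quad}, while (c) needs only that $\bar{\mathbf U}_j^\star$ is the true integral mean of $\mathbf U_j^\star(x)$. Your remark that (c), as stated, tacitly requires $\mathbf U_j^\star(x)\in D_\eta$ on $I_j$ is a fair observation that the statement omits.
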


\begin{proof}
For the proofs of parts (a)--(c), see \cite{CaoHuangLiWu2026}. 

For part (d), evaluate the same convexity inequality at each nodal state and sum with the positive quadrature weights $\omega_\nu$:
\[
\mathcal E_j\!\left(\mathcal F^{\mathrm{COS}}(\mathbf U_h^\star)\right)
\le
(1-\lambda)\eta(\bar{\mathbf U})+\lambda\mathcal E_j(\mathbf U_j^\star).
\]
Because $\bar{\mathbf U}=\sum_\nu \omega_\nu \mathbf U_{j,\nu}^\star$ and $D_\eta$ is convex, Jensen's inequality gives
\[
\eta(\bar{\mathbf U})\le \mathcal E_j(\mathbf U_j^\star).
\]
Substituting this bound yields \eqref{eq:cos-quad-entropy-inheritance}.  
\end{proof}

\subsection{Canonical COS admissible segments and general COS-compatible sets}

The canonical COS coefficient naturally determines a canonical oscillation-admissible segment along the scaling ray:
\begin{equation}\label{eq:canonical-cos-segment}
	\mathcal O_j^{\mathrm{COS},\star}
	:=
	\Bigl\{
	\mathcal S_{\bar{\mathbf U}_j^\star}(\theta;\mathbf U_j^\star):0\le \theta\le \lambda_j^{\mathrm{COS}}
	\Bigr\}.
\end{equation}
This is the most direct translation of \textsf{COS(DG)} into the EPO framework.

\begin{proposition}[Canonical COS admissible segment]\label{prop:canonical-cos-segment}
	The set $\mathcal O_j^{\mathrm{COS},\star}$ defined by \eqref{eq:canonical-cos-segment} is nonempty, closed, and convex. It contains the constant state $\bar{\mathbf U}_j^\star$, and its oscillation radius along the scaling ray is exactly
	\[
	\theta_j^{\mathrm O}=\lambda_j^{\mathrm{COS}}.
	\]
\end{proposition}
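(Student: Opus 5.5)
The set $\mathcal O_j^{\mathrm{COS},\star}$ is the image of the compact interval $[0,\lambda_j^{\mathrm{COS}}]$ under the affine map $\theta\mapsto \mathcal S_{\bar{\mathbf U}_j^\star}(\theta;\mathbf U_j^\star)$, which acts on the finite-dimensional nodal array in $\mathbb R^{mL}$. Each property will follow from this description.

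\textbf{Nonemptiness and the constant state.} First check that $\lambda_j^{\mathrm{COS}}$ is well defined and lies in $(0,1]$. By \eqref{eq:cos-sigma-1d}, $\sigma_j^\pm\ge 0$, since $C_k>0$, the Jensen distances \eqref{eq:cos-jensen-distance} are nonnegative by convexity of $g$, and $\Theta_j^\pm\ge\varepsilon_j>0$ whenever the quotient is used. The factor $\alpha_j^{\mathrm{COS}}\Delta t/h_j$ is nonnegative, so the exponent in \eqref{eq:cos-lambda-1d} is nonpositive and finite, giving $0<\lambda_j^{\mathrm{COS}}\le 1$. The same reasoning applies to the local variant \eqref{eq:cos-sigma-local-1d}. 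Hence $0$ lies in the parameter interval, and taking $\theta=0$ gives the constant state $\mathcal S_{\bar{\mathbf U}_j^\star}(0;\mathbf U_j^\star)=\bar{\mathbf U}_j^\star$. This shows the set is nonempty and contains $\bar{\mathbf U}_j^\star$.

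\textbf{Closedness and convexity.} The map $\theta\mapsto\bar{\mathbf U}_j^\star+\theta(\mathbf U_j^\star-\bar{\mathbf U}_j^\star)$ is affine, hence continuous. The continuous image of the compact interval $[0,\lambda_j^{\mathrm{COS}}]$ is compact and therefore closed. Convexity also holds: for $\theta_1,\theta_2$ in the interval and $s\in[0,1]$, affinity gives $s\,\mathcal S(\theta_1)+(1-s)\mathcal S(\theta_2)=\mathcal S(s\theta_1+(1-s)\theta_2)$, and the parameter $s\theta_1+(1-s)\theta_2$ stays in the interval. So the set is a line segment, or a single point.

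\textbf{The oscillation radius.} Define the radius as $\theta_j^{\mathrm O}:=\sup\{\theta\in[0,1]:\mathcal S_{\bar{\mathbf U}_j^\star}(\theta;\mathbf U_j^\star)\in\mathcal O_j^{\mathrm{COS},\star}\}$. Every $\theta\le\lambda_j^{\mathrm{COS}}$ belongs by definition, so $\theta_j^{\mathrm O}\ge\lambda_j^{\mathrm{COS}}$.

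This inequality is where I expect the main obstacle: the degenerate case. If $\mathbf U_j^\star=\bar{\mathbf U}_j^\star$ nodewise, the ray collapses to a single point, and every $\theta$ would qualify. I would handle this by one of two conventions, and fix it explicitly in the proof:
\begin{itemize}
\item interpret the radius through the parameter description, i.e.\ the admissible parameter set is exactly $[0,\lambda_j^{\mathrm{COS}}]$;
\item note that in the degenerate case all states on the ray coincide, so the choice of $\theta$ is immaterial and the radius may be set to $\lambda_j^{\mathrm{COS}}$.
\end{itemize}
In the nondegenerate case, the map $\theta\mapsto\mathcal S(\theta)$ is injective because $\mathbf U_j^\star-\bar{\mathbf U}_j^\star\neq 0$. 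A point with $\theta>\lambda_j^{\mathrm{COS}}$ therefore cannot equal any $\mathcal S(\theta')$ with $\theta'\le\lambda_j^{\mathrm{COS}}$. This gives $\theta_j^{\mathrm O}\le\lambda_j^{\mathrm{COS}}$, and the supremum is attained because the set is closed. Together with the lower bound, $\theta_j^{\mathrm O}=\lambda_j^{\mathrm{COS}}$.
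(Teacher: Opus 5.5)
Your argument is correct and is essentially the paper's: the set is the image of the closed interval $[0,\lambda_j^{\mathrm{COS}}]$ under the continuous affine map $\theta\mapsto\mathcal S_{\bar{\mathbf U}_j^\star}(\theta;\mathbf U_j^\star)$, with $\theta=0$ giving the constant state and the radius read off as the largest admissible parameter. Your extra checks are accurate refinements of points the paper's ``by construction'' passes over, not gaps: you verify $0<\lambda_j^{\mathrm{COS}}\le 1$, and you note that for a constant nodal array the sup-definition \eqref{eq:thetaO} would return $1$, so the identity there holds only in the parameter sense the paper implicitly uses.
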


\begin{proof}
	The interval $[0,\lambda_j^{\mathrm{COS}}]$ is nonempty and closed. Since
	$\theta\mapsto \mathcal S_{\bar{\mathbf U}_j^\star}(\theta;\mathbf U_j^\star)$
	is continuous and affine, its image is a closed line segment, hence closed and convex. At $\theta=0$ one obtains the constant state $\bar{\mathbf U}_j^\star$. By construction, the largest admissible scaling parameter on this segment is $\lambda_j^{\mathrm{COS}}$.
\end{proof}

The canonical segment is the basic oscillation set used in this paper. For greater flexibility, we also allow more general oscillation-admissible sets that retain the same ray-based geometry.

\begin{assumption}[General COS-compatible oscillation-suppressing sets]\label{ass:osc}
	For each cell $I_j$ and candidate average $\bar{\mathbf U}_j^\star$, there is a nonempty closed convex set $\mathcal O_j^\star$ such that
	\begin{equation}\label{eq:O-constant}
		\bar{\mathbf U}_j^\star\in \mathcal O_j^\star.
	\end{equation}
	The canonical segment \eqref{eq:canonical-cos-segment} is one admissible choice.
\end{assumption}

\subsection{The oscillation radius}

For a candidate nodal array $\mathbf U_j^\star$, define the oscillation radius by
\begin{equation}\label{eq:thetaO}
	\theta_j^{\mathrm O}
	:=
	\sup\Bigl\{
	\theta\in[0,1]:
	\mathcal S_{\bar{\mathbf U}_j^\star}(\theta;\mathbf U_j^\star)\in \mathcal O_j^\star
	\Bigr\}.
\end{equation}
Because $\bar{\mathbf U}_j^\star=\mathcal S_{\bar{\mathbf U}_j^\star}(0;\mathbf U_j^\star)$ belongs to $\mathcal O_j^\star$, the admissible set in \eqref{eq:thetaO} is nonempty. Since $\mathcal O_j^\star$ is closed, the supremum is attained.

\begin{proposition}[Basic properties of the oscillation radius]\label{prop:thetaO-basic}
	Under Assumption~\ref{ass:osc}, the number $\theta_j^{\mathrm O}$ is well defined and belongs to $[0,1]$. Moreover,
	\begin{equation}\label{eq:thetaO-property}
		\mathcal S_{\bar{\mathbf U}_j^\star}(\theta;\mathbf U_j^\star)\in \mathcal O_j^\star
		\qquad\text{for all }0\le \theta\le \theta_j^{\mathrm O}.
	\end{equation}
	In the canonical COS case $\mathcal O_j^\star=\mathcal O_j^{\mathrm{COS},\star}$, one has $\theta_j^{\mathrm O}=\lambda_j^{\mathrm{COS}}$.
\end{proposition}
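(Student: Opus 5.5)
The argument works directly with the admissible index set
\[
A_j:=\bigl\{\theta\in[0,1]:\mathcal S_{\bar{\mathbf U}_j^\star}(\theta;\mathbf U_j^\star)\in \mathcal O_j^\star\bigr\}.
\]
The plan is to show that $A_j$ is a nonempty closed interval of the form $[0,\theta_j^{\mathrm O}]$. Everything in Proposition~\ref{prop:thetaO-basic} then follows.

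The first step establishes nonemptiness and the range of $\theta_j^{\mathrm O}$. By \eqref{eq:mean-preserving-ray}, $\mathcal S_{\bar{\mathbf U}_j^\star}(0;\mathbf U_j^\star)=\bar{\mathbf U}_j^\star$, which lies in $\mathcal O_j^\star$ by \eqref{eq:O-constant}. Hence $0\in A_j$. Since $A_j\subset[0,1]$ is nonempty, its supremum exists and lies in $[0,1]$, so $\theta_j^{\mathrm O}$ is well defined.

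The second step is closedness and attainment. The map $\theta\mapsto \mathcal S_{\bar{\mathbf U}_j^\star}(\theta;\mathbf U_j^\star)$ is affine, hence continuous, from $[0,1]$ into the finite-dimensional space of nodal arrays. The set $A_j$ is the preimage of the closed set $\mathcal O_j^\star$ under this map, so it is closed in the compact interval $[0,1]$. The supremum therefore belongs to $A_j$.

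The third step is the interval property \eqref{eq:thetaO-property}, which is the only point requiring convexity. Fix $\theta\in[0,\theta_j^{\mathrm O}]$ and write $\theta=(1-s)\cdot 0+s\,\theta_j^{\mathrm O}$ with $s\in[0,1]$ (take $s=0$ if $\theta_j^{\mathrm O}=0$). Because the ray is affine in $\theta$,
\[
\mathcal S(\theta)=(1-s)\,\mathcal S(0)+s\,\mathcal S(\theta_j^{\mathrm O}).
\]
This is a convex combination of two points of $\mathcal O_j^\star$, so it lies in $\mathcal O_j^\star$ by convexity.

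Finally, for the canonical case $\mathcal O_j^\star=\mathcal O_j^{\mathrm{COS},\star}$, we have $\lambda_j^{\mathrm{COS}}\in(0,1]$ because it is an exponential of a nonpositive number. Clearly $\theta\le\lambda_j^{\mathrm{COS}}$ implies $\theta\in A_j$. The mild obstacle is the converse. If $\mathbf U_j^\star\neq\bar{\mathbf U}_j^\star$, the parametrization is injective, so $\mathcal S(\theta)\in\mathcal O_j^{\mathrm{COS},\star}$ forces $\theta\le\lambda_j^{\mathrm{COS}}$. This gives $A_j=[0,\lambda_j^{\mathrm{COS}}]$, consistent with Proposition~\ref{prop:canonical-cos-segment}.

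In the degenerate case $\mathbf U_j^\star=\bar{\mathbf U}_j^\star$, the ray is constant, so $A_j=[0,1]$ and the supremum is $1$, while $\lambda_j^{\mathrm{COS}}$ may be smaller than $1$. The proof will note this: either the claimed identity is read as holding up to this trivial case, where all $\theta$ give the same state and the limiter is unaffected, or, more naturally, the canonical segment is understood in parameter space as $\theta\in[0,\lambda_j^{\mathrm{COS}}]$, in which case the identity holds by definition.
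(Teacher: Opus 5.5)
Your proof is correct and follows the paper's argument: $\theta=0$ is admissible, the admissible set is closed as the preimage of $\mathcal O_j^\star$ under the continuous ray map, and convexity of $\mathcal O_j^\star$ then fills in the whole interval $[0,\theta_j^{\mathrm O}]$. For the canonical case the paper simply appeals to Proposition~\ref{prop:canonical-cos-segment}, whereas your injectivity argument is more careful: you correctly note that when $\mathbf U_j^\star$ is the constant array $\bar{\mathbf U}_j^\star$, the literal supremum in \eqref{eq:thetaO} equals $1$ rather than $\lambda_j^{\mathrm{COS}}$, a discrepancy that does not change the limited state.
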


\begin{proof}
	The set in \eqref{eq:thetaO} is nonempty because $\theta=0$ is admissible, and it is contained in $[0,1]$, so the supremum exists. Since $\mathcal O_j^\star$ is closed and
	$\theta\mapsto \mathcal S_{\bar{\mathbf U}_j^\star}(\theta;\mathbf U_j^\star)$
	is continuous, the admissible set of $\theta$ values is closed.
	
	Now let $\theta_1$ be admissible and let $0\le \theta\le \theta_1$. Then
	\[
	\mathcal S_{\bar{\mathbf U}_j^\star}(\theta;\mathbf U_j^\star)
	=
	\Bigl(1-\frac{\theta}{\theta_1}\Bigr)\bar{\mathbf U}_j^\star
	+
	\frac{\theta}{\theta_1}
	\mathcal S_{\bar{\mathbf U}_j^\star}(\theta_1;\mathbf U_j^\star).
	\]
	Both states on the right-hand side belong to the convex set $\mathcal O_j^\star$, so the whole convex combination also belongs to $\mathcal O_j^\star$. This proves \eqref{eq:thetaO-property}. The canonical COS statement follows from Proposition~\ref{prop:canonical-cos-segment}.
\end{proof}

\subsection{A COS-compatible convex-gauge realization}

The next proposition records a useful general class of COS-compatible oscillation sets.

\begin{proposition}[Gauge-based COS module]\label{prop:seminorm-O}
	Let $\Omega_j^\star:(\mathbb R^m)^L\to [0,\infty)$ be continuous, convex, and positively homogeneous:
	\[
	\Omega_j^\star(c\mathbf W)=c\,\Omega_j^\star(\mathbf W)
	\qquad\text{for all }c\ge 0.
	\]
	Fix a threshold $\Gamma_j^\star\ge 0$ and define
	\begin{equation}\label{eq:seminorm-O}
		\mathcal O_j^\star
		:=
		\Bigl\{
		\mathbf U_j:
		\Omega_j^\star\bigl(\mathbf U_j-\bar{\mathbf U}_j^\star\bigr)\le \Gamma_j^\star
		\Bigr\},
	\end{equation}
	where $\mathbf U_j-\bar{\mathbf U}_j^\star$ means the nodal array whose $\nu$th entry is
	$\mathbf U_{j,\nu}-\bar{\mathbf U}_j^\star$.
	Then $\mathcal O_j^\star$ satisfies Assumption~\ref{ass:osc}. Moreover,
	\begin{equation}\label{eq:thetaO-explicit}
		\theta_j^{\mathrm O}=
		\begin{cases}
			1,
			&
			\Omega_j^\star\bigl(\mathbf U_j^\star-\bar{\mathbf U}_j^\star\bigr)\le \Gamma_j^\star,\\[0.8em]
			\dfrac{\Gamma_j^\star}{\Omega_j^\star\bigl(\mathbf U_j^\star-\bar{\mathbf U}_j^\star\bigr)},
			&
			\Omega_j^\star\bigl(\mathbf U_j^\star-\bar{\mathbf U}_j^\star\bigr)>\Gamma_j^\star.
		\end{cases}
	\end{equation}
\end{proposition}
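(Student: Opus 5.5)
We verify the three requirements of Assumption~\ref{ass:osc} in turn and then compute the radius from positive homogeneity. \emph{Nonemptiness and constant state:} the nodal array whose entries all equal $\bar{\mathbf U}_j^\star$ gives the deviation $\mathbf U_j-\bar{\mathbf U}_j^\star=\mathbf 0$. Homogeneity with $c=0$ yields $\Omega_j^\star(\mathbf 0)=0\cdot\Omega_j^\star(\mathbf 0)=0\le\Gamma_j^\star$, so $\bar{\mathbf U}_j^\star\in\mathcal O_j^\star$ and \eqref{eq:O-constant} holds.

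\emph{Closedness:} $\mathcal O_j^\star$ is the preimage of the closed set $(-\infty,\Gamma_j^\star]$ under $\mathbf U_j\mapsto \Omega_j^\star(\mathbf U_j-\bar{\mathbf U}_j^\star)$. This map is continuous, being a continuous function composed with a translation.

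\emph{Convexity:} for $\mathbf U_j,\mathbf V_j\in\mathcal O_j^\star$ and $t\in[0,1]$, the deviation of $t\mathbf U_j+(1-t)\mathbf V_j$ is $t(\mathbf U_j-\bar{\mathbf U}_j^\star)+(1-t)(\mathbf V_j-\bar{\mathbf U}_j^\star)$. Convexity of $\Omega_j^\star$ bounds its gauge by $t\Gamma_j^\star+(1-t)\Gamma_j^\star=\Gamma_j^\star$. Hence Assumption~\ref{ass:osc} is satisfied.

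For the radius, abbreviate $A:=\Omega_j^\star(\mathbf U_j^\star-\bar{\mathbf U}_j^\star)\ge0$. The key observation is that along the scaling ray \eqref{eq:scaling-ray} the deviation is exactly $\theta(\mathbf U_j^\star-\bar{\mathbf U}_j^\star)$. Positive homogeneity then gives
\[
\Omega_j^\star\bigl(\mathcal S_{\bar{\mathbf U}_j^\star}(\theta;\mathbf U_j^\star)-\bar{\mathbf U}_j^\star\bigr)=\theta A,
\qquad \theta\in[0,1].
\]
So the admissible set in \eqref{eq:thetaO} is $\{\theta\in[0,1]:\theta A\le\Gamma_j^\star\}$.

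\emph{Case $A\le\Gamma_j^\star$:} every $\theta\in[0,1]$ satisfies $\theta A\le A\le\Gamma_j^\star$, since $A\ge0$. The supremum is therefore $1$.

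\emph{Case $A>\Gamma_j^\star$:} here $A>0$, so the admissible set is $[0,\Gamma_j^\star/A]$, and $\Gamma_j^\star/A<1$ lies inside $[0,1]$. The supremum is therefore $\Gamma_j^\star/A$, which is \eqref{eq:thetaO-explicit}.

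There is no genuine obstacle. The only point needing care is the degenerate situation $A=0$, where the quotient would be undefined. It is handled because $A=0\le\Gamma_j^\star$ always places it in the first case, and the first case never divides by $A$.
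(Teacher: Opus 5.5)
Your proposal is correct and follows essentially the same route as the paper: closedness from continuity, convexity of the sublevel set of a convex function, membership of the constant state via $\Omega_j^\star(\mathbf 0)=0$, and the identity $\Omega_j^\star(\theta\mathbf W)=\theta\,\Omega_j^\star(\mathbf W)$ along the ray. You are slightly more explicit than the paper in deriving $\Omega_j^\star(\mathbf 0)=0$ from homogeneity with $c=0$ and in noting that the degenerate case $A=0$ falls into the first branch, which avoids division by zero.
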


\begin{proof}
	Closedness follows from continuity of $\Omega_j^\star$, and convexity follows because \eqref{eq:seminorm-O} is a sublevel set of the convex function
	\[
	\mathbf U_j\longmapsto
	\Omega_j^\star\bigl(\mathbf U_j-\bar{\mathbf U}_j^\star\bigr).
	\]
	The constant state belongs to $\mathcal O_j^\star$ because the nodal array with every entry equal to $\bar{\mathbf U}_j^\star$ satisfies
	\[
	\Omega_j^\star(\mathbf 0)=0.
	\]
	Hence Assumption~\ref{ass:osc} holds.
	
	Now observe that along the scaling ray,
	\[
	\mathcal S_{\bar{\mathbf U}_j^\star}(\theta;\mathbf U_j^\star)-\bar{\mathbf U}_j^\star
	=
	\theta\bigl(\mathbf U_j^\star-\bar{\mathbf U}_j^\star\bigr),
	\]
	that is, the $\nu$th nodal entry equals
	$\theta(\mathbf U_{j,\nu}^\star-\bar{\mathbf U}_j^\star)$.
	By positive homogeneity,
	\[
	\Omega_j^\star\Bigl(
	\mathcal S_{\bar{\mathbf U}_j^\star}(\theta;\mathbf U_j^\star)-\bar{\mathbf U}_j^\star
	\Bigr)
	=
	\theta\,\Omega_j^\star\bigl(\mathbf U_j^\star-\bar{\mathbf U}_j^\star\bigr).
	\]
	Therefore the admissible values of $\theta$ are exactly those in \eqref{eq:thetaO-explicit}.
\end{proof}

\begin{example}[Entropy-deviation oscillation module]\label{ex:entropy-O}
Assume $\eta\in C^1(G)$ is convex, and fix a cell average $\bar{\mathbf U}_j^\star\in G$. Define
\begin{equation}\label{eq:entropy-O}
\Omega_{j,\eta}^\star(\mathbf V):=\sum_{\nu=1}^L \omega_\nu\Bigl[\eta({\mathbf V}_\nu)-\eta(\bar{\mathbf U}_j^\star)-\nabla\eta(\bar{\mathbf U}_j^\star)\cdot ({\mathbf V}_\nu-\bar{\mathbf U}_j^\star)\Bigr].
\end{equation}
Then $\Omega_{j,\eta}^\star(\mathbf V)\ge 0$ for all $\mathbf V$, the mapping $\mathbf V\mapsto \Omega_{j,\eta}^\star(\mathbf V)$ is convex and continuous, and $\Omega_{j,\eta}^\star(\bar{\mathbf U}_j^\star)=0$. Consequently, for any threshold $\Gamma_j^\star\ge 0$, the sublevel set
\[
\mathcal O_{j,\eta}^\star:=\bigl\{\mathbf V:\Omega_{j,\eta}^\star(\mathbf V)\le \Gamma_j^\star\bigr\}
\]
is a COS-compatible oscillation-suppressing set. Along any scaling ray
\[
\theta\longmapsto \Omega_{j,\eta}^\star\bigl(\mathcal S_{\bar{\mathbf U}_j^\star}(\theta;\mathbf U_j^\star)\bigr),
\]
the profile is convex and nondecreasing on $[0,1]$. Hence the associated oscillation radius is found by a one-dimensional root search whenever $\Gamma_j^\star$ is active.
\end{example}

\begin{proof}
The pointwise convexity inequality
\[
\eta({\mathbf V}_\nu)\ge \eta(\bar{\mathbf U}_j^\star)+\nabla\eta(\bar{\mathbf U}_j^\star)\cdot ({\mathbf V}_\nu-\bar{\mathbf U}_j^\star)
\]
implies $\Omega_{j,\eta}^\star(\mathbf V)\ge 0$. Convexity and continuity follow because $\Omega_{j,\eta}^\star$ is a weighted sum of convex continuous functions of the individual nodal states. The constant state gives equality in the first-order convexity inequality, hence $\Omega_{j,\eta}^\star(\bar{\mathbf U}_j^\star)=0$, and therefore $\mathcal O_{j,\eta}^\star$ is closed and convex. Now define
\[
\Phi_j(\theta):=\Omega_{j,\eta}^\star\bigl(\mathcal S_{\bar{\mathbf U}_j^\star}(\theta;\mathbf U_j^\star)\bigr).
\]
Convexity of $\Phi_j$ follows from convexity of $\Omega_{j,\eta}^\star$ and affinity of the scaling ray. Moreover,
\[
\Phi_j'(0)=\sum_{\nu=1}^L \omega_\nu\nabla\eta(\bar{\mathbf U}_j^\star)\cdot (\mathbf U_{j,\nu}^\star-\bar{\mathbf U}_j^\star)=\nabla\eta(\bar{\mathbf U}_j^\star)\cdot\sum_{\nu=1}^L \omega_\nu(\mathbf U_{j,\nu}^\star-\bar{\mathbf U}_j^\star)=0,
\]
because the nodal deviations have zero weighted mean. Since $\Phi_j$ is convex and has zero derivative at $\theta=0$, it is nondecreasing on $[0,1]$. The final statement follows.
\end{proof}
\section{The entropy module and the local EPO admissible set}\label{sec:entropy_limiter}

This section formulates the entropy limiter and the associated local admissible set.

\subsection{The entropy profile along a scaling ray}

Fix a cell $I_j$ and write
\[
\mathbf U_j^\star=(\mathbf U_{j,1}^\star,\dots,\mathbf U_{j,L}^\star),\qquad
\bar{\mathbf U}_j^\star = \sum_{\nu=1}^L \omega_\nu \mathbf U_{j,\nu}^\star.
\]
Whenever the whole candidate ray stays inside the entropy domain $D_\eta$, namely whenever
\[
\bar{\mathbf U}_j^\star + \theta(\mathbf U_{j,\nu}^\star-\bar{\mathbf U}_j^\star)\in D_\eta,
\qquad 0\le \theta\le 1,\ \nu=1,\dots,L,
\]
we define the direct entropy profile by
\begin{equation}\label{eq:Psi}
\Psi_j(\theta)
:=
\mathcal E\!\left(\mathcal S_{\bar{\mathbf U}_j^\star}(\theta;\mathbf U_j^\star)\right)
=
\sum_{\nu=1}^L
\omega_\nu\,
\eta\!\left(
\bar{\mathbf U}_j^\star + \theta(\mathbf U_{j,\nu}^\star-\bar{\mathbf U}_j^\star)
\right),
\qquad
0\le \theta\le 1.
\end{equation}

\begin{proposition}[Convexity and monotonicity of the direct entropy profile]\label{prop:Psi}
Assume \eqref{eq:weakP}, i.e., $\bar{\mathbf U}_j^\star\in G$, and assume that the full candidate ray stays inside $D_\eta$ as above. Then $\Psi_j$ is continuous, convex, and nondecreasing on $[0,1]$. Moreover,
\begin{equation}\label{eq:Psi0}
\Psi_j(0)=\eta(\bar{\mathbf U}_j^\star).
\end{equation}
\end{proposition}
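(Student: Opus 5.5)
The plan is to mirror the argument already used for the entropy-deviation profile $\Phi_j$ in Example~\ref{ex:entropy-O}, with one extra step because $\eta$ is only assumed $C^2$ on the open set $D_\eta$.

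\textbf{Step 1: value at $\theta=0$ and continuity.} First, \eqref{eq:Psi0} is immediate. At $\theta=0$ every nodal argument equals $\bar{\mathbf U}_j^\star$, and $\sum_\nu\omega_\nu=1$.

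Continuity on $[0,1]$ holds because each map $\theta\mapsto \bar{\mathbf U}_j^\star+\theta(\mathbf U_{j,\nu}^\star-\bar{\mathbf U}_j^\star)$ is affine with values in $D_\eta$, and $\eta$ is continuous on $D_\eta$. Thus $\Psi_j$ is a finite positive combination of continuous functions.

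\textbf{Step 2: convexity.} For each $\nu$, the composition of the convex function $\eta$ with an affine map into the convex set $D_\eta$ is convex in $\theta$. Since $\omega_\nu>0$, the weighted sum $\Psi_j$ is convex on $[0,1]$.

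\textbf{Step 3: monotonicity.} Since $D_\eta$ is open and the segment is compact, $\Psi_j$ is $C^1$ on a neighborhood of $[0,1]$, in particular at $\theta=0$. By the chain rule,
\[
\Psi_j'(0)=\sum_{\nu=1}^L\omega_\nu\nabla\eta(\bar{\mathbf U}_j^\star)\cdot(\mathbf U_{j,\nu}^\star-\bar{\mathbf U}_j^\star)
=\nabla\eta(\bar{\mathbf U}_j^\star)\cdot\Bigl(\sum_{\nu}\omega_\nu\mathbf U_{j,\nu}^\star-\bar{\mathbf U}_j^\star\Bigr)=0.
\]
This uses the mean-preservation identity \eqref{eq:mean-preserving-ray}, i.e.\ the nodal deviations have zero weighted mean.

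For a convex differentiable function, $\Psi_j(\theta)\ge \Psi_j(\theta_1)+\Psi_j'(\theta_1)(\theta-\theta_1)$, so the derivative is nondecreasing. Hence $\Psi_j'(\theta)\ge\Psi_j'(0)=0$ on $[0,1]$, and $\Psi_j$ is nondecreasing.

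A derivative-free alternative is to use Jensen: $\Psi_j(\theta)\ge\eta(\bar{\mathbf U}_j^\star)=\Psi_j(0)$ for every $\theta$, because the weighted mean of the ray nodes is $\bar{\mathbf U}_j^\star$. Then for $0\le\theta_1<\theta_2\le1$, write $\theta_1$ as a convex combination of $0$ and $\theta_2$. Convexity gives
\[
\Psi_j(\theta_1)\le\Bigl(1-\frac{\theta_1}{\theta_2}\Bigr)\Psi_j(0)+\frac{\theta_1}{\theta_2}\Psi_j(\theta_2)\le\Psi_j(\theta_2),
\]
where the last inequality uses $\Psi_j(0)\le\Psi_j(\theta_2)$. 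I would present this version, since it avoids differentiability concerns.

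\textbf{Main obstacle.} The only delicate point is making sure every evaluation of $\eta$ lies in $D_\eta$. The hypotheses $\bar{\mathbf U}_j^\star\in G\subset D_\eta$ and the full-ray assumption settle this.
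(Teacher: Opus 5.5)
Your proposal is correct, and the version you say you would present is the paper's own argument. It gets continuity and convexity by composing $\eta$ with affine maps, uses Jensen's inequality to get $\Psi_j(\theta)\ge\eta(\bar{\mathbf U}_j^\star)=\Psi_j(0)$, and concludes from the fact that a convex function minimized at the left endpoint is nondecreasing, a step you make explicit via the convex combination of $0$ and $\theta_2$. Your derivative-based alternative ($\Psi_j'(0)=0$ plus monotonicity of $\Psi_j'$) is also valid and mirrors the proof of Example~\ref{ex:entropy-O}.
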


\begin{proof}
Continuity follows from continuity of $\eta$ on $D_\eta$. Each map
\[
\theta \mapsto \eta\!\left(\bar{\mathbf U}_j^\star + \theta(\mathbf U_{j,\nu}^\star-\bar{\mathbf U}_j^\star)\right)
\]
is convex because it is the composition of the convex function $\eta$ with an affine map. Therefore $\Psi_j$, being a positive weighted sum of convex functions, is convex on $[0,1]$.

At $\theta=0$, every argument of $\eta$ equals $\bar{\mathbf U}_j^\star$, hence
\[
\Psi_j(0)
=
\sum_{\nu=1}^L \omega_\nu \eta(\bar{\mathbf U}_j^\star)
=
\eta(\bar{\mathbf U}_j^\star).
\]
For every $\theta\in[0,1]$, Jensen's inequality gives
\begin{align*}
\Psi_j(\theta)
&=
\sum_{\nu=1}^L
\omega_\nu\,
\eta\!\left(
\bar{\mathbf U}_j^\star + \theta(\mathbf U_{j,\nu}^\star-\bar{\mathbf U}_j^\star)
\right)
\\
&\ge
\eta\!\left(
\sum_{\nu=1}^L \omega_\nu
\bigl[\bar{\mathbf U}_j^\star + \theta(\mathbf U_{j,\nu}^\star-\bar{\mathbf U}_j^\star)\bigr]
\right)
=
\eta(\bar{\mathbf U}_j^\star)
=
\Psi_j(0),
\end{align*}
because the weighted deviations sum to zero. Thus $\theta=0$ is a global minimizer of the convex function $\Psi_j$ on $[0,1]$. A convex function on an interval that attains its minimum at the left endpoint is nondecreasing, and the claim follows.
\end{proof}

\subsection{An auxiliary full-ray entropy radius}

When the full candidate ray lies in $D_\eta$, one may define the auxiliary direct entropy radius by
\begin{equation}\label{eq:thetaE}
\theta_j^{\mathrm E}
:=
\sup\Bigl\{
\theta\in[0,1]:
\Psi_j(\theta)\le B_j^n
\Bigr\}.
\end{equation}

\begin{proposition}[Existence of the auxiliary full-ray entropy radius]\label{prop:thetaE}
Under Assumptions~\ref{ass:weakP} and \ref{ass:weakE}, and under the full-ray hypothesis of Proposition~\ref{prop:Psi}, the set in \eqref{eq:thetaE} is nonempty, and $\theta_j^{\mathrm E}\in [0,1]$ is well defined. Moreover,
\begin{equation}\label{eq:thetaE-property}
\mathcal E\!\left(\mathcal S_{\bar{\mathbf U}_j^\star}(\theta;\mathbf U_j^\star)\right)\le B_j^n
\qquad
\text{for all }0\le \theta\le \theta_j^{\mathrm E}.
\end{equation}
\end{proposition}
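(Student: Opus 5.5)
The plan is to lean entirely on Proposition~\ref{prop:Psi}, so the argument splits into three short steps: nonemptiness, well-definedness, and the interval property \eqref{eq:thetaE-property}.

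\textbf{Nonemptiness.} I would show that $\theta=0$ is admissible. By \eqref{eq:Psi0} we have $\Psi_j(0)=\eta(\bar{\mathbf U}_j^\star)$. The evaluation is legitimate because Assumption~\ref{ass:weakP} gives $\bar{\mathbf U}_j^\star\in G\subset D_\eta$. Assumption~\ref{ass:weakE} then yields $\Psi_j(0)\le B_j^n$, so $0$ belongs to the set in \eqref{eq:thetaE}.

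\textbf{Well-definedness and attainment.} The set is contained in $[0,1]$, so its supremum exists and $\theta_j^{\mathrm E}\in[0,1]$. Under the full-ray hypothesis, $\Psi_j$ is continuous on $[0,1]$ by Proposition~\ref{prop:Psi}. The admissible set is therefore the preimage of $(-\infty,B_j^n]$ under a continuous map on a closed interval, hence closed. It follows that the supremum is attained, i.e.\ $\Psi_j(\theta_j^{\mathrm E})\le B_j^n$.

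\textbf{The interval property.} The key input is the monotonicity in Proposition~\ref{prop:Psi}: $\Psi_j$ is nondecreasing on $[0,1]$. For any $0\le\theta\le\theta_j^{\mathrm E}$ this gives
\[
\Psi_j(\theta)\le \Psi_j(\theta_j^{\mathrm E})\le B_j^n .
\]
Since $\Psi_j(\theta)=\mathcal E\bigl(\mathcal S_{\bar{\mathbf U}_j^\star}(\theta;\mathbf U_j^\star)\bigr)$ by definition \eqref{eq:Psi}, this is exactly \eqref{eq:thetaE-property}.

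Alternatively, one could avoid monotonicity and argue by convexity, as in Proposition~\ref{prop:thetaO-basic}. Writing $\mathcal S(\theta)$ as a convex combination of $\mathcal S(0)$ and $\mathcal S(\theta_j^{\mathrm E})$, convexity of $\Psi_j$ gives
\[
\Psi_j(\theta)\le\max\{\Psi_j(0),\Psi_j(\theta_j^{\mathrm E})\}\le B_j^n .
\]

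The only step needing any care is attainment of the supremum, which relies on continuity of $\eta$ along the entire ray. This is precisely why the full-ray hypothesis $\mathcal S(\theta)\in D_\eta$ for all $\theta\in[0,1]$ is imposed. Without it, $\Psi_j$ might fail to be defined near $\theta=1$, and the positivity-first radius $\theta_j^{\mathrm{PE}}$ is needed instead. I do not anticipate a genuine obstacle beyond this.
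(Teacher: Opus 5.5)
Your proof is correct and follows the paper's argument essentially step for step: $\theta=0$ is admissible because $\Psi_j(0)=\eta(\bar{\mathbf U}_j^\star)\le B_j^n$, the admissible set is closed because $\Psi_j$ is continuous, and the interval property follows from the monotonicity in Proposition~\ref{prop:Psi}. Your alternative convexity argument is also valid (the trivial case $\theta_j^{\mathrm E}=0$ should be treated separately), and it shows that convexity of $\Psi_j$ together with $\Psi_j(0)\le B_j^n$ already suffices.
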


\begin{proof}
By Assumption~\ref{ass:weakE} and Proposition~\ref{prop:Psi},
\[
\Psi_j(0)=\eta(\bar{\mathbf U}_j^\star)\le B_j^n.
\]
Therefore $\theta=0$ is admissible, so the set in \eqref{eq:thetaE} is nonempty. Since the set is contained in $[0,1]$, the supremum exists. Because $\Psi_j$ is continuous, the admissible set is closed. Because $\Psi_j$ is nondecreasing on $[0,1]$, the admissible set is an interval of the form $[0,\theta_j^{\mathrm E}]$. This yields \eqref{eq:thetaE-property}.
\end{proof}

\begin{remark}
The auxiliary radius $\theta_j^{\mathrm E}$ is useful when the full candidate ray is known \emph{a priori} to stay inside $D_\eta$. In the general framework, however, the primary entropy construction is the positivity-first one below, because it guarantees that pointwise entropy is evaluated only on states already certified to lie in $G\subset D_\eta$.
\end{remark}

\subsection{The positivity / invariant-set admissibility radius}

The geometric admissibility radius is defined by
\begin{equation}\label{eq:thetaP}
\theta_j^{\mathrm P}
:=
\sup\Bigl\{
\theta\in[0,1]:
\bar{\mathbf U}_j^\star+\theta(\mathbf U_{j,\nu}^\star-\bar{\mathbf U}_j^\star)\in G
\text{ for all }\nu=1,\dots,L
\Bigr\}.
\end{equation}

\begin{proposition}[Existence of the geometric radius]\label{prop:thetaP}
Under Assumption~\ref{ass:weakP}, the set in \eqref{eq:thetaP} is nonempty, $\theta_j^{\mathrm P}\in[0,1]$ is well defined, and
\begin{equation}\label{eq:thetaP-property}
\mathcal S_{\bar{\mathbf U}_j^\star}(\theta;\mathbf U_j^\star)_\nu \in G
\qquad
\text{for all }\nu,\ 0\le \theta\le \theta_j^{\mathrm P}.
\end{equation}
\end{proposition}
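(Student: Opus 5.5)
The proof follows the same pattern as Propositions~\ref{prop:thetaO-basic} and \ref{prop:thetaE}. We study the set of admissible scaling parameters
\[
A_j:=\Bigl\{\theta\in[0,1]:\bar{\mathbf U}_j^\star+\theta(\mathbf U_{j,\nu}^\star-\bar{\mathbf U}_j^\star)\in G\ \text{for all }\nu=1,\dots,L\Bigr\}.
\]
We will show three things: $A_j$ is nonempty, $A_j$ is closed, and $A_j$ is an initial segment of $[0,1]$. It then follows that $A_j=[0,\theta_j^{\mathrm P}]$, which gives \eqref{eq:thetaP-property}.

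\textbf{Step 1 (nonemptiness and well-posedness).} At $\theta=0$ every nodal state equals $\bar{\mathbf U}_j^\star$. By Assumption~\ref{ass:weakP} this average lies in $G$, so $0\in A_j$. Since $A_j\subset[0,1]$, its supremum $\theta_j^{\mathrm P}$ exists and lies in $[0,1]$.

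\textbf{Step 2 (closedness).} For each $\nu$, the map $\theta\mapsto\bar{\mathbf U}_j^\star+\theta(\mathbf U_{j,\nu}^\star-\bar{\mathbf U}_j^\star)$ is continuous and affine. Because $G$ is closed (Assumption~\ref{ass:G}), the preimage of $G$ under this map, intersected with $[0,1]$, is closed. The set $A_j$ is the intersection of these $L$ closed sets, hence closed. Therefore the supremum is attained, i.e., $\theta_j^{\mathrm P}\in A_j$.

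\textbf{Step 3 (initial segment).} Let $\theta_1\in A_j$ with $\theta_1>0$, and let $0\le\theta\le\theta_1$. For each $\nu$ we use the identity
\[
\bar{\mathbf U}_j^\star+\theta(\mathbf U_{j,\nu}^\star-\bar{\mathbf U}_j^\star)
=\Bigl(1-\tfrac{\theta}{\theta_1}\Bigr)\bar{\mathbf U}_j^\star+\tfrac{\theta}{\theta_1}\Bigl[\bar{\mathbf U}_j^\star+\theta_1(\mathbf U_{j,\nu}^\star-\bar{\mathbf U}_j^\star)\Bigr].
\]
This is a convex combination of two points of $G$: the average $\bar{\mathbf U}_j^\star$, and the scaled node at $\theta_1$. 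By convexity of $G$, the left-hand side lies in $G$, so $\theta\in A_j$. Applying this with $\theta_1=\theta_j^{\mathrm P}$ yields \eqref{eq:thetaP-property}. If $\theta_j^{\mathrm P}=0$, the claim reduces to Step 1.

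The only point needing care is Step 2: the supremum is attained only because $G$ is closed. This is precisely why the remark after Assumption~\ref{ass:G} replaces an open physical set by a closed numerical set $G_\varepsilon$. The rest of the argument is routine convexity.
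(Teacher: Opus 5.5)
Your proof is correct and follows essentially the same route as the paper's: $\theta=0$ is admissible because $\bar{\mathbf U}_j^\star\in G$, convexity of $G$ makes the admissible set an initial segment, and closedness of $G$ with continuity of the ray makes it closed, so it equals $[0,\theta_j^{\mathrm P}]$. You spell out the convex-combination identity and the $\theta_1>0$ case explicitly, where the paper only states these steps in words.
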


\begin{proof}
Because $\bar{\mathbf U}_j^\star\in G$, the value $\theta=0$ is admissible. Hence the admissible set is nonempty. Convexity of $G$ implies that if some $\theta_1\in[0,1]$ is admissible, then every $0\le \theta\le \theta_1$ is also admissible. Closedness of $G$ and continuity of the scaling ray imply that the admissible set is closed. Therefore it is a compact interval $[0,\theta_j^{\mathrm P}]$, and \eqref{eq:thetaP-property} follows.
\end{proof}

\begin{proposition}[Positivity-first entropy radius]\label{prop:thetaE-after-P}
Assume Assumptions~\ref{ass:weakP} and \ref{ass:weakE}. Define the positivity-limited auxiliary state
\[
\mathbf U_j^{\mathrm P}
:=
\mathcal S_{\bar{\mathbf U}_j^\star}\!\bigl(\theta_j^{\mathrm P};\mathbf U_j^\star\bigr).
\]
Then $(\mathbf U_j^{\mathrm P})_\nu\in G$ for every $\nu$, and therefore the pointwise entropy is well defined at every node of $\mathbf U_j^{\mathrm P}$. Define the post-positivity entropy profile on the shortened admissible ray by
\[
\Psi_j^{\mathrm P}(\vartheta)
:=
\mathcal E\!\left(\mathcal S_{\bar{\mathbf U}_j^\star}(\vartheta;\mathbf U_j^{\mathrm P})\right),
\qquad 0\le \vartheta\le 1,
\]
and let
\[
\vartheta_j^{\mathrm E|\mathrm P}
:=
\sup\Bigl\{
\vartheta\in[0,1]:
\Psi_j^{\mathrm P}(\vartheta)\le B_j^n
\Bigr\}.
\]
Finally define the positivity-first entropy radius
\begin{equation}\label{eq:thetaPE}
\theta_j^{\mathrm{PE}}
:=
\theta_j^{\mathrm P}\,\vartheta_j^{\mathrm E|\mathrm P}.
\end{equation}
Then $\theta_j^{\mathrm{PE}}\in[0,1]$ is well defined and
\begin{equation}\label{eq:thetaPE-property}
\mathcal S_{\bar{\mathbf U}_j^\star}(\theta;\mathbf U_j^\star)_\nu\in G,
\qquad
\mathcal E\!\left(\mathcal S_{\bar{\mathbf U}_j^\star}(\theta;\mathbf U_j^\star)\right)\le B_j^n,
\qquad 0\le \theta\le \theta_j^{\mathrm{PE}},\ \nu=1,\dots,L.
\end{equation}
Moreover, if the original entropy profile $\Psi_j(\theta)$ is already well defined on the whole candidate ray $0\le \theta\le 1$, then
\begin{equation}\label{eq:PE-equals-min}
\theta_j^{\mathrm{PE}}=\min\{\theta_j^{\mathrm P},\theta_j^{\mathrm E}\},
\end{equation}
and the sequentially limited state may be written directly as
\[
\mathbf U_j^{\mathrm{PE}}
=
\mathcal S_{\bar{\mathbf U}_j^\star}\!\bigl(\theta_j^{\mathrm{PE}};\mathbf U_j^\star\bigr).
\]
\end{proposition}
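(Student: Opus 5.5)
The plan has three parts: establish the composition identity for the two scalings, apply the convexity and monotonicity results from Propositions~\ref{prop:Psi} and \ref{prop:thetaE} to the shortened ray, and then treat the full-ray case separately.

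\textbf{Composition identity.} Start from
\[
\mathcal S_{\bar{\mathbf U}_j^\star}(\vartheta;\mathbf U_j^{\mathrm P})
=\mathcal S_{\bar{\mathbf U}_j^\star}(\theta_j^{\mathrm P}\vartheta;\mathbf U_j^\star),
\]
which holds because $\mathbf U_j^{\mathrm P}-\bar{\mathbf U}_j^\star=\theta_j^{\mathrm P}(\mathbf U_j^\star-\bar{\mathbf U}_j^\star)$. Two consequences follow. First, $\mathbf U_j^{\mathrm P}$ has weighted mean $\bar{\mathbf U}_j^\star$ by \eqref{eq:mean-preserving-ray}. Second, by Proposition~\ref{prop:thetaP}, every node of $\mathcal S_{\bar{\mathbf U}_j^\star}(\vartheta;\mathbf U_j^{\mathrm P})$ lies in $G\subset D_\eta$ for all $\vartheta\in[0,1]$. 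In particular $(\mathbf U_j^{\mathrm P})_\nu\in G$.

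\textbf{Well-definedness and \eqref{eq:thetaPE-property}.} The full shortened ray lies in $G\subset D_\eta$, so the full-ray hypothesis of Proposition~\ref{prop:Psi} holds with $\mathbf U_j^{\mathrm P}$ in place of $\mathbf U_j^\star$. Hence $\Psi_j^{\mathrm P}$ is continuous, convex and nondecreasing on $[0,1]$, with $\Psi_j^{\mathrm P}(0)=\eta(\bar{\mathbf U}_j^\star)\le B_j^n$ by Assumption~\ref{ass:weakE}. Repeating the argument of Proposition~\ref{prop:thetaE} shows that $\vartheta_j^{\mathrm E|\mathrm P}\in[0,1]$ exists and that $\{\vartheta:\Psi_j^{\mathrm P}(\vartheta)\le B_j^n\}=[0,\vartheta_j^{\mathrm E|\mathrm P}]$. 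It follows that $\theta_j^{\mathrm{PE}}\in[0,1]$. For $0\le\theta\le\theta_j^{\mathrm{PE}}$:
\begin{itemize}
\item if $\theta_j^{\mathrm P}=0$, then $\theta=0$, the state is constant and equal to $\bar{\mathbf U}_j^\star\in G$, and its entropy is $\eta(\bar{\mathbf U}_j^\star)\le B_j^n$;
\item otherwise, set $\vartheta=\theta/\theta_j^{\mathrm P}\le\vartheta_j^{\mathrm E|\mathrm P}$. The composition identity gives membership in $G$, since $\theta\le\theta_j^{\mathrm P}$, and gives $\mathcal E\le B_j^n$, since $\Psi_j^{\mathrm P}(\vartheta)\le B_j^n$.
\end{itemize}

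\textbf{Identity \eqref{eq:PE-equals-min}.} This is the step I expect to require the most care, because of degenerate cases such as $\theta_j^{\mathrm P}=0$. When $\Psi_j$ is defined on all of $[0,1]$, the composition identity gives $\Psi_j^{\mathrm P}(\vartheta)=\Psi_j(\theta_j^{\mathrm P}\vartheta)$. By Proposition~\ref{prop:thetaE}, $\{\theta\in[0,1]:\Psi_j(\theta)\le B_j^n\}=[0,\theta_j^{\mathrm E}]$. If $\theta_j^{\mathrm P}>0$,
\[
\theta_j^{\mathrm P}\vartheta_j^{\mathrm E|\mathrm P}
=\sup\{\theta\in[0,\theta_j^{\mathrm P}]:\Psi_j(\theta)\le B_j^n\}
=\sup\bigl([0,\theta_j^{\mathrm E}]\cap[0,\theta_j^{\mathrm P}]\bigr)
=\min\{\theta_j^{\mathrm P},\theta_j^{\mathrm E}\}.
\]
If $\theta_j^{\mathrm P}=0$, both sides of \eqref{eq:PE-equals-min} are zero.

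The final formula for $\mathbf U_j^{\mathrm{PE}}$ follows by applying the composition identity once more, now with $\vartheta=\vartheta_j^{\mathrm E|\mathrm P}$.
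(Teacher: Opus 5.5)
Your proposal is correct and follows essentially the same route as the paper. Both arguments use the ray-composition identity, apply Proposition~\ref{prop:Psi} to the shortened ray anchored at $\bar{\mathbf U}_j^\star$, split on whether $\theta_j^{\mathrm P}=0$, and obtain the min formula from $\Psi_j^{\mathrm P}(\vartheta)=\Psi_j(\theta_j^{\mathrm P}\vartheta)$. The differences are cosmetic. You get nodal membership of the shortened ray from Proposition~\ref{prop:thetaP} via composition, whereas the paper uses convexity of $G$. You phrase \eqref{eq:PE-equals-min} as an intersection of intervals and treat $\theta_j^{\mathrm P}=0$ explicitly, whereas the paper computes $\vartheta_j^{\mathrm E|\mathrm P}=\min\{1,\theta_j^{\mathrm E}/\theta_j^{\mathrm P}\}$.
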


\begin{proof}
Proposition~\ref{prop:thetaP} gives $(\mathbf U_j^{\mathrm P})_\nu\in G$ for every $\nu$. Since $\bar{\mathbf U}_j^\star\in G$ as well and $G\subset D_\eta$, convexity implies
\[
\mathcal S_{\bar{\mathbf U}_j^\star}(\vartheta;\mathbf U_j^{\mathrm P})_\nu
=
(1-\vartheta)\bar{\mathbf U}_j^\star+\vartheta(\mathbf U_j^{\mathrm P})_\nu
\in G\subset D_\eta
\qquad\text{for all }0\le \vartheta\le 1.
\]
Thus the profile $\Psi_j^{\mathrm P}$ is well defined on $[0,1]$. By Proposition~\ref{prop:Psi} applied to the candidate array $\mathbf U_j^{\mathrm P}$, the function $\Psi_j^{\mathrm P}$ is continuous, convex, and nondecreasing on $[0,1]$. At $\vartheta=0$ one has
\[
\Psi_j^{\mathrm P}(0)=\eta(\bar{\mathbf U}_j^\star)\le B_j^n
\]
by Assumption~\ref{ass:weakE}, so the admissible set defining $\vartheta_j^{\mathrm E|\mathrm P}$ is nonempty and forms an interval $[0,\vartheta_j^{\mathrm E|\mathrm P}]$.

If $\theta_j^{\mathrm P}=0$, then $\mathbf U_j^{\mathrm P}=\mathbf C(\bar{\mathbf U}_j^\star)$ and therefore $\Psi_j^{\mathrm P}\equiv \eta(\bar{\mathbf U}_j^\star)\le B_j^n$. In that case $\vartheta_j^{\mathrm E|\mathrm P}=1$ and $\theta_j^{\mathrm{PE}}=0$, so \eqref{eq:thetaPE-property} is immediate. Assume henceforth that $\theta_j^{\mathrm P}>0$, and fix $0\le \theta\le \theta_j^{\mathrm{PE}}$. Set $\vartheta:=\theta/\theta_j^{\mathrm P}$. Then $0\le \vartheta\le \vartheta_j^{\mathrm E|\mathrm P}$ and, by the ray-composition identity
\begin{equation}\label{eq:ray-composition}
\mathcal S_{\bar{\mathbf U}}(\vartheta;\mathcal S_{\bar{\mathbf U}}(\theta;\mathbf V))
=
\mathcal S_{\bar{\mathbf U}}(\vartheta\theta;\mathbf V),
\qquad 0\le \vartheta,\theta\le 1,
\end{equation}
we have
\[
\mathcal S_{\bar{\mathbf U}_j^\star}(\theta;\mathbf U_j^\star)
=
\mathcal S_{\bar{\mathbf U}_j^\star}(\vartheta;\mathbf U_j^{\mathrm P}).
\]
The nodal inclusion in $G$ follows because the right-hand side lies on the segment joining $\mathbf C(\bar{\mathbf U}_j^\star)$ to $\mathbf U_j^{\mathrm P}\in G^L$, while the entropy bound follows from the definition of $\vartheta_j^{\mathrm E|\mathrm P}$.

For the final statement, assume that the direct profile $\Psi_j$ is well defined on the whole candidate ray. Another application of \eqref{eq:ray-composition} gives
\[
\Psi_j^{\mathrm P}(\vartheta)
=
\mathcal E\!\left(\mathcal S_{\bar{\mathbf U}_j^\star}(\vartheta\theta_j^{\mathrm P};\mathbf U_j^\star)\right)
=
\Psi_j(\vartheta\theta_j^{\mathrm P}).
\]
Because $\Psi_j$ is nondecreasing on $[0,1]$, the admissible $\vartheta$ are exactly those satisfying $\vartheta\theta_j^{\mathrm P}\le \theta_j^{\mathrm E}$. Hence
\[
\vartheta_j^{\mathrm E|\mathrm P}
=
\min\left\{1,\frac{\theta_j^{\mathrm E}}{\theta_j^{\mathrm P}}\right\}
\qquad (\theta_j^{\mathrm P}>0),
\]
which yields \eqref{eq:PE-equals-min}. The representation of $\mathbf U_j^{\mathrm{PE}}$ then follows from \eqref{eq:ray-composition} again.
\end{proof}

\begin{remark}
Proposition~\ref{prop:thetaE-after-P} is the mathematically primary entropy construction in this paper. It makes the entropy module rigorous even when the entropy is only guaranteed to be well defined on $G$. The auxiliary full-ray radius $\theta_j^{\mathrm E}$ is retained because it is convenient in cases where the full candidate ray already lies in $D_\eta$, and then \eqref{eq:PE-equals-min} recovers the familiar formula $\min\{\theta_j^{\mathrm P},\theta_j^{\mathrm E}\}$.
\end{remark}

\subsection{The local EPO admissible set}

We now combine the three admissibility mechanisms in one local set. For a prescribed cell average $\bar{\mathbf U}\in G$ and budget $B\in\mathbb R$, define the mean-preserving affine space
\begin{equation}\label{eq:mean-space}
\mathfrak M(\bar{\mathbf U})
:=
\Bigl\{
\mathbf V\in(\mathbb R^m)^L:
\sum_{\nu=1}^L \omega_\nu {\mathbf V}_\nu = \bar{\mathbf U}
\Bigr\}.
\end{equation}
For the cell under consideration, define the local EPO admissible set
\begin{equation}\label{eq:Adef}
\mathcal A_j(B,\bar{\mathbf U})
:=
\Bigl\{
\mathbf V\in \mathfrak M(\bar{\mathbf U}) :
{\bf V}_\nu\in G\ \forall\nu,\ 
\mathcal E(\mathbf V)\le B,\ 
\mathbf V\in \mathcal O_j^\star
\Bigr\}.
\end{equation}

\begin{theorem}[Geometry of the local EPO admissible set]\label{thm:A-star}
Let $\bar{\mathbf U}\in G$ and suppose $B\ge \eta(\bar{\mathbf U})$. Under Assumption~\ref{ass:osc}, the set $\mathcal A_j(B,\bar{\mathbf U})$ is nonempty, closed, and convex. Moreover, 
\begin{equation}\label{eq:A-constant}
\bar{\mathbf U}\in \mathcal A_j(B,\bar{\mathbf U}),
\end{equation}
and for every $\mathbf V\in \mathcal A_j(B,\bar{\mathbf U})$ and every $\theta\in[0,1]$,
\begin{equation}\label{eq:A-star-property}
\mathcal S_{\bar{\mathbf U}}(\theta;\mathbf V)\in \mathcal A_j(B,\bar{\mathbf U}).
\end{equation}
\end{theorem}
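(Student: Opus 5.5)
The plan is to write $\mathcal A_j(B,\bar{\mathbf U})$ as an intersection of four sets in $(\mathbb R^m)^L$ and check closedness and convexity for each one separately:
\[
\mathfrak M(\bar{\mathbf U}),\qquad G^L,\qquad \{\mathbf V\in G^L:\mathcal E(\mathbf V)\le B\},\qquad \mathcal O_j^\star .
\]
\begin{itemize}
\item The set $\mathfrak M(\bar{\mathbf U})$ is the preimage of a point under the linear map $\mathbf V\mapsto\sum_\nu\omega_\nu\mathbf V_\nu$. It is therefore a closed affine subspace.
\item The set $G^L$ is closed and convex by Assumption~\ref{ass:G}.
\item $\mathcal O_j^\star$ is closed and convex by Assumption~\ref{ass:osc}.
\end{itemize}

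The entropy constraint needs a little more care, because $\eta$ is only defined on $D_\eta$. Since the nodal constraint $\mathbf V_\nu\in G$ appears in the definition \eqref{eq:Adef}, I will treat the entropy condition on $G^L\subset D_\eta^L$.
\begin{itemize}
\item On $G^L$, the map $\mathcal E(\mathbf V)=\sum_\nu\omega_\nu\eta(\mathbf V_\nu)$ is a positive combination of convex functions of the separate nodes, so it is convex. Its sublevel set inside the convex set $G^L$ is therefore convex.
\item For closedness, $\eta$ is continuous on the open set $D_\eta\supset G$. If $\mathbf V^{(k)}\to\mathbf V$ with $\mathbf V^{(k)}\in G^L$, then $\mathbf V\in G^L$ because $G$ is closed, and $\mathcal E(\mathbf V)=\lim\mathcal E(\mathbf V^{(k)})\le B$.
\end{itemize}
An intersection of closed convex sets is closed and convex. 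For the multi-entropy version of Remark~\ref{rem:multi-entropy-prelim}, I would just intersect one such sublevel set for each pair.

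Next I check \eqref{eq:A-constant}, reading $\bar{\mathbf U}$ as the constant nodal array $\mathbf C(\bar{\mathbf U})$. This array has weighted mean $\bar{\mathbf U}$, every node lies in $G$, and $\mathcal E(\mathbf C(\bar{\mathbf U}))=\eta(\bar{\mathbf U})\le B$ by hypothesis. The oscillation set contains it by \eqref{eq:O-constant}. I will take $\mathcal O_j^\star$ to be built around the anchor $\bar{\mathbf U}$, i.e.\ $\bar{\mathbf U}=\bar{\mathbf U}_j^\star$, which is how the theorem is used. Nonemptiness then follows.

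For the star-shapedness \eqref{eq:A-star-property}, take $\mathbf V\in\mathcal A_j(B,\bar{\mathbf U})$ and write
\[
\mathcal S_{\bar{\mathbf U}}(\theta;\mathbf V)=(1-\theta)\,\mathbf C(\bar{\mathbf U})+\theta\,\mathbf V .
\]
This is a convex combination of two elements of the convex set $\mathcal A_j(B,\bar{\mathbf U})$, so it lies in the set. As an independent check of the individual constraints:
\begin{itemize}
\item mean preservation is \eqref{eq:mean-preserving-ray};
\item the nodes stay in $G$ by convexity of $G$;
\item the entropy bound follows from Proposition~\ref{prop:Psi} applied with candidate $\mathbf V$, since $\Psi$ is nondecreasing and so $\mathcal E(\mathcal S(\theta;\mathbf V))\le\mathcal E(\mathbf V)\le B$.
\end{itemize}

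The main obstacle is the entropy constraint, namely making sure $\mathcal E$ is only ever evaluated where it is defined and that closedness survives the fact that $D_\eta$ is open. The device above handles this: intersect with $G^L$ first and use that $G$ is closed with $G\subset D_\eta$. A secondary point is the identification of the anchor of $\mathcal O_j^\star$ with $\bar{\mathbf U}$, which I will state explicitly.
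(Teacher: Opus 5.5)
Your proof is correct and follows the paper's argument: you write $\mathcal A_j(B,\bar{\mathbf U})$ as the intersection of the mean space, $G^L$, the entropy sublevel set and $\mathcal O_j^\star$, check that the constant state lies in each, and obtain \eqref{eq:A-star-property} as a convex combination of two elements of a convex set. Your two refinements are both handled correctly, and the paper's proof passes over both: taking the entropy sublevel set inside the closed set $G^L\subset D_\eta^L$ keeps closedness intact even though $D_\eta$ is open, and you make explicit that invoking \eqref{eq:O-constant} requires $\bar{\mathbf U}=\bar{\mathbf U}_j^\star$.
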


\begin{proof}
The mean-preserving space $\mathfrak M(\bar{\mathbf U})$ is an affine closed convex subset of $(\mathbb R^m)^L$. The geometric constraint set
\[
G^L=\{\mathbf V: {\bf V}_\nu\in G\ \forall \nu\}
\]
is closed and convex because $G$ is closed and convex. The entropy sublevel set
\[
\{\mathbf V\in (\mathbb R^m)^L:\mathcal E(\mathbf V)\le B\}
\]
is closed and convex because $\mathcal E$ is continuous and convex. Finally, $\mathcal O_j^\star$ is closed and convex by Assumption~\ref{ass:osc}. Therefore $\mathcal A_j(B,\bar{\mathbf U})$, being the intersection of closed convex sets, is itself closed and convex.

To prove nonemptiness, note that the constant state $\bar{\mathbf U}$ belongs to $\mathfrak M(\bar{\mathbf U})$ by construction, lies in $G^L$ because $\bar{\mathbf U}\in G$, satisfies
\[
\mathcal E(\bar{\mathbf U})=\eta(\bar{\mathbf U})\le B,
\]
and belongs to $\mathcal O_j^\star$ by Assumption~\ref{ass:osc}. This proves \eqref{eq:A-constant}.

Finally, let $\mathbf V\in \mathcal A_j(B,\bar{\mathbf U})$ and $0\le \theta\le 1$. Since
\[
\mathcal S_{\bar{\mathbf U}}(\theta;\mathbf V)
=
(1-\theta)\bar{\mathbf U}+\theta \mathbf V,
\]
and both endpoints belong to the convex set $\mathcal A_j(B,\bar{\mathbf U})$, the scaled state also belongs to $\mathcal A_j(B,\bar{\mathbf U})$. This proves \eqref{eq:A-star-property}.
\end{proof}

\begin{remark}
Theorem~\ref{thm:A-star} is the geometric heart of EPO. In the COS-compatible setting used here, the local admissible set is not merely star-shaped; it is the intersection of closed convex sets coming from the mean constraint, the physical admissible set, the entropy budget, and the oscillation-suppressing constraint. This observation makes the final limiter both natural and mathematically transparent.
\end{remark}

\subsection{The EPO radius as a maximal radial projection}

With $\bar{\mathbf U}_j^\star$ and $B_j^n$ fixed, define the maximal radial admissibility radius by
\begin{equation}\label{eq:thetaA}
\theta_j^{\mathrm{EPO}}
:=
\sup\Bigl\{
\theta\in[0,1]:
\mathcal S_{\bar{\mathbf U}_j^\star}(\theta;\mathbf U_j^\star)\in
\mathcal A_j(B_j^n,\bar{\mathbf U}_j^\star)
\Bigr\}.
\end{equation}
In parallel, recall the positivity-first entropy radius $\theta_j^{\mathrm{PE}}$ from \eqref{eq:thetaPE} and the oscillation radius $\theta_j^{\mathrm O}$ from \eqref{eq:thetaO}. When the direct full-ray entropy profile is available, Proposition~\ref{prop:thetaE-after-P} gives $\theta_j^{\mathrm{PE}}=\min\{\theta_j^{\mathrm P},\theta_j^{\mathrm E}\}$.

\begin{theorem}[Min-formula for the EPO radius]\label{thm:min}
Under Assumptions~\ref{ass:weakP}, \ref{ass:weakE}, and \ref{ass:osc},
\begin{equation}\label{eq:minformula}
\theta_j^{\mathrm{EPO}}
=
\min\bigl\{\theta_j^{\mathrm{PE}},\theta_j^{\mathrm O}\bigr\}.
\end{equation}
If, in addition, the direct full-ray entropy profile is available on $[0,1]$, then
\begin{equation}\label{eq:minformula-direct}
\theta_j^{\mathrm{EPO}}
=
\min\bigl\{\theta_j^{\mathrm P},\theta_j^{\mathrm E},\theta_j^{\mathrm O}\bigr\}.
\end{equation}
Equivalently, the EPO limiter is the maximal radial projection of the candidate nodal state into the local EPO admissible set.
\end{theorem}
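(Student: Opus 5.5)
The plan is to identify the set of admissible scaling parameters
\[
T_j:=\bigl\{\theta\in[0,1]:\mathcal S_{\bar{\mathbf U}_j^\star}(\theta;\mathbf U_j^\star)\in \mathcal A_j(B_j^n,\bar{\mathbf U}_j^\star)\bigr\}
\]
as the intersection of two parameter sets. The first is
\[
T_j^{\mathrm{PE}}:=\bigl\{\theta:\ \mathcal S(\theta)_\nu\in G\ \forall\nu,\ \mathcal E(\mathcal S(\theta))\le B_j^n\bigr\}.
\]
The second is $T_j^{\mathrm O}:=\{\theta:\mathcal S(\theta)\in\mathcal O_j^\star\}$. The mean constraint $\mathfrak M(\bar{\mathbf U}_j^\star)$ is automatic on the ray by \eqref{eq:mean-preserving-ray}, so $T_j=T_j^{\mathrm{PE}}\cap T_j^{\mathrm O}$ follows directly from the definition \eqref{eq:Adef}. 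By Proposition~\ref{prop:thetaO-basic} we have $T_j^{\mathrm O}=[0,\theta_j^{\mathrm O}]$. If we also show $T_j^{\mathrm{PE}}=[0,\theta_j^{\mathrm{PE}}]$, then $T_j=[0,\min\{\theta_j^{\mathrm{PE}},\theta_j^{\mathrm O}\}]$ and its supremum gives \eqref{eq:minformula}.

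Identifying $T_j^{\mathrm{PE}}$ is the main step. The inclusion $[0,\theta_j^{\mathrm{PE}}]\subset T_j^{\mathrm{PE}}$ is exactly \eqref{eq:thetaPE-property}. For the reverse inclusion, take $\theta\in T_j^{\mathrm{PE}}$. The nodal inclusion in $G$ together with Proposition~\ref{prop:thetaP} gives $\theta\le\theta_j^{\mathrm P}$. If $\theta_j^{\mathrm P}=0$, then $\theta=0=\theta_j^{\mathrm{PE}}$ and we are done. Otherwise set $\vartheta=\theta/\theta_j^{\mathrm P}\in[0,1]$. The ray-composition identity \eqref{eq:ray-composition} gives $\mathcal S(\theta;\mathbf U_j^\star)=\mathcal S(\vartheta;\mathbf U_j^{\mathrm P})$, so $\Psi_j^{\mathrm P}(\vartheta)\le B_j^n$. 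The definition of $\vartheta_j^{\mathrm E|\mathrm P}$ then yields $\vartheta\le\vartheta_j^{\mathrm E|\mathrm P}$, that is, $\theta\le\theta_j^{\mathrm{PE}}$.

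The point needing care is that $\mathcal E$ may be undefined off $G$. This is why the argument uses $G$-membership first, so that every entropy evaluation happens on the shortened ray where $\Psi_j^{\mathrm P}$ is well defined. For the same reason, the set $\mathcal A_j$ should be read with $\mathcal E$ evaluated only on $G^L$.

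For \eqref{eq:minformula-direct}, substitute \eqref{eq:PE-equals-min} into \eqref{eq:minformula}. For the projection interpretation, note two facts. Theorem~\ref{thm:A-star} shows that $\mathcal A_j$ is closed, convex, and contains the constant state, so $T_j$ is a closed interval containing $0$. The supremum $\theta_j^{\mathrm{EPO}}$ is therefore attained, and $\mathcal S(\theta_j^{\mathrm{EPO}};\mathbf U_j^\star)$ is the farthest point of the ray from $\bar{\mathbf U}_j^\star$ toward $\mathbf U_j^\star$ that remains in $\mathcal A_j$, i.e.\ the maximal radial projection.
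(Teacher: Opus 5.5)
Your proof is correct and uses the same two-sided argument as the paper: the forward inclusion comes from \eqref{eq:thetaPE-property} and \eqref{eq:thetaO-property}, the converse from maximality of the two radii, and then \eqref{eq:PE-equals-min} gives the direct formula. Your explicit reverse inclusion, via $\theta\le\theta_j^{\mathrm P}$ and the ray-composition identity \eqref{eq:ray-composition}, is in fact the step the paper only asserts: $\theta_j^{\mathrm{PE}}=\theta_j^{\mathrm P}\vartheta_j^{\mathrm E|\mathrm P}$ is defined as a product rather than a supremum, and \eqref{eq:thetaPE-property} states only the forward inclusion, so your argument is what makes the converse $\theta\le\theta_j^{\mathrm{PE}}$ rigorous.
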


\begin{proof}
First, let
\[
\theta \le \min\{\theta_j^{\mathrm{PE}},\theta_j^{\mathrm O}\}.
\]
Then the scaled state $\mathcal S_{\bar{\mathbf U}_j^\star}(\theta;\mathbf U_j^\star)$ satisfies the geometric and entropy constraints by \eqref{eq:thetaPE-property}, and it satisfies the oscillation constraint by \eqref{eq:thetaO-property}. Mean preservation is automatic. Hence
\[
\mathcal S_{\bar{\mathbf U}_j^\star}(\theta;\mathbf U_j^\star)\in
\mathcal A_j(B_j^n,\bar{\mathbf U}_j^\star).
\]
Therefore
\[
\min\{\theta_j^{\mathrm{PE}},\theta_j^{\mathrm O}\}
\le
\theta_j^{\mathrm{EPO}}.
\]

Conversely, suppose $\theta$ is admissible for the set in \eqref{eq:thetaA}. Then
\[
\mathcal S_{\bar{\mathbf U}_j^\star}(\theta;\mathbf U_j^\star)\in \mathcal A_j(B_j^n,\bar{\mathbf U}_j^\star).
\]
By definition of $\mathcal A_j$, this implies separately that the geometric constraint, entropy constraint, and oscillation constraint are all satisfied. In particular, by the defining maximality property of $\theta_j^{\mathrm{PE}}$ in \eqref{eq:thetaPE-property} and of $\theta_j^{\mathrm O}$ in \eqref{eq:thetaO-property},
\[
\theta\le \theta_j^{\mathrm{PE}},\qquad
\theta\le \theta_j^{\mathrm O}.
\]
Therefore $\theta\le \min\{\theta_j^{\mathrm{PE}},\theta_j^{\mathrm O}\}$. Taking the supremum over all admissible $\theta$ shows
\[
\theta_j^{\mathrm{EPO}}
\le
\min\{\theta_j^{\mathrm{PE}},\theta_j^{\mathrm O}\}.
\]
Combining the two inequalities yields \eqref{eq:minformula}. The direct formula \eqref{eq:minformula-direct} is an immediate consequence of \eqref{eq:PE-equals-min}.
\end{proof}

\begin{remark}
Theorem~\ref{thm:min} gives the precise sense in which EPO is a \emph{single} local limiter rather than a sequential composition of three unrelated limiters. The rigorous primary formula is \eqref{eq:minformula}, built from the positivity-first entropy radius. When the direct entropy radius $\theta_j^{\mathrm E}$ is available on the whole ray, \eqref{eq:minformula-direct} recovers the more familiar three-way minimum. In either description, the final state still lies on the same original cell-average-anchored scaling ray.
\end{remark}

\section{The one-dimensional EPO theorem}\label{sec:main}

We are ready to define the final limited state and prove its properties.

\subsection{Definition of the EPO limiter}

Let $\mathbf U_j^\star=(\mathbf U_{j,1}^\star,\dots,\mathbf U_{j,L}^\star)$ denote the candidate nodal array in cell $I_j$, with average $\bar{\mathbf U}_j^\star$, and let $\theta_j^{\mathrm{EPO}}$ be given by \eqref{eq:minformula}. The EPO-limited nodal values are
\begin{equation}\label{eq:EPO-limited}
\mathbf U_{j,\nu}^{n+1}
=
\bar{\mathbf U}_j^\star
+
\theta_j^{\mathrm{EPO}}\bigl(\mathbf U_{j,\nu}^\star-\bar{\mathbf U}_j^\star\bigr),
\qquad \nu=1,\dots,L.
\end{equation}
Equivalently,
\[
\mathbf U_j^{n+1}
=
\mathcal S_{\bar{\mathbf U}_j^\star}\!\bigl(\theta_j^{\mathrm{EPO}};\mathbf U_j^\star\bigr).
\]

\begin{theorem}[Local EPO properties]\label{thm:localEPO}
Assume Assumptions~\ref{ass:weakP}, \ref{ass:weakE}, and \ref{ass:osc}. Then the EPO-limited nodal array \eqref{eq:EPO-limited} satisfies, in every cell $I_j$,
\begin{enumerate}[label=(\roman*),leftmargin=2em]
\item \textbf{mean preservation:}
\begin{equation}\label{eq:EPO-mean}
\sum_{\nu=1}^L \omega_\nu \mathbf U_{j,\nu}^{n+1}
=
\bar{\mathbf U}_j^\star;
\end{equation}
\item \textbf{invariant-set preservation:}
\begin{equation}\label{eq:EPO-P}
\mathbf U_{j,\nu}^{n+1}\in G,
\qquad \nu=1,\dots,L;
\end{equation}
\item \textbf{local strong entropy stability:}
\begin{equation}\label{eq:EPO-local-entropy}
\sum_{\nu=1}^L \omega_\nu \eta(\mathbf U_{j,\nu}^{n+1})
\le
B_j^n;
\end{equation}
\item \textbf{oscillation admissibility:}
\begin{equation}\label{eq:EPO-O}
\mathbf U_j^{n+1}\in \mathcal O_j^\star.
\end{equation}
\end{enumerate}
\end{theorem}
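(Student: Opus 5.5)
The plan is to read all four properties off the single fact that the limited array lies on the cell-average-anchored ray at a parameter no larger than each individual radius. We already know from Theorem~\ref{thm:min} that $\theta_j^{\mathrm{EPO}}=\min\{\theta_j^{\mathrm{PE}},\theta_j^{\mathrm O}\}$, and that this number is the supremum in \eqref{eq:thetaA}. Hence $0\le\theta_j^{\mathrm{EPO}}\le\theta_j^{\mathrm{PE}}$ and $\theta_j^{\mathrm{EPO}}\le\theta_j^{\mathrm O}$. We will work with this minimum directly rather than with the supremum, so that no separate attainment argument is needed.

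The steps then follow the order of the statement.
\begin{enumerate}[label=(\roman*),leftmargin=2em]
\item For mean preservation, note that $\mathbf U_j^{n+1}=\mathcal S_{\bar{\mathbf U}_j^\star}(\theta_j^{\mathrm{EPO}};\mathbf U_j^\star)$. Because the candidate satisfies $\sum_\nu\omega_\nu\mathbf U_{j,\nu}^\star=\bar{\mathbf U}_j^\star$, identity \eqref{eq:mean-preserving-ray} gives \eqref{eq:EPO-mean}. Equivalently, the weighted nodal deviations sum to zero and are multiplied by the scalar $\theta_j^{\mathrm{EPO}}$.
\item For invariant-set preservation, apply the first part of \eqref{eq:thetaPE-property} from Proposition~\ref{prop:thetaE-after-P} with $\theta=\theta_j^{\mathrm{EPO}}\le\theta_j^{\mathrm{PE}}$. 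This yields $\mathbf U_{j,\nu}^{n+1}\in G$ for every $\nu$. Only Assumptions~\ref{ass:weakP} and~\ref{ass:weakE} are required here.
\item For local strong entropy stability, the second part of \eqref{eq:thetaPE-property}, evaluated at the same $\theta$, gives $\mathcal E(\mathbf U_j^{n+1})\le B_j^n$, which is exactly \eqref{eq:EPO-local-entropy}. The entropy evaluations are legitimate because, by (ii), every node lies in $G\subset D_\eta$.
\item For oscillation admissibility, apply \eqref{eq:thetaO-property} from Proposition~\ref{prop:thetaO-basic} with $\theta=\theta_j^{\mathrm{EPO}}\le\theta_j^{\mathrm O}$. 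This gives \eqref{eq:EPO-O}.
\end{enumerate}

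As a cross-check on (ii)--(iv), we can also argue through Theorem~\ref{thm:A-star}. By Assumption~\ref{ass:weakE} the budget satisfies $B_j^n\ge\eta(\bar{\mathbf U}_j^\star)$, so $\mathcal A_j(B_j^n,\bar{\mathbf U}_j^\star)$ is closed and convex. By the first half of the proof of Theorem~\ref{thm:min}, the ray point at $\min\{\theta_j^{\mathrm{PE}},\theta_j^{\mathrm O}\}$ lies in this set. Membership in $\mathcal A_j$ then delivers all four properties at once.

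The only delicate point is that the radii are defined as suprema, so we must know the constraints hold at the endpoint parameter itself. This is already settled earlier in the paper. Proposition~\ref{prop:thetaO-basic} shows that the oscillation radius is attained, by closedness of $\mathcal O_j^\star$. Proposition~\ref{prop:thetaE-after-P} establishes \eqref{eq:thetaPE-property} on the closed interval $[0,\theta_j^{\mathrm{PE}}]$, using continuity of the profile $\Psi_j^{\mathrm P}$ and closedness of $G$. With both results invoked as stated, the remaining argument is bookkeeping.
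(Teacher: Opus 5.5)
Your proof is correct and follows the paper's argument: mean preservation comes from the ray identity \eqref{eq:mean-preserving-ray}, (ii)--(iii) come from \eqref{eq:thetaPE-property} because $\theta_j^{\mathrm{EPO}}\le\theta_j^{\mathrm{PE}}$, and (iv) comes from Proposition~\ref{prop:thetaO-basic} because $\theta_j^{\mathrm{EPO}}\le\theta_j^{\mathrm O}$. Your cross-check through Theorem~\ref{thm:A-star} and your remark that the suprema are attained are both valid, but the argument does not need them.
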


\begin{proof}
Property \eqref{eq:EPO-mean} follows immediately from \eqref{eq:mean-preserving-ray}, since the EPO limiter is a scaling about the cell average.

By Theorem~\ref{thm:min},
\[
\theta_j^{\mathrm{EPO}}\le \theta_j^{\mathrm{PE}}.
\]
Therefore \eqref{eq:thetaPE-property} immediately yields both \eqref{eq:EPO-P} and \eqref{eq:EPO-local-entropy}. Finally,
\[
\theta_j^{\mathrm{EPO}}\le \theta_j^{\mathrm O},
\]
and Proposition~\ref{prop:thetaO-basic} yields \eqref{eq:EPO-O}.
\end{proof}

\begin{remark}
Theorem~\ref{thm:localEPO} records the local consequences of the EPO construction. In particular, the limiter does not alter the candidate cell average, and hence it preserves conservation whenever the underlying candidate update is conservative. The theorem also shows that weak entropy stability of the average is sufficient to obtain a strong nodal entropy inequality by the same cell-average-anchored scaling mechanism used in positivity preservation.
\end{remark}

\subsection{Global strong entropy stability}

The local entropy budgets become global once they satisfy the compatibility condition in Assumption~\ref{ass:budget-compat}.

\begin{theorem}[Global strong entropy stability]\label{thm:global}
In addition to the hypotheses of Theorem~\ref{thm:localEPO}, assume Assumption~\ref{ass:budget-compat}. Then
\begin{equation}\label{eq:global-strong}
\sum_j \sum_{\nu=1}^L \omega_\nu \eta(\mathbf U_{j,\nu}^{n+1})
\le
\sum_j \sum_{\nu=1}^L \omega_\nu \eta(\mathbf U_{j,\nu}^{n})
-
\lambda \sum_j \bigl(\widehat{\mathcal Q}_{j+\frac12}^n-\widehat{\mathcal Q}_{j-\frac12}^n\bigr).
\end{equation}
In particular, under periodic boundary conditions the entropy flux term telescopes and
\begin{equation}\label{eq:global-strong-periodic}
\sum_j \sum_{\nu=1}^L \omega_\nu \eta(\mathbf U_{j,\nu}^{n+1})
\le
\sum_j \sum_{\nu=1}^L \omega_\nu \eta(\mathbf U_{j,\nu}^{n}).
\end{equation}
\end{theorem}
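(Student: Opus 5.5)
The plan is to sum the local strong entropy inequality of Theorem~\ref{thm:localEPO}(iii) over all cells and then apply the global compatibility condition of Assumption~\ref{ass:budget-compat}. Concretely, for every cell $I_j$, \eqref{eq:EPO-local-entropy} gives
\[
\sum_{\nu=1}^L \omega_\nu \eta(\mathbf U_{j,\nu}^{n+1}) \le B_j^n .
\]
This uses only Assumptions~\ref{ass:weakP}, \ref{ass:weakE}, and \ref{ass:osc}. It holds because $\theta_j^{\mathrm{EPO}}\le\theta_j^{\mathrm{PE}}$, and \eqref{eq:thetaPE-property} applies. Summing over $j$ gives $\sum_j\mathcal E(\mathbf U_j^{n+1})\le\sum_j B_j^n$. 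Then \eqref{eq:budget-compat}, with $\mathcal E(\mathbf U_j^n)=\sum_\nu\omega_\nu\eta(\mathbf U_{j,\nu}^n)$ by \eqref{eq:local-entropy}, yields \eqref{eq:global-strong} directly.

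For the periodic statement \eqref{eq:global-strong-periodic}, I would note that with $N$ cells and indices taken modulo $N$, the flux term is a telescoping sum:
\[
\sum_{j=1}^N\bigl(\widehat{\mathcal Q}_{j+\frac12}^n-\widehat{\mathcal Q}_{j-\frac12}^n\bigr)=\widehat{\mathcal Q}_{N+\frac12}^n-\widehat{\mathcal Q}_{\frac12}^n=0 .
\]
The last equality holds because $\widehat{\mathcal Q}_{N+\frac12}^n$ and $\widehat{\mathcal Q}_{\frac12}^n$ are the same interface flux. Inserting this into \eqref{eq:global-strong} gives \eqref{eq:global-strong-periodic}.

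The only point needing care is that the sums must be meaningful. I would state the result for finitely many cells, which is the periodic case. On an infinite mesh, I would assume the sums are finite, for example because the data are compactly supported perturbations of a constant state. Otherwise the argument is a direct concatenation of earlier results, with no real obstacle. It is worth remarking that for the canonical budget of Proposition~\ref{prop:canonical-budget}, \eqref{eq:canonical-budget-global} shows the compatibility holds with equality, so the theorem applies to the Lax--Friedrichs realization.
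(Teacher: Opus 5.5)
Your proposal is correct and matches the paper's proof: sum the local bound \eqref{eq:EPO-local-entropy} over all cells, invoke Assumption~\ref{ass:budget-compat}, and telescope the interface entropy-flux sum in the periodic case. Your caveat that the sums must be finite (automatic in the periodic case) is a sensible clarification that the paper leaves implicit.
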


\begin{proof}
Summing the local bound \eqref{eq:EPO-local-entropy} over all cells gives
\[
\sum_j \sum_{\nu=1}^L \omega_\nu \eta(\mathbf U_{j,\nu}^{n+1})
\le
\sum_j B_j^n.
\]
Assumption~\ref{ass:budget-compat} then yields \eqref{eq:global-strong}. The periodic case follows by telescoping the interface entropy flux sum.
\end{proof}

\begin{remark}
The inequality \eqref{eq:global-strong} is quadrature based: the left-hand side is the discrete nodal entropy rather than the exact cell integral of $\eta(\mathbf U_h)$. This is the appropriate notion for a nodal limiter.
\end{remark}

\begin{corollary}[Several prescribed entropy pairs]\label{cor:multi-entropy}
Let $\{(\eta^{(r)},\mathcal Q^{(r)})\}_{r=1}^M$ be a finite family of convex entropy pairs with corresponding local entropy functionals $\mathcal E_j^{(r)}$ and local budgets $B_j^{n,(r)}$. Assume that, for every $r$, the hypotheses of Theorem~\ref{thm:localEPO} hold with $(\eta,\mathcal Q,\mathcal E_j,B_j^n)$ replaced by $(\eta^{(r)},\mathcal Q^{(r)},\mathcal E_j^{(r)},B_j^{n,(r)})$. Define the combined entropy radius by
\[
\theta_j^{\mathrm{PE,all}}:=\min_{1\le r\le M}\theta_j^{\mathrm{PE},(r)},
\qquad
\theta_j^{\mathrm{EPO,all}}:=\min\{\theta_j^{\mathrm{PE,all}},\theta_j^{\mathrm O}\}.
\]
Then the limited state
\[
\mathbf U_j^{n+1}
=
\mathcal S_{\bar{\mathbf U}_j^\star}\!\bigl(\theta_j^{\mathrm{EPO,all}};\mathbf U_j^\star\bigr)
\]
satisfies \eqref{eq:EPO-mean}, \eqref{eq:EPO-P}, and \eqref{eq:EPO-O}; moreover, for every $r=1,\dots,M$,
\[
\mathcal E_j^{(r)}(\mathbf U_j^{n+1})\le B_j^{n,(r)}.
\]
If, in addition, the budgets are globally compatible for the $r$th entropy pair, then the analogue of Theorem~\ref{thm:global} holds for that pair. In particular, \EPO{} yields fully discrete entropy stability for every entropy pair in the prescribed finite family.
\end{corollary}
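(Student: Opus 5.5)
The proof is a bookkeeping exercise on the single scaling ray. For each $r$, the hypotheses of Theorem~\ref{thm:localEPO} with the $r$th pair give three things: Assumption~\ref{ass:weakP}, the weak budget $\eta^{(r)}(\bar{\mathbf U}_j^\star)\le B_j^{n,(r)}$, and Assumption~\ref{ass:osc}. Proposition~\ref{prop:thetaE-after-P} then yields a well-defined radius $\theta_j^{\mathrm{PE},(r)}\in[0,1]$ with property \eqref{eq:thetaPE-property} for that pair. The admissible set is closed under decreasing $\theta$: for every $0\le\theta\le\theta_j^{\mathrm{PE},(r)}$, all nodes of $\mathcal S_{\bar{\mathbf U}_j^\star}(\theta;\mathbf U_j^\star)$ lie in $G$ and $\mathcal E_j^{(r)}\le B_j^{n,(r)}$. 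This downward closedness is exactly what lets a minimum of radii work.

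The positivity radius $\theta_j^{\mathrm P}$ does not depend on $r$, and neither does $\theta_j^{\mathrm O}$ nor the anchor $\bar{\mathbf U}_j^\star$. Hence $\theta_j^{\mathrm{EPO,all}}$, being a minimum of finitely many numbers in $[0,1]$, satisfies $\theta_j^{\mathrm{EPO,all}}\le\theta_j^{\mathrm{PE},(r)}$ for every $r$ and $\theta_j^{\mathrm{EPO,all}}\le\theta_j^{\mathrm O}$. The conclusions then follow in turn:
\begin{itemize}
\item Mean preservation \eqref{eq:EPO-mean} follows from \eqref{eq:mean-preserving-ray}, since the limited state lies on the ray.
\item Nodal inclusion in $G$, \eqref{eq:EPO-P}, follows from \eqref{eq:thetaPE-property} for any single $r$ (say $r=1$), or directly from Proposition~\ref{prop:thetaP}, since $\theta_j^{\mathrm{EPO,all}}\le\theta_j^{\mathrm{PE},(1)}\le\theta_j^{\mathrm P}$.
\item For each $r$, the entropy bound $\mathcal E_j^{(r)}(\mathbf U_j^{n+1})\le B_j^{n,(r)}$ follows from \eqref{eq:thetaPE-property} for that $r$, evaluated at $\theta=\theta_j^{\mathrm{EPO,all}}$.
\item Oscillation admissibility \eqref{eq:EPO-O} follows from \eqref{eq:thetaO-property} in Proposition~\ref{prop:thetaO-basic}.
\end{itemize}

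For the global statement, fix $r$ such that the budgets $B_j^{n,(r)}$ satisfy Assumption~\ref{ass:budget-compat} with $(\eta^{(r)},\mathcal Q^{(r)})$. Summing the local bounds over $j$ gives $\sum_j\mathcal E_j^{(r)}(\mathbf U_j^{n+1})\le\sum_j B_j^{n,(r)}$. Compatibility then gives the analogue of \eqref{eq:global-strong}, and telescoping gives \eqref{eq:global-strong-periodic} in the periodic case, exactly as in the proof of Theorem~\ref{thm:global}. That proof uses only the local inequality and compatibility, not how the radius was chosen, so it transfers verbatim.

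There is essentially no analytic obstacle; the only point needing care is well-definedness. Each $\theta_j^{\mathrm{PE},(r)}$ requires its own entropy $\eta^{(r)}$ to be evaluated only on $G\subset D_{\eta^{(r)}}$. This is guaranteed because every $\theta_j^{\mathrm{PE},(r)}$ is built on the common positivity-limited state $\mathbf U_j^{\mathrm P}$, as in Remark~\ref{rem:multi-entropy-prelim}. As an optional consistency check, one can note that $\theta_j^{\mathrm{EPO,all}}$ is the maximal radial projection into $\bigcap_r\mathcal A_j^{(r)}$, which is closed and convex by Theorem~\ref{thm:A-star}. The argument for Theorem~\ref{thm:min} applies unchanged, because a $\theta$ is admissible for the intersection exactly when it is admissible for each constraint separately.
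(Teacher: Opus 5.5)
Your proposal is correct and follows essentially the same route as the paper's proof. Both use the downward closedness of each interval $[0,\theta_j^{\mathrm{PE},(r)}]$ from \eqref{eq:thetaPE-property}, so that the minimum satisfies every entropy constraint together with the oscillation constraint, and then apply the arguments of Theorems~\ref{thm:localEPO} and \ref{thm:global} pair by pair; your closing check that $\theta_j^{\mathrm{EPO,all}}$ is the maximal radial projection into $\bigcap_r\mathcal A_j^{(r)}$ is valid but optional.
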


\begin{proof}
For each $r$, the definition of $\theta_j^{\mathrm{PE},(r)}$ implies that every $0\le \theta\le \theta_j^{\mathrm{PE},(r)}$ satisfies the $r$th local entropy constraint together with nodal admissibility in $G$. Therefore every $0\le \theta\le \theta_j^{\mathrm{PE,all}}$ satisfies all prescribed entropy constraints simultaneously. Intersecting once more with the oscillation constraint gives the radius $\theta_j^{\mathrm{EPO,all}}$. The conclusions then follow from the same arguments as in Theorems~\ref{thm:localEPO} and \ref{thm:global}, applied separately to each entropy pair.
\end{proof}

\begin{remark}[Relation to classical entropy-stable discretizations]
The entropy part of EPO should be distinguished from the classical Tadmor/SBP line of work \cite{Tadmor1987,Tadmor2003,FisherCarpenter2013,Gassner2013,CarpenterFisherNielsenFrankel2014,SvardNordstrom2014,DelReyFernandezHickenZingg2014,ChenShu2017,ChanFernandezCarpenter2019}. In those constructions the entropy inequality is built into the discrete derivative, split form, or numerical flux. Here the starting point is different: a candidate FV/DG update is given, a weak entropy budget is verified for its updated cell average, and a local scaling then enforces a strong quadrature-based entropy inequality. The two viewpoints are complementary, and an entropy-stable base discretization may serve as the source of a weak or passive entropy input for EPO.
\end{remark}

\begin{remark}
If the auxiliary direct entropy radii are available on the whole candidate ray for all prescribed entropy pairs, then
\[
\theta_j^{\mathrm{PE,all}}
=
\min\bigl\{\theta_j^{\mathrm P},\theta_j^{\mathrm E,(1)},\dots,\theta_j^{\mathrm E,(M)}\bigr\}.
\]
Thus one obtains one entropy radius per entropy pair and then takes the minimum.
\end{remark}

\subsection{Conservation}

Although mean preservation is obvious from the scaling construction, it is worth recording its consequence for conservative schemes.

\begin{corollary}[Conservation inheritance]\label{cor:conservation}
Suppose the underlying candidate scheme updates cell averages conservatively, i.e.,
\begin{equation}\label{eq:candidate-conservative}
\bar{\mathbf U}_j^\star
=
\bar{\mathbf U}_j^n
-
\lambda\bigl(\widehat{\mathbf F}_{j+\frac12}^n-\widehat{\mathbf F}_{j-\frac12}^n\bigr)
\end{equation}
for some numerical fluxes $\widehat{\mathbf F}_{j\pm\frac12}^n$. Then the EPO-limited solution satisfies the same conservative cell-average update:
\begin{equation}\label{eq:EPO-conservative}
\bar{\mathbf U}_j^{n+1}
=
\bar{\mathbf U}_j^\star
=
\bar{\mathbf U}_j^n
-
\lambda\bigl(\widehat{\mathbf F}_{j+\frac12}^n-\widehat{\mathbf F}_{j-\frac12}^n\bigr).
\end{equation}
\end{corollary}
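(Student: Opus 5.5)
The statement reduces to a single identity: the limited cell average coincides with the candidate average. Once that is in hand, the conservative form follows by substituting the assumed update \eqref{eq:candidate-conservative}.

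First I fix the meaning of $\bar{\mathbf U}_j^{n+1}$. For the limited state $\mathbf U_j^{n+1}$, I define it through the positive quadrature identity \eqref{eq:cellavg-quad}:
\[
\bar{\mathbf U}_j^{n+1}=\sum_{\nu=1}^L\omega_\nu \mathbf U_{j,\nu}^{n+1}.
\]
This is legitimate because the limited state is a polynomial (or reconstruction) of the same class as the candidate. It is obtained by the affine map $(1-\theta_j^{\mathrm{EPO}})\bar{\mathbf U}_j^\star+\theta_j^{\mathrm{EPO}}\mathbf U_j^\star(x)$, and that map leaves the representation space invariant because constants belong to it. Its exact cell integral is therefore $(1-\theta)\bar{\mathbf U}_j^\star+\theta\bar{\mathbf U}_j^\star=\bar{\mathbf U}_j^\star$, by linearity of the integral. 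This agrees with part (a) of Proposition~\ref{prop:cos-basic}, whose argument applies verbatim with $\lambda_j^{\mathrm{COS}}$ replaced by $\theta_j^{\mathrm{EPO}}$. The quadrature version gives the same value.

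Second, I invoke the mean-preservation property \eqref{eq:EPO-mean} of Theorem~\ref{thm:localEPO}, which itself is just \eqref{eq:mean-preserving-ray} applied with $\theta=\theta_j^{\mathrm{EPO}}\in[0,1]$. This gives
\[
\bar{\mathbf U}_j^{n+1}=\bar{\mathbf U}_j^\star.
\]
Substituting the hypothesis \eqref{eq:candidate-conservative} then yields \eqref{eq:EPO-conservative}. None of this depends on which radius realizes the minimum, so the same argument covers the multi-entropy radius $\theta_j^{\mathrm{EPO,all}}$ of Corollary~\ref{cor:multi-entropy}. As a closing remark, I would note that summing over $j$ telescopes the fluxes, so $\sum_j\bar{\mathbf U}_j^{n+1}$ is conserved under periodic boundary conditions.

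There is no real obstacle. The only delicate point is the first step: making sure the cell average of the limited state, whether defined by exact integration or by the quadrature \eqref{eq:cellavg-quad}, is the same quantity as the candidate average. I settle this by the affine-invariance of the representation space noted above.
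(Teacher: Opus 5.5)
Your proposal is correct and matches the paper's proof: the mean-preservation property \eqref{eq:EPO-mean} gives $\bar{\mathbf U}_j^{n+1}=\bar{\mathbf U}_j^\star$, and substituting \eqref{eq:candidate-conservative} finishes the argument. Your extra check that the exact-integral and quadrature definitions of the limited average coincide is a harmless clarification that the paper leaves implicit.
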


\begin{proof}
Equation \eqref{eq:EPO-mean} shows that the cell average of the limited nodal array equals the candidate average $\bar{\mathbf U}_j^\star$. Substituting \eqref{eq:candidate-conservative} completes the proof.
\end{proof}

\subsection{Interpretation}

EPO combines several local constraints on the same cell-average-anchored ray. The admissible-state constraint gives the radius $\theta_j^{\mathrm P}$. Each prescribed entropy pair gives one positivity-first entropy radius $\theta_j^{\mathrm{PE},(r)}$; for one entropy pair this is simply $\theta_j^{\mathrm{PE}}$. The oscillation set gives the radius $\theta_j^{\mathrm O}$. The final limiter uses the minimum of the relevant radii. In this sense, the method is a single radial projection rather than a sequence of unrelated corrections.

\section{Multistage and multistep high-order time discretizations}\label{sec:time}

This section discusses the EPO framework with high-order SSP time discretizations. 
 For classical explicit SSP Runge--Kutta methods, the following fully rigorous theory yields a \emph{stagewise} weak entropy budget and therefore a stagewise EPO realization. In practical implementations, it is often convenient to apply the positivity and entropy modules stage by stage while deferring the O-module to the end of the time step, since the O-module does not enter the stagewise entropy-budget recursion. If one wishes to retain the designed high-order temporal accuracy while applying the entropy module only once per step, a rigorous approach is to use an SSP multistep method, whose end-of-step candidate average is already a convex combination of forward-Euler building blocks.

\subsection{Stagewise formulation}

Let an explicit multistage method generate, for each stage $\ell$, candidate nodal values
\[
\mathbf U_{j,\nu}^{(\ell),\star},
\qquad
\nu=1,\dots,L,
\]
with candidate average
\[
\bar{\mathbf U}_j^{(\ell),\star}
=
\sum_{\nu=1}^L \omega_\nu \mathbf U_{j,\nu}^{(\ell),\star}.
\]
Assume that a stagewise admissible set $G$, stagewise budgets $B_j^{(\ell)}$, and stagewise convex oscillation-suppressing sets $\mathcal O_{j}^{(\ell),\star}$ are available. Then the entire EPO construction above applies at stage $\ell$ after replacing
\[
\mathbf U_j^\star,\ \bar{\mathbf U}_j^\star,\ B_j^n,\ \mathcal O_j^\star
\]
by
\[
\mathbf U_j^{(\ell),\star},\ \bar{\mathbf U}_j^{(\ell),\star},\ B_j^{(\ell)},\ \mathcal O_{j}^{(\ell),\star}.
\]

\begin{proposition}[Stagewise EPO]\label{prop:stagewise}
Assume that, at a given stage $\ell$, the candidate stage values satisfy the analogues of Assumptions~\ref{ass:weakP}, \ref{ass:weakE}, and \ref{ass:osc}. Then the EPO-limited stage values satisfy the analogues of Theorem~\ref{thm:localEPO}. If, in addition, the stage budgets satisfy the analogue of Assumption~\ref{ass:budget-compat}, then the analogue of Theorem~\ref{thm:global} holds at that stage.
\end{proposition}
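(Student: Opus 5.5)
The plan is to observe that nothing in the proofs of Sections~\ref{sec:entropy_limiter} and~\ref{sec:main} depends on the specific origin of the candidate data. Those arguments used only the following ingredients: the candidate nodal array, its quadrature average, the convex closed set $G$, a scalar budget, and a closed convex oscillation set containing the constant state. So we fix a stage $\ell$ and perform the substitution
\[
(\mathbf U_j^\star,\bar{\mathbf U}_j^\star,B_j^n,\mathcal O_j^\star)\mapsto(\mathbf U_j^{(\ell),\star},\bar{\mathbf U}_j^{(\ell),\star},B_j^{(\ell)},\mathcal O_j^{(\ell),\star}).
\]
We then check, in order, that each earlier result survives it verbatim.

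\textbf{Radii at stage $\ell$.} First, the stagewise analogue of Assumption~\ref{ass:weakP}, namely $\bar{\mathbf U}_j^{(\ell),\star}\in G$, gives the stage geometric radius through Proposition~\ref{prop:thetaP}. Adding the analogue of Assumption~\ref{ass:weakE}, $\eta(\bar{\mathbf U}_j^{(\ell),\star})\le B_j^{(\ell)}$, Proposition~\ref{prop:thetaE-after-P} yields the positivity-first radius $\theta_j^{\mathrm{PE},(\ell)}$ together with property \eqref{eq:thetaPE-property}. The analogue of Assumption~\ref{ass:osc} gives $\theta_j^{\mathrm O,(\ell)}$ and \eqref{eq:thetaO-property} through Proposition~\ref{prop:thetaO-basic}. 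With these three radii in hand, Theorem~\ref{thm:min} applies unchanged and identifies the stage EPO radius as $\min\{\theta_j^{\mathrm{PE},(\ell)},\theta_j^{\mathrm O,(\ell)}\}$.

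\textbf{Local and global conclusions.} The proof of Theorem~\ref{thm:localEPO} then transfers line by line. Mean preservation comes from \eqref{eq:mean-preserving-ray}, nodal membership in $G$ and the local entropy bound $\mathcal E(\mathbf U_j^{(\ell)})\le B_j^{(\ell)}$ come from \eqref{eq:thetaPE-property}, and oscillation admissibility comes from \eqref{eq:thetaO-property}. For the global statement, we sum the local stage bounds over $j$. The stagewise analogue of \eqref{eq:budget-compat} then gives the analogue of \eqref{eq:global-strong}, with the stage nodal entropy of the stage input on the right-hand side and the stage entropy fluxes. In the periodic case the flux sum telescopes, exactly as in Theorem~\ref{thm:global}.

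\textbf{Main point to verify.} The only step requiring genuine care is confirming that no hidden time-level-$n$ structure was used in the earlier proofs. In particular, Assumption~\ref{ass:budget-compat} is phrased in terms of $\mathcal E(\mathbf U_j^n)$ and $\lambda$, so the analogue at stage $\ell$ must be stated with the stage input state and its effective step ratio; the summation argument itself is purely algebraic. I expect no real obstacle here, since every earlier proof used only convexity, closedness, and the weighted-mean identity, and none of these references the time index.
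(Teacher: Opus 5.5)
Your proposal is correct and matches the paper's argument: the paper's proof simply observes that the proofs of Theorems~\ref{thm:localEPO} and~\ref{thm:global} are cellwise and use only the current candidate stage values and budgets. Your stage-by-stage substitution check is a more detailed version of that same reasoning. Your caveat is also sound: the stagewise analogue of Assumption~\ref{ass:budget-compat} must be written with the stage inputs and effective step ratios, as in the stage budgets of Theorem~\ref{thm:ssp-budget}.
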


\begin{proof}
The proof is identical to that of Theorems~\ref{thm:localEPO} and \ref{thm:global}; all arguments are cellwise and depend only on the current candidate stage values and budgets.
\end{proof}

\subsection{Weak entropy stability for explicit SSP Runge--Kutta methods}

Write the explicit $s$-stage SSP Runge--Kutta method in Shu--Osher form
\begin{equation}\label{eq:ssp-shu-osher}
\mathbf U^{(0)}=\mathbf U^n,
\qquad
\mathbf U^{(i),\star}
=
\sum_{k=0}^{i-1}\alpha_{ik}\Bigl(\mathbf U^{(k)}+\gamma_{ik}\Delta t\,\mathcal L(\mathbf U^{(k)})\Bigr),
\qquad i=1,\dots,s,
\end{equation}
where $\alpha_{ik}\ge0$, $\gamma_{ik}\ge0$, and $\sum_{k=0}^{i-1}\alpha_{ik}=1$ for each $i$. Here $\mathbf U^{(k)}$ denotes the post-limited stage state that serves as the input to later stages, while $\mathbf U^{(i),\star}$ is the new stage candidate before stage-$i$ EPO limiting. It is also convenient to introduce
\[
\beta_{ik}:=\alpha_{ik}\gamma_{ik},
\qquad
\mathcal C_{\mathrm{SSP}}
:=
\min_{\substack{1\le i\le s,\ 0\le k\le i-1\\ \beta_{ik}>0}}
\frac{\alpha_{ik}}{\beta_{ik}}
=
\min_{\substack{1\le i\le s,\ 0\le k\le i-1\\ \gamma_{ik}>0}}
\frac{1}{\gamma_{ik}},
\]
so that $\Delta t\le \mathcal C_{\mathrm{SSP}}\Delta t_{\mathrm{FE}}$ implies $\gamma_{ik}\Delta t\le \Delta t_{\mathrm{FE}}$ for every forward-Euler substep appearing in \eqref{eq:ssp-shu-osher}.

To state the stage budgets compactly, define for any stage state $\mathbf V$ the flux differences
\[
\Delta\widehat{\mathbf F}_j(\mathbf V)
:=
\widehat{\mathbf F}_{j+\frac12}(\mathbf V)-\widehat{\mathbf F}_{j-\frac12}(\mathbf V),
\qquad
\Delta\widehat{\mathcal Q}_j(\mathbf V)
:=
\widehat{\mathcal Q}_{j+\frac12}(\mathbf V)-\widehat{\mathcal Q}_{j-\frac12}(\mathbf V),
\]
and let
\begin{equation}\label{eq:fe-stage-average}
\bar{\mathbf U}_{j}^{\mathrm{FE}}(\mathbf V;\gamma)
:=
\bar{\mathbf V}_j-\gamma\lambda\,\Delta\widehat{\mathbf F}_j(\mathbf V),
\qquad \lambda:=\frac{\Delta t}{\Delta x},
\end{equation}
be the candidate cell average produced by a forward-Euler substep of effective size $\gamma\Delta t$ started from $\mathbf V$.

\begin{theorem}[Weak entropy property for explicit SSP Runge--Kutta methods]\label{thm:ssp-budget}
Assume that every forward-Euler substep of the form \eqref{eq:fe-stage-average} with $\gamma\Delta t\le \Delta t_{\mathrm{FE}}$ satisfies the weak entropy stability estimate
\begin{equation}\label{eq:fe-stage-entropy}
\eta\!\bigl(\bar{\mathbf U}_{j}^{\mathrm{FE}}(\mathbf V;\gamma)\bigr)
\le
\mathcal E_j(\mathbf V_j)-\gamma\lambda\,\Delta\widehat{\mathcal Q}_j(\mathbf V)
\end{equation}
for every cell $j$ and every admissible stage state $\mathbf V$. Assume also that the same forward-Euler substeps satisfy the weak geometric budget
\begin{equation}\label{eq:fe-stage-G}
\bar{\mathbf U}_{j}^{\mathrm{FE}}(\mathbf V;\gamma)\in G.
\end{equation}
If the SSP time step satisfies $\Delta t\le \mathcal C_{\mathrm{SSP}}\Delta t_{\mathrm{FE}}$, then every stage candidate average in \eqref{eq:ssp-shu-osher} satisfies the weak entropy stability
\begin{equation}\label{eq:ssp-stage-weakE}
\eta\!\bigl(\bar{\mathbf U}_j^{(i),\star}\bigr)
\le
B_j^{(i)},
\qquad i=1,\dots,s,
\end{equation}
with stagewise entropy budget
\begin{align}
B_j^{(i)}
&:=
\sum_{k=0}^{i-1}\alpha_{ik}
\Bigl(
\mathcal E_j(\mathbf U_j^{(k)})-\gamma_{ik}\lambda\,\Delta\widehat{\mathcal Q}_j(\mathbf U^{(k)})
\Bigr)
\label{eq:ssp-stage-budget-general}\\
&=
\sum_{k=0}^{i-1}\alpha_{ik}\mathcal E_j(\mathbf U_j^{(k)})
-\lambda\sum_{k=0}^{i-1}\beta_{ik}\,\Delta\widehat{\mathcal Q}_j(\mathbf U^{(k)}).
\notag
\end{align}
Moreover,
\begin{equation}\label{eq:ssp-stage-G}
\bar{\mathbf U}_j^{(i),\star}\in G
\qquad \text{for every stage }i.
\end{equation}
\end{theorem}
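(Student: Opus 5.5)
The plan is to decompose each stage candidate average into forward-Euler building blocks and then use convexity of $\eta$ and of $G$. Applying the cell-averaging operation to \eqref{eq:ssp-shu-osher} (it is linear and commutes with the Shu--Osher combination), and noting that the spatial operator's cell-average component is $-\Delta\widehat{\mathbf F}_j(\mathbf V)/\Delta x$, gives
\[
\bar{\mathbf U}_j^{(i),\star}
=
\sum_{k=0}^{i-1}\alpha_{ik}\Bigl(\bar{\mathbf U}_j^{(k)}-\gamma_{ik}\lambda\,\Delta\widehat{\mathbf F}_j(\mathbf U^{(k)})\Bigr)
=
\sum_{k=0}^{i-1}\alpha_{ik}\,\bar{\mathbf U}_{j}^{\mathrm{FE}}(\mathbf U^{(k)};\gamma_{ik}).
\]
The weights $\alpha_{ik}$ are nonnegative and sum to one, so this is a convex combination of forward-Euler candidate averages started from the post-limited stage states $\mathbf U^{(k)}$.

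Next I would verify that each building block lies in the regime where the hypotheses apply. There are two things to check. First, the effective step satisfies $\gamma_{ik}\Delta t\le\Delta t_{\mathrm{FE}}$. This is immediate from $\Delta t\le\mathcal C_{\mathrm{SSP}}\Delta t_{\mathrm{FE}}$ whenever $\gamma_{ik}>0$, and trivial when $\gamma_{ik}=0$, because then the block is just $\bar{\mathbf U}_j^{(k)}$ and the estimate reduces to Jensen \eqref{eq:jensen-local}. Second, each $\mathbf U^{(k)}$ must be an admissible stage state. I would prove this by induction on $i$. The base case is $\mathbf U^{(0)}=\mathbf U^n$, assumed admissible. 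For the inductive step, once \eqref{eq:ssp-stage-G} and \eqref{eq:ssp-stage-weakE} hold at stage $k$, Proposition~\ref{prop:stagewise} (i.e., Theorem~\ref{thm:localEPO} at that stage) shows that the limited $\mathbf U^{(k)}$ has nodal values in $G$. This makes it admissible as input for later stages.

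With these in place, the conclusion follows from convexity. By \eqref{eq:fe-stage-G} each block lies in $G$, and convexity of $G$ gives \eqref{eq:ssp-stage-G}. Since $G\subset D_\eta$, Jensen's inequality for the convex $\eta$ gives
\[
\eta\bigl(\bar{\mathbf U}_j^{(i),\star}\bigr)\le\sum_{k}\alpha_{ik}\,\eta\bigl(\bar{\mathbf U}_{j}^{\mathrm{FE}}(\mathbf U^{(k)};\gamma_{ik})\bigr).
\]
Applying \eqref{eq:fe-stage-entropy} termwise then yields exactly $B_j^{(i)}$ in \eqref{eq:ssp-stage-budget-general}. The second expression for the budget follows from $\beta_{ik}=\alpha_{ik}\gamma_{ik}$.

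The only delicate point is the admissibility induction. It requires interleaving the present theorem with the stagewise EPO limiting: the weak properties at stage $k$ produce an admissible limited state, which in turn feeds the weak properties at stage $i>k$. The algebraic steps are routine.
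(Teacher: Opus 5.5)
Your proposal is correct and follows the paper's proof. Both write $\bar{\mathbf U}_j^{(i),\star}$ as the $\alpha_{ik}$-weighted convex combination of forward-Euler averages $\bar{\mathbf U}_j^{\mathrm{FE}}(\mathbf U^{(k)};\gamma_{ik})$, use $\gamma_{ik}\Delta t\le\Delta t_{\mathrm{FE}}$, and conclude by Jensen's inequality, the termwise forward-Euler estimate, and convexity of $G$. Your explicit induction, showing that the post-limited stage inputs $\mathbf U^{(k)}$ are admissible because of the stagewise limiting, spells out a point the paper leaves implicit; it is not a different argument.
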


\begin{proof}
Fix a stage $i\in\{1,\dots,s\}$ and a cell $j$. For each $k\in\{0,\dots,i-1\}$ with $\alpha_{ik}>0$, define the forward-Euler substep average
\[
\bar{\mathbf W}_{j}^{(ik)}
:=
\bar{\mathbf U}_{j}^{\mathrm{FE}}\bigl(\mathbf U^{(k)};\gamma_{ik}\bigr)
=
\bar{\mathbf U}_{j}^{(k)}-\gamma_{ik}\lambda\,\Delta\widehat{\mathbf F}_j\bigl(\mathbf U^{(k)}\bigr).
\]
Because $\Delta t\le \mathcal C_{\mathrm{SSP}}\Delta t_{\mathrm{FE}}$, every effective substep size satisfies $\gamma_{ik}\Delta t\le \Delta t_{\mathrm{FE}}$. Therefore the weak entropy estimate \eqref{eq:fe-stage-entropy} and the weak geometric budget \eqref{eq:fe-stage-G} are valid for every $\bar{\mathbf W}_{j}^{(ik)}$.

Next, take the cell average of the stage formula \eqref{eq:ssp-shu-osher}. Since averaging is linear and the spatial operator enters conservatively at the level of cell averages, one obtains
\[
\bar{\mathbf U}_j^{(i),\star}
=
\sum_{k=0}^{i-1}\alpha_{ik}
\Bigl(
\bar{\mathbf U}_j^{(k)}-\gamma_{ik}\lambda\,\Delta\widehat{\mathbf F}_j(\mathbf U^{(k)})
\Bigr)
=
\sum_{k=0}^{i-1}\alpha_{ik}\,\bar{\mathbf W}_{j}^{(ik)}.
\]
Because the coefficients $\alpha_{ik}$ are nonnegative and sum to one, this is a convex combination. Convexity of $\eta$ therefore gives
\[
\eta\!\bigl(\bar{\mathbf U}_j^{(i),\star}\bigr)
\le
\sum_{k=0}^{i-1}\alpha_{ik}\,\eta\!\bigl(\bar{\mathbf W}_{j}^{(ik)}\bigr).
\]
Applying \eqref{eq:fe-stage-entropy} to each term on the right-hand side yields
\[
\eta\!\bigl(\bar{\mathbf U}_j^{(i),\star}\bigr)
\le
\sum_{k=0}^{i-1}\alpha_{ik}
\Bigl(
\mathcal E_j(\mathbf U_j^{(k)})-\gamma_{ik}\lambda\,\Delta\widehat{\mathcal Q}_j(\mathbf U^{(k)})
\Bigr)
=
B_j^{(i)},
\]
which is exactly \eqref{eq:ssp-stage-weakE}. Rewriting $\alpha_{ik}\gamma_{ik}$ as $\beta_{ik}$ gives the second form in \eqref{eq:ssp-stage-budget-general}.

Finally, \eqref{eq:fe-stage-G} gives $\bar{\mathbf W}_{j}^{(ik)}\in G$ for every $k$. Since $\bar{\mathbf U}_j^{(i),\star}$ is a convex combination of these states and $G$ is convex, \eqref{eq:ssp-stage-G} follows.
\end{proof}

Theorem~\ref{thm:ssp-budget} shows that each SSP stage has its own local entropy budget $B_j^{(i)}$, inherited from the forward-Euler building blocks by convexity. One then feeds $B_j^{(i)}$ into Proposition~\ref{prop:stagewise} and applies the local modules stage by stage. If several entropy pairs are prescribed, one repeats \eqref{eq:ssp-stage-budget-general} for each pair, computes the corresponding stagewise radii $\theta_j^{\mathrm{PE},(r)}$, and uses their minimum at that stage.

\begin{example}[The classical third-order SSPRK scheme]\label{ex:ssprk33-budgets}
For the standard three-stage third-order SSPRK method,
\begin{align}
\mathbf U^{(1),\star}
&=
\mathbf U^{(0)}+\Delta t\,\mathcal L(\mathbf U^{(0)}),
\label{eq:ssprk33-1}\\
\mathbf U^{(2),\star}
&=
\frac34\mathbf U^{(0)}+\frac14\Bigl(\mathbf U^{(1)}+\Delta t\,\mathcal L(\mathbf U^{(1)})\Bigr),
\label{eq:ssprk33-2}\\
\mathbf U^{(3),\star}
&=
\frac13\mathbf U^{(0)}+\frac23\Bigl(\mathbf U^{(2)}+\Delta t\,\mathcal L(\mathbf U^{(2)})\Bigr),
\label{eq:ssprk33-3}
\end{align}
where $\mathbf U^{(0)}=\mathbf U^n$ and $\mathbf U^{(1)}$, $\mathbf U^{(2)}$ are the post-limited stage states that feed the next stage. The corresponding stagewise weak entropy budgets from \eqref{eq:ssp-stage-budget-general} are
\begin{align}
B_j^{(1)}
&=
\mathcal E_j(\mathbf U_j^{(0)})-\lambda\,\Delta\widehat{\mathcal Q}_j(\mathbf U^{(0)}),
\label{eq:ssprk33-B1}\\
B_j^{(2)}
&=
\frac34\,\mathcal E_j(\mathbf U_j^{(0)})
+\frac14\Bigl(\mathcal E_j(\mathbf U_j^{(1)})-\lambda\,\Delta\widehat{\mathcal Q}_j(\mathbf U^{(1)})\Bigr),
\label{eq:ssprk33-B2}\\
B_j^{(3)}
&=
\frac13\,\mathcal E_j(\mathbf U_j^{(0)})
+\frac23\Bigl(\mathcal E_j(\mathbf U_j^{(2)})-\lambda\,\Delta\widehat{\mathcal Q}_j(\mathbf U^{(2)})\Bigr).
\label{eq:ssprk33-B3}
\end{align}
These are exactly the stagewise budgets that should be passed to the entropy module at stages $1$, $2$, and $3$. After the stage-$i$ EPO projection one obtains the strong nodal entropy statement
\[
\mathcal E_j(\mathbf U_j^{(i)})\le B_j^{(i)},
\]
which then becomes part of the input data for the next SSP stage.
\end{example}

\subsection{Limitations of SSPRK}

The fully rigorous SSPRK realization is therefore the following one: at every Runge--Kutta stage, first form the stage candidate average, then apply positivity and entropy limiting using the stage budget $B_j^{(i)}$, and finally, if desired, apply the O-module either at that same stage or after the last stage. Two points deserve to be stated explicitly.

First, \emph{one-step entropy limiter is not justified by the SSPRK proof above}. The reason is visible already in the third-order example \eqref{eq:ssprk33-B3}: the final-stage weak entropy budget depends on $\mathcal E_j(\mathbf U_j^{(2)})$. If the entropy module is not enforced at stage $2$, then this quantity is not controlled by a certified strong entropy inequality, and the recursion of stage budgets does not close. Thus the current theory proves stagewise SSPRK-EPO, but it does not prove that one may run a classical SSPRK method unchanged and apply the entropy limiter only once at the end of the time step.

Second, because the entropy module is inserted after every intermediate SSPRK stage, the resulting fully limited method should be regarded as only \emph{first-order in time once the entropy limiter is active}.   But as a theorem-level statement for the nonlinear limited scheme, the stagewise SSPRK realization is only a certified forward-Euler-type weak-to-strong closure at each stage.

\begin{proposition}[Stagewise P/E and endpoint-only O]\label{prop:stagewise-PE-endO}
Assume that, at every SSPRK stage, one applies the positivity and entropy modules using the stage budget $B_j^{(i)}$, thereby producing stage states that satisfy the analogue of Theorem~\ref{thm:localEPO} without the oscillation constraint. After the final stage $s$, let $\mathbf U_j^{(s),\mathrm{PE}}$ denote the resulting positivity--entropy limited state, and then apply one additional oscillation scaling
\[
\mathbf U_j^{n+1}
:=
\mathcal S_{\bar{\mathbf U}_j^{(s),\star}}\!\bigl(\theta_j^{\mathrm O,\mathrm{end}};\mathbf U_j^{(s),\mathrm{PE}}\bigr),
\qquad 0\le \theta_j^{\mathrm O,\mathrm{end}}\le 1,
\]
with any admissible endpoint oscillation radius $\theta_j^{\mathrm O,\mathrm{end}}$. Then the final state still preserves the stage-$s$ cell average, remains nodally in $G$, satisfies the strong entropy inequality
\[
\mathcal E_j(\mathbf U_j^{n+1})\le B_j^{(s)},
\]
and belongs to the endpoint oscillation set.
\end{proposition}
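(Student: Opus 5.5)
The plan is to treat the endpoint oscillation scaling as one more point on the same cell-average-anchored ray. Each property will then follow from convexity of the three constraint sets: nodal membership in $G$, the entropy sublevel set, and the oscillation set.

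\emph{Step 1: the stage-$s$ P/E state.} After the final stage, the positivity--entropy limited state has the form
\[
\mathbf U_j^{(s),\mathrm{PE}}=\mathcal S_{\bar{\mathbf U}_j^{(s),\star}}\bigl(\theta;\mathbf U_j^{(s),\star}\bigr)
\]
for some $\theta\le\theta_j^{\mathrm{PE}}$. By Proposition~\ref{prop:stagewise} and Theorem~\ref{thm:ssp-budget}, this state satisfies the analogue of Theorem~\ref{thm:localEPO} without the oscillation constraint. Concretely:
\begin{itemize}
\item its weighted nodal mean is $\bar{\mathbf U}_j^{(s),\star}$;
\item all its nodes lie in $G$;
\item $\mathcal E_j(\mathbf U_j^{(s),\mathrm{PE}})\le B_j^{(s)}$.
\end{itemize}
The hypotheses of Theorem~\ref{thm:A-star} are also met: $\bar{\mathbf U}_j^{(s),\star}\in G$ by \eqref{eq:ssp-stage-G}, and $\eta(\bar{\mathbf U}_j^{(s),\star})\le B_j^{(s)}$ by \eqref{eq:ssp-stage-weakE}.

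\emph{Step 2: mean, positivity, and entropy for the final state.} The final state is
\[
\mathbf U_j^{n+1}=(1-\theta_j^{\mathrm O,\mathrm{end}})\,\mathbf C(\bar{\mathbf U}_j^{(s),\star})+\theta_j^{\mathrm O,\mathrm{end}}\,\mathbf U_j^{(s),\mathrm{PE}},
\]
where $\mathbf C(\bar{\mathbf U})$ is the constant nodal array. I will prove the three properties directly from this convex combination.
\begin{itemize}
\item \emph{Mean preservation} is \eqref{eq:mean-preserving-ray} applied with anchor $\bar{\mathbf U}_j^{(s),\star}$. The anchor is legitimate because it equals the weighted mean of $\mathbf U_j^{(s),\mathrm{PE}}$.
\item \emph{Nodal admissibility} holds because each node is a convex combination of $\bar{\mathbf U}_j^{(s),\star}\in G$ and a node of $\mathbf U_j^{(s),\mathrm{PE}}$, which lies in $G$; since $G$ is convex, the result lies in $G$.
\item \emph{The entropy bound} follows from convexity of $\mathcal E_j$, as in Proposition~\ref{prop:cos-basic}(d):
\[
\mathcal E_j(\mathbf U_j^{n+1})\le(1-\theta_j^{\mathrm O,\mathrm{end}})\,\eta(\bar{\mathbf U}_j^{(s),\star})+\theta_j^{\mathrm O,\mathrm{end}}\,\mathcal E_j(\mathbf U_j^{(s),\mathrm{PE}})\le B_j^{(s)}.
\]
All evaluations here are well defined because every node lies in $G\subset D_\eta$. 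Equivalently, one can apply \eqref{eq:A-star-property} to the set $\mathcal A_j$ with the oscillation constraint dropped.
\end{itemize}

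\emph{Step 3: the oscillation property.} This is the step where the meaning of ``admissible endpoint oscillation radius'' has to be pinned down. I will take it to mean $\theta_j^{\mathrm O,\mathrm{end}}\le\theta_j^{\mathrm O}$, where $\theta_j^{\mathrm O}$ is defined by \eqref{eq:thetaO} with candidate $\mathbf U_j^{(s),\mathrm{PE}}$ and a closed convex endpoint set $\mathcal O_j^{\mathrm{end}}$ that contains $\bar{\mathbf U}_j^{(s),\star}$. Proposition~\ref{prop:thetaO-basic} then gives $\mathbf U_j^{n+1}\in\mathcal O_j^{\mathrm{end}}$.

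Two points need care here. First, the oscillation scaling must be anchored at the same average as the P/E state, which Step 1 guarantees. Second, the composition identity \eqref{eq:ray-composition} shows that the final state still lies on the original stage-$s$ ray. Its total radius is $\theta_j^{\mathrm O,\mathrm{end}}$ times the P/E radius, which is at most $\theta_j^{\mathrm{PE}}$, so the conclusions of Step 2 also follow directly from \eqref{eq:thetaPE-property}.
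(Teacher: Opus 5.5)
Your proposal is correct and takes essentially the same route as the paper. Nodal admissibility comes from convexity of $G$ along the segment from the constant state to $\mathbf U_j^{(s),\mathrm{PE}}$, the entropy bound comes from convexity of the quadrature entropy along that same segment, and oscillation membership holds by the choice of $\theta_j^{\mathrm O,\mathrm{end}}$. The only minor difference is that you combine the two endpoint bounds $\eta(\bar{\mathbf U}_j^{(s),\star})\le B_j^{(s)}$ and $\mathcal E_j(\mathbf U_j^{(s),\mathrm{PE}})\le B_j^{(s)}$, whereas the paper uses the monotonicity in Proposition~\ref{prop:Psi}; this gives the slightly stronger $\mathcal E_j(\mathbf U_j^{n+1})\le\mathcal E_j(\mathbf U_j^{(s),\mathrm{PE}})$, and either suffices.
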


\begin{proof}
The stagewise P/E hypothesis gives $\mathbf U_j^{(s),\mathrm{PE}}\in G^L$ together with $\mathcal E_j(\mathbf U_j^{(s),\mathrm{PE}})\le B_j^{(s)}$. Since the entire segment from the constant state $\bar{\mathbf U}_j^{(s),\star}$ to $\mathbf U_j^{(s),\mathrm{PE}}$ lies in $G\subset D_\eta$, Proposition~\ref{prop:Psi} applies to this endpoint candidate. Hence any further scaling toward the constant state can only decrease the quadrature entropy. Nodal membership in $G$ follows by convexity, and oscillation admissibility is built into the choice of $\theta_j^{\mathrm O,\mathrm{end}}$.
\end{proof}

Proposition~\ref{prop:stagewise-PE-endO} resolves the apparent tension between the theorem-level stagewise SSPRK analysis and the common implementation habit of applying the O-module only at the end of the RK step. The P- and E-modules are the pieces that enter the stage-budget recursion; the O-module may be postponed because a final extra scaling cannot destroy the already-established positivity and entropy bounds.

\subsection{An end-of-step high-order realization via SSP multistep methods}

To keep the designed high-order time accuracy while using the entropy module only once per step, a cleaner rigorous realization is to use an SSP multistep method:
\begin{equation}\label{eq:ssp-multistep}
\mathbf U^{n+1,\star}
=
\sum_{r=0}^{k-1}\alpha_r\Bigl(\mathbf U^{n-r}+\gamma_r\Delta t\,\mathcal L(\mathbf U^{n-r})\Bigr),
\qquad
\alpha_r\ge 0,\ \gamma_r\ge 0,\ \sum_{r=0}^{k-1}\alpha_r=1.
\end{equation}
Set
\[
\beta_r:=\alpha_r\gamma_r,
\qquad
\mathcal C_{\mathrm{MS}}
:=
\min_{\substack{0\le r\le k-1\\ \gamma_r>0}}\frac1{\gamma_r}.
\]
If $\Delta t\le \mathcal C_{\mathrm{MS}}\Delta t_{\mathrm{FE}}$, then every building block in \eqref{eq:ssp-multistep} is an admissible forward-Euler step of effective size $\gamma_r\Delta t$.

\begin{theorem}[Weak entropy stability for SSP multistep cell averages]\label{thm:sspms-budget}
Assume that the forward-Euler substeps satisfy \eqref{eq:fe-stage-entropy} and \eqref{eq:fe-stage-G} whenever their effective time step does not exceed $\Delta t_{\mathrm{FE}}$. If the SSP multistep formula \eqref{eq:ssp-multistep} is used with $\Delta t\le \mathcal C_{\mathrm{MS}}\Delta t_{\mathrm{FE}}$, then the end-of-step candidate average satisfies
\begin{equation}\label{eq:sspms-weakE}
\eta\!\bigl(\bar{\mathbf U}_j^{n+1,\star}\bigr)
\le
B_j^{n+1,\mathrm{MS}},
\end{equation}
with the end-of-step entropy budget
\begin{equation}\label{eq:sspms-budget}
B_j^{n+1,\mathrm{MS}}
:=
\sum_{r=0}^{k-1}\alpha_r\Bigl(\mathcal E_j(\mathbf U_j^{n-r})-\gamma_r\lambda\,\Delta\widehat{\mathcal Q}_j(\mathbf U^{n-r})\Bigr),
\end{equation}
and also satisfies the weak geometric budget
\begin{equation}\label{eq:sspms-weakP}
\bar{\mathbf U}_j^{n+1,\star}\in G.
\end{equation}
\end{theorem}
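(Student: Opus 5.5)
The argument mirrors the proof of Theorem~\ref{thm:ssp-budget} step for step. The only change is that the building blocks are now the stored past time levels $\mathbf U^{n-r}$, $r=0,\dots,k-1$, instead of intermediate Runge--Kutta stages. Fix a cell $j$. For each $r$ with $\alpha_r>0$, define the forward-Euler substep average
\[
\bar{\mathbf W}_j^{(r)}:=\bar{\mathbf U}_j^{\mathrm{FE}}\bigl(\mathbf U^{n-r};\gamma_r\bigr)=\bar{\mathbf U}_j^{n-r}-\gamma_r\lambda\,\Delta\widehat{\mathbf F}_j(\mathbf U^{n-r}).
\]
The time-step restriction $\Delta t\le \mathcal C_{\mathrm{MS}}\Delta t_{\mathrm{FE}}$ gives $\gamma_r\Delta t\le\Delta t_{\mathrm{FE}}$ for every $r$ with $\gamma_r>0$. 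When $\gamma_r=0$, the substep is trivially admissible. Hence \eqref{eq:fe-stage-entropy} and \eqref{eq:fe-stage-G} apply to every $\bar{\mathbf W}_j^{(r)}$. This needs each past level $\mathbf U^{n-r}$ to be an admissible stage state. I would take that from the fact that each earlier step was itself EPO-limited, so its nodal values lie in $G$ by Theorem~\ref{thm:localEPO}; the starting values are assumed admissible.

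Next, take cell averages of \eqref{eq:ssp-multistep}. Averaging is linear, and the spatial operator $\mathcal L$ acts on cell averages through the conservative flux difference. Therefore
\[
\bar{\mathbf U}_j^{n+1,\star}=\sum_{r=0}^{k-1}\alpha_r\,\bar{\mathbf W}_j^{(r)}.
\]
Because $\alpha_r\ge0$ and $\sum_r\alpha_r=1$, this is a convex combination.

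Two conclusions follow from it. First, convexity of $G$ together with $\bar{\mathbf W}_j^{(r)}\in G$ gives \eqref{eq:sspms-weakP}. Second, since $G\subset D_\eta$, the function $\eta$ is defined and convex at all these points. Jensen's inequality then gives $\eta(\bar{\mathbf U}_j^{n+1,\star})\le\sum_r\alpha_r\eta(\bar{\mathbf W}_j^{(r)})$. Bounding each term by \eqref{eq:fe-stage-entropy} yields exactly $B_j^{n+1,\mathrm{MS}}$ in \eqref{eq:sspms-budget}, which proves \eqref{eq:sspms-weakE}.

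The only delicate point is the admissibility bookkeeping, namely that every past state $\mathbf U^{n-r}$ qualifies as an "admissible stage state" in the hypothesis of \eqref{eq:fe-stage-entropy}. I would handle this by an induction over time levels, using invariant-set preservation from the local EPO theorem at each previous step. No stagewise recursion is needed, because each past level is already a post-limited state. This is also why a single end-of-step entropy correction suffices here, in contrast with the SSPRK case.
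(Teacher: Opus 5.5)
Your proposal is correct and follows the paper's proof essentially line for line. You define the forward-Euler averages $\bar{\mathbf W}_j^{(r)}$, use $\gamma_r\Delta t\le\Delta t_{\mathrm{FE}}$ to invoke \eqref{eq:fe-stage-entropy} and \eqref{eq:fe-stage-G}, write $\bar{\mathbf U}_j^{n+1,\star}=\sum_r\alpha_r\bar{\mathbf W}_j^{(r)}$ as a convex combination, and conclude by Jensen's inequality and convexity of $G$; your remarks on the case $\gamma_r=0$ and on inductive admissibility of the stored levels $\mathbf U^{n-r}$ are sound elaborations of bookkeeping the paper leaves implicit in its hypothesis.
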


\begin{proof}
For each $r$, define the forward-Euler candidate average
\[
\bar{\mathbf W}_j^{(r)}
:=
\bar{\mathbf U}_j^{n-r}-\gamma_r\lambda\,\Delta\widehat{\mathbf F}_j(\mathbf U^{n-r}).
\]
Because $\Delta t\le \mathcal C_{\mathrm{MS}}\Delta t_{\mathrm{FE}}$, every effective substep size satisfies $\gamma_r\Delta t\le \Delta t_{\mathrm{FE}}$. Hence \eqref{eq:fe-stage-entropy} and \eqref{eq:fe-stage-G} apply to each $\bar{\mathbf W}_j^{(r)}$. Taking cell averages of \eqref{eq:ssp-multistep} gives
\[
\bar{\mathbf U}_j^{n+1,\star}
=
\sum_{r=0}^{k-1}\alpha_r\bar{\mathbf W}_j^{(r)}.
\]
Convexity of $\eta$ therefore yields
\[
\eta\!\bigl(\bar{\mathbf U}_j^{n+1,\star}\bigr)
\le
\sum_{r=0}^{k-1}\alpha_r\,\eta\!\bigl(\bar{\mathbf W}_j^{(r)}\bigr)
\le
\sum_{r=0}^{k-1}\alpha_r\Bigl(\mathcal E_j(\mathbf U_j^{n-r})-\gamma_r\lambda\,\Delta\widehat{\mathcal Q}_j(\mathbf U^{n-r})\Bigr),
\]
which is \eqref{eq:sspms-weakE}. The weak geometric budget \eqref{eq:sspms-weakP} follows in the same way from convexity of $G$.
\end{proof}

\begin{corollary}[One-shot end-of-step EPO for SSP multistep methods]\label{cor:sspms-epo}
Apply one positivity-first EPO correction to the end-of-step candidate state of the SSP multistep method \eqref{eq:ssp-multistep}, using the budget $B_j^{n+1,\mathrm{MS}}$ from \eqref{eq:sspms-budget}. Then the conclusions of Theorems~\ref{thm:localEPO} and \ref{thm:global} hold at time level $n+1$. Because the entropy module is applied only once per step, this SSP-multistep realization is the rigorous route for retaining the designed high-order temporal accuracy while enforcing the EPO constraints. For several prescribed entropy pairs, one computes the end-of-step budget and the corresponding entropy radius for each pair and then takes the minimum.
\end{corollary}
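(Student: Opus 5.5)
The plan is to reduce Corollary~\ref{cor:sspms-epo} to the single-step EPO machinery already established. Theorem~\ref{thm:sspms-budget} supplies exactly the weak inputs that machinery needs. We take the end-of-step candidate $\mathbf U^{n+1,\star}$ as the cellwise candidate and $\bar{\mathbf U}_j^{n+1,\star}$ as the anchor of the scaling ray. The budget is $B_j^{n+1,\mathrm{MS}}$ from \eqref{eq:sspms-budget}, and the oscillation set is any $\mathcal O_j^\star$ satisfying Assumption~\ref{ass:osc}.

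\textbf{Verifying the local hypotheses.} \eqref{eq:sspms-weakP} is Assumption~\ref{ass:weakP} for this candidate, and \eqref{eq:sspms-weakE} is Assumption~\ref{ass:weakE} with $B_j^n$ replaced by $B_j^{n+1,\mathrm{MS}}$. Hence Propositions~\ref{prop:thetaP} and \ref{prop:thetaE-after-P} apply verbatim and give a well-defined positivity-first radius $\theta_j^{\mathrm{PE}}$. Theorem~\ref{thm:min} then gives $\theta_j^{\mathrm{EPO}}=\min\{\theta_j^{\mathrm{PE}},\theta_j^{\mathrm O}\}$. Theorem~\ref{thm:localEPO} yields the four conclusions at level $n+1$: mean preservation, nodal membership in $G$, $\mathcal E_j(\mathbf U_j^{n+1})\le B_j^{n+1,\mathrm{MS}}$, and $\mathbf U_j^{n+1}\in\mathcal O_j^\star$. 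The same argument run for each pair $r$ gives one radius per pair; Corollary~\ref{cor:multi-entropy} then justifies using their minimum.

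\textbf{The global statement.} The analogue of Theorem~\ref{thm:global} needs a compatibility condition like Assumption~\ref{ass:budget-compat}, now with a right-hand side spread over $k$ previous levels. Summing \eqref{eq:sspms-budget} over $j$ gives
\[
\sum_j \mathcal E_j(\mathbf U_j^{n+1})\le \sum_{r=0}^{k-1}\alpha_r\Bigl(\sum_j\mathcal E_j(\mathbf U_j^{n-r})-\gamma_r\lambda\sum_j\Delta\widehat{\mathcal Q}_j(\mathbf U^{n-r})\Bigr).
\]
I will interpret ``the conclusions of Theorem~\ref{thm:global}'' in this multistep sense. Under periodicity each flux sum telescopes to zero, which gives $\sum_j\mathcal E_j(\mathbf U_j^{n+1})\le\max_{0\le r\le k-1}\sum_j\mathcal E_j(\mathbf U_j^{n-r})$, since the $\alpha_r$ are convex weights. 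Induction on $n$ then shows that the total nodal entropy never exceeds its value over the starting levels. This uses that the earlier levels were themselves produced by EPO or are the given admissible data, so \eqref{eq:fe-stage-entropy} applies to each of them.

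\textbf{Main obstacle.} The statement that the designed temporal order is retained is not a convexity statement. For it I would argue only structurally: the limiter acts once per step on a candidate from an order-$p$ SSP multistep method. If, for smooth solutions, the limiter is inactive or perturbs the state by $O(\Delta t^{p+1})$ per step, the local truncation order is unchanged. A rigorous accuracy proof would require this perturbation estimate for $1-\theta_j^{\mathrm{EPO}}$, which I would only state as an assumption or remark. The contrast with SSPRK is that there is no intermediate-stage entropy recursion to close, so the single correction suffices for all the certified properties.
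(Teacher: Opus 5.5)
Your local argument is the paper's proof: Theorem~\ref{thm:sspms-budget} furnishes Assumptions~\ref{ass:weakP} and \ref{ass:weakE} for the end-of-step candidate with budget $B_j^{n+1,\mathrm{MS}}$, and Theorem~\ref{thm:localEPO} gives the four local conclusions.

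You differ on the global claim. The paper only says ``if the budgets are globally compatible, Theorem~\ref{thm:global} applies,'' which amounts to imposing Assumption~\ref{ass:budget-compat} as an extra hypothesis. You instead sum \eqref{eq:sspms-budget} directly and get a multilevel inequality. In the periodic case this gives $\sum_j\mathcal E_j(\mathbf U_j^{n+1})\le\max_r\sum_j\mathcal E_j(\mathbf U_j^{n-r})$, and by induction a bound by the starting levels.

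Your version is the more informative one. Write $E^m:=\sum_j\mathcal E_j(\mathbf U_j^m)$. For the third-order budget \eqref{eq:sspms-third-budget} with periodic boundaries, $\sum_j B_j^{n+1,\mathrm{MS}}=\tfrac{16}{27}E^n+\tfrac{11}{27}E^{n-3}$. The single-level compatibility in Assumption~\ref{ass:budget-compat} therefore holds only if $E^{n-3}\le E^n$. It fails as soon as entropy has actually decreased over the last few steps, so the literal step-to-step monotonicity of Theorem~\ref{thm:global} is not what the multistep budget delivers. The paper's hedge buys brevity; your reinterpretation gives a global statement that needs no additional compatibility hypothesis.

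You are also right that the sentence about retaining the designed temporal order is not established by either argument. The paper gives no perturbation estimate for $1-\theta_j^{\mathrm{EPO}}$ either, so treating that claim as a remark, supported only by the accuracy test, is appropriate.
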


\begin{proof}
Theorem~\ref{thm:sspms-budget} supplies exactly the weak geometric and weak entropy inputs needed by Theorem~\ref{thm:localEPO}. If the budgets are globally compatible, Theorem~\ref{thm:global} applies as well.
\end{proof}

\begin{example}[A third-order SSP multistep realization]\label{ex:sspms-third}
The third-order SSP multistep formula quoted in \cite[Section~2.3]{ZhangXiaShu2012} can be written as
\begin{equation}\label{eq:sspms-third}
\mathbf U^{n+1,\star}
=
\frac{16}{27}\Bigl(\mathbf U^n+3\Delta t\,\mathcal L(\mathbf U^n)\Bigr)
+
\frac{11}{27}\Bigl(\mathbf U^{n-3}+\frac{12}{11}\Delta t\,\mathcal L(\mathbf U^{n-3})\Bigr).
\end{equation}
Here $\alpha_0=16/27$, $\gamma_0=3$, $\alpha_3=11/27$, and $\gamma_3=12/11$, so $\mathcal C_{\mathrm{MS}}=1/3$. The corresponding end-of-step entropy budget is
\begin{equation}\label{eq:sspms-third-budget}
B_j^{n+1,\mathrm{MS}}
=
\frac{16}{27}\Bigl(\mathcal E_j(\mathbf U_j^{n})-3\lambda\,\Delta\widehat{\mathcal Q}_j(\mathbf U^{n})\Bigr)
+
\frac{11}{27}\Bigl(\mathcal E_j(\mathbf U_j^{n-3})-\frac{12}{11}\lambda\,\Delta\widehat{\mathcal Q}_j(\mathbf U^{n-3})\Bigr).
\end{equation}
This is the budget passed to the \emph{single} end-of-step entropy module. In contrast with stagewise SSPRK, no intermediate entropy corrections are required.
\end{example}

The two realizations serve different purposes. Stagewise SSPRK inherits forward-Euler weak budgets directly, but once the entropy module is active stage by stage the resulting limited method can only be certified at first order in time. SSP multistep methods provide an end-of-step weak entropy budget and therefore allow a single end-of-step entropy correction while retaining the designed temporal order. For several prescribed entropy pairs, one computes the corresponding budget and entropy radius for each pair and then takes the minimum.
\section{Two-dimensional EPO on rectangular and triangular meshes}\label{sec:2d}

This section extends EPO to two-dimensional rectangular and triangular meshes. The cellwise scaling geometry from Sections~\ref{sec:oscillation}--\ref{sec:main} is unchanged; what depends on the mesh is the construction of positive quadratures and of weak geometric and weak entropy budgets for the updated cell averages. For background on rectangular and triangular decompositions we use the positivity literature \cite{ZhangShuPP2010,ZhangXiaShu2012,WuZhangShuSurvey2025}, the optimal Cartesian decompositions of Cui, Ding, and Wu \cite{CuiDingWuMulti2023,CuiDingWuOCAD2024}, the suitable-quadrature entropy framework of Chen and Shu \cite{ChenShu2017}, and the recent triangular decomposition work of Ding, Cui, and Wu \cite{DingCuiWu2025}.

In two dimensions, the cellwise EPO geometry remains unchanged; only the construction of weak budgets depends on the mesh and quadrature. On rectangular meshes, both weak positivity and weak entropy can be obtained actively from line-separable special quadratures. On triangular meshes, positivity is supplied by feasible-quadrature/GQL arguments, while entropy may be obtained either actively from local CAD-based weak budgets or passively from an already entropy-stable candidate discretization. Once these budgets are available, the final EPO step is the same local scaling along the cell-average-anchored ray.

The one-dimensional proofs developed earlier truly use only four ingredients: positive quadrature weights, a cell-average-anchored scaling ray, weak budgets for the updated averages, and convex admissible or oscillation sets. The two-dimensional theory below does not introduce a new limiting geometry; it only supplies these four ingredients on rectangles and triangles by mesh-specific decomposition arguments.

\subsection{Cellwise EPO geometry on a generic two-dimensional cell}

Let $K\in\mathcal T_h$ be a cell of a two-dimensional mesh and let
\[
S_K=\{\mathbf x_{K,\nu}\}_{\nu=1}^{N_K}\subset \overline K
\]
be a finite node set equipped with positive weights
\[
\varpi_{K,\nu}>0,
\qquad
\sum_{\nu=1}^{N_K}\varpi_{K,\nu}=1,
\]
chosen so that the cell average of the local candidate representation is exactly given by
\begin{equation}\label{eq:2d-cellavg}
\bar{\mathbf U}_K
=
\sum_{\nu=1}^{N_K}\varpi_{K,\nu}\,\mathbf U_{K,\nu}.
\end{equation}
The associated local quadrature entropy is
\begin{equation}\label{eq:2d-local-entropy}
\mathcal E_K(\mathbf V)
:=
\sum_{\nu=1}^{N_K}\varpi_{K,\nu}\,\eta(V_{\nu}),
\qquad
\mathbf V=(V_1,\dots,V_{N_K})\in (\mathbb R^m)^{N_K},
\end{equation}
and the constant nodal state generated by a mean value $\bar{\mathbf U}$ is
\[
\mathbf C_K(\bar{\mathbf U})
:=
(\bar{\mathbf U},\dots,\bar{\mathbf U})\in (\mathbb R^m)^{N_K}.
\]
For a candidate array $\mathbf U_K^\star$ with average $\bar{\mathbf U}_K^\star$, define the cell-average-anchored scaling ray
\begin{equation}\label{eq:2d-scaling}
\mathcal S_{K,\bar{\mathbf U}_K^\star}(\theta;\mathbf U_K^\star)
:=
\bar{\mathbf U}_K^\star
+
\theta\bigl(\mathbf U_K^\star-\bar{\mathbf U}_K^\star\bigr),
\qquad 0\le \theta\le 1.
\end{equation}
All local geometric arguments from Sections~\ref{sec:oscillation}--\ref{sec:main} depend only on positivity of the quadrature weights, mean preservation, and convexity of the underlying constraints. Consequently they extend immediately from the one-dimensional index $j$ to an arbitrary two-dimensional cell $K$.

\begin{proposition}[Cellwise dimensional lifting]\label{prop:2d-lift}
Fix a cell $K$ together with positive weights $\{\varpi_{K,\nu}\}_{\nu=1}^{N_K}$ satisfying \eqref{eq:2d-cellavg}. Replace in Sections~\ref{sec:oscillation}--\ref{sec:main} the one-dimensional cell index $j$ by $K$, the weights $\omega_\nu$ by $\varpi_{K,\nu}$, and the nodal array $\mathbf U_j^\star$ by $\mathbf U_K^\star\in (\mathbb R^m)^{N_K}$. Then the cellwise analogues of Proposition~\ref{prop:Psi}, Proposition~\ref{prop:thetaP}, Proposition~\ref{prop:thetaE-after-P}, Proposition~\ref{prop:thetaO-basic}, Theorem~\ref{thm:A-star}, Theorem~\ref{thm:min}, and Theorem~\ref{thm:localEPO} remain valid verbatim.
\end{proposition}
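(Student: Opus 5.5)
The plan is to audit the one-dimensional proofs and check that each uses only structural features that survive the substitution $j\to K$, $\omega_\nu\to\varpi_{K,\nu}$, $L\to N_K$. Concretely, these proofs work in the finite-dimensional space $(\mathbb R^m)^{N}$ of nodal arrays. They use only four ingredients: positive weights summing to one, the exact mean identity \eqref{eq:2d-cellavg}, the affine ray \eqref{eq:2d-scaling}, and convexity and closedness of $G$, $D_\eta$, the entropy sublevel sets and $\mathcal O_K^\star$. Neither the spatial dimension, the position of the nodes, nor the endpoint structure of Gauss--Lobatto points enters any of the listed results. The endpoint structure is used only in Proposition~\ref{prop:canonical-budget}, which is deliberately excluded from the list. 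So the first step is to record the 2D analogues of the basic identities. These are the mean-preservation identity \eqref{eq:mean-preserving-ray} for $\mathcal S_{K,\bar{\mathbf U}_K^\star}$, which follows from $\sum_\nu\varpi_{K,\nu}=1$ and \eqref{eq:2d-cellavg}, and the ray-composition identity \eqref{eq:ray-composition}, which is purely algebraic.

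Next I will go through the results in dependency order. For Proposition~\ref{prop:Psi}, convexity of $\Psi_K$ comes from composing convex $\eta$ with an affine map and summing with positive weights $\varpi_{K,\nu}$. The identity $\Psi_K(0)=\eta(\bar{\mathbf U}_K^\star)$ uses $\sum_\nu\varpi_{K,\nu}=1$. The minimality at $0$ comes from Jensen with weights $\varpi_{K,\nu}$ together with the zero weighted mean of the deviations. For Proposition~\ref{prop:thetaP} and Proposition~\ref{prop:thetaO-basic}, only convexity and closedness of $G$ and $\mathcal O_K^\star$, plus continuity of the ray, are used. Proposition~\ref{prop:thetaE-after-P} then follows from the 2D versions of Propositions~\ref{prop:thetaP} and \ref{prop:Psi} together with \eqref{eq:ray-composition}. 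Theorem~\ref{thm:A-star} needs the sets $\mathfrak M_K(\bar{\mathbf U})$, $G^{N_K}$, $\{\mathcal E_K\le B\}$ and $\mathcal O_K^\star$ to be closed and convex. This holds because $\mathcal E_K$ is a positive combination of continuous convex functions and $\mathbf C_K(\bar{\mathbf U})$ lies in each set. Theorem~\ref{thm:min} and Theorem~\ref{thm:localEPO} use only the preceding results and the definition of the supremum, so they transfer word for word.

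The main obstacle, which is modest, is well-definedness of $\mathcal E_K$ in the positivity-first argument. On the shortened ray it must only be evaluated where the nodal states lie in $G\subset D_\eta$, and this carries over because it is again a convexity argument with the anchor $\bar{\mathbf U}_K^\star\in G$. A second point to flag is that the hypotheses must be restated cellwise: $\bar{\mathbf U}_K^\star\in G$, $\eta(\bar{\mathbf U}_K^\star)\le B_K$, and a closed convex $\mathcal O_K^\star\ni\mathbf C_K(\bar{\mathbf U}_K^\star)$. Providing these is the mesh-dependent task deferred to the later subsections. For Example~\ref{ex:entropy-O}-type oscillation sets, the zero-derivative argument also uses only the zero weighted mean, so it transfers as well.
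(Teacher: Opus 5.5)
Your proposal is correct and follows the same route as the paper. The paper's proof simply observes that every cellwise argument in Sections~\ref{sec:oscillation}--\ref{sec:main} uses only the affine mean constraint, positive weights summing to one, and convexity of the constraint sets and functionals, so the substitution $j\to K$, $\omega_\nu\to\varpi_{K,\nu}$ is purely notational; your item-by-item audit is a more detailed version of this, and your explicit mention of closedness (needed for the suprema to be attained) spells out an ingredient the paper leaves implicit.
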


\begin{proof}
Every proof in Sections~\ref{sec:oscillation}--\ref{sec:main} is cellwise and uses only three facts: the mean constraint is affine, the quadrature weights are positive and sum to one, and the admissibility sets or admissibility functionals are convex. None of those arguments depends on the spatial dimension or on the geometry of the cell. Replacing the index $j$ by $K$ and the weights $\omega_\nu$ by $\varpi_{K,\nu}$ therefore changes only the notation.
\end{proof}

An important consequence of Proposition~\ref{prop:2d-lift} is that entropy can also be inherited from an already entropy-stable candidate scheme.

\begin{proposition}[Entropy inheritance under further radial scaling]\label{prop:2d-passive-entropy}
Let $K$ be a cell, let $\mathbf U_K^\star\in (\mathbb R^m)^{N_K}$ have average $\bar{\mathbf U}_K^\star\in G$, and suppose the segment from $\bar{\mathbf U}_K^\star$ to $\mathbf U_K^\star$ stays inside $D_\eta$. If
\begin{equation}\label{eq:2d-passive-budget}
\mathcal E_K(\mathbf U_K^\star)\le B_K,
\end{equation}
then for every $\theta\in[0,1]$,
\begin{equation}\label{eq:2d-passive-budget-scaled}
\mathcal E_K\!\bigl(\mathcal S_{K,\bar{\mathbf U}_K^\star}(\theta;\mathbf U_K^\star)\bigr)
\le B_K.
\end{equation}
In particular, if a candidate two-dimensional DG/FV stage is already quadrature-entropy stable, then any additional positivity or COS scaling along the same cell-average-anchored ray preserves that entropy inequality.
\end{proposition}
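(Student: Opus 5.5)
The plan is to reduce the statement to the monotonicity of the direct entropy profile established in Proposition~\ref{prop:Psi}, transplanted to the cell $K$ by Proposition~\ref{prop:2d-lift}. Define
\[
\Psi_K(\theta):=\mathcal E_K\!\bigl(\mathcal S_{K,\bar{\mathbf U}_K^\star}(\theta;\mathbf U_K^\star)\bigr)
=\sum_{\nu=1}^{N_K}\varpi_{K,\nu}\,\eta\!\bigl(\bar{\mathbf U}_K^\star+\theta(\mathbf U_{K,\nu}^\star-\bar{\mathbf U}_K^\star)\bigr),\qquad 0\le\theta\le1.
\]
The hypothesis that the segment from $\bar{\mathbf U}_K^\star$ to $\mathbf U_K^\star$ lies in $D_\eta$ is exactly the full-ray hypothesis of Proposition~\ref{prop:Psi}. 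Hence $\Psi_K$ is well defined on $[0,1]$.

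Next I would check that $\Psi_K$ is convex and nondecreasing. Convexity holds because each summand is $\eta$ composed with an affine map of $\theta$, and the weights $\varpi_{K,\nu}$ are positive. For the minimum, $\Psi_K(0)=\eta(\bar{\mathbf U}_K^\star)$. Jensen's inequality, using $\sum_\nu\varpi_{K,\nu}(\mathbf U_{K,\nu}^\star-\bar{\mathbf U}_K^\star)=0$ from \eqref{eq:2d-cellavg}, gives $\Psi_K(\theta)\ge\Psi_K(0)$ for all $\theta$. A convex function on $[0,1]$ minimized at the left endpoint is nondecreasing. This is precisely the cellwise analogue of Proposition~\ref{prop:Psi} guaranteed by Proposition~\ref{prop:2d-lift}, so I would simply cite it.

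The conclusion then follows in one line: for $\theta\in[0,1]$, $\Psi_K(\theta)\le\Psi_K(1)=\mathcal E_K(\mathbf U_K^\star)\le B_K$ by \eqref{eq:2d-passive-budget}, which is \eqref{eq:2d-passive-budget-scaled}. An alternative avoids monotonicity altogether. Pointwise convexity gives $\Psi_K(\theta)\le(1-\theta)\eta(\bar{\mathbf U}_K^\star)+\theta\,\mathcal E_K(\mathbf U_K^\star)$, and Jensen bounds $\eta(\bar{\mathbf U}_K^\star)\le\mathcal E_K(\mathbf U_K^\star)$, exactly as in part (d) of Proposition~\ref{prop:cos-basic}. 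This version uses $D_\eta$-membership only at the endpoints and the average, and convexity of $D_\eta$ covers the intermediate nodal states.

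For the ``in particular'' clause, any positivity scaling (radius $\theta_K^{\mathrm P}$) or COS scaling (radius $\lambda^{\mathrm{COS}}$) is of the form $\mathcal S_{K,\bar{\mathbf U}_K^\star}(\theta;\mathbf U_K^\star)$ with $\theta\in[0,1]$. Successive scalings compose along the same ray by \eqref{eq:ray-composition}, so the bound applies directly. I expect no genuine obstacle. The only point needing care is ensuring every entropy evaluation lies in $D_\eta$, which the segment hypothesis supplies directly, or alternatively $\bar{\mathbf U}_K^\star\in G\subset D_\eta$ combined with convexity of $D_\eta$.
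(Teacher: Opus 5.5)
Your proof is correct and follows the paper's argument: the paper also defines $\Psi_K(\theta)=\mathcal E_K\bigl(\mathcal S_{K,\bar{\mathbf U}_K^\star}(\theta;\mathbf U_K^\star)\bigr)$, lifts Proposition~\ref{prop:Psi} to the cell $K$ via Proposition~\ref{prop:2d-lift} to get that $\Psi_K$ is nondecreasing, and concludes $\Psi_K(\theta)\le\Psi_K(1)\le B_K$. Your alternative bound $\Psi_K(\theta)\le(1-\theta)\eta(\bar{\mathbf U}_K^\star)+\theta\,\mathcal E_K(\mathbf U_K^\star)\le\mathcal E_K(\mathbf U_K^\star)$ is also valid; it is the same calculation as Proposition~\ref{prop:cos-basic}(d) and avoids the monotonicity step entirely.
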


\begin{proof}
Define
\[
\Psi_K(\theta)
:=
\mathcal E_K\!\bigl(\mathcal S_{K,\bar{\mathbf U}_K^\star}(\theta;\mathbf U_K^\star)\bigr).
\]
By Proposition~\ref{prop:2d-lift}, the proof of Proposition~\ref{prop:Psi} applies verbatim with the weights $\varpi_{K,\nu}$ in place of $\omega_\nu$, and therefore $\Psi_K$ is nondecreasing on $[0,1]$. Since \eqref{eq:2d-passive-budget} is exactly $\Psi_K(1)\le B_K$, it follows that $\Psi_K(\theta)\le \Psi_K(1)\le B_K$ for every $0\le \theta\le 1$.
\end{proof}

\subsection{Rectangular meshes: line-separable weak budgets}\label{sec:2d-rect}

We begin with Cartesian meshes, because there the special quadratures of Zhang--Shu type admit a line-separable interpretation that is perfectly aligned with the one-dimensional active-entropy theory developed earlier.

Consider the two-dimensional hyperbolic system
\begin{equation}\label{eq:2d-pde}
\partial_t \mathbf U + \partial_x \mathbf F_1(\mathbf U)+\partial_y \mathbf F_2(\mathbf U)=0
\end{equation}
on a rectangular mesh with cells
\[
I_{ij}=[x_{i-\frac12},x_{i+\frac12}]\times [y_{j-\frac12},y_{j+\frac12}],
\qquad
\Delta x = x_{i+\frac12}-x_{i-\frac12},
\quad
\Delta y = y_{j+\frac12}-y_{j-\frac12}.
\]
Let $p_{ij}^n(x,y)$ denote the local polynomial (or reconstructed local representation) at time level $n$. The cell average is denoted by
\[
\bar{\mathbf U}_{ij}^n = \frac{1}{\Delta x\Delta y}\int_{I_{ij}} p_{ij}^n(x,y)\,dx\,dy.
\]
Fix a $Q$-point Gauss quadrature in each coordinate direction with nodes $\{\widetilde x_i^{(q)}\}_{q=1}^Q$, $\{\widetilde y_j^{(q)}\}_{q=1}^Q$ and normalized weights $\{\widehat\omega_q\}_{q=1}^Q$. Fix also an $L$-point Gauss--Lobatto quadrature with nodes $\{x_i^{(\mu)}\}_{\mu=1}^L$, $\{y_j^{(\mu)}\}_{\mu=1}^L$ and positive weights $\{\omega_\mu\}_{\mu=1}^L$, where $\omega_1=\omega_L>0$.

We consider a candidate cell-average update of the form
\begin{equation}\label{eq:rect-update}
\bar{\mathbf U}_{ij}^\star
=
\bar{\mathbf U}_{ij}^n
-\lambda_x\sum_{q=1}^Q \widehat\omega_q
\Bigl(\widehat{\mathbf F}_{1,i+\frac12,q}^n-\widehat{\mathbf F}_{1,i-\frac12,q}^n\Bigr)
-\lambda_y\sum_{q=1}^Q \widehat\omega_q
\Bigl(\widehat{\mathbf F}_{2,q,j+\frac12}^n-\widehat{\mathbf F}_{2,q,j-\frac12}^n\Bigr),
\end{equation}
where
\[
\lambda_x:=\frac{\Delta t}{\Delta x},
\qquad
\lambda_y:=\frac{\Delta t}{\Delta y},
\]
and the directional numerical fluxes (e.g., LF flux) are evaluated at edge Gauss nodes,
\[
\widehat{\mathbf F}_{1,i+\frac12,q}^n
:=
\widehat{\mathbf F}_1\bigl( p_{ij}^n(x_{i+\frac12},\widetilde y_j^{(q)}),\ p_{i+1,j}^n(x_{i+\frac12},\widetilde y_j^{(q)})\bigr),
\]
\[
\widehat{\mathbf F}_{2,q,j+\frac12}^n
:=
\widehat{\mathbf F}_2\bigl( p_{ij}^n(\widetilde x_i^{(q)},y_{j+\frac12}),\ p_{i,j+1}^n(\widetilde x_i^{(q)},y_{j+\frac12})\bigr).
\]

The special quadrature used by Zhang and Shu \cite{ZhangShuPP2010} on rectangles can be written in the line-separable form
\begin{equation}\label{eq:rect-cad}
\bar{\mathbf U}_{ij}^n
=
\kappa_1\sum_{q=1}^Q \widehat\omega_q \sum_{\mu=1}^L \omega_\mu\,
 p_{ij}^n(x_i^{(\mu)},\widetilde y_j^{(q)})
+
\kappa_2\sum_{q=1}^Q \widehat\omega_q \sum_{\mu=1}^L \omega_\mu\,
 p_{ij}^n(\widetilde x_i^{(q)},y_j^{(\mu)}),
\end{equation}
where $\kappa_1,\kappa_2>0$ and $\kappa_1+\kappa_2=1$. The classical positivity proof uses
\begin{equation}\label{eq:kappa-classic}
\kappa_1 = \frac{\alpha_1/\Delta x}{\alpha_1/\Delta x+\alpha_2/\Delta y},
\qquad
\kappa_2 = \frac{\alpha_2/\Delta y}{\alpha_1/\Delta x+\alpha_2/\Delta y},
\end{equation}
where $\alpha_1$ and $\alpha_2$ are bounds on the directional characteristic speeds in the $x$- and $y$-directions.

Define the one-dimensional slice averages
\begin{equation}\label{eq:rect-slice-avgs}
\bar{\mathbf U}_{ij,q}^{x,n}
:=
\sum_{\mu=1}^L \omega_\mu\,p_{ij}^n(x_i^{(\mu)},\widetilde y_j^{(q)}),
\qquad
\bar{\mathbf U}_{ij,q}^{y,n}
:=
\sum_{\mu=1}^L \omega_\mu\,p_{ij}^n(\widetilde x_i^{(q)},y_j^{(\mu)}).
\end{equation}
Then \eqref{eq:rect-cad} becomes
\[
\bar{\mathbf U}_{ij}^n
=
\kappa_1\sum_{q=1}^Q\widehat\omega_q\bar{\mathbf U}_{ij,q}^{x,n}
+
\kappa_2\sum_{q=1}^Q\widehat\omega_q\bar{\mathbf U}_{ij,q}^{y,n}.
\]
Introduce the directional candidate slice averages
\begin{equation}\label{eq:rect-slice-candidates}
\mathbf A_{ij,q}^{x,\star}
:=
\bar{\mathbf U}_{ij,q}^{x,n}
-
\frac{\lambda_x}{\kappa_1}
\Bigl(\widehat{\mathbf F}_{1,i+\frac12,q}^n-\widehat{\mathbf F}_{1,i-\frac12,q}^n\Bigr),
\end{equation}
\begin{equation}\label{eq:rect-slice-candidates-y}
\mathbf A_{ij,q}^{y,\star}
:=
\bar{\mathbf U}_{ij,q}^{y,n}
-
\frac{\lambda_y}{\kappa_2}
\Bigl(\widehat{\mathbf F}_{2,q,j+\frac12}^n-\widehat{\mathbf F}_{2,q,j-\frac12}^n\Bigr).
\end{equation}
Substituting \eqref{eq:rect-cad} into \eqref{eq:rect-update} immediately yields the exact convex representation
\begin{equation}\label{eq:rect-star-decomp}
\bar{\mathbf U}_{ij}^\star
=
\kappa_1\sum_{q=1}^Q \widehat\omega_q\,\mathbf A_{ij,q}^{x,\star}
+
\kappa_2\sum_{q=1}^Q \widehat\omega_q\,\mathbf A_{ij,q}^{y,\star}.
\end{equation}
The set of quadrature nodes entering \eqref{eq:rect-cad} is
\begin{equation}\label{eq:rect-node-set}
S_{ij}^{\mathrm R}
:=
\bigl(S_i^x\times \widetilde S_j^y\bigr)
\cup
\bigl(\widetilde S_i^x\times S_j^y\bigr),
\end{equation}
where $S_i^x:=\{x_i^{(\mu)}\}_{\mu=1}^L$, $S_j^y:=\{y_j^{(\mu)}\}_{\mu=1}^L$, $\widetilde S_i^x:=\{\widetilde x_i^{(q)}\}_{q=1}^Q$, and $\widetilde S_j^y:=\{\widetilde y_j^{(q)}\}_{q=1}^Q$.

\begin{theorem}[Weak geometric budget on rectangular meshes]\label{thm:rect-weakP}
Assume that the directional fluxes $\widehat{\mathbf F}_1$ and $\widehat{\mathbf F}_2$ are one-dimensional IDP fluxes with CFL number $c_0$ for the normal systems
\[
\partial_t \mathbf U + \partial_x\mathbf F_1(\mathbf U)=0,
\qquad
\partial_t \mathbf U + \partial_y\mathbf F_2(\mathbf U)=0.
\]
Assume further that all values of $p_{ij}^n$ at the nodes of $S_{ij}^{\mathrm R}$ belong to $G$. If
\begin{equation}\label{eq:rect-cfl-general}
\frac{\alpha_1\Delta t}{\kappa_1\Delta x}\le \omega_1 c_0,
\qquad
\frac{\alpha_2\Delta t}{\kappa_2\Delta y}\le \omega_1 c_0,
\end{equation}
then $\mathbf A_{ij,q}^{x,\star}\in G$ and $\mathbf A_{ij,q}^{y,\star}\in G$ for every $q$, and therefore
\begin{equation}\label{eq:rect-weakP-result}
\bar{\mathbf U}_{ij}^\star\in G.
\end{equation}
For the classical choice \eqref{eq:kappa-classic}, the CFL condition \eqref{eq:rect-cfl-general} becomes
\begin{equation}\label{eq:rect-cfl-classic}
\left(\frac{\alpha_1}{\Delta x}+\frac{\alpha_2}{\Delta y}\right)\Delta t
\le
\omega_1 c_0.
\end{equation}
\end{theorem}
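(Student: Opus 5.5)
The plan is to reduce the two-dimensional statement to the one-dimensional weak-positivity argument of Proposition~\ref{prop:canonical-budget}, applied slice by slice, and then to close with the convex representation \eqref{eq:rect-star-decomp}. Fix $q$ and look at $\mathbf A_{ij,q}^{x,\star}$. Along the horizontal line $y=\widetilde y_j^{(q)}$, the values $p_{ij}^n(x_i^{(\mu)},\widetilde y_j^{(q)})$, $\mu=1,\dots,L$, form a one-dimensional Gauss--Lobatto nodal array. Its endpoint values are exactly the traces $p_{ij}^n(x_{i\pm\frac12},\widetilde y_j^{(q)})$ that enter $\widehat{\mathbf F}_{1,i\pm\frac12,q}^n$, and its weighted mean is $\bar{\mathbf U}_{ij,q}^{x,n}$. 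Definition \eqref{eq:rect-slice-candidates} is therefore a one-dimensional forward-Euler cell-average update on this slice with effective mesh ratio $\lambda_x/\kappa_1=\Delta t/(\kappa_1\Delta x)$. The neighbor traces $p_{i\pm1,j}^n(x_{i\pm\frac12},\widetilde y_j^{(q)})$ are endpoint Gauss--Lobatto values of the neighboring cells on the same slice, so they lie in $G$ by the nodal hypothesis on $S_{i\pm1,j}^{\mathrm R}$. The $y$-direction slices are treated symmetrically with $\lambda_y/\kappa_2$.

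On each slice I would follow the regrouping in the proof of Proposition~\ref{prop:canonical-budget}. Write
\[
\mathbf A^{x,\star}_{ij,q}=\sum_{\mu=2}^{L-1}\omega_\mu \mathbf U_\mu+\omega_1\Bigl[\mathbf U_1+\tfrac{\lambda_x}{\kappa_1\omega_1}\widehat{\mathbf F}_{1,i-\frac12,q}-\tfrac{\lambda_x}{\kappa_1\omega_1}\widehat{\mathbf F}_1(\mathbf U_1,\mathbf U_L)\Bigr]+\omega_L\Bigl[\mathbf U_L+\tfrac{\lambda_x}{\kappa_1\omega_L}\widehat{\mathbf F}_1(\mathbf U_1,\mathbf U_L)-\tfrac{\lambda_x}{\kappa_1\omega_L}\widehat{\mathbf F}_{1,i+\frac12,q}\Bigr],
\]
which inserts the fictitious interior flux $\widehat{\mathbf F}_1(\mathbf U_1,\mathbf U_L)$ and its negative. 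Using $\omega_1=\omega_L$, each bracket is a first-order three-point update of the IDP flux $\widehat{\mathbf F}_1$ for the normal system, with mesh ratio $\lambda_x/(\kappa_1\omega_1)$ and with all inputs in $G$. The IDP assumption with CFL number $c_0$ applies once $\alpha_1\lambda_x/(\kappa_1\omega_1)\le c_0$, which is exactly \eqref{eq:rect-cfl-general}. Each bracket then lies in $G$, and convexity of $G$ with weights $\omega_\mu>0$ gives $\mathbf A^{x,\star}_{ij,q}\in G$. For the LF flux one can instead invoke the two-point representation \eqref{eq:canonical-decomp} together with $H_\alpha\in G$ from Theorem~\ref{thm:2point-entropy}.

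With every $\mathbf A^{x,\star}_{ij,q},\mathbf A^{y,\star}_{ij,q}\in G$ established, I would check \eqref{eq:rect-star-decomp} by substituting \eqref{eq:rect-cad} into \eqref{eq:rect-update}; this works because $\sum_q\widehat\omega_q=1$ and $\kappa_1\cdot\lambda_x/\kappa_1=\lambda_x$. Since the weights $\kappa_1\widehat\omega_q$ and $\kappa_2\widehat\omega_q$ are nonnegative and sum to one, convexity of $G$ gives \eqref{eq:rect-weakP-result}. For the classical choice \eqref{eq:kappa-classic}, $\alpha_1/(\kappa_1\Delta x)=\alpha_1/\Delta x+\alpha_2/\Delta y=\alpha_2/(\kappa_2\Delta y)$, so both conditions in \eqref{eq:rect-cfl-general} collapse to \eqref{eq:rect-cfl-classic}.

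The main obstacle is the precise meaning of ``IDP flux with CFL number $c_0$.'' The argument needs it to mean that the three-point update $\mathbf V-\mu(\widehat{\mathbf F}(\mathbf V,\mathbf W_R)-\widehat{\mathbf F}(\mathbf W_L,\mathbf V))\in G$ whenever $\mu\alpha\le c_0$ and $\mathbf W_L,\mathbf V,\mathbf W_R\in G$. I would state this convention explicitly. I would also verify that the bracket decomposition matches it even when the two endpoint weights are split, which is where $\omega_1=\omega_L$ is used.
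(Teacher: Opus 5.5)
Your proposal is correct and follows essentially the same approach as the paper, whose proof simply cites the classical Zhang--Shu argument \cite{ZhangShuPP2010,WuZhangShuSurvey2025}: you split each Gauss--Lobatto slice, via the fictitious interior flux $\widehat{\mathbf F}_1(\mathbf U_1,\mathbf U_L)$, into first-order three-point updates with mesh ratio $\lambda_x/(\kappa_1\omega_1)$, and then close with the convex representation \eqref{eq:rect-star-decomp}. Your explicit use of the nodal hypothesis on the neighboring node sets $S_{i\pm1,j}^{\mathrm R}$ and $S_{i,j\pm1}^{\mathrm R}$, to place the exterior traces in $G$, is the right reading of the assumption.
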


\begin{proof}
See \cite{ZhangShuPP2010,WuZhangShuSurvey2025}.
\end{proof}

The same line-separable decomposition also yields a weak entropy stability, expressed as a weak entropy budget obtained by averaging one-dimensional slice budgets.

\begin{theorem}[Weak entropy stability on rectangular meshes]\label{thm:rect-weakE}
In the setting of Theorem~\ref{thm:rect-weakP}, assume in addition that for each fixed $q$ the directional slice candidates satisfy one-dimensional entropy budgets of the form
\begin{equation}\label{eq:rect-slice-budgets}
\eta(\mathbf A_{ij,q}^{x,\star})\le B_{ij,q}^{x},
\qquad
\eta(\mathbf A_{ij,q}^{y,\star})\le B_{ij,q}^{y}.
\end{equation}
Then the updated two-dimensional cell average satisfies
\begin{equation}\label{eq:rect-weakE-result}
\eta(\bar{\mathbf U}_{ij}^\star)
\le
B_{ij}^{\mathrm R}
:=
\kappa_1\sum_{q=1}^Q \widehat\omega_q B_{ij,q}^{x}
+
\kappa_2\sum_{q=1}^Q \widehat\omega_q B_{ij,q}^{y}.
\end{equation}

In particular, let $\mathcal Q(\mathbf U)=(\mathcal Q_1(\mathbf U),\mathcal Q_2(\mathbf U))$ be the entropy-flux vector associated with $\eta$, and define the edge states
\[
\mathbf U_{i+\frac12,q}^{-,n}:=p_{ij}^n(x_{i+\frac12},\widetilde y_j^{(q)}),
\qquad
\mathbf U_{i+\frac12,q}^{+,n}:=p_{i+1,j}^n(x_{i+\frac12},\widetilde y_j^{(q)}),
\]
\[
\mathbf U_{q,j+\frac12}^{-,n}:=p_{ij}^n(\widetilde x_i^{(q)},y_{j+\frac12}),
\qquad
\mathbf U_{q,j+\frac12}^{+,n}:=p_{i,j+1}^n(\widetilde x_i^{(q)},y_{j+\frac12}).
\]
If the slice budgets are furnished by the one-dimensional canonical mechanism of Proposition~\ref{prop:canonical-budget} applied to the normal systems in the $x$- and $y$-directions with effective time-step ratios $\lambda_x/\kappa_1$ and $\lambda_y/\kappa_2$, then one may take
\begin{align}
B_{ij,q}^{x}
&:=
\sum_{\mu=1}^L \omega_\mu\,
\eta\!\bigl(p_{ij}^n(x_i^{(\mu)},\widetilde y_j^{(q)})\bigr)
-\frac{\lambda_x}{\kappa_1}
\Bigl(
\widehat{\mathcal Q}_{1,i+\frac12,q}^n-\widehat{\mathcal Q}_{1,i-\frac12,q}^n
\Bigr),
\label{eq:rect-Bx}\\
B_{ij,q}^{y}
&:=
\sum_{\mu=1}^L \omega_\mu\,
\eta\!\bigl(p_{ij}^n(\widetilde x_i^{(q)},y_j^{(\mu)})\bigr)
-\frac{\lambda_y}{\kappa_2}
\Bigl(
\widehat{\mathcal Q}_{2,q,j+\frac12}^n-\widehat{\mathcal Q}_{2,q,j-\frac12}^n
\Bigr),
\label{eq:rect-By}
\end{align}
where
\[
\widehat{\mathcal Q}_{1,i+\frac12,q}^n
:=
\frac12\Bigl(
\mathcal Q_1(\mathbf U_{i+\frac12,q}^{-,n})+\mathcal Q_1(\mathbf U_{i+\frac12,q}^{+,n})
-\alpha_1\bigl(\eta(\mathbf U_{i+\frac12,q}^{+,n})-\eta(\mathbf U_{i+\frac12,q}^{-,n})\bigr)
\Bigr),
\]
\[
\widehat{\mathcal Q}_{2,q,j+\frac12}^n
:=
\frac12\Bigl(
\mathcal Q_2(\mathbf U_{q,j+\frac12}^{-,n})+\mathcal Q_2(\mathbf U_{q,j+\frac12}^{+,n})
-\alpha_2\bigl(\eta(\mathbf U_{q,j+\frac12}^{+,n})-\eta(\mathbf U_{q,j+\frac12}^{-,n})\bigr)
\Bigr).
\]
Consequently,
\begin{align}
B_{ij}^{\mathrm R}
&=
\kappa_1\sum_{q=1}^Q \widehat\omega_q\sum_{\mu=1}^L \omega_\mu\,
\eta\!\bigl(p_{ij}^n(x_i^{(\mu)},\widetilde y_j^{(q)})\bigr)
+
\kappa_2\sum_{q=1}^Q \widehat\omega_q\sum_{\mu=1}^L \omega_\mu\,
\eta\!\bigl(p_{ij}^n(\widetilde x_i^{(q)},y_j^{(\mu)})\bigr)
\notag\\
&\qquad
-\lambda_x \sum_{q=1}^Q \widehat\omega_q
\Bigl(
\widehat{\mathcal Q}_{1,i+\frac12,q}^n-\widehat{\mathcal Q}_{1,i-\frac12,q}^n
\Bigr)
-\lambda_y \sum_{q=1}^Q \widehat\omega_q
\Bigl(
\widehat{\mathcal Q}_{2,q,j+\frac12}^n-\widehat{\mathcal Q}_{2,q,j-\frac12}^n
\Bigr).
\label{eq:rect-weakE-explicit}
\end{align}
\end{theorem}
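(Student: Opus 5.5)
The plan is to argue in two layers, mirroring the one-dimensional proofs. For the abstract statement \eqref{eq:rect-weakE-result}, start from the exact convex representation \eqref{eq:rect-star-decomp}. The coefficients $\kappa_1\widehat\omega_q$ and $\kappa_2\widehat\omega_q$ are nonnegative and sum to $\kappa_1+\kappa_2=1$, because $\sum_q\widehat\omega_q=1$. By Theorem~\ref{thm:rect-weakP} every slice candidate $\mathbf A_{ij,q}^{x,\star}$ and $\mathbf A_{ij,q}^{y,\star}$ lies in $G\subset D_\eta$. Jensen's inequality for the convex $\eta$ therefore gives
\[
\eta(\bar{\mathbf U}_{ij}^\star)\le\kappa_1\sum_q\widehat\omega_q\,\eta(\mathbf A_{ij,q}^{x,\star})+\kappa_2\sum_q\widehat\omega_q\,\eta(\mathbf A_{ij,q}^{y,\star}).
\]
Inserting the slice budgets \eqref{eq:rect-slice-budgets} termwise, with positive weights, yields $B_{ij}^{\mathrm R}$.

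For the concrete budgets, fix $q$ and view $\mathbf A_{ij,q}^{x,\star}$ as a one-dimensional forward-Euler Lax--Friedrichs update \eqref{eq:avg-update-LF}. The system is $\partial_t\mathbf U+\partial_x\mathbf F_1(\mathbf U)=0$, the nodal array is the Gauss--Lobatto slice $\{p_{ij}^n(x_i^{(\mu)},\widetilde y_j^{(q)})\}_\mu$, the interface states are $\mathbf U_{i\pm\frac12,q}^{\mp,n}$ and the endpoint values of the neighboring slices, and the time-step ratio is $\lambda_x/\kappa_1$. The points to check are as follows.
\begin{enumerate}[label=(\alph*),leftmargin=2em]
\item The slice average \eqref{eq:rect-slice-avgs} is exactly $\sum_\mu\omega_\mu(\cdot)$. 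This holds by definition.
\item The Gauss--Lobatto endpoints $x_i^{(1)},x_i^{(L)}$ coincide with $x_{i\mp\frac12}$. So the endpoint nodal values equal the edge traces entering $\widehat{\mathbf F}_{1,i\pm\frac12,q}^n$, and likewise for the neighbor $p_{i\pm1,j}^n$ at the same $\widetilde y_j^{(q)}$.
\item The CFL requirement \eqref{eq:CFL-canonical} becomes $\alpha_1\lambda_x/\kappa_1\le\omega_1$, which is \eqref{eq:rect-cfl-general} with $c_0=1$ for LF.
\end{enumerate}
Proposition~\ref{prop:canonical-budget} then applies verbatim, with Theorem~\ref{thm:2point-entropy} used for the normal flux $\mathbf F_1$ and entropy flux $\mathcal Q_1$. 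It yields \eqref{eq:rect-Bx} with the stated LF numerical entropy flux. The $y$-direction is identical and gives \eqref{eq:rect-By}.

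Finally, substitute \eqref{eq:rect-Bx}--\eqref{eq:rect-By} into $B_{ij}^{\mathrm R}$. The factors $\kappa_1\cdot\lambda_x/\kappa_1=\lambda_x$ and $\kappa_2\cdot\lambda_y/\kappa_2=\lambda_y$ cancel, which gives \eqref{eq:rect-weakE-explicit} directly.

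I expect the main obstacle to be the bookkeeping in the second step. Proposition~\ref{prop:canonical-budget} was stated with the interface traces being the cell's own endpoint nodal values. The hypotheses of Theorem~\ref{thm:2point-entropy} must also hold: the Riemann problems between $\mathbf U_{i\pm\frac12,q}^{\mp,n}$, and the internal pair $(\mathbf U_{j,1},\mathbf U_{j,L})$ of each slice, need to have entropy solutions in $G$ with wave speeds bounded by $\alpha_1$. I would justify this by noting that all these states are nodal values in $S_{ij}^{\mathrm R}$ or in the neighbors' node sets, hence lie in $G$ by assumption, and that $\alpha_1$ is by definition a bound on the $x$-directional speeds. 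This must be recorded as part of "the setting of Theorem~\ref{thm:rect-weakP}."
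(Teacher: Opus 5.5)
Your proposal is correct and follows the paper's proof essentially step for step: Jensen's inequality on the exact convex decomposition \eqref{eq:rect-star-decomp}, then Proposition~\ref{prop:canonical-budget} applied to each horizontal and vertical slice with effective ratios $\lambda_x/\kappa_1$ and $\lambda_y/\kappa_2$, and finally the cancellation of $\kappa_1,\kappa_2$ in the flux terms. One small bookkeeping slip in your last paragraph: the two Riemann pairs entering \eqref{eq:canonical-decomp} for a slice are the inner traces $(\mathbf U_{i-\frac12,q}^{+,n},\mathbf U_{i+\frac12,q}^{-,n})$ and the outer neighbor traces $(\mathbf U_{i-\frac12,q}^{-,n},\mathbf U_{i+\frac12,q}^{+,n})$, so your Riemann-solvability hypothesis should name the outer pair rather than listing the inner pair twice.
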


\begin{proof}
The proof has two separate steps: first we derive the abstract weak entropy estimate \eqref{eq:rect-weakE-result} from the exact convex decomposition \eqref{eq:rect-star-decomp}; then we identify explicit slice budgets by applying the one-dimensional canonical entropy theorem to each horizontal and vertical slice.

\smallskip
\noindent\emph{Step 1: convex averaging of slice entropy inequalities.}
Equation~\eqref{eq:rect-star-decomp} gives the exact representation
\[
\bar{\mathbf U}_{ij}^\star
=
\kappa_1\sum_{q=1}^Q \widehat\omega_q\,\mathbf A_{ij,q}^{x,\star}
+
\kappa_2\sum_{q=1}^Q \widehat\omega_q\,\mathbf A_{ij,q}^{y,\star}.
\]
Because $\kappa_1,\kappa_2>0$, because $\sum_{q=1}^Q\widehat\omega_q=1$, and because $\kappa_1+\kappa_2=1$, the coefficients
\[
\{\kappa_1\widehat\omega_q\}_{q=1}^Q
\cup
\{\kappa_2\widehat\omega_q\}_{q=1}^Q
\]
form a nonnegative family whose total sum is one. Therefore \eqref{eq:rect-star-decomp} is a genuine convex combination of the slice candidate states. Convexity of $\eta$ then yields
\begin{align*}
\eta(\bar{\mathbf U}_{ij}^\star)
&=
\eta\!\left(
\kappa_1\sum_{q=1}^Q \widehat\omega_q\,\mathbf A_{ij,q}^{x,\star}
+
\kappa_2\sum_{q=1}^Q \widehat\omega_q\,\mathbf A_{ij,q}^{y,\star}
\right)\\
&\le
\kappa_1\sum_{q=1}^Q \widehat\omega_q\,\eta(\mathbf A_{ij,q}^{x,\star})
+
\kappa_2\sum_{q=1}^Q \widehat\omega_q\,\eta(\mathbf A_{ij,q}^{y,\star}).
\end{align*}
Applying the assumed slice bounds \eqref{eq:rect-slice-budgets} gives
\[
\eta(\bar{\mathbf U}_{ij}^\star)
\le
\kappa_1\sum_{q=1}^Q \widehat\omega_q B_{ij,q}^{x}
+
\kappa_2\sum_{q=1}^Q \widehat\omega_q B_{ij,q}^{y}
=
B_{ij}^{\mathrm R},
\]
which is exactly \eqref{eq:rect-weakE-result}.

\smallskip
\noindent\emph{Step 2: rigorous identification of the slice budgets.}
Fix $q\in\{1,\dots,Q\}$. Define the horizontal slice polynomial
\[
p_{ij,q}^{x,n}(x):=p_{ij}^n(x,\widetilde y_j^{(q)}),
\qquad x\in [x_{i-\frac12},x_{i+\frac12}],
\]
with nodal values
\[
\mathbf U_{ij,q,\mu}^{x,n}:=p_{ij}^n(x_i^{(\mu)},\widetilde y_j^{(q)}),
\qquad \mu=1,\dots,L.
\]
By the Gauss--Lobatto formula in \eqref{eq:rect-slice-avgs},
\[
\bar{\mathbf U}_{ij,q}^{x,n}
=
\sum_{\mu=1}^L \omega_\mu\,\mathbf U_{ij,q,\mu}^{x,n}.
\]
Now set
\[
\lambda_{x,\mathrm{eff}}:=\frac{\lambda_x}{\kappa_1}.
\]
Then \eqref{eq:rect-slice-candidates} can be rewritten as
\[
\mathbf A_{ij,q}^{x,\star}
=
\bar{\mathbf U}_{ij,q}^{x,n}
-
\lambda_{x,\mathrm{eff}}
\Bigl(
\widehat{\mathbf F}_{1,i+\frac12,q}^n-\widehat{\mathbf F}_{1,i-\frac12,q}^n
\Bigr).
\]
This is exactly a one-dimensional candidate cell-average update of the form treated by Proposition~\ref{prop:canonical-budget}: the cell is the interval $[x_{i-\frac12},x_{i+\frac12}]$, the nodal values are $\mathbf U_{ij,q,\mu}^{x,n}$, the interface states are
\[
\mathbf U_{i+\frac12,q}^{-,n},\quad \mathbf U_{i+\frac12,q}^{+,n},
\qquad
\mathbf U_{i-\frac12,q}^{-,n},\quad \mathbf U_{i-\frac12,q}^{+,n},
\]
and the effective CFL number is $\lambda_{x,\mathrm{eff}}$. Applying Proposition~\ref{prop:canonical-budget} to this horizontal slice gives
\[
\eta(\mathbf A_{ij,q}^{x,\star})\le B_{ij,q}^{x},
\]
with $B_{ij,q}^{x}$ precisely as in \eqref{eq:rect-Bx}. The numerical entropy flux entering that one-dimensional estimate is exactly $\widehat{\mathcal Q}_{1,i+\frac12,q}^n$, because the normal entropy flux for the horizontal slice is $\mathcal Q_1$.

The vertical slices are entirely analogous. For the same fixed $q$, define
\[
p_{ij,q}^{y,n}(y):=p_{ij}^n(\widetilde x_i^{(q)},y),
\qquad y\in [y_{j-\frac12},y_{j+\frac12}],
\]
with nodal values
\[
\mathbf U_{ij,q,\mu}^{y,n}:=p_{ij}^n(\widetilde x_i^{(q)},y_j^{(\mu)}),
\qquad \mu=1,\dots,L.
\]
Again \eqref{eq:rect-slice-avgs} gives
\[
\bar{\mathbf U}_{ij,q}^{y,n}
=
\sum_{\mu=1}^L \omega_\mu\,\mathbf U_{ij,q,\mu}^{y,n}.
\]
With
\[
\lambda_{y,\mathrm{eff}}:=\frac{\lambda_y}{\kappa_2},
\]
equation~\eqref{eq:rect-slice-candidates-y} becomes
\[
\mathbf A_{ij,q}^{y,\star}
=
\bar{\mathbf U}_{ij,q}^{y,n}
-
\lambda_{y,\mathrm{eff}}
\Bigl(
\widehat{\mathbf F}_{2,q,j+\frac12}^n-\widehat{\mathbf F}_{2,q,j-\frac12}^n
\Bigr),
\]
which is again a one-dimensional candidate average update, now for the normal system in the $y$-direction. Proposition~\ref{prop:canonical-budget} applied to this vertical slice yields
\[
\eta(\mathbf A_{ij,q}^{y,\star})\le B_{ij,q}^{y},
\]
with $B_{ij,q}^{y}$ given by \eqref{eq:rect-By}; here the relevant entropy flux is $\mathcal Q_2$, so the numerical entropy flux is $\widehat{\mathcal Q}_{2,q,j+\frac12}^n$.

Finally, substitute \eqref{eq:rect-Bx} and \eqref{eq:rect-By} into the definition of $B_{ij}^{\mathrm R}$:
\begin{align*}
B_{ij}^{\mathrm R}
&=
\kappa_1\sum_{q=1}^Q \widehat\omega_q B_{ij,q}^{x}
+
\kappa_2\sum_{q=1}^Q \widehat\omega_q B_{ij,q}^{y}\\
&=
\kappa_1\sum_{q=1}^Q \widehat\omega_q\sum_{\mu=1}^L \omega_\mu\,
\eta\!\bigl(p_{ij}^n(x_i^{(\mu)},\widetilde y_j^{(q)})\bigr)
-
\kappa_1\sum_{q=1}^Q \widehat\omega_q\frac{\lambda_x}{\kappa_1}
\Bigl(
\widehat{\mathcal Q}_{1,i+\frac12,q}^n-\widehat{\mathcal Q}_{1,i-\frac12,q}^n
\Bigr)\\
&\quad
+
\kappa_2\sum_{q=1}^Q \widehat\omega_q\sum_{\mu=1}^L \omega_\mu\,
\eta\!\bigl(p_{ij}^n(\widetilde x_i^{(q)},y_j^{(\mu)})\bigr)
-
\kappa_2\sum_{q=1}^Q \widehat\omega_q\frac{\lambda_y}{\kappa_2}
\Bigl(
\widehat{\mathcal Q}_{2,q,j+\frac12}^n-\widehat{\mathcal Q}_{2,q,j-\frac12}^n
\Bigr).
\end{align*}
The factors $\kappa_1$ and $\kappa_2$ cancel in the flux contributions, leaving exactly \eqref{eq:rect-weakE-explicit}. This completes the proof.
\end{proof}

\begin{corollary}[EPO on rectangular meshes]\label{cor:rect-epo}
Assume that the hypotheses of Theorems~\ref{thm:rect-weakP} and \ref{thm:rect-weakE} hold on every rectangular cell. Equip the node set $S_{ij}^{\mathrm R}$ with the positive weights inherited from \eqref{eq:rect-cad}, let $\mathcal O_{ij}^\star$ be any COS-compatible convex oscillation-suppressing set, and let $\mathcal E_{ij}$ denote the local quadrature entropy induced by those weights. Define the limited rectangular-cell array by
\[
\mathbf U_{ij}^{\mathrm{EPO}}
:=
\mathcal S_{I_{ij},\bar{\mathbf U}_{ij}^\star}
\bigl(\theta_{ij}^{\mathrm{EPO}};\mathbf U_{ij}^\star\bigr).
\]
Then
\begin{enumerate}[label=(\roman*),leftmargin=2em]
\item every nodal value of $\mathbf U_{ij}^{\mathrm{EPO}}$ on $S_{ij}^{\mathrm R}$ belongs to $G$;
\item the local strong entropy inequality
\[
\mathcal E_{ij}(\mathbf U_{ij}^{\mathrm{EPO}})\le B_{ij}^{\mathrm R}
\]
holds;
\item $\mathbf U_{ij}^{\mathrm{EPO}}\in \mathcal O_{ij}^\star$.
\end{enumerate}
If the budgets $B_{ij}^{\mathrm R}$ satisfy the global compatibility assumption of Section~\ref{sec:main}, then the global strong entropy inequality holds after limiting.
\end{corollary}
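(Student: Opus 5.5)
The corollary is a direct instance of the cellwise dimensional lifting in Proposition~\ref{prop:2d-lift}, so I would reduce it to Theorem~\ref{thm:localEPO} once the three weak inputs on the rectangle are in place.

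\textbf{Step 1: the quadrature structure.} Take the node set $S_{ij}^{\mathrm R}$ with the weights $\kappa_1\widehat\omega_q\omega_\mu$ on $S_i^x\times\widetilde S_j^y$ and $\kappa_2\widehat\omega_q\omega_\mu$ on $\widetilde S_i^x\times S_j^y$. These are positive and sum to $\kappa_1+\kappa_2=1$, and \eqref{eq:rect-cad} is exactly the exact cell-average identity \eqref{eq:2d-cellavg}.

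The one point needing care is when the two families share a node, for instance at the centre when $L$ and $Q$ are both odd. Then $\mathcal E_{ij}$ must be read either as a sum over the formal multiset of nodes, or with the coincident weights merged. The value of $\mathcal E_{ij}$ is the same either way, because the nodal state at a shared point is unique. So the multiset convention is harmless, and every convexity argument goes through unchanged.

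Applied to the candidate array $\mathbf U_{ij}^\star$, the same identity says its weighted mean is $\bar{\mathbf U}_{ij}^\star$. This is what makes the scaling ray \eqref{eq:2d-scaling} mean preserving.

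\textbf{Step 2: the weak inputs.} Theorem~\ref{thm:rect-weakP} gives $\bar{\mathbf U}_{ij}^\star\in G$, which is the analogue of Assumption~\ref{ass:weakP}. Theorem~\ref{thm:rect-weakE} gives $\eta(\bar{\mathbf U}_{ij}^\star)\le B_{ij}^{\mathrm R}$, which is the analogue of Assumption~\ref{ass:weakE} with budget $B_{ij}^{\mathrm R}$. The set $\mathcal O_{ij}^\star$ satisfies Assumption~\ref{ass:osc} by hypothesis. Hence $\theta_{ij}^{\mathrm{EPO}}=\min\{\theta_{ij}^{\mathrm{PE}},\theta_{ij}^{\mathrm O}\}$ is well defined, by the cellwise versions of Propositions~\ref{prop:thetaP}, \ref{prop:thetaE-after-P} and \ref{prop:thetaO-basic} together with Theorem~\ref{thm:min}.

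\textbf{Step 3: the local conclusions.} Invoking Proposition~\ref{prop:2d-lift} for the cellwise analogue of Theorem~\ref{thm:localEPO} yields (i) nodal membership in $G$ on $S_{ij}^{\mathrm R}$, (ii) $\mathcal E_{ij}(\mathbf U_{ij}^{\mathrm{EPO}})\le B_{ij}^{\mathrm R}$, and (iii) $\mathbf U_{ij}^{\mathrm{EPO}}\in\mathcal O_{ij}^\star$. Mean preservation comes for free as well.

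\textbf{Step 4: the global statement.} Sum (ii) over all cells and apply the assumed compatibility of the $B_{ij}^{\mathrm R}$, exactly as in the proof of Theorem~\ref{thm:global}. Under periodicity, the interface entropy-flux terms in \eqref{eq:rect-weakE-explicit} telescope edge by edge and Gauss node by Gauss node, because each edge flux appears with opposite signs in its two neighbouring cells.

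The main obstacle is Step 1's bookkeeping, not analysis: one must check that the positive weights genuinely reproduce the cell average for the candidate representation, so that the scaling is conservative, and that the strong entropy functional is the one induced by these weights. Everything after that is verbatim reuse of the one-dimensional results.
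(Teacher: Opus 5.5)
Your proposal is correct and follows the paper's own proof, which simply invokes Proposition~\ref{prop:2d-lift}, with Theorems~\ref{thm:rect-weakP} and \ref{thm:rect-weakE} supplying the weak geometric and weak entropy inputs for the cellwise analogue of Theorem~\ref{thm:localEPO}. Your extra bookkeeping is sound and only makes explicit what the paper leaves implicit: the treatment of a coincident centre node, the exactness of \eqref{eq:rect-cad} for the candidate representation, and the edge-by-edge telescoping of the Gauss-node entropy fluxes.
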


\begin{proof}
This is an immediate application of Proposition~\ref{prop:2d-lift} together with Theorems~\ref{thm:rect-weakP} and \ref{thm:rect-weakE}.
\end{proof}

\begin{remark}[Optimal Cartesian decompositions and the geometric module]\label{rem:rect-ocad}
The tensor-product decomposition \eqref{eq:rect-cad} is the classical Zhang--Shu choice. For $\mathbb P^2$- and $\mathbb P^3$-based methods on Cartesian meshes, Cui, Ding, and Wu proved that this classical decomposition is not optimal and constructed alternative decompositions with fewer internal nodes and larger BP CFL numbers \cite{CuiDingWuMulti2023}; see also the systematic OCAD theory in \cite{CuiDingWuOCAD2024}. In particular, the optimal positivity CFL for $\mathbb P^2$ and $\mathbb P^3$ on rectangles becomes
\begin{equation}\label{eq:rect-ocad-cfl}
\left(
2\frac{\alpha_1}{\Delta x}
+
2\frac{\alpha_2}{\Delta y}
+
4\max\!\left\{\frac{\alpha_1}{\Delta x},\frac{\alpha_2}{\Delta y}\right\}
\right)\Delta t
\le c_0,
\end{equation}
for the geometric module. EPO can exploit this improvement directly: one may certify the weak geometric budget by the optimal decomposition, while keeping the entropy and COS modules unchanged. The local scaling theorem does not depend on which specific weak-positivity backbone is used.
\end{remark}

\subsection{Triangular meshes: weak positivity, weak entropy stability, and COS entropy inheritance}\label{sec:2d-tri}

On unstructured triangular meshes, a direct line-separable reduction is no longer available. The correct generalization is instead the feasible special quadrature framework together with the GQL-based invariant-domain analysis emphasized in the recent survey literature \cite{WuZhangShuSurvey2025,WuShuGQL2023,DingCuiWu2025}.

Let $K$ be a triangular cell with edges $e_K^{(i)}$ ($i=1,2,3$), edge lengths $l_K^{(i)}$, and outward unit normals $\mathbf n_K^{(i)}\in\mathbb S^1$. On each edge $e_K^{(i)}$ fix a $Q$-point Gauss quadrature with nodes $\{\mathbf x_{K,i,q}\}_{q=1}^Q$ and normalized weights $\{\widehat\omega_q\}_{q=1}^Q$. Let $p_K^n$ denote the local polynomial at time level $n$ and write
\[
\mathbf U_{K,i,q}^{\mathrm{int},n}:=p_K^n(\mathbf x_{K,i,q}),
\qquad
\mathbf U_{K,i,q}^{\mathrm{ext},n}:=p_{K_i}^n(\mathbf x_{K,i,q}),
\]
where $K_i$ is the neighbor sharing the edge $e_K^{(i)}$.

We consider the Lax--Friedrichs / Rusanov candidate update
\begin{equation}\label{eq:tri-update}
\bar{\mathbf U}_K^\star
=
\bar{\mathbf U}_K^n
-
\frac{\Delta t}{|K|}
\sum_{i=1}^3 l_K^{(i)}
\sum_{q=1}^Q \widehat\omega_q\,
\widehat{\mathbf F}_{K,i,q}^{\mathrm{LF},n},
\end{equation}
with
\begin{equation}\label{eq:tri-lf}
\widehat{\mathbf F}_{K,i,q}^{\mathrm{LF},n}
:=
\frac12\Bigl(
\mathbf F(\mathbf U_{K,i,q}^{\mathrm{int},n})\cdot \mathbf n_K^{(i)}
+
\mathbf F(\mathbf U_{K,i,q}^{\mathrm{ext},n})\cdot \mathbf n_K^{(i)}
-
\alpha\bigl(\mathbf U_{K,i,q}^{\mathrm{ext},n}-\mathbf U_{K,i,q}^{\mathrm{int},n}\bigr)
\Bigr),
\end{equation}
where $\alpha>0$ is a viscosity parameter chosen large enough to dominate all relevant directional wave speeds.

The cell average is assumed to admit a feasible special quadrature of the form
\begin{equation}\label{eq:tri-quad}
\bar{\mathbf U}_K^n
=
\sum_{i=1}^3 w_{K,i}\sum_{q=1}^Q \widehat\omega_q\,
\mathbf U_{K,i,q}^{\mathrm{int},n}
+
\sum_{s=1}^{S_K} \omega_{K,s}^\ast\, \mathbf U_{K,s}^{\ast,n},
\end{equation}
where
\[
w_{K,i}>0,
\qquad
\omega_{K,s}^\ast>0,
\qquad
\sum_{i=1}^3 w_{K,i}+\sum_{s=1}^{S_K}\omega_{K,s}^\ast=1,
\]
and every internal node $\mathbf x_{K,s}^\ast$ lies in $K$. The quadrature is exact on the local polynomial space used by the DG/FV approximation.

\begin{theorem}[Weak geometric budget on triangular meshes]\label{thm:tri-weakP}
Assume that the feasible quadrature \eqref{eq:tri-quad} holds, that all states appearing in \eqref{eq:tri-quad} together with the neighboring edge states $\mathbf U_{K,i,q}^{\mathrm{ext},n}$ belong to $G$. If
\begin{equation}\label{eq:tri-cfl}
\frac{\alpha\Delta t}{|K|}
\le
\min_{1\le i\le 3}\frac{w_{K,i}}{l_K^{(i)}},
\end{equation}
then the candidate cell average satisfies
\begin{equation}\label{eq:tri-weakP-result}
\bar{\mathbf U}_K^\star\in G.
\end{equation}
\end{theorem}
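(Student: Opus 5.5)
The plan is to write $\bar{\mathbf U}_K^\star$ as an explicit convex combination of admissible states, in the same way as the one-dimensional decomposition \eqref{eq:canonical-decomp}. Three ingredients are needed: the feasible quadrature \eqref{eq:tri-quad}, the divergence-theorem identity $\sum_{i=1}^3 l_K^{(i)}\mathbf n_K^{(i)}=\mathbf 0$, and the fact that an admissible Lax--Friedrichs state along a fixed normal direction is again admissible.

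\emph{Step 1: expand the update.} Substitute \eqref{eq:tri-quad} for $\bar{\mathbf U}_K^n$ in \eqref{eq:tri-update}. Set $\mu_i:=\alpha\Delta t\, l_K^{(i)}/|K|$ and split each edge-node term as
\[
w_{K,i}\mathbf U^{\mathrm{int}}_{K,i,q}=(w_{K,i}-\mu_i)\mathbf U^{\mathrm{int}}_{K,i,q}+\mu_i\mathbf U^{\mathrm{int}}_{K,i,q}.
\]
The CFL condition \eqref{eq:tri-cfl} is exactly $\mu_i\le w_{K,i}$, so every coefficient $w_{K,i}-\mu_i$ is nonnegative.

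\emph{Step 2: form the Lax--Friedrichs combinations.} Group $\mu_i\mathbf U^{\mathrm{int}}_{K,i,q}-\frac{\Delta t\, l_K^{(i)}}{|K|}\widehat{\mathbf F}^{\mathrm{LF},n}_{K,i,q}$. With $\mathbf n=\mathbf n_K^{(i)}$ this gives
\[
\mu_i\Bigl[\tfrac12\bigl(\mathbf U^{\mathrm{int}}_{K,i,q}+\mathbf U^{\mathrm{ext}}_{K,i,q}\bigr)-\tfrac{1}{2\alpha}\bigl(\mathbf F(\mathbf U^{\mathrm{ext}}_{K,i,q})\cdot\mathbf n+\mathbf F(\mathbf U^{\mathrm{int}}_{K,i,q})\cdot\mathbf n\bigr)\Bigr].
\]
This is not yet a Lax--Friedrichs state, because the interior flux enters with a plus sign. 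Use the closed-polygon identity $\sum_i l_K^{(i)}\mathbf n_K^{(i)}=0$ to add $\mu_i\,\frac{1}{\alpha}\mathbf F(\mathbf U^{\mathrm{int}}_{K,i,q})\cdot\mathbf n$, which turns each bracket into $H_{\alpha,\mathbf n}(\mathbf U^{\mathrm{int}},\mathbf U^{\mathrm{ext}})$. The price is a compensating sum $-\frac{\Delta t}{|K|}\sum_{i,q}\widehat\omega_q\, l_K^{(i)}\,\mathbf F(\mathbf U^{\mathrm{int}}_{K,i,q})\cdot\mathbf n_K^{(i)}$.

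This compensating term is the main obstacle. It vanishes only if $\mathbf F$ is evaluated at one common state, which is not the case when the interior states vary along the edges. My first attempt is the GQL route of \cite{WuShuGQL2023,WuZhangShuSurvey2025}: instead of insisting on an algebraic convex combination, verify that $\varphi(\bar{\mathbf U}_K^\star)\ge 0$ for every linear functional $\varphi$ describing $G$. I would cite Zhang--Xia--Shu \cite{ZhangXiaShu2012} for the polygon version. In that approach the term $\sum_{i,q}\widehat\omega_q l_K^{(i)}\mathbf F(\mathbf U^{\mathrm{int}})\cdot\mathbf n$ is paired with the corresponding quadrature nodes and controlled through the interior weights $w_{K,i}$. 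The controlling bound is $|\varphi(\mathbf F(\mathbf U)\cdot\mathbf n)|\le \alpha\,\varphi(\mathbf U)$ for $\mathbf U\in G$, which is the GQL form of the statement that $\alpha$ dominates the directional wave speeds. If that works, the terms
\[
(w_{K,i}-\mu_i)\,\varphi(\mathbf U^{\mathrm{int}})\quad\text{together with}\quad \mu_i\,\varphi\bigl(\mathbf U^{\mathrm{int}}\pm\alpha^{-1}\mathbf F(\mathbf U^{\mathrm{int}})\cdot\mathbf n\bigr)
\]
are all nonnegative.

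\emph{Step 3: conclude.} Recognize each $H_{\alpha,\mathbf n}$ as the Riemann average of the one-dimensional normal system, which lies in $G$ by Theorem~\ref{thm:2point-entropy}. The internal states $\mathbf U^{\ast,n}_{K,s}$ lie in $G$ by hypothesis. The coefficients are nonnegative and, after the $\mu_i$ regrouping, sum to one. Since $G$ is convex and closed, it follows that $\bar{\mathbf U}_K^\star\in G$.
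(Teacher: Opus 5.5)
The paper gives no argument of its own for this theorem; it only cites the GQL/CAD literature. Your proposal therefore has to stand alone, and as written it does not close.

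\textbf{The gap.} The failing step is Step~3. The decomposition you assemble there still contains the compensating term
\[
-\frac{\Delta t}{|K|}\sum_{i=1}^3\sum_{q=1}^Q\widehat\omega_q\,l_K^{(i)}\,\mathbf F(\mathbf U^{\mathrm{int},n}_{K,i,q})\cdot\mathbf n_K^{(i)},
\]
which is not a weighted state. So $\bar{\mathbf U}_K^\star$ is \emph{not} a convex combination of the $H_{\alpha,\mathbf n}$ states and the internal states, and Theorem~\ref{thm:2point-entropy} cannot finish the argument. The GQL patch is left conditional, and it has two defects.
\begin{enumerate}
\item The bound $|\varphi(\mathbf F(\mathbf U)\cdot\mathbf n)|\le\alpha\varphi(\mathbf U)$ is neither assumed nor implied by $\alpha$ dominating the wave speeds. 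It changes under $\mathbf F\mapsto\mathbf F+\mathbf C$, while $\bar{\mathbf U}_K^\star$ does not, because $\sum_i l_K^{(i)}\mathbf n_K^{(i)}=\mathbf 0$. For instance, with $\mathbf f(u)=(u+1,0)$, $G=[0,1]$, $\alpha=1$ and $\varphi(u)=u$, it fails at $u=0$. (It does hold for Euler with $\alpha\ge|\mathbf u\cdot\mathbf n|+c$.)
\item Even granting the bound, the $\mu_i$ weight of $\mathbf U^{\mathrm{int}}$ has already been spent inside $H_{\alpha,\mathbf n}$. Absorbing the compensating term with what is left then requires $w_{K,i}\ge 2\mu_i$, i.e.\ half of \eqref{eq:tri-cfl}.
\end{enumerate}
The Zhang--Xia--Shu bridge-flux pairing does not rescue the stated CFL either. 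It cancels the term using two-point averages only, but, as in Theorem~\ref{thm:tri-weakE-cad}, it costs the middle-edge condition $\alpha\Delta t/|K|\le w_{K,2}/(l_K^{(1)}+l_K^{(2)}+l_K^{(3)})$.

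\textbf{The repair.} Drop $H_{\alpha,\mathbf n}$ altogether. Your Step~2 bracket is exactly
\[
\tfrac{\mu_i}{2}\bigl(\mathbf U^{\mathrm{int}}-\alpha^{-1}\mathbf F(\mathbf U^{\mathrm{int}})\cdot\mathbf n\bigr)+\tfrac{\mu_i}{2}\bigl(\mathbf U^{\mathrm{ext}}-\alpha^{-1}\mathbf F(\mathbf U^{\mathrm{ext}})\cdot\mathbf n\bigr).
\]
Under the Lax--Friedrichs splitting property $\mathbf U\pm\alpha^{-1}\mathbf F(\mathbf U)\cdot\mathbf n\in G$ (the Euler case), $\bar{\mathbf U}_K^\star$ is then a genuine convex combination as soon as $\mu_i\le w_{K,i}$, which is \eqref{eq:tri-cfl}.

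For a general closed convex $G$ with only the two-point property, argue with supporting half-spaces.
\begin{itemize}
\item Take a supporting half-space $\{\ell\ge0\}$, $\ell(\mathbf U)=\mathbf a\cdot\mathbf U+b$, touching $G$ at some $\mathbf U_0$. $G$ is the intersection of such half-spaces, as one sees by projecting exterior points onto $G$.
\item Apply Lemma~\ref{lem:tri-directional-2point} to the pairs $(\mathbf U_0,\mathbf U)$ and $(\mathbf U,\mathbf U_0)$. This gives $\bigl|\mathbf a\cdot\bigl((\mathbf F(\mathbf U)-\mathbf F(\mathbf U_0))\cdot\mathbf n\bigr)\bigr|\le\alpha\,\ell(\mathbf U)$ for all $\mathbf U\in G$, which is the correct, shifted form of your controlling bound.
\item The shift contributes $\frac{\Delta t}{|K|}\,\mathbf a\cdot\bigl(\mathbf F(\mathbf U_0)\cdot\sum_i l_K^{(i)}\mathbf n_K^{(i)}\bigr)=0$. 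Hence $\ell(\bar{\mathbf U}_K^\star)\ge0$ under \eqref{eq:tri-cfl}.
\end{itemize}
This route uses the wave-speed hypothesis for Riemann problems involving $\mathbf U_0$, which is a mild global strengthening of the hypothesis as stated.
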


\begin{proof}
See \cite{WuZhangShuSurvey2025,WuShuGQL2023,DingCuiWu2025}. 
\end{proof}

Theorem~\ref{thm:tri-weakP} is the weak geometric backbone needed by EPO on triangles. Different special quadratures lead to different admissible node sets and different CFL constants.

\begin{corollary}[Classical and optimal triangular geometric modules]\label{cor:tri-cfls}
Assume the hypotheses of Theorem~\ref{thm:tri-weakP}. Then the following particular choices of feasible quadratures yield the corresponding geometric CFL numbers.
\begin{enumerate}[label=(\alph*),leftmargin=2em]
\item \textbf{Zhang--Xia--Shu quadrature \cite{ZhangXiaShu2012}.} If $w_{K,i}=2\omega_1/3$ for all three edges, where $\omega_1$ is the first Gauss--Lobatto weight with $L=\lceil (k+3)/2\rceil$, then
\begin{equation}\label{eq:zxs-cfl}
\frac{\alpha\Delta t}{|K|}
\le
\frac{2\omega_1}{3}\min_{1\le i\le 3}\frac{1}{l_K^{(i)}}.
\end{equation}
\item \textbf{Chen--Shu quadrature \cite{ChenShu2017}.} For the Chen--Shu suitable quadratures on triangles, one obtains
\begin{equation}\label{eq:cs-cfl}
\frac{\alpha\Delta t}{|K|}
\le
w_m^{\mathrm{CS}}\min_{1\le i\le 3}\frac{1}{l_K^{(i)}}.
\end{equation}
Here
\[
w_1^{\mathrm{CS}}=\frac13,\qquad
w_2^{\mathrm{CS}}=\frac{3}{20},\qquad
w_3^{\mathrm{CS}}\approx 0.086812,\qquad
w_4^{\mathrm{CS}}\approx 0.05572449.
\]
\item \textbf{Optimal triangular decompositions \cite{DingCuiWu2025}.} Order the edge lengths so that $l_K^{(1)}\ge l_K^{(2)}\ge l_K^{(3)}$. For the optimal $P^1$ and $P^2$ decompositions of Ding, Cui, and Wu,
\begin{equation}\label{eq:dcw-cfl-p1}
\frac{\alpha\Delta t}{|K|}
\le
C_{K,1}^{\mathrm{DCW}}
:=
\frac{2}{3(l_K^{(1)}+l_K^{(2)})},
\end{equation}
\begin{equation}\label{eq:dcw-cfl-p2}
\frac{\alpha\Delta t}{|K|}
\le
C_{K,2}^{\mathrm{DCW}}
:=
\frac{2}{9\overline l_K+3\widehat l_K},
\end{equation}
where
\begin{equation}\label{eq:tri-lbar-lhat}
\begin{aligned}
\overline l_K
&:=\frac{l_K^{(1)}+l_K^{(2)}+l_K^{(3)}}{3},
\\
\widehat l_K
&:=
\Bigl(
(l_K^{(1)})^2+(l_K^{(2)})^2+(l_K^{(3)})^2
-\frac23\bigl(
l_K^{(1)}l_K^{(2)}+l_K^{(2)}l_K^{(3)}+l_K^{(3)}l_K^{(1)}
\bigr)
\Bigr)^{1/2}.
\end{aligned}
\end{equation}
\end{enumerate}
\end{corollary}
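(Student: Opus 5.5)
Each of (a)--(c) follows from Theorem~\ref{thm:tri-weakP} once the corresponding feasible quadrature is written in the form \eqref{eq:tri-quad}. The edge weights $w_{K,i}$ are then read off, and the generic CFL condition \eqref{eq:tri-cfl}, namely $\alpha\Delta t/|K|\le\min_i w_{K,i}/l_K^{(i)}$, is specialized. So the plan is: for each quadrature, verify the structural hypotheses of \eqref{eq:tri-quad}, identify $w_{K,i}$, and substitute. The geometric conclusion $\bar{\mathbf U}_K^\star\in G$ is then inherited verbatim.

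For (a), I will recall the Zhang--Xia--Shu construction. A Dubiner/Gauss--Lobatto tensor quadrature on the triangle is obtained by averaging, over the three edge directions, the collapsed-coordinate map that sends a square onto the triangle with one edge fixed. In each of the three copies, the Gauss--Lobatto endpoint line lying on an edge carries weight $\omega_1$ times the Gauss weights $\widehat\omega_q$ along that edge. Averaging the three copies with weight $1/3$ and using the symmetry of the construction gives a boundary weight of $2\omega_1/3$ on each edge. Every other node is interior to $K$ and has a positive weight, and exactness holds for degree $k$ when $L=\lceil (k+3)/2\rceil$. Hence $w_{K,i}=2\omega_1/3$, and \eqref{eq:tri-cfl} becomes \eqref{eq:zxs-cfl}.

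For (b), I will take the Chen--Shu suitable quadratures. These are symmetric, have positive weights, and carry Gauss points on the edges with a common total edge weight $w_m^{\mathrm{CS}}$; the paper quotes the numerical values, which I will not rederive. With $w_{K,i}=w_m^{\mathrm{CS}}$, \eqref{eq:cs-cfl} follows in the same way.

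The main obstacle is (c), because the Ding--Cui--Wu decompositions are not symmetric: the optimal $w_{K,i}$ depend on the edge lengths. My plan is to invoke \cite{DingCuiWu2025} for the existence of a feasible decomposition whose weights equalize or optimize the ratios $w_{K,i}/l_K^{(i)}$, and then show that the resulting optimal value $\max\min_i w_{K,i}/l_K^{(i)}$ equals $C_{K,1}^{\mathrm{DCW}}$ for $P^1$ and $C_{K,2}^{\mathrm{DCW}}$ for $P^2$.

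For $P^1$, I expect the argument to be short. Exactness on linears forces the weighted barycenter of the boundary nodes plus interior nodes to equal the centroid. The optimum puts boundary mass only on the two longest edges, with weight proportional to length, $w_{K,i}=\tfrac{2}{3}l_K^{(i)}/(l_K^{(1)}+l_K^{(2)})$ for $i=1,2$. I would check that these weights are admissible and yield the ratio in \eqref{eq:dcw-cfl-p1}.

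For $P^2$, I will not reproduce the optimization, which involves the quadratic-moment constraints producing $\overline l_K$ and $\widehat l_K$. I will cite the optimal value from \cite{DingCuiWu2025} and only confirm that the decomposition has the form \eqref{eq:tri-quad}, with positive weights and internal nodes in $K$, so that Theorem~\ref{thm:tri-weakP} applies.
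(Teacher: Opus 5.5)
Your overall route is the paper's: its proof only substitutes the imported boundary weights $w_{K,i}$ into \eqref{eq:tri-cfl}. So (a), (b), and the $P^2$ case of (c) go through by citation, as you plan.

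The one step where you go beyond citation, the $P^1$ case of (c), fails as written. If boundary mass sits only on the two longest edges, then $w_{K,3}=0$. This violates the requirement $w_{K,i}>0$ in \eqref{eq:tri-quad}, and it makes the minimum in \eqref{eq:tri-cfl} equal to $w_{K,3}/l_K^{(3)}=0$, so Theorem~\ref{thm:tri-weakP} gives no admissible time step. This is not a technicality. The Lax--Friedrichs flux through the shortest edge still contributes $-\frac{\Delta t\,l_K^{(3)}}{2|K|}\widehat\omega_q\bigl(\alpha\mathbf U_{K,3,q}^{\mathrm{int},n}+\mathbf F(\mathbf U_{K,3,q}^{\mathrm{int},n})\cdot\mathbf n_K^{(3)}\bigr)$ to \eqref{eq:tri-update}. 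In the argument behind Theorem~\ref{thm:tri-weakP}, that term is absorbed only by CAD weight on those same nodes.

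The correct computation runs as follows. For linear data, Gauss edge averages equal midpoint values, so exactness is your barycenter condition. Label edge $i$ opposite vertex $V_i$. The interior part $\sum_s\omega_{K,s}^\ast\mathbf x_{K,s}^\ast$ must then equal $\sum_c\beta_cV_c$ with $\beta_c=\frac13-\frac12(w_{K,a}+w_{K,b})$, where $a,b$ are the two edges through $V_c$. This is realizable by nodes in $K$ exactly when $w_{K,a}+w_{K,b}\le\frac23$ for every pair of edges. Now impose $w_{K,i}\ge\tau\,l_K^{(i)}$. The binding pair is the two longest edges, which gives $\tau\le 2/\bigl(3(l_K^{(1)}+l_K^{(2)})\bigr)$. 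The bound is attained by $w_{K,i}=2l_K^{(i)}/\bigl(3(l_K^{(1)}+l_K^{(2)})\bigr)$ for all three edges, the shortest included. The leftover mass $\bigl(l_K^{(1)}+l_K^{(2)}-2l_K^{(3)}\bigr)/\bigl(3(l_K^{(1)}+l_K^{(2)})\bigr)\ge0$ sits at the point fixed by the barycenter condition, which lies on the shortest edge. So the two longest edges enter through the binding constraint, not through the support of the weights.

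A smaller inaccuracy appears in (a). Within a single collapsed-coordinate copy, the nodes on the two edges through the collapsed vertex carry weights $\omega_1\widehat\omega_q$ times the Jacobian. That Jacobian is twice the relative distance from the collapsed vertex, so it varies along the edge, and the form \eqref{eq:tri-quad} does not hold copywise. Each edge lies in the two copies that collapse at its endpoints, and their two linear Jacobian factors sum to the constant $2$ by symmetry of the Gauss rule. Averaging with weight $\frac13$ then gives $\frac23\omega_1\widehat\omega_q$, which is where the factor $2/3$ comes from. Your count of one endpoint line per copy with weight $\omega_1\widehat\omega_q$ would instead give $\omega_1/3$.
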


\begin{proof}
Each statement is an immediate consequence of Theorem~\ref{thm:tri-weakP} after substituting the corresponding boundary weights $w_{K,i}$ from the imported feasible quadratures.
\end{proof}

For the entropy part of EPO on triangles there are in fact two complementary routes. The first is an \emph{active} route, fully parallel to the rectangular theory: one starts from a CAD of the candidate cell average and proves a \emph{weak entropy stability} for $\bar{\mathbf U}_K^\star$ itself. The second is the \emph{passive} route recorded later in Theorem~\ref{thm:tri-passive-EPO}: if the candidate DG/FV stage is already quadrature-entropy stable, then any further EPO scaling can only improve that entropy bound. 

\begin{lemma}[Directional two-point entropy inequality]\label{lem:tri-directional-2point}
Fix a unit vector $\mathbf n\in \mathbb S^1$ and define
\begin{equation}\label{eq:Halpha-directional}
H_{\alpha,\mathbf n}(\mathbf U_L,\mathbf U_R)
:=
\frac{\mathbf U_L+\mathbf U_R}{2}
-
\frac{\mathbf F(\mathbf U_R)\cdot \mathbf n-\mathbf F(\mathbf U_L)\cdot \mathbf n}{2\alpha}.
\end{equation}
Assume that the one-dimensional normal Riemann problem
\[
\partial_t \mathbf U+\partial_\xi\bigl(\mathbf F(\mathbf U)\cdot \mathbf n\bigr)=0
\]
with left state $\mathbf U_L\in G$ and right state $\mathbf U_R\in G$ admits an entropy solution taking values in $G$ and whose waves are all contained in $|\xi|\le \alpha$. Then
\begin{equation}\label{eq:tri-directional-entropy}
\eta\!\left(H_{\alpha,\mathbf n}(\mathbf U_L,\mathbf U_R)\right)
\le
\frac{\eta(\mathbf U_L)+\eta(\mathbf U_R)}{2}
-
\frac{\mathcal Q(\mathbf U_R)\cdot \mathbf n-\mathcal Q(\mathbf U_L)\cdot \mathbf n}{2\alpha},
\end{equation}
and moreover $H_{\alpha,\mathbf n}(\mathbf U_L,\mathbf U_R)\in G$.
\end{lemma}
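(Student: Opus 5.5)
The plan is to reduce the statement to Theorem~\ref{thm:2point-entropy} applied to a one-dimensional system. Fix $\mathbf n$ and consider the normal system
\[
\partial_t \mathbf U+\partial_\xi \mathbf F_{\mathbf n}(\mathbf U)=0,
\qquad
\mathbf F_{\mathbf n}(\mathbf U):=\mathbf F(\mathbf U)\cdot\mathbf n .
\]
This is a one-dimensional conservation law of the form \eqref{eq:pde} with flux $\mathbf F_{\mathbf n}$.

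The first step is to check that $(\eta,\mathcal Q_{\mathbf n})$, with $\mathcal Q_{\mathbf n}:=\mathcal Q\cdot\mathbf n$, is a convex entropy pair for this normal system. Convexity of $\eta$ on $D_\eta$ is unchanged. The compatibility relation $\nabla\eta^\top \partial_{\mathbf U}\mathbf F_k=\nabla \mathcal Q_k^\top$ for $k=1,2$ is linear in the flux. Taking the combination with coefficients $n_1,n_2$ gives
\[
\nabla\eta^\top\,\partial_{\mathbf U}\mathbf F_{\mathbf n}=\nabla\mathcal Q_{\mathbf n}^\top .
\]
The hypothesis of the lemma supplies the remaining ingredients: a self-similar entropy solution of the normal Riemann problem with values in $G$, and all waves inside $|\xi|\le\alpha$. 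By ``entropy solution'' I mean one satisfying $\partial_t\eta+\partial_\xi\mathcal Q_{\mathbf n}\le0$ distributionally. I will make this interpretation explicit, since for systems it is exactly what the hypothesis is meant to provide.

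With these in hand, every hypothesis of Theorem~\ref{thm:2point-entropy} holds with $\mathbf F$ replaced by $\mathbf F_{\mathbf n}$ and $\mathcal Q$ replaced by $\mathcal Q_{\mathbf n}$. Comparing \eqref{eq:Halpha} with \eqref{eq:Halpha-directional} shows that $H_\alpha$ for the normal flux is exactly $H_{\alpha,\mathbf n}(\mathbf U_L,\mathbf U_R)$. Then:
\begin{itemize}
\item inequality \eqref{eq:2point-entropy} becomes \eqref{eq:tri-directional-entropy} verbatim;
\item identity \eqref{eq:Halpha-average} shows that $H_{\alpha,\mathbf n}$ is the average of $\mathcal R$ over $[-\alpha,\alpha]$, so $H_{\alpha,\mathbf n}(\mathbf U_L,\mathbf U_R)\in G$ by convexity and closedness of $G$.
\end{itemize}

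As a safeguard in case one prefers not to cite the theorem abstractly, the proof of Theorem~\ref{thm:2point-entropy} could be repeated directly with $\mathbf F_{\mathbf n}$ and $\mathcal Q_{\mathbf n}$. That means integrating the conservation law and the entropy inequality over the cone $[-\alpha t,\alpha t]$, using self-similarity, and then applying Jensen's inequality. Nothing in that argument depends on the flux beyond its role in the Rankine--Hugoniot/Reynolds bookkeeping.

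The only potential obstacle is the first step: certifying that the normal entropy flux is $\mathcal Q\cdot\mathbf n$ and that the entropy condition for the normal Riemann problem is the right one. This is a one-line linearity check once the meaning of ``entropy solution'' is fixed. Everything else is direct substitution.
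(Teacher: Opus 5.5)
Your proposal is correct and follows the paper's proof: both apply Theorem~\ref{thm:2point-entropy} to the normal system with flux $\mathbf F_{\mathbf n}=\mathbf F\cdot\mathbf n$ and entropy flux $\mathcal Q_{\mathbf n}=\mathcal Q\cdot\mathbf n$, note that $H_\alpha$ for this system is exactly $H_{\alpha,\mathbf n}$, and then read off the entropy inequality and membership in $G$ from \eqref{eq:2point-entropy} and \eqref{eq:Halpha-average}. Your extra checks are harmless refinements that the paper leaves implicit: the linear compatibility relation showing $(\eta,\mathcal Q_{\mathbf n})$ is an entropy pair, the explicit reading of ``entropy solution,'' and the use of closedness of $G$ for the integral average.
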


\begin{proof}
Apply Theorem~\ref{thm:2point-entropy} to the one-dimensional normal system with flux
\[
\mathbf F_{\mathbf n}(\mathbf U):=\mathbf F(\mathbf U)\cdot \mathbf n
\]
and entropy flux
\[
\mathcal Q_{\mathbf n}(\mathbf U):=\mathcal Q(\mathbf U)\cdot \mathbf n.
\]
Then \eqref{eq:Halpha} for the normal system is exactly \eqref{eq:Halpha-directional}, and \eqref{eq:2point-entropy} becomes \eqref{eq:tri-directional-entropy}. The statement $H_{\alpha,\mathbf n}(\mathbf U_L,\mathbf U_R)\in G$ is the corresponding directional version of \eqref{eq:Halpha-average} and follows in the same way.
\end{proof}

For later use, with a fixed viscosity parameter $\alpha>0$, define the directional local Lax--Friedrichs numerical state and numerical entropy flux by
\begin{equation}\label{eq:tri-local-LF-flux}
\widehat{\mathbf F}_{\alpha}^{\mathrm{LF}}(\mathbf U_L,\mathbf U_R;\mathbf n)
:=
\frac12\Bigl(
\mathbf F(\mathbf U_L)\cdot \mathbf n
+
\mathbf F(\mathbf U_R)\cdot \mathbf n
-
\alpha(\mathbf U_R-\mathbf U_L)
\Bigr),
\end{equation}
\begin{equation}\label{eq:tri-local-LF-entropy}
\widehat{\mathcal Q}_{\alpha}^{\mathrm{LF}}(\mathbf U_L,\mathbf U_R;\mathbf n)
:=
\frac12\Bigl(
\mathcal Q(\mathbf U_L)\cdot \mathbf n
+
\mathcal Q(\mathbf U_R)\cdot \mathbf n
-
\alpha\bigl(\eta(\mathbf U_R)-\eta(\mathbf U_L)\bigr)
\Bigr).
\end{equation}
The antisymmetry identities
\begin{equation}\label{eq:tri-local-LF-antisym}
\widehat{\mathbf F}_{\alpha}^{\mathrm{LF}}(\mathbf U_L,\mathbf U_R;-\mathbf n)
=
-
\widehat{\mathbf F}_{\alpha}^{\mathrm{LF}}(\mathbf U_R,\mathbf U_L;\mathbf n),
\qquad
\widehat{\mathcal Q}_{\alpha}^{\mathrm{LF}}(\mathbf U_L,\mathbf U_R;-\mathbf n)
=
-
\widehat{\mathcal Q}_{\alpha}^{\mathrm{LF}}(\mathbf U_R,\mathbf U_L;\mathbf n)
\end{equation}
follow immediately from the definitions.

\begin{theorem}[Weak entropy stability on triangular meshes via Zhang--Xia--Shu CAD]\label{thm:tri-weakE-zxs}
Assume that the feasible special quadrature is the classical Zhang--Xia--Shu CAD,
\begin{equation}\label{eq:tri-zxs-cad}
\bar{\mathbf U}_K^n
=
\frac{2\omega_1}{3}
\sum_{i=1}^3\sum_{q=1}^Q \widehat\omega_q\,
\mathbf U_{K,i,q}^{\mathrm{int},n}
+
\sum_{s=1}^{S_K}\omega_{K,s}^\ast\,\mathbf U_{K,s}^{\ast,n},
\end{equation}
where $\omega_1$ is the first Gauss--Lobatto weight and
\[
\omega_{K,s}^\ast>0,
\qquad
2\omega_1+\sum_{s=1}^{S_K}\omega_{K,s}^\ast=1.
\]
Assume also that all states appearing in \eqref{eq:tri-zxs-cad} together with the neighboring edge states $\mathbf U_{K,i,q}^{\mathrm{ext},n}$ belong to $G$, and that for every $q$ the directional Riemann problems associated with the five pairs
\[
(\mathbf U_{K,2,q}^{\mathrm{int},n},\mathbf U_{K,1,q}^{\mathrm{ext},n}),
\quad
(\mathbf U_{K,2,q}^{\mathrm{int},n},\mathbf U_{K,1,q}^{\mathrm{int},n}),
\quad
(\mathbf U_{K,2,q}^{\mathrm{int},n},\mathbf U_{K,2,q}^{\mathrm{ext},n}),
\quad
(\mathbf U_{K,2,q}^{\mathrm{int},n},\mathbf U_{K,3,q}^{\mathrm{int},n}),
\quad
(\mathbf U_{K,2,q}^{\mathrm{int},n},\mathbf U_{K,3,q}^{\mathrm{ext},n})
\]
with these five pairs taken in the directions
\[
\mathbf n_K^{(1)},
\quad
\mathbf n_K^{(1)},
\quad
\mathbf n_K^{(2)},
\quad
\mathbf n_K^{(3)},
\quad
\mathbf n_K^{(3)},
\]
respectively. Assume each such directional problem admits an entropy solution taking values in $G$ and with all waves contained in $|\xi|\le \alpha$. If
\begin{equation}\label{eq:tri-zxs-entropy-cfl}
\frac{\alpha\Delta t}{|K|}
\Bigl(
l_K^{(1)}+l_K^{(2)}+l_K^{(3)}
\Bigr)
\le
\frac{2\omega_1}{3},
\end{equation}
then the candidate average \eqref{eq:tri-update} satisfies the weak entropy inequality
\begin{equation}\label{eq:tri-zxs-weakE}
\eta(\bar{\mathbf U}_K^\star)\le B_K^{\mathrm{ZXS}},
\end{equation}
with local entropy budget
\begin{align}
B_K^{\mathrm{ZXS}}
&:=
\sum_{s=1}^{S_K}\omega_{K,s}^\ast\,\eta(\mathbf U_{K,s}^{\ast,n})
+
\frac{2\omega_1}{3}
\sum_{i=1}^3\sum_{q=1}^Q \widehat\omega_q\,\eta(\mathbf U_{K,i,q}^{\mathrm{int},n})
\notag\\
&\qquad
-
\frac{\Delta t}{|K|}
\sum_{i=1}^3 l_K^{(i)}
\sum_{q=1}^Q \widehat\omega_q\,\widehat{\mathcal Q}_{K,i,q}^{\mathrm{LF},n},
\label{eq:tri-zxs-budget}
\end{align}
where
\begin{equation}\label{eq:tri-zxs-boundary-entropy-flux}
\widehat{\mathcal Q}_{K,i,q}^{\mathrm{LF},n}
:=
\widehat{\mathcal Q}_{\alpha}^{\mathrm{LF}}
\bigl(
\mathbf U_{K,i,q}^{\mathrm{int},n},
\mathbf U_{K,i,q}^{\mathrm{ext},n};
\mathbf n_K^{(i)}
\bigr).
\end{equation}
Moreover, $\bar{\mathbf U}_K^\star\in G$.
\end{theorem}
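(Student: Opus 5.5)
The idea is to mimic the one-dimensional proof of Proposition~\ref{prop:canonical-budget}. For each edge Gauss index $q$, I will write the candidate average \eqref{eq:tri-update} as a convex combination of three kinds of states: internal nodal states, edge interior states, and directional two-point states $H_{\alpha,\mathbf n}$. Then I apply Jensen together with Lemma~\ref{lem:tri-directional-2point}. The structural ingredient that replaces the 1D telescoping is the closed-polygon identity $\sum_{i=1}^3 l_K^{(i)}\mathbf n_K^{(i)}=\mathbf 0$. Consequently, for any fixed state $\mathbf W$ one may add
\[
\sum_{i=1}^3 l_K^{(i)}\,\mathbf F(\mathbf W)\cdot\mathbf n_K^{(i)}=\mathbf 0,
\qquad
\sum_{i=1}^3 l_K^{(i)}\,\mathcal Q(\mathbf W)\cdot\mathbf n_K^{(i)}=0
\]
to the update for free. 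I will choose the anchor $\mathbf W=\mathbf U_{K,2,q}^{\mathrm{int},n}$, which explains why all five pairs in the hypothesis have $\mathbf U_{K,2,q}^{\mathrm{int},n}$ as their left state.

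\emph{Step 1 (algebraic regrouping).} Fix $q$, abbreviate $\mathbf U_2:=\mathbf U_{K,2,q}^{\mathrm{int},n}$, $\mathbf U_i^{\mathrm{int}}:=\mathbf U_{K,i,q}^{\mathrm{int},n}$, $\mathbf U_i^{\mathrm{ext}}:=\mathbf U_{K,i,q}^{\mathrm{ext},n}$, and set $c_i:=\alpha\Delta t\,l_K^{(i)}/|K|$. Subtracting $\mathbf F(\mathbf U_2)\cdot\mathbf n_K^{(i)}$ from each LF flux \eqref{eq:tri-lf} and using the definition \eqref{eq:Halpha-directional}, a direct computation gives
\[
-\frac{\Delta t\, l_K^{(i)}}{|K|}\Bigl(\widehat{\mathbf F}_{K,i,q}^{\mathrm{LF},n}-\mathbf F(\mathbf U_2)\cdot\mathbf n_K^{(i)}\Bigr)
=c_i\Bigl(H_{\alpha,\mathbf n_K^{(i)}}(\mathbf U_2,\mathbf U_i^{\mathrm{ext}})+H_{\alpha,\mathbf n_K^{(i)}}(\mathbf U_2,\mathbf U_i^{\mathrm{int}})-\mathbf U_2-\mathbf U_i^{\mathrm{int}}\Bigr).
\]
For $i=2$ the second $H$ equals $H(\mathbf U_2,\mathbf U_2)=\mathbf U_2$, which cancels the $-\mathbf U_2$ term. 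Inserting this into \eqref{eq:tri-update} combined with \eqref{eq:tri-zxs-cad} yields, for each $q$, the following coefficients:
\begin{itemize}
\item $\mathbf U_i^{\mathrm{int}}$ for $i=1,3$ carries $\tfrac{2\omega_1}{3}-c_i$;
\item $\mathbf U_2$ carries $\tfrac{2\omega_1}{3}-(c_1+c_2+c_3)$;
\item the five $H$ states carry $c_1,c_1,c_2,c_3,c_3$;
\item the internal nodes carry $\omega_{K,s}^\ast$.
\end{itemize}
Under \eqref{eq:tri-zxs-entropy-cfl} all of these are nonnegative. They sum to one because the subtracted and added $c_i$'s cancel and $2\omega_1+\sum_s\omega_{K,s}^\ast=1$.

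\emph{Step 2 (entropy and geometry).} By hypothesis and Lemma~\ref{lem:tri-directional-2point}, each of the five $H$ states lies in $G$. Hence $\bar{\mathbf U}_K^\star$ is a convex combination of points of $G$, so $\bar{\mathbf U}_K^\star\in G$. Applying Jensen to $\eta$ and bounding each $\eta(H)$ by \eqref{eq:tri-directional-entropy} gives an upper bound in terms of nodal entropies and differences $\mathcal Q(\cdot)\cdot\mathbf n-\mathcal Q(\mathbf U_2)\cdot\mathbf n$.

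\emph{Step 3 (reassembly).} I then run Step 1 in reverse for the scalar entropy quantities, using \eqref{eq:tri-local-LF-entropy} and the zero identity with $\mathcal Q(\mathbf U_2)$. Every $\eta(\mathbf U_i^{\mathrm{int}})$ and $\eta(\mathbf U_2)$ should recover its original weight $2\omega_1/3$, and the flux remainders should recombine into $-\frac{\Delta t}{|K|}\sum_i l_K^{(i)}\widehat{\mathcal Q}_{K,i,q}^{\mathrm{LF},n}$. Summing with weights $\widehat\omega_q$ then gives \eqref{eq:tri-zxs-budget}.

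The main obstacle is the bookkeeping in Steps 1 and 3, specifically:
\begin{itemize}
\item checking that the $i=2$ degenerate pair makes the coefficient of $\mathbf U_2$ exactly $\tfrac{2\omega_1}{3}-\sum_i c_i$, so that the stated CFL suffices;
\item checking that the $\eta$-viscosity terms $\tfrac{\alpha}{2}\bigl(\eta(\mathbf U_i^{\mathrm{ext}})-\eta(\mathbf U_i^{\mathrm{int}})\bigr)$ land with the right signs;
\item confirming that the $\mathcal Q(\mathbf U_2)$ terms vanish through $\sum_i l_K^{(i)}\mathbf n_K^{(i)}=\mathbf 0$.
\end{itemize}
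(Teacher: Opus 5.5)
Your proposal is correct and is essentially the paper's argument. After flattening, your convex combination is identical to the paper's, with the same five directional states anchored at $\mathbf U_{K,2,q}^{\mathrm{int},n}$, weights $\widehat\omega_q c_i$, and nodal weights $\widehat\omega_q\bigl(\tfrac{2\omega_1}{3}-c_1\bigr)$, $\widehat\omega_q\bigl(\tfrac{2\omega_1}{3}-c_1-c_2-c_3\bigr)$, $\widehat\omega_q\bigl(\tfrac{2\omega_1}{3}-c_3\bigr)$, and your scalar reassembly via the analogous identity for $\widehat{\mathcal Q}_{\alpha}^{\mathrm{LF}}$ goes through as you anticipate. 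The only difference is bookkeeping: you subtract $\mathbf F(\mathbf U_{K,2,q}^{\mathrm{int},n})\cdot\mathbf n_K^{(i)}$ edge by edge using $\sum_i l_K^{(i)}\mathbf n_K^{(i)}=\mathbf 0$ and apply Jensen once, whereas the paper inserts antisymmetric bridge fluxes, groups the terms into three blocks $\mathbf H_{K,r,q}$, and applies Jensen blockwise.
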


\begin{proof}
Fix $q\in\{1,\dots,Q\}$ and abbreviate
\[
\mathbf U_i:=\mathbf U_{K,i,q}^{\mathrm{int},n},
\qquad
\mathbf U_i^{\mathrm{ext}}:=\mathbf U_{K,i,q}^{\mathrm{ext},n},
\qquad
l_i:=l_K^{(i)},
\qquad
\mathbf n_i:=\mathbf n_K^{(i)}.
\]
Also set
\[
\gamma_q:=\frac{2\omega_1}{3}\widehat\omega_q,
\qquad
\tau_K:=\frac{3\Delta t}{2\omega_1|K|},
\qquad
\mu_i:=\tau_K\alpha l_i
=
\frac{3\alpha\Delta t\,l_i}{2\omega_1|K|},
\quad i=1,2,3.
\]
The CFL condition \eqref{eq:tri-zxs-entropy-cfl} gives
\[
\mu_i\ge 0,
\qquad
\mu_1+\mu_2+\mu_3\le 1.
\]

 By \eqref{eq:tri-local-LF-antisym},
\begin{align}
\sum_{i=1}^3 l_i\,\widehat{\mathbf F}_{K,i,q}^{\mathrm{LF},n}
&=
l_1\widehat{\mathbf F}_{\alpha}^{\mathrm{LF}}(\mathbf U_1,\mathbf U_1^{\mathrm{ext}};\mathbf n_1)
+l_2\widehat{\mathbf F}_{\alpha}^{\mathrm{LF}}(\mathbf U_2,\mathbf U_2^{\mathrm{ext}};\mathbf n_2)
+l_3\widehat{\mathbf F}_{\alpha}^{\mathrm{LF}}(\mathbf U_3,\mathbf U_3^{\mathrm{ext}};\mathbf n_3)
\notag\\
&=
\Bigl[
l_1\widehat{\mathbf F}_{\alpha}^{\mathrm{LF}}(\mathbf U_1,\mathbf U_1^{\mathrm{ext}};\mathbf n_1)
+
l_1\widehat{\mathbf F}_{\alpha}^{\mathrm{LF}}(\mathbf U_1,\mathbf U_2;-\mathbf n_1)
\Bigr]
\notag\\
&\quad
+
\Bigl[
l_1\widehat{\mathbf F}_{\alpha}^{\mathrm{LF}}(\mathbf U_2,\mathbf U_1;\mathbf n_1)
+
l_2\widehat{\mathbf F}_{\alpha}^{\mathrm{LF}}(\mathbf U_2,\mathbf U_2^{\mathrm{ext}};\mathbf n_2)
+
l_3\widehat{\mathbf F}_{\alpha}^{\mathrm{LF}}(\mathbf U_2,\mathbf U_3;\mathbf n_3)
\Bigr]
\notag\\
&\quad
+
\Bigl[
l_3\widehat{\mathbf F}_{\alpha}^{\mathrm{LF}}(\mathbf U_3,\mathbf U_2;-\mathbf n_3)
+
l_3\widehat{\mathbf F}_{\alpha}^{\mathrm{LF}}(\mathbf U_3,\mathbf U_3^{\mathrm{ext}};\mathbf n_3)
\Bigr].
\label{eq:tri-zxs-flux-splitting}
\end{align}
Insert \eqref{eq:tri-zxs-cad} and \eqref{eq:tri-zxs-flux-splitting} into \eqref{eq:tri-update}. Then
\begin{equation}\label{eq:tri-zxs-candidate-decomp}
\bar{\mathbf U}_K^\star
=
\sum_{s=1}^{S_K}\omega_{K,s}^\ast\,\mathbf U_{K,s}^{\ast,n}
+
\sum_{q=1}^Q \gamma_q
\bigl(
\mathbf H_{K,1,q}+\mathbf H_{K,2,q}+\mathbf H_{K,3,q}
\bigr),
\end{equation}
where
\begin{align}
\mathbf H_{K,1,q}
&:=
\mathbf U_1
-
\tau_K l_1
\Bigl(
\widehat{\mathbf F}_{\alpha}^{\mathrm{LF}}(\mathbf U_1,\mathbf U_1^{\mathrm{ext}};\mathbf n_1)
+
\widehat{\mathbf F}_{\alpha}^{\mathrm{LF}}(\mathbf U_1,\mathbf U_2;-\mathbf n_1)
\Bigr),
\label{eq:tri-zxs-H1}\\
\mathbf H_{K,2,q}
&:=
\mathbf U_2
-
\tau_K
\Bigl(
l_1\widehat{\mathbf F}_{\alpha}^{\mathrm{LF}}(\mathbf U_2,\mathbf U_1;\mathbf n_1)
+
l_2\widehat{\mathbf F}_{\alpha}^{\mathrm{LF}}(\mathbf U_2,\mathbf U_2^{\mathrm{ext}};\mathbf n_2)
+
l_3\widehat{\mathbf F}_{\alpha}^{\mathrm{LF}}(\mathbf U_2,\mathbf U_3;\mathbf n_3)
\Bigr),
\label{eq:tri-zxs-H2}\\
\mathbf H_{K,3,q}
&:=
\mathbf U_3
-
\tau_K l_3
\Bigl(
\widehat{\mathbf F}_{\alpha}^{\mathrm{LF}}(\mathbf U_3,\mathbf U_2;-\mathbf n_3)
+
\widehat{\mathbf F}_{\alpha}^{\mathrm{LF}}(\mathbf U_3,\mathbf U_3^{\mathrm{ext}};\mathbf n_3)
\Bigr).
\label{eq:tri-zxs-H3}
\end{align}
Since
\[
\sum_{s=1}^{S_K}\omega_{K,s}^\ast+\sum_{q=1}^Q 3\gamma_q
=
\sum_{s=1}^{S_K}\omega_{K,s}^\ast+2\omega_1
=
1,
\]
Following \cite{ZhangXiaShu2012}, 
\eqref{eq:tri-zxs-candidate-decomp} is a convex decomposition once each $\mathbf H_{K,r,q}$ is shown to belong to $G$.

We next rewrite the three building blocks as convex combinations of directional two-point LF/Riemann-average states. A direct expansion of \eqref{eq:tri-zxs-H1} gives
\begin{equation}\label{eq:tri-zxs-H1-convex}
\mathbf H_{K,1,q}
=
(1-\mu_1)\mathbf U_1
+
\mu_1 H_{\alpha,\mathbf n_1}(\mathbf U_2,\mathbf U_1^{\mathrm{ext}}).
\end{equation}
Similarly,
\begin{equation}\label{eq:tri-zxs-H3-convex}
\mathbf H_{K,3,q}
=
(1-\mu_3)\mathbf U_3
+
\mu_3 H_{\alpha,\mathbf n_3}(\mathbf U_2,\mathbf U_3^{\mathrm{ext}}).
\end{equation}
For the middle block, expand \eqref{eq:tri-zxs-H2} and collect the terms containing $\mathbf F(\mathbf U_2)\cdot \mathbf n_i$. Since a closed polygon satisfies
\[
l_1\mathbf n_1+l_2\mathbf n_2+l_3\mathbf n_3=\mathbf 0,
\]
those terms cancel, and one obtains
\begin{equation}\label{eq:tri-zxs-H2-convex}
\begin{aligned}
\mathbf H_{K,2,q}
&=
(1-\mu_1-\mu_2-\mu_3)\mathbf U_2
+
\mu_1 H_{\alpha,\mathbf n_1}(\mathbf U_2,\mathbf U_1)
\\
&\quad
+
\mu_2 H_{\alpha,\mathbf n_2}(\mathbf U_2,\mathbf U_2^{\mathrm{ext}})
+
\mu_3 H_{\alpha,\mathbf n_3}(\mathbf U_2,\mathbf U_3).
\end{aligned}
\end{equation}
By Lemma~\ref{lem:tri-directional-2point}, each directional two-point state on the right-hand sides of \eqref{eq:tri-zxs-H1-convex}--\eqref{eq:tri-zxs-H2-convex} belongs to $G$. Because the coefficients are nonnegative and sum to one, each $\mathbf H_{K,r,q}$ belongs to $G$. Therefore \eqref{eq:tri-zxs-candidate-decomp} implies
\[
\bar{\mathbf U}_K^\star\in G.
\]

We now turn to entropy. Using \eqref{eq:tri-zxs-H1-convex}, convexity of $\eta$, and Lemma~\ref{lem:tri-directional-2point}, we obtain
\begin{align}
\eta(\mathbf H_{K,1,q})
&\le
(1-\mu_1)\eta(\mathbf U_1)
+
\mu_1
\left[
\frac{\eta(\mathbf U_2)+\eta(\mathbf U_1^{\mathrm{ext}})}{2}
-
\frac{\mathcal Q(\mathbf U_1^{\mathrm{ext}})\cdot \mathbf n_1-\mathcal Q(\mathbf U_2)\cdot \mathbf n_1}{2\alpha}
\right]
\notag\\
&=
\eta(\mathbf U_1)
-
\tau_K l_1
\Bigl(
\widehat{\mathcal Q}_{\alpha}^{\mathrm{LF}}(\mathbf U_1,\mathbf U_1^{\mathrm{ext}};\mathbf n_1)
+
\widehat{\mathcal Q}_{\alpha}^{\mathrm{LF}}(\mathbf U_1,\mathbf U_2;-\mathbf n_1)
\Bigr).
\label{eq:tri-zxs-H1-entropy}
\end{align}
The same argument gives
\begin{align}
\eta(\mathbf H_{K,3,q})
&\le
\eta(\mathbf U_3)
-
\tau_K l_3
\Bigl(
\widehat{\mathcal Q}_{\alpha}^{\mathrm{LF}}(\mathbf U_3,\mathbf U_2;-\mathbf n_3)
+
\widehat{\mathcal Q}_{\alpha}^{\mathrm{LF}}(\mathbf U_3,\mathbf U_3^{\mathrm{ext}};\mathbf n_3)
\Bigr).
\label{eq:tri-zxs-H3-entropy}
\end{align}
For the middle block, \eqref{eq:tri-zxs-H2-convex} gives
\begin{align*}
\eta(\mathbf H_{K,2,q})
&\le
(1-\mu_1-\mu_2-\mu_3)\eta(\mathbf U_2)
+
\mu_1
\left[
\frac{\eta(\mathbf U_2)+\eta(\mathbf U_1)}{2}
-
\frac{\mathcal Q(\mathbf U_1)\cdot \mathbf n_1-\mathcal Q(\mathbf U_2)\cdot \mathbf n_1}{2\alpha}
\right]\\
&\quad
+
\mu_2
\left[
\frac{\eta(\mathbf U_2)+\eta(\mathbf U_2^{\mathrm{ext}})}{2}
-
\frac{\mathcal Q(\mathbf U_2^{\mathrm{ext}})\cdot \mathbf n_2-\mathcal Q(\mathbf U_2)\cdot \mathbf n_2}{2\alpha}
\right]\\
&\quad
+
\mu_3
\left[
\frac{\eta(\mathbf U_2)+\eta(\mathbf U_3)}{2}
-
\frac{\mathcal Q(\mathbf U_3)\cdot \mathbf n_3-\mathcal Q(\mathbf U_2)\cdot \mathbf n_3}{2\alpha}
\right].
\end{align*}
Substituting $\mu_i=\tau_K\alpha l_i$ and using again $l_1\mathbf n_1+l_2\mathbf n_2+l_3\mathbf n_3=\mathbf 0$ to cancel the $\mathcal Q(\mathbf U_2)$ terms yields
\begin{align}
\eta(\mathbf H_{K,2,q})
&\le
\eta(\mathbf U_2)
-
\tau_K
\Bigl(
l_1\widehat{\mathcal Q}_{\alpha}^{\mathrm{LF}}(\mathbf U_2,\mathbf U_1;\mathbf n_1)
+
l_2\widehat{\mathcal Q}_{\alpha}^{\mathrm{LF}}(\mathbf U_2,\mathbf U_2^{\mathrm{ext}};\mathbf n_2)
+
l_3\widehat{\mathcal Q}_{\alpha}^{\mathrm{LF}}(\mathbf U_2,\mathbf U_3;\mathbf n_3)
\Bigr).
\label{eq:tri-zxs-H2-entropy}
\end{align}
Now sum \eqref{eq:tri-zxs-H1-entropy}, \eqref{eq:tri-zxs-H2-entropy}, and \eqref{eq:tri-zxs-H3-entropy}. By the antisymmetry in \eqref{eq:tri-local-LF-antisym}, the internal bridge entropy fluxes cancel:
\[
\widehat{\mathcal Q}_{\alpha}^{\mathrm{LF}}(\mathbf U_1,\mathbf U_2;-\mathbf n_1)
+
\widehat{\mathcal Q}_{\alpha}^{\mathrm{LF}}(\mathbf U_2,\mathbf U_1;\mathbf n_1)
=0,
\]
\[
\widehat{\mathcal Q}_{\alpha}^{\mathrm{LF}}(\mathbf U_3,\mathbf U_2;-\mathbf n_3)
+
\widehat{\mathcal Q}_{\alpha}^{\mathrm{LF}}(\mathbf U_2,\mathbf U_3;\mathbf n_3)
=0.
\]
Hence
\begin{equation}\label{eq:tri-zxs-three-blocks}
\eta(\mathbf H_{K,1,q})+\eta(\mathbf H_{K,2,q})+\eta(\mathbf H_{K,3,q})
\le
\eta(\mathbf U_1)+\eta(\mathbf U_2)+\eta(\mathbf U_3)
-
\tau_K
\sum_{i=1}^3 l_i\,\widehat{\mathcal Q}_{K,i,q}^{\mathrm{LF},n}.
\end{equation}

Finally, apply Jensen's inequality to the convex decomposition \eqref{eq:tri-zxs-candidate-decomp}:
\[
\eta(\bar{\mathbf U}_K^\star)
\le
\sum_{s=1}^{S_K}\omega_{K,s}^\ast\,\eta(\mathbf U_{K,s}^{\ast,n})
+
\sum_{q=1}^Q \gamma_q
\Bigl(
\eta(\mathbf H_{K,1,q})+\eta(\mathbf H_{K,2,q})+\eta(\mathbf H_{K,3,q})
\Bigr).
\]
Using \eqref{eq:tri-zxs-three-blocks} and the identity
\[
\gamma_q\tau_K
=
\frac{2\omega_1}{3}\widehat\omega_q \frac{3\Delta t}{2\omega_1|K|}
=
\frac{\Delta t}{|K|}\widehat\omega_q,
\]
we obtain exactly \eqref{eq:tri-zxs-budget}. This completes the proof.
\end{proof}

\begin{theorem}[Weak entropy stability on triangular meshes via a general CAD]\label{thm:tri-weakE-cad}
Assume the feasible quadrature \eqref{eq:tri-quad}. After a cyclic relabeling of the three edges if necessary, fix the bridge ordering $1\to 2\to 3$. Assume that all states appearing in \eqref{eq:tri-quad} together with the neighboring edge states $\mathbf U_{K,i,q}^{\mathrm{ext},n}$ belong to $G$, and that for every $q$ the directional Riemann problems associated with the same five pairs as in Theorem~\ref{thm:tri-weakE-zxs} admit entropy solutions taking values in $G$ and with all waves contained in $|\xi|\le \alpha$. If
\begin{equation}\label{eq:tri-cad-entropy-cfl}
\frac{\alpha\Delta t}{|K|}
\le
\min\left\{
\frac{w_{K,1}}{l_K^{(1)}},
\frac{w_{K,2}}{l_K^{(1)}+l_K^{(2)}+l_K^{(3)}},
\frac{w_{K,3}}{l_K^{(3)}}
\right\},
\end{equation}
then the candidate average \eqref{eq:tri-update} satisfies
\begin{equation}\label{eq:tri-cad-weakE}
\eta(\bar{\mathbf U}_K^\star)\le B_K^{\mathrm{CAD}},
\end{equation}
with local entropy budget
\begin{align}
B_K^{\mathrm{CAD}}
&:=
\sum_{s=1}^{S_K}\omega_{K,s}^\ast\,\eta(\mathbf U_{K,s}^{\ast,n})
+
\sum_{i=1}^3 w_{K,i}
\sum_{q=1}^Q \widehat\omega_q\,\eta(\mathbf U_{K,i,q}^{\mathrm{int},n})
\notag\\
&\qquad
-
\frac{\Delta t}{|K|}
\sum_{i=1}^3 l_K^{(i)}
\sum_{q=1}^Q \widehat\omega_q\,\widehat{\mathcal Q}_{K,i,q}^{\mathrm{LF},n},
\label{eq:tri-cad-budget}
\end{align}
where $\widehat{\mathcal Q}_{K,i,q}^{\mathrm{LF},n}$ is still given by \eqref{eq:tri-zxs-boundary-entropy-flux}. Moreover, $\bar{\mathbf U}_K^\star\in G$.
\end{theorem}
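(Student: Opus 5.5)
The argument follows the proof of Theorem~\ref{thm:tri-weakE-zxs}, with the uniform edge weight $2\omega_1/3$ replaced by the three weights $w_{K,i}$. Fix $q$ and keep the abbreviations $\mathbf U_i,\mathbf U_i^{\mathrm{ext}},l_i,\mathbf n_i$. The new element is that each edge block receives its own time-scaling. Set
\[
\tau_{K,i}:=\frac{\Delta t}{w_{K,i}|K|},\qquad \gamma_{i,q}:=w_{K,i}\widehat\omega_q,
\]
so that $\gamma_{i,q}\tau_{K,i}=\widehat\omega_q\Delta t/|K|$ for every $i$. This identity is what makes the final flux coefficient independent of $i$.

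\textbf{Step 1: flux splitting.} Using the antisymmetry \eqref{eq:tri-local-LF-antisym}, split $\sum_i l_i\widehat{\mathbf F}_{K,i,q}^{\mathrm{LF},n}$ exactly as in \eqref{eq:tri-zxs-flux-splitting}, with bridges $1\to2$ and $2\to3$. Insert this and \eqref{eq:tri-quad} into \eqref{eq:tri-update} to obtain
\[
\bar{\mathbf U}_K^\star=\sum_s\omega_{K,s}^\ast\mathbf U_{K,s}^{\ast,n}+\sum_q\sum_{r=1}^3\gamma_{r,q}\mathbf H_{K,r,q}.
\]
Here $\mathbf H_{K,r,q}$ is defined as in \eqref{eq:tri-zxs-H1}--\eqref{eq:tri-zxs-H3}, but with $\tau_K$ replaced by $\tau_{K,r}$ in block $r$. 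Because $\sum_i w_{K,i}+\sum_s\omega^\ast_{K,s}=1$, the coefficients are nonnegative and sum to one, so this is a convex decomposition.

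\textbf{Step 2: convex forms of the blocks.} Define $\mu_{1}:=\tau_{K,1}\alpha l_1$ and $\mu_{3}:=\tau_{K,3}\alpha l_3$ for the outer blocks, and $\mu_{2,i}:=\tau_{K,2}\alpha l_i$ for $i=1,2,3$ in the middle block. Expanding the blocks as in \eqref{eq:tri-zxs-H1-convex}--\eqref{eq:tri-zxs-H2-convex} gives
\[
\mathbf H_{K,1,q}=(1-\mu_1)\mathbf U_1+\mu_1H_{\alpha,\mathbf n_1}(\mathbf U_2,\mathbf U_1^{\mathrm{ext}}),
\]
\[
\mathbf H_{K,3,q}=(1-\mu_3)\mathbf U_3+\mu_3H_{\alpha,\mathbf n_3}(\mathbf U_2,\mathbf U_3^{\mathrm{ext}}),
\]
and a four-term combination for $\mathbf H_{K,2,q}$. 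In the middle block the $\mathbf F(\mathbf U_2)\cdot\mathbf n_i$ terms cancel because $\sum_i l_i\mathbf n_i=\mathbf 0$. The CFL condition \eqref{eq:tri-cad-entropy-cfl} is exactly $\mu_1\le1$, $\mu_3\le1$ and $\mu_{2,1}+\mu_{2,2}+\mu_{2,3}\le1$, so all three are convex combinations. Lemma~\ref{lem:tri-directional-2point} places every two-point state in $G$, which gives each $\mathbf H_{K,r,q}\in G$ and hence $\bar{\mathbf U}_K^\star\in G$.

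\textbf{Step 3: entropy bounds.} Apply convexity of $\eta$ together with \eqref{eq:tri-directional-entropy} to each block. This reproduces \eqref{eq:tri-zxs-H1-entropy}--\eqref{eq:tri-zxs-H3-entropy} with $\tau_K$ replaced by $\tau_{K,r}$; in the middle block the $\mathcal Q(\mathbf U_2)$ terms again cancel by $\sum_i l_i\mathbf n_i=\mathbf 0$. Multiply each block bound by $\gamma_{r,q}$ and use $\gamma_{r,q}\tau_{K,r}=\widehat\omega_q\Delta t/|K|$. Every flux term then carries the common factor $\widehat\omega_q\Delta t/|K|$, so the internal bridge entropy fluxes cancel in pairs by antisymmetry. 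Summing, and applying Jensen's inequality to the convex decomposition of Step 1, yields \eqref{eq:tri-cad-budget}.

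\textbf{Main obstacle.} In Theorem~\ref{thm:tri-weakE-zxs} the bridge fluxes cancelled because all three blocks shared one $\tau_K$. With unequal $w_{K,i}$, cancellation works only if the weights $\gamma_{r,q}$ exactly absorb the block-dependent $\tau_{K,r}$. The choice $\tau_{K,r}=\Delta t/(w_{K,r}|K|)$ is made for this purpose, and it is also the choice that turns the three convexity requirements into \eqref{eq:tri-cad-entropy-cfl}. I will check this bookkeeping first, before writing out the expansions.
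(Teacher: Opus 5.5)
Your proposal is correct and follows the paper's proof essentially step for step. Both use the same bridged flux splitting $1\to2\to3$, the same convex forms of the three blocks, the same reading of the CFL condition, and the same weighting by $w_{K,r}\widehat\omega_q$ that makes the bridge entropy fluxes cancel. The only difference is bookkeeping: the paper absorbs $l_1$ and $l_3$ into its $\tau_{K,1}$ and $\tau_{K,3}$, whereas you use the uniform $\tau_{K,r}=\Delta t/(w_{K,r}|K|)$ and keep $l_r$ explicit in the outer blocks. The resulting $\mu$'s are identical.
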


\begin{proof}
Fix $q\in\{1,\dots,Q\}$ and use the same abbreviations
\[
\mathbf U_i:=\mathbf U_{K,i,q}^{\mathrm{int},n},
\qquad
\mathbf U_i^{\mathrm{ext}}:=\mathbf U_{K,i,q}^{\mathrm{ext},n},
\qquad
l_i:=l_K^{(i)},
\qquad
\mathbf n_i:=\mathbf n_K^{(i)},
\qquad
w_i:=w_{K,i}.
\]
Define
\[
\tau_{K,1}:=\frac{\Delta t\,l_1}{w_1|K|},
\qquad
\tau_{K,2}:=\frac{\Delta t}{w_2|K|},
\qquad
\tau_{K,3}:=\frac{\Delta t\,l_3}{w_3|K|},
\]
\[
\mu_{K,1}:=\alpha\tau_{K,1},
\qquad
\mu_{K,2}^{(i)}:=\alpha\tau_{K,2}l_i\ \ (i=1,2,3),
\qquad
\mu_{K,3}:=\alpha\tau_{K,3}.
\]
The CFL condition \eqref{eq:tri-cad-entropy-cfl} is equivalent to
\[
0\le \mu_{K,1}\le 1,
\qquad
0\le \mu_{K,3}\le 1,
\qquad
0\le \mu_{K,2}^{(1)}+\mu_{K,2}^{(2)}+\mu_{K,2}^{(3)}\le 1.
\]

Define the three formal building blocks
\begin{align}
\mathbf H_{K,1,q}^{\mathrm{CAD}}
&:=
\mathbf U_1
-
\tau_{K,1}
\Bigl(
\widehat{\mathbf F}_{\alpha}^{\mathrm{LF}}(\mathbf U_1,\mathbf U_1^{\mathrm{ext}};\mathbf n_1)
+
\widehat{\mathbf F}_{\alpha}^{\mathrm{LF}}(\mathbf U_1,\mathbf U_2;-\mathbf n_1)
\Bigr),
\label{eq:tri-cad-H1}\\
\mathbf H_{K,2,q}^{\mathrm{CAD}}
&:=
\mathbf U_2
-
\tau_{K,2}
\Bigl(
l_1\widehat{\mathbf F}_{\alpha}^{\mathrm{LF}}(\mathbf U_2,\mathbf U_1;\mathbf n_1)
+
l_2\widehat{\mathbf F}_{\alpha}^{\mathrm{LF}}(\mathbf U_2,\mathbf U_2^{\mathrm{ext}};\mathbf n_2)
+
l_3\widehat{\mathbf F}_{\alpha}^{\mathrm{LF}}(\mathbf U_2,\mathbf U_3;\mathbf n_3)
\Bigr),
\label{eq:tri-cad-H2}\\
\mathbf H_{K,3,q}^{\mathrm{CAD}}
&:=
\mathbf U_3
-
\tau_{K,3}
\Bigl(
\widehat{\mathbf F}_{\alpha}^{\mathrm{LF}}(\mathbf U_3,\mathbf U_2;-\mathbf n_3)
+
\widehat{\mathbf F}_{\alpha}^{\mathrm{LF}}(\mathbf U_3,\mathbf U_3^{\mathrm{ext}};\mathbf n_3)
\Bigr).
\label{eq:tri-cad-H3}
\end{align}
Then \eqref{eq:tri-update} and \eqref{eq:tri-quad} give the exact decomposition
\begin{equation}\label{eq:tri-cad-candidate-decomp}
\bar{\mathbf U}_K^\star
=
\sum_{s=1}^{S_K}\omega_{K,s}^\ast\,\mathbf U_{K,s}^{\ast,n}
+
\sum_{q=1}^Q \widehat\omega_q
\Bigl(
w_1\mathbf H_{K,1,q}^{\mathrm{CAD}}
+
w_2\mathbf H_{K,2,q}^{\mathrm{CAD}}
+
w_3\mathbf H_{K,3,q}^{\mathrm{CAD}}
\Bigr).
\end{equation}
Direct expansion shows
\begin{equation}\label{eq:tri-cad-H1-convex}
\mathbf H_{K,1,q}^{\mathrm{CAD}}
=
(1-\mu_{K,1})\mathbf U_1
+
\mu_{K,1}H_{\alpha,\mathbf n_1}(\mathbf U_2,\mathbf U_1^{\mathrm{ext}}),
\end{equation}
\begin{equation}\label{eq:tri-cad-H3-convex}
\mathbf H_{K,3,q}^{\mathrm{CAD}}
=
(1-\mu_{K,3})\mathbf U_3
+
\mu_{K,3}H_{\alpha,\mathbf n_3}(\mathbf U_2,\mathbf U_3^{\mathrm{ext}}),
\end{equation}
and, using $l_1\mathbf n_1+l_2\mathbf n_2+l_3\mathbf n_3=\mathbf 0$ exactly as before,
\begin{equation}\label{eq:tri-cad-H2-convex}
\begin{aligned}
\mathbf H_{K,2,q}^{\mathrm{CAD}}
&=
\bigl(
1-\mu_{K,2}^{(1)}-\mu_{K,2}^{(2)}-\mu_{K,2}^{(3)}
\bigr)\mathbf U_2
+
\mu_{K,2}^{(1)}H_{\alpha,\mathbf n_1}(\mathbf U_2,\mathbf U_1)
\\
&\quad
+
\mu_{K,2}^{(2)}H_{\alpha,\mathbf n_2}(\mathbf U_2,\mathbf U_2^{\mathrm{ext}})
+
\mu_{K,2}^{(3)}H_{\alpha,\mathbf n_3}(\mathbf U_2,\mathbf U_3).
\end{aligned}
\end{equation}
Lemma~\ref{lem:tri-directional-2point} and the CFL condition imply that each $\mathbf H_{K,r,q}^{\mathrm{CAD}}$ belongs to $G$. Since \eqref{eq:tri-cad-candidate-decomp} is a convex decomposition by \eqref{eq:tri-quad}, we obtain
\[
\bar{\mathbf U}_K^\star\in G.
\]

For entropy, the same convexity argument used in \eqref{eq:tri-zxs-H1-entropy}--\eqref{eq:tri-zxs-H2-entropy} yields
\begin{align}
\eta(\mathbf H_{K,1,q}^{\mathrm{CAD}})
&\le
\eta(\mathbf U_1)
-
\tau_{K,1}
\Bigl(
\widehat{\mathcal Q}_{\alpha}^{\mathrm{LF}}(\mathbf U_1,\mathbf U_1^{\mathrm{ext}};\mathbf n_1)
+
\widehat{\mathcal Q}_{\alpha}^{\mathrm{LF}}(\mathbf U_1,\mathbf U_2;-\mathbf n_1)
\Bigr),
\label{eq:tri-cad-H1-entropy}\\
\eta(\mathbf H_{K,2,q}^{\mathrm{CAD}})
&\le
\eta(\mathbf U_2)
-
\tau_{K,2}
\Bigl(
l_1\widehat{\mathcal Q}_{\alpha}^{\mathrm{LF}}(\mathbf U_2,\mathbf U_1;\mathbf n_1)
+
l_2\widehat{\mathcal Q}_{\alpha}^{\mathrm{LF}}(\mathbf U_2,\mathbf U_2^{\mathrm{ext}};\mathbf n_2)
+
l_3\widehat{\mathcal Q}_{\alpha}^{\mathrm{LF}}(\mathbf U_2,\mathbf U_3;\mathbf n_3)
\Bigr),
\label{eq:tri-cad-H2-entropy}\\
\eta(\mathbf H_{K,3,q}^{\mathrm{CAD}})
&\le
\eta(\mathbf U_3)
-
\tau_{K,3}
\Bigl(
\widehat{\mathcal Q}_{\alpha}^{\mathrm{LF}}(\mathbf U_3,\mathbf U_2;-\mathbf n_3)
+
\widehat{\mathcal Q}_{\alpha}^{\mathrm{LF}}(\mathbf U_3,\mathbf U_3^{\mathrm{ext}};\mathbf n_3)
\Bigr).
\label{eq:tri-cad-H3-entropy}
\end{align}
Multiply \eqref{eq:tri-cad-H1-entropy}, \eqref{eq:tri-cad-H2-entropy}, and \eqref{eq:tri-cad-H3-entropy} by $w_1\widehat\omega_q$, $w_2\widehat\omega_q$, and $w_3\widehat\omega_q$, respectively.
Because
\[
w_1\tau_{K,1}= \frac{\Delta t\,l_1}{|K|},
\qquad
w_2\tau_{K,2}= \frac{\Delta t}{|K|},
\qquad
w_3\tau_{K,3}= \frac{\Delta t\,l_3}{|K|},
\]
all internal bridge entropy fluxes appear with the same coefficients and therefore cancel by \eqref{eq:tri-local-LF-antisym}. We are left with
\begin{align}
&w_1\eta(\mathbf H_{K,1,q}^{\mathrm{CAD}})
+
w_2\eta(\mathbf H_{K,2,q}^{\mathrm{CAD}})
+
w_3\eta(\mathbf H_{K,3,q}^{\mathrm{CAD}})
\notag\\
&\qquad\le
w_1\eta(\mathbf U_1)+w_2\eta(\mathbf U_2)+w_3\eta(\mathbf U_3)
-
\frac{\Delta t}{|K|}
\sum_{i=1}^3 l_i\,\widehat{\mathcal Q}_{K,i,q}^{\mathrm{LF},n}.
\label{eq:tri-cad-three-blocks}
\end{align}
Finally, apply Jensen's inequality to \eqref{eq:tri-cad-candidate-decomp} and use \eqref{eq:tri-cad-three-blocks}. This yields exactly \eqref{eq:tri-cad-budget}.
\end{proof}

\begin{corollary}[Active-entropy EPO on triangular meshes]\label{cor:tri-epo-active}
Assume that on every triangular cell $K$ the candidate average satisfies the weak geometric budget of Theorem~\ref{thm:tri-weakP}, and that a weak entropy budget is available from either Theorem~\ref{thm:tri-weakE-zxs} or Theorem~\ref{thm:tri-weakE-cad}. Equip the corresponding triangular CAD node set with its positive quadrature weights, let $\mathcal E_K$ denote the induced local quadrature entropy, and let $\mathcal O_K^\star$ be any COS-compatible convex oscillation-suppressing set. Define
\[
\mathbf U_K^{\mathrm{EPO}}
:=
\mathcal S_{K,\bar{\mathbf U}_K^\star}
\bigl(\theta_K^{\mathrm{EPO}};\mathbf U_K^\star\bigr).
\]
Then
\begin{enumerate}[label=(\roman*),leftmargin=2em]
\item every nodal value of $\mathbf U_K^{\mathrm{EPO}}$ on the triangular CAD node set belongs to $G$;
\item the local strong entropy inequality
\[
\mathcal E_K(\mathbf U_K^{\mathrm{EPO}})\le B_K
\]
holds;
\item $\mathbf U_K^{\mathrm{EPO}}\in \mathcal O_K^\star$.
\end{enumerate}
If the local budgets satisfy the global compatibility assumption of Section~\ref{sec:main}, then the global strong entropy inequality holds after EPO limiting.
\end{corollary}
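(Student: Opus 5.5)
The plan is to reduce Corollary~\ref{cor:tri-epo-active} to the cellwise lifting in Proposition~\ref{prop:2d-lift}, exactly as for Corollary~\ref{cor:rect-epo}. Fix a triangle $K$ and take as node set $S_K$ the union of the edge Gauss nodes $\{\mathbf x_{K,i,q}\}$ and the internal nodes $\{\mathbf x_{K,s}^\ast\}$ from \eqref{eq:tri-quad} (or \eqref{eq:tri-zxs-cad}). Their weights are $\varpi=w_{K,i}\widehat\omega_q$ and $\varpi=\omega_{K,s}^\ast$. These weights are positive, sum to one, and reproduce the cell average exactly, because the quadrature is exact on the local polynomial space. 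So \eqref{eq:2d-cellavg} holds for the candidate array $\mathbf U_K^\star$ of nodal values of the candidate polynomial, and $\mathcal E_K$ in \eqref{eq:2d-local-entropy} is the induced quadrature entropy.

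First I check the three weak inputs of the cellwise analogue of Theorem~\ref{thm:localEPO}.
\begin{itemize}
\item The weak geometric budget $\bar{\mathbf U}_K^\star\in G$ comes from Theorem~\ref{thm:tri-weakP}. Equivalently, it is the final claim of Theorem~\ref{thm:tri-weakE-zxs} or Theorem~\ref{thm:tri-weakE-cad}.
\item The weak entropy budget $\eta(\bar{\mathbf U}_K^\star)\le B_K$ is \eqref{eq:tri-zxs-weakE} with $B_K=B_K^{\mathrm{ZXS}}$, or \eqref{eq:tri-cad-weakE} with $B_K=B_K^{\mathrm{CAD}}$.
\item Assumption~\ref{ass:osc} holds for $\mathcal O_K^\star$ by hypothesis.
\end{itemize}
One point needs care: the node set used for the entropy functional must be the same CAD node set whose weights define $\mathcal E_K$. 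The budgets \eqref{eq:tri-zxs-budget} and \eqref{eq:tri-cad-budget} are written with exactly these weights, so they are consistent with $\mathcal E_K$ evaluated at time level $n$.

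With these inputs, Proposition~\ref{prop:2d-lift} transfers Propositions~\ref{prop:thetaP}, \ref{prop:thetaE-after-P}, \ref{prop:thetaO-basic} and Theorems~\ref{thm:min}, \ref{thm:localEPO} verbatim. The radius $\theta_K^{\mathrm{EPO}}=\min\{\theta_K^{\mathrm{PE}},\theta_K^{\mathrm O}\}$ is therefore well defined. The limited array then has the following properties:
\begin{itemize}
\item its nodes lie in $G$ by the analogue of \eqref{eq:thetaPE-property}, giving (i);
\item it satisfies $\mathcal E_K(\mathbf U_K^{\mathrm{EPO}})\le B_K$, giving (ii);
\item it lies in $\mathcal O_K^\star$ by the analogue of \eqref{eq:thetaO-property}, giving (iii).
\end{itemize}
Mean preservation follows as well.

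For the global statement, I sum (ii) over $K$ with weights $|K|$ and invoke the assumed compatibility of the budgets, as in the proof of Theorem~\ref{thm:global}. If one wants compatibility to be verifiable, note that $\widehat{\mathcal Q}_{\alpha}^{\mathrm{LF}}$ satisfies the antisymmetry \eqref{eq:tri-local-LF-antisym}. Each edge flux $l_K^{(i)}\widehat{\mathcal Q}^{\mathrm{LF},n}_{K,i,q}$ then cancels against its neighbor's contribution in periodic or compactly supported settings. The corollary itself, however, only assumes compatibility.

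The main obstacle is bookkeeping rather than analysis. The theorems supply weak budgets in terms of the CAD node values at time $n$, whereas the EPO limiting acts on the candidate nodal array at the same CAD nodes. I must confirm that the quadrature exactness used in \eqref{eq:2d-cellavg} holds for the candidate polynomial $p_K^\star$ and not only for $p_K^n$; this follows because both lie in the same polynomial space. Everything else is a direct citation.
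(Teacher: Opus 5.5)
Your proposal is correct and follows the paper's own argument. Proposition~\ref{prop:2d-lift} transfers Theorem~\ref{thm:localEPO} to the triangular cell, with Theorem~\ref{thm:tri-weakP} supplying the weak geometric input and Theorem~\ref{thm:tri-weakE-zxs} or Theorem~\ref{thm:tri-weakE-cad} the weak entropy budget; the global statement then follows by summation under the assumed compatibility, and your extra bookkeeping (CAD node set and weights, quadrature exactness for the candidate polynomial, antisymmetric cancellation of edge entropy fluxes) is consistent with what the paper leaves implicit.
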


\begin{proof}
This is the triangular counterpart of Corollary~\ref{cor:rect-epo}. Proposition~\ref{prop:2d-lift} transfers Theorem~\ref{thm:localEPO} verbatim to the present cellwise triangular setting, while Theorem~\ref{thm:tri-weakP} supplies the weak geometric input and Theorem~\ref{thm:tri-weakE-zxs} or Theorem~\ref{thm:tri-weakE-cad} supplies the weak entropy budget.
\end{proof}

The active triangular budgets above give a local weak entropy stability on triangles without assuming that the candidate DG/FV stage is already entropy stable. 

\begin{theorem}[EPO on triangular meshes]\label{thm:tri-passive-EPO}
Assume that on every triangular cell $K$ a positive quadrature node set $S_K$ and weights $\{\varpi_{K,\nu}\}$ are chosen. Suppose that
\begin{enumerate}[label=(\roman*),leftmargin=2em]
\item the candidate cell averages satisfy the weak geometric budget $\bar{\mathbf U}_K^\star\in G$ furnished by Theorem~\ref{thm:tri-weakP};
\item the entire cell-average-anchored segment stays inside the entropy domain,
\[
\bar{\mathbf U}_K^\star+\theta(\mathbf U_{K,\nu}^\star-\bar{\mathbf U}_K^\star)\in D_\eta,
\qquad 0\le \theta\le 1,\quad \nu=1,\dots,N_K;
\]
\item the candidate nodal arrays are already entropy stable in the passive sense
\[
\mathcal E_K(\mathbf U_K^\star)\le B_K;
\]
\item the local passive budgets are globally compatible in the sense that
\begin{equation}\label{eq:tri-passive-global-candidate}
\sum_{K\in\mathcal T_h} |K|\,B_K
\le
\mathfrak B^n;
\end{equation}
\item each cell $K$ is equipped with a COS-compatible closed convex oscillation-suppressing set $\mathcal O_K^\star$.
\end{enumerate}
Let the geometric and oscillation radii be computed cellwise on $S_K$, and set
\begin{equation}\label{eq:tri-theta}
\theta_K^{\mathrm{EPO}}:=\min\{\theta_K^{\mathrm P},\theta_K^{\mathrm O}\}.
\end{equation}
Define the limited array by
\[
\mathbf U_K^{\mathrm{EPO}}
:=
\mathcal S_{K,\bar{\mathbf U}_K^\star}
\bigl(\theta_K^{\mathrm{EPO}};\mathbf U_K^\star\bigr).
\]
Then the EPO-limited nodal array on every cell satisfies
\begin{enumerate}[label=(\alph*),leftmargin=2em]
\item nodal membership in $G$;
\item membership in $\mathcal O_K^\star$;
\item the cellwise entropy monotonicity
\[
\mathcal E_K(\mathbf U_K^{\mathrm{EPO}})
\le
\mathcal E_K(\mathbf U_K^\star)
\le
B_K,
\]
and consequently the global entropy inequality
\begin{equation}\label{eq:tri-passive-global-limited}
\sum_{K\in\mathcal T_h} |K|\,\mathcal E_K(\mathbf U_K^{\mathrm{EPO}})
\le
\mathfrak B^n.
\end{equation}
\end{enumerate}
Equivalently, in this passive-entropy setting one may simply identify the entropy radius with the geometric one, that is, $\theta_K^{\mathrm{PE}}=\theta_K^{\mathrm P}$.
\end{theorem}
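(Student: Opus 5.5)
The argument is purely cellwise and uses only results already proved for a generic cell via Proposition~\ref{prop:2d-lift}, together with the passive inheritance of Proposition~\ref{prop:2d-passive-entropy}. Fix a cell $K$. Because hypothesis (i) gives $\bar{\mathbf U}_K^\star\in G$, the cellwise analogue of Proposition~\ref{prop:thetaP} shows that $\theta_K^{\mathrm P}\in[0,1]$ is well defined. It also shows that every $\theta\le\theta_K^{\mathrm P}$ keeps all nodes of $\mathcal S_{K,\bar{\mathbf U}_K^\star}(\theta;\mathbf U_K^\star)$ in $G$. Hypothesis (v) and the cellwise analogue of Proposition~\ref{prop:thetaO-basic} give the same conclusion for $\theta_K^{\mathrm O}$ and $\mathcal O_K^\star$. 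Since $\theta_K^{\mathrm{EPO}}=\min\{\theta_K^{\mathrm P},\theta_K^{\mathrm O}\}$ is at most each radius, conclusions (a) and (b) follow at once.

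For (c), the plan is to apply Proposition~\ref{prop:2d-passive-entropy}. I would take as budget $B:=\mathcal E_K(\mathbf U_K^\star)$ rather than $B_K$. Hypothesis (ii) is exactly the segment-in-$D_\eta$ requirement, so the direct profile $\Psi_K(\theta)=\mathcal E_K(\mathcal S_{K,\bar{\mathbf U}_K^\star}(\theta;\mathbf U_K^\star))$ is well defined on $[0,1]$. By the cellwise version of Proposition~\ref{prop:Psi}, $\Psi_K$ is nondecreasing there. Hence $\Psi_K(\theta_K^{\mathrm{EPO}})\le\Psi_K(1)=\mathcal E_K(\mathbf U_K^\star)$, which is the first inequality in (c). Hypothesis (iii) then gives the second. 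Multiplying by $|K|>0$, summing over $\mathcal T_h$, and invoking (iv) yields \eqref{eq:tri-passive-global-limited}.

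For the final ``equivalently'' statement, I would note that with budget $B_K$ the direct entropy radius satisfies $\theta_K^{\mathrm E}=1$. This holds because $\Psi_K(1)\le B_K$ and $\Psi_K$ is monotone. By \eqref{eq:PE-equals-min} in its cellwise form, available since the full ray lies in $D_\eta$ by (ii), we get $\theta_K^{\mathrm{PE}}=\min\{\theta_K^{\mathrm P},1\}=\theta_K^{\mathrm P}$. Therefore \eqref{eq:tri-theta} coincides with the general min-formula of Theorem~\ref{thm:min}.

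I expect no real obstacle. The only point needing care is that the monotonicity argument of Proposition~\ref{prop:Psi} requires $\eta$ to be defined along the whole segment, not just at the limited nodes in $G$. This is precisely why hypothesis (ii) is imposed, and I would check explicitly that the Jensen step uses only positivity of the weights $\varpi_{K,\nu}$ and the mean identity \eqref{eq:2d-cellavg}.
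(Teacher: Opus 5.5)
Your proposal is correct and matches the paper's proof: (a) and (b) follow from $\theta_K^{\mathrm{EPO}}\le\theta_K^{\mathrm P}$ and $\theta_K^{\mathrm{EPO}}\le\theta_K^{\mathrm O}$ via the lifted Propositions~\ref{prop:thetaP} and \ref{prop:thetaO-basic}, and (c) follows from the monotonicity of the direct profile under hypothesis (ii) (Proposition~\ref{prop:2d-passive-entropy}), then multiplying by $|K|$, summing, and using (iv). Your derivation of $\theta_K^{\mathrm{PE}}=\theta_K^{\mathrm P}$ through $\theta_K^{\mathrm E}=1$ and \eqref{eq:PE-equals-min} is a more careful version of the paper's one-line remark; it tacitly needs Assumption~\ref{ass:weakE}, which holds because $\eta(\bar{\mathbf U}_K^\star)=\Psi_K(0)\le\Psi_K(1)\le B_K$, and you should say so explicitly.
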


\begin{proof}
Because $\theta_K^{\mathrm{EPO}}\le \theta_K^{\mathrm P}$, the nodal membership statement follows from the two-dimensional lifting of Proposition~\ref{prop:thetaP} in Proposition~\ref{prop:2d-lift} together with Theorem~\ref{thm:tri-weakP}. Because $\theta_K^{\mathrm{EPO}}\le \theta_K^{\mathrm O}$, the two-dimensional lifting of Proposition~\ref{prop:thetaO-basic} yields $\mathbf U_K^{\mathrm{EPO}}\in \mathcal O_K^\star$.

For the entropy statement, assumption (ii) is exactly the domain hypothesis required by Proposition~\ref{prop:2d-passive-entropy}. Applying that proposition to the scaling parameter $\theta_K^{\mathrm{EPO}}\in[0,1]$ gives
\[
\mathcal E_K(\mathbf U_K^{\mathrm{EPO}})
\le
\mathcal E_K(\mathbf U_K^\star)
\le
B_K.
\]
Multiplying by $|K|$ and summing over all cells yields
\[
\sum_{K\in\mathcal T_h}|K|\,\mathcal E_K(\mathbf U_K^{\mathrm{EPO}})
\le
\sum_{K\in\mathcal T_h}|K|\,B_K
\le
\mathfrak B^n,
\]
which is \eqref{eq:tri-passive-global-limited}. The final statement is simply the observation that, along a passively entropy-stable ray, no additional entropy truncation beyond the geometric radius is needed.
\end{proof}

\begin{remark}
Theorem~\ref{thm:tri-passive-EPO} does \emph{not} require a separate active weak entropy budget for the cell average. What it does require is the entropy-domain hypothesis on the full cell-average-anchored segment and a globally compatible family of passive candidate bounds $B_K$. This is precisely the scenario in which the  entropy-stable DG discretizations fit most naturally into the EPO philosophy.
\end{remark}

\subsection{A COS(DG)-compatible oscillation module in two dimensions}

The one-dimensional oscillation formulas extend cellwise to rectangles, triangles, and more general meshes with only notational changes. The required inputs are candidate cell functions $\mathbf U_K^\star(\mathbf x)$ (DG polynomials or FV reconstructions), their cell averages $\bar{\mathbf U}_K^\star$, and immediate neighboring-cell data.

Let $K$ be a cell of area $|K|$ and let $\mathcal N_K$ denote the set of cells sharing a full edge with $K$. For an edge $E=\partial K\cap\partial K'$ with outward unit normal $\mathbf n_{E,K}$ from $K$ to $K'$, define the normal Jacobian
\[
\mathbf A_{\mathbf n_{E,K}}(\mathbf U):=\sum_{r=1}^2 n_{E,K}^{(r)}\,\frac{\partial \mathbf F_r}{\partial \mathbf U}(\mathbf U),
\]
and choose the compact local wave-speed bound
\begin{equation}\label{eq:2d-cos-beta}
\beta_E^{\mathrm{COS}}:=\max\Bigl\{\operatorname{spr}(\mathbf A_{\mathbf n_{E,K}}(\bar{\mathbf U}_K^\star)),\operatorname{spr}(\mathbf A_{\mathbf n_{E,K}}(\bar{\mathbf U}_{K'}^\star))\Bigr\}.
\end{equation}
With either entropy-induced distance \eqref{eq:cos-jensen-distance} or \eqref{eq:cos-frozen-distance}, define
\begin{equation}\label{eq:2d-cos-sigma}
\Theta_{E,K}:=\|\mathbf U_{K'}^\star-\bar{\mathbf U}_K^\star\|_{L_g^2(K')}^2,\qquad
\sigma_{E,K}^{\mathrm{COS}}:=\begin{cases}
C_k\dfrac{\|\mathbf U_{K'}^\star-\mathbf U_K^\star\|_{L_g^2(K')}^2}{\Theta_{E,K}},& \Theta_{E,K}\ge \varepsilon_K,\\[1.0em]
0,&\text{otherwise},
\end{cases}
\end{equation}
where $\varepsilon_K>0$ is a roundoff floor. For FV reconstructions the norms over $K'$ are computed by evaluating the candidate reconstruction polynomials in $K'$; no DG-specific basis transformation is required.

The canonical two-dimensional COS coefficient is then
\begin{equation}\label{eq:2d-cos-lambda}
\lambda_K^{\mathrm{COS}}:=\exp\!\left(-\frac{\Delta t}{|K|}\sum_{E\subset\partial K}|E|\,\beta_E^{\mathrm{COS}}\,\sigma_{E,K}^{\mathrm{COS}}\right),
\end{equation}
and the cellwise COS operator is
\begin{equation}\label{eq:2d-cos-op}
\mathcal F^{\mathrm{COS}}(\mathbf U_h^\star)\big|_{K}:=(1-\lambda_K^{\mathrm{COS}})\bar{\mathbf U}_K^\star+\lambda_K^{\mathrm{COS}}\mathbf U_K^\star(\mathbf x).
\end{equation}
Again, this formula is valid for both DG stage polynomials and FV reconstructions.

A local COS variant is obtained by activating only selected edges. Following \textsf{COS(DG)}, define
\[
\Lambda_{E,K}^\pm(\mathbf U):=\lambda_{\max/\min}\bigl(\mathbf A_{\mathbf n_{E,K}}(\mathbf U)\bigr),
\]
that is, the extremal eigenvalues of the normal Jacobian in the edge-normal direction $\mathbf n_{E,K}$. We mark an interface $E=K\cap K'$ as a potential shock if either
\begin{equation}\label{eq:2d-cos-local-criterion}
\Lambda_{E,K}^+(\bar{\mathbf U}_K^\star)-\Lambda_{E,K}^+(\bar{\mathbf U}_{K'}^\star)>\delta\Bigl(|\Lambda_{E,K}^+(\bar{\mathbf U}_K^\star)|+|\Lambda_{E,K}^+(\bar{\mathbf U}_{K'}^\star)|\Bigr),
\end{equation}
or
\begin{equation}\label{eq:2d-cos-local-criterion-minus}
\Lambda_{E,K}^-(\bar{\mathbf U}_K^\star)-\Lambda_{E,K}^-(\bar{\mathbf U}_{K'}^\star)>\delta\Bigl(|\Lambda_{E,K}^-(\bar{\mathbf U}_K^\star)|+|\Lambda_{E,K}^-(\bar{\mathbf U}_{K'}^\star)|\Bigr),
\end{equation}
The edge $E$ is declared COS-active when one of \eqref{eq:2d-cos-local-criterion}--\eqref{eq:2d-cos-local-criterion-minus} holds; otherwise its contribution in \eqref{eq:2d-cos-sigma} is set to zero. This gives the local COS version on general meshes.

The canonical two-dimensional oscillation-admissible segment is
\begin{equation}\label{eq:2d-cos-set}
\mathcal O_K^{\mathrm{COS},\star}:=\Bigl\{\mathcal S_{\bar{\mathbf U}_K^\star}(\theta;\mathbf U_K^\star):0\le \theta\le \lambda_K^{\mathrm{COS}}\Bigr\}.
\end{equation}

\begin{proposition}[Two-dimensional canonical COS module]\label{prop:2d-cos}
For every cell $K$, the set $\mathcal O_K^{\mathrm{COS},\star}$ defined by \eqref{eq:2d-cos-set} is nonempty, closed, and convex. It contains the constant state $\mathbf C_K(\bar{\mathbf U}_K^\star)$ and its oscillation radius along the cell-average-anchored ray is
\begin{equation}\label{eq:2d-cos-radius}
\theta_K^{\mathrm O}=\lambda_K^{\mathrm{COS}}.
\end{equation}
Moreover, the operator \eqref{eq:2d-cos-op} preserves the cell average and satisfies the same $L^2$-nonexpansiveness and entropy-inheritance properties cellwise as in Proposition~\ref{prop:cos-basic}.
\end{proposition}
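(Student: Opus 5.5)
The plan is to reduce everything to the one-dimensional arguments already given, exactly as Proposition~\ref{prop:2d-lift} does for the other cellwise results. The first step is to check that $\lambda_K^{\mathrm{COS}}$ in \eqref{eq:2d-cos-lambda} is well defined and lies in $(0,1]$. Every $\sigma_{E,K}^{\mathrm{COS}}$ is nonnegative: it is either zero or a ratio of two nonnegative entropy-induced (or frozen-Hessian) squared distances, and the denominator is at least $\varepsilon_K>0$. Each $\beta_E^{\mathrm{COS}}$ is a spectral radius, hence nonnegative. Since $\Delta t,|E|,|K|>0$, the exponent is nonpositive, so $0<\lambda_K^{\mathrm{COS}}\le 1$. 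The local (edge-marked) variant only multiplies some of the $\sigma_{E,K}^{\mathrm{COS}}$ by $0$, which preserves this bound.

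With $\lambda_K^{\mathrm{COS}}\in(0,1]$ in hand, the geometric statements follow by repeating the proof of Proposition~\ref{prop:canonical-cos-segment} with $j$ replaced by $K$. The map $\theta\mapsto\mathcal S_{K,\bar{\mathbf U}_K^\star}(\theta;\mathbf U_K^\star)$ is affine and continuous from $[0,\lambda_K^{\mathrm{COS}}]$ into $(\mathbb R^m)^{N_K}$. Its image is therefore a closed segment, hence nonempty, closed and convex. At $\theta=0$ it gives $\mathbf C_K(\bar{\mathbf U}_K^\star)$.

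For the radius \eqref{eq:2d-cos-radius}, I will argue as in Proposition~\ref{prop:thetaO-basic}. Suppose first that $\mathbf U_K^\star\neq\mathbf C_K(\bar{\mathbf U}_K^\star)$. Then the ray is injective in $\theta$, so $\mathcal S(\theta)\in\mathcal O_K^{\mathrm{COS},\star}$ holds exactly when $\theta\le\lambda_K^{\mathrm{COS}}$, and the supremum in the two-dimensional analogue of \eqref{eq:thetaO} equals $\lambda_K^{\mathrm{COS}}$. This degenerate constant-state case is the only real subtlety I expect. When $\mathbf U_K^\star=\mathbf C_K(\bar{\mathbf U}_K^\star)$, every $\theta\in[0,1]$ gives the same state and the formal supremum is $1$. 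I would handle this by adopting the convention that in this case the radius is taken as $\lambda_K^{\mathrm{COS}}$. This is harmless: the limited state $\mathbf C_K(\bar{\mathbf U}_K^\star)$ is the same for any choice of $\theta$, so all conclusions are unaffected.

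The operator properties are then checked cellwise. Mean preservation follows from \eqref{eq:2d-cos-op} by linearity of the cell integral, since $\frac1{|K|}\int_K\mathbf U_K^\star\,d\mathbf x=\bar{\mathbf U}_K^\star$; at the nodal level it is \eqref{eq:2d-cellavg} together with positivity of the weights. $L^2$ nonexpansiveness also holds on each cell. Writing $\mathbf U_K^\star=\bar{\mathbf U}_K^\star+\mathbf W$ with $\int_K\mathbf W=0$, orthogonality gives
\[
\|(1-\lambda)\bar{\mathbf U}_K^\star+\lambda\mathbf U_K^\star\|_{L^2(K)}^2=|K||\bar{\mathbf U}_K^\star|^2+\lambda^2\|\mathbf W\|_{L^2(K)}^2\le\|\mathbf U_K^\star\|_{L^2(K)}^2,
\]
and summing over $K$ gives the global bound. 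Continuous entropy inheritance uses pointwise convexity, $\eta((1-\lambda)\bar{\mathbf U}+\lambda\mathbf U^\star(\mathbf x))\le(1-\lambda)\eta(\bar{\mathbf U})+\lambda\eta(\mathbf U^\star(\mathbf x))$, integrated over $K$, followed by Jensen's inequality $|K|\eta(\bar{\mathbf U})\le\int_K\eta(\mathbf U^\star)$. The quadrature version is the same computation with the weights $\varpi_{K,\nu}$, exactly as in part (d) of Proposition~\ref{prop:cos-basic}. All of this is valid under the standing domain hypothesis that the relevant states lie in $D_\eta$.
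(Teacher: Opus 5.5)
Your proposal follows the same route as the paper: $\mathcal O_K^{\mathrm{COS},\star}$ is the affine image of $[0,\lambda_K^{\mathrm{COS}}]$, the constant state corresponds to $\theta=0$, and the operator properties are the one-dimensional COS calculations repeated cellwise with $I_j,h_j$ replaced by $K,|K|$. Your additions are correct refinements that the paper leaves implicit or cites away:
\begin{itemize}
\item you check that \eqref{eq:2d-cos-lambda} gives $\lambda_K^{\mathrm{COS}}\in(0,1]$;
\item you observe that when $\mathbf U_K^\star=\mathbf C_K(\bar{\mathbf U}_K^\star)$ the supremum defining $\theta_K^{\mathrm O}$ is formally $1$, so \eqref{eq:2d-cos-radius} needs the harmless convention you adopt;
\item you give self-contained proofs of mean preservation, $L^2$ nonexpansiveness and entropy inheritance, where the paper cites \textsf{COS(DG)}.
\end{itemize}
One small slip: nodal mean preservation uses $\sum_\nu\varpi_{K,\nu}=1$, not positivity of the weights.
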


\begin{proof}
The first statement follows exactly as in Proposition~\ref{prop:canonical-cos-segment}: \eqref{eq:2d-cos-set} is the image of the closed interval $[0,\lambda_K^{\mathrm{COS}}]$ under an affine map. The value $\theta=0$ gives the constant state and the largest admissible ray parameter is therefore \eqref{eq:2d-cos-radius}. Mean preservation, $L^2$ nonexpansiveness, and entropy inheritance are proved cellwise by the same calculations as in Proposition~\ref{prop:cos-basic}, with $I_j$ replaced by $K$ and $h_j$ replaced by $|K|$.
\end{proof}
\subsection{Unified two-dimensional EPO theorem}\label{sec:2d-unified}

We can now summarize the rectangular and triangular active-entropy pathways, together with the triangular passive-entropy alternative, in one theorem.

\begin{theorem}[Unified two-dimensional EPO]\label{thm:2d-epo}
Let $\mathcal T_h$ be a mesh of rectangles or triangles. On each cell $K\in\mathcal T_h$, choose a positive quadrature node set $S_K$ and positive weights $\{\varpi_{K,\nu}\}_{\nu=1}^{N_K}$ satisfying the exact mean relation \eqref{eq:2d-cellavg}. Let $\mathbf U_K^\star$ be a candidate nodal array with average $\bar{\mathbf U}_K^\star$.

Assume the following cellwise hypotheses.
\begin{enumerate}[label=(\roman*),leftmargin=2em]
\item \textbf{Weak geometric budget.} The candidate average satisfies $\bar{\mathbf U}_K^\star\in G$. This may be supplied, for example, by Theorem~\ref{thm:rect-weakP} on rectangles or by Theorem~\ref{thm:tri-weakP} on triangles.
\item \textbf{Entropy pathway.} Either
\begin{enumerate}[label=(\alph*),leftmargin=2em]
\item an active local budget $\eta(\bar{\mathbf U}_K^\star)\le B_K$ is available (for example Theorem~\ref{thm:rect-weakE}, Theorem~\ref{thm:tri-weakE-zxs}, or Theorem~\ref{thm:tri-weakE-cad}), or
\item the candidate nodal array is already entropy stable in the passive sense $\mathcal E_K(\mathbf U_K^\star)\le B_K$, and the full cell-average-anchored segment stays in $D_\eta$.
\end{enumerate}
\item \textbf{Oscillation pathway.} A COS-compatible closed convex set $\mathcal O_K^\star$ is prescribed.
\end{enumerate}
In the active-entropy case, let $\theta_K^{\mathrm{PE}}$ be the positivity-first entropy radius and define
\[
\theta_K^{\mathrm{EPO}}:=\min\{\theta_K^{\mathrm{PE}},\theta_K^{\mathrm O}\}.
\]
In the passive-entropy case, set equivalently $\theta_K^{\mathrm{PE}}:=\theta_K^{\mathrm P}$ and again define
\[
\theta_K^{\mathrm{EPO}}:=\min\{\theta_K^{\mathrm{PE}},\theta_K^{\mathrm O}\}=\min\{\theta_K^{\mathrm P},\theta_K^{\mathrm O}\}.
\]
Finally set
\[
\mathbf U_K^{\mathrm{EPO}}
:=
\mathcal S_{K,\bar{\mathbf U}_K^\star}
\bigl(\theta_K^{\mathrm{EPO}};\mathbf U_K^\star\bigr).
\]
Then the EPO-limited nodal array on every cell satisfies
\begin{enumerate}[label=(\alph*),leftmargin=2em]
\item nodal membership in $G$;
\item the relevant entropy inequality:
\[
\mathcal E_K(\mathbf U_K^{\mathrm{EPO}})\le B_K
\qquad \text{in the active case},
\]
\[
\mathcal E_K(\mathbf U_K^{\mathrm{EPO}})\le \mathcal E_K(\mathbf U_K^\star)\le B_K
\qquad \text{in the passive case};
\]
\item membership in $\mathcal O_K^\star$.
\end{enumerate}
If the local budgets are globally compatible in the active case, or if a global candidate/passive compatibility bound is available in the passive case, then the corresponding global strong entropy inequality holds after EPO limiting.
\end{theorem}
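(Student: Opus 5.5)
The proof is a bookkeeping argument. Proposition~\ref{prop:2d-lift} transfers every cellwise 1D result to the generic cell $K$ with weights $\varpi_{K,\nu}$, so I will derive each conclusion from the two-dimensional analogues of Propositions~\ref{prop:thetaP}, \ref{prop:thetaE-after-P}, \ref{prop:thetaO-basic} and Theorem~\ref{thm:localEPO}. In the passive case I will use Proposition~\ref{prop:2d-passive-entropy} instead. I treat the two entropy pathways in (ii) separately, and handle the common conclusions (a) and (c) once.

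\emph{Nodal admissibility and oscillation membership.} In both cases $\theta_K^{\mathrm{EPO}}\le\theta_K^{\mathrm{PE}}\le\theta_K^{\mathrm P}$; in the passive case the second inequality is an equality by definition. Hypothesis (i), $\bar{\mathbf U}_K^\star\in G$, is exactly the input of the lifted Proposition~\ref{prop:thetaP}, and \eqref{eq:thetaP-property} gives nodal membership in $G$ for every $\theta\le\theta_K^{\mathrm P}$, hence (a). Hypothesis (iii) is Assumption~\ref{ass:osc} on $K$. The lifted Proposition~\ref{prop:thetaO-basic} then gives $\mathcal S_{K,\bar{\mathbf U}_K^\star}(\theta;\mathbf U_K^\star)\in\mathcal O_K^\star$ for all $\theta\le\theta_K^{\mathrm O}$. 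Since $\theta_K^{\mathrm{EPO}}\le\theta_K^{\mathrm O}$, conclusion (c) follows.

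\emph{Entropy, active case.} Here (i) and (ii)(a) are the analogues of Assumptions~\ref{ass:weakP} and \ref{ass:weakE}. The lifted Proposition~\ref{prop:thetaE-after-P} defines $\theta_K^{\mathrm{PE}}$, and \eqref{eq:thetaPE-property} yields $\mathcal E_K(\mathcal S_{K,\bar{\mathbf U}_K^\star}(\theta;\mathbf U_K^\star))\le B_K$ for all $\theta\le\theta_K^{\mathrm{PE}}$. Taking $\theta=\theta_K^{\mathrm{EPO}}$ gives (b). Equivalently, one can invoke the lifted Theorem~\ref{thm:localEPO} directly.

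\emph{Entropy, passive case.} Hypothesis (ii)(b) supplies exactly the hypotheses of Proposition~\ref{prop:2d-passive-entropy}: the full segment lies in $D_\eta$, and $\mathcal E_K(\mathbf U_K^\star)\le B_K$. That proposition shows that the profile $\Psi_K$ is nondecreasing on $[0,1]$, which I take from the lifted Proposition~\ref{prop:Psi}. So for $\theta=\theta_K^{\mathrm{EPO}}\in[0,1]$ we get $\mathcal E_K(\mathbf U_K^{\mathrm{EPO}})=\Psi_K(\theta)\le\Psi_K(1)=\mathcal E_K(\mathbf U_K^\star)\le B_K$. This is the same argument as in Theorem~\ref{thm:tri-passive-EPO}, but now without assuming triangles.

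\emph{Global statement.} I sum the local inequalities in (b) over $K$, weighted by $|K|$, the natural global weight in 2D. For the active case I then apply the global compatibility assumption (the 2D analogue of Assumption~\ref{ass:budget-compat}), as in Theorem~\ref{thm:global}. For the passive case I apply the candidate bound \eqref{eq:tri-passive-global-candidate}.

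The only point I expect to need care is the passive case. The positivity-first construction is not used there, so the entropy is evaluated along the whole segment. This is legitimate only because of the domain hypothesis in (ii)(b), and I will state explicitly where it is used. The remaining care is in checking that the lifting of Proposition~\ref{prop:2d-lift} covers each invoked result, which it does by its statement.
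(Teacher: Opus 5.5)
Your proposal is correct and follows the same route as the paper's proof. You lift the cellwise results via Proposition~\ref{prop:2d-lift} and use the positivity-first radius (equivalently Theorem~\ref{thm:localEPO}) in the active case and Proposition~\ref{prop:2d-passive-entropy} in the passive case. You then obtain oscillation membership from the lifted Proposition~\ref{prop:thetaO-basic} and sum under the compatibility hypotheses. Your version is simply more explicit, for example in deriving nodal membership in $G$ in the passive case from $\theta_K^{\mathrm{EPO}}\le\theta_K^{\mathrm P}$, which the paper leaves implicit.
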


\begin{proof}
Proposition~\ref{prop:2d-lift} applies cellwise. In the active-entropy case, Theorem~\ref{thm:localEPO} is invoked with the cellwise weak geometric and weak entropy budgets; on triangles those budgets may be supplied, for example, by Theorem~\ref{thm:tri-weakE-zxs} or Theorem~\ref{thm:tri-weakE-cad}. In the passive-entropy case, the entropy statement follows from Proposition~\ref{prop:2d-passive-entropy}. In both cases, the oscillation statement is furnished by the lifted form of Proposition~\ref{prop:thetaO-basic} contained in Proposition~\ref{prop:2d-lift}. The global claim follows by summation over the mesh under the stated compatibility hypotheses.
\end{proof}

Theorem~\ref{thm:2d-epo} is the precise two-dimensional counterpart of the one-dimensional EPO theory developed earlier. It shows that the three ingredients of EPO continue to play exactly the same roles in two dimensions:
\begin{itemize}[leftmargin=2em]
\item special quadratures and convex decompositions provide the \emph{weak geometric budget};
\item one-dimensional slice budgets, triangular CAD-based weak budgets, or entropy-stable DG discretizations provide the \emph{entropy budget};
\item COS-compatible convex gauges provide the \emph{oscillation budget};
\item the final EPO limiter is the same local average-anchored radial scaling as before.
\end{itemize}

\begin{procedure}[2D EPO procedure]\label{proc:2d-epo}
On each cell $K$ of a rectangular or triangular mesh, proceed as follows.
\begin{enumerate}[leftmargin=2em]
\item Choose a positive quadrature/node set $S_K$ suitable for the local polynomial space and for the desired positivity or entropy analysis.
\item Evolve the underlying DG/FV scheme to obtain candidate nodal values and the candidate average $\bar{\mathbf U}_K^\star$.
\item Certify the weak geometric budget of the average:
\begin{itemize}[leftmargin=2em]
\item on rectangles, use the line-separable slice decomposition and Theorem~\ref{thm:rect-weakP};
\item on triangles, use a feasible special quadrature together with Theorem~\ref{thm:tri-weakP}.
\end{itemize}
\item Choose the entropy pathway:
\begin{itemize}[leftmargin=2em]
\item active-entropy path: construct a local budget $B_K$ (e.g. Theorem~\ref{thm:rect-weakE}, Theorem~\ref{thm:tri-weakE-zxs}, or Theorem~\ref{thm:tri-weakE-cad}) and compute the positivity-first entropy radius $\theta_K^{\mathrm{PE}}$;
\item passive-entropy path: start from an already entropy-stable candidate discretization and identify $\theta_K^{\mathrm{PE}}=\theta_K^{\mathrm P}$.
\end{itemize}
\item Build a COS-compatible oscillation set $\mathcal O_K^\star$ from compact cellwise data and compute $\theta_K^{\mathrm O}$.
\item Compute $\theta_K^{\mathrm{EPO}}=\min\{\theta_K^{\mathrm{PE}},\theta_K^{\mathrm O}\}$ and replace the candidate nodal array by the scaled array on the ray from the constant state generated by $\bar{\mathbf U}_K^\star$.
\item For rigorous SSPRK realizations, repeat the required $P/E$ steps stagewise and, if desired, postpone the canonical $O$-module to the final RK stage as in Proposition~\ref{prop:stagewise-PE-endO}. For high-order-in-time SSP multistep realizations, use the single end-of-step entropy budget of Theorem~\ref{thm:sspms-budget} and apply EPO once per time step.
\end{enumerate}
\end{procedure}

\section{Realizations, formulas, and implementation examples}\label{sec:realizations}

This section illustrates how the abstract EPO framework specializes in representative settings. The point is not to exhaust all possibilities but to show that the framework is computationally meaningful and connects directly with existing positivity, entropy, and oscillation technologies.

\subsection{Scalar conservation laws}

Consider the scalar conservation law
\[
u_t + f(u)_x = 0,
\]
and let the admissible interval be
\[
G=[u_{\min},u_{\max}].
\]
Suppose the candidate nodal values in cell $I_j$ are
\[
u_{j,1}^\star,\dots,u_{j,L}^\star,
\qquad
\bar u_j^\star = \sum_{\nu=1}^L \omega_\nu u_{j,\nu}^\star.
\]

\subsubsection{Geometric radius}

For the interval constraint, the positivity/invariant-set radius is explicit. Let
\[
M_j^\star := \max_{\nu} u_{j,\nu}^\star,
\qquad
m_j^\star := \min_{\nu} u_{j,\nu}^\star.
\]
Then
\begin{equation}\label{eq:scalar-thetaP}
\theta_j^{\mathrm P}
=
\min\left\{
1,\
\frac{u_{\max}-\bar u_j^\star}{M_j^\star-\bar u_j^\star}
\ \text{(if }M_j^\star>\bar u_j^\star\text{)},\
\frac{\bar u_j^\star-u_{\min}}{\bar u_j^\star-m_j^\star}
\ \text{(if }m_j^\star<\bar u_j^\star\text{)}
\right\}.
\end{equation}
This is exactly the classical Zhang--Shu scalar scaling factor.

\subsubsection{Entropy radius}

Let $\eta$ be any convex entropy. Then
\[
\Psi_j(\theta)
=
\sum_{\nu=1}^L
\omega_\nu
\eta\!\left(\bar u_j^\star+\theta(u_{j,\nu}^\star-\bar u_j^\star)\right)
\]
is convex and nondecreasing on $[0,1]$. The entropy radius is the largest $\theta$ such that $\Psi_j(\theta)\le B_j^n$.

For example, consider the quadratic entropy
\[
\eta(u)=\frac{u^2}{2}.
\]
Then
\begin{align}
\Psi_j(\theta)
&=
\frac12 \sum_{\nu=1}^L \omega_\nu
\left(\bar u_j^\star+\theta(u_{j,\nu}^\star-\bar u_j^\star)\right)^2 \notag\\
&=
\frac12(\bar u_j^\star)^2
+
\frac{\theta^2}{2}\sum_{\nu=1}^L \omega_\nu (u_{j,\nu}^\star-\bar u_j^\star)^2,
\label{eq:quadratic-Psi}
\end{align}
because the linear term vanishes by the mean constraint. Therefore
\begin{equation}\label{eq:quadratic-thetaE}
\theta_j^{\mathrm E}
=
\begin{cases}
1,
& \Psi_j(1)\le B_j^n,\\[0.6em]
\sqrt{
\dfrac{2(B_j^n-\frac12(\bar u_j^\star)^2)}
{\sum_{\nu=1}^L \omega_\nu (u_{j,\nu}^\star-\bar u_j^\star)^2}
},
& \Psi_j(1)>B_j^n.
\end{cases}
\end{equation}
This formula shows directly that the entropy limiter penalizes the variance of the nodal deviations from the cell average.

\subsubsection{Oscillation radius}

A canonical COS radius is obtained by specializing \eqref{eq:cos-lambda-1d} with the quadratic sensor entropy $g(u)=u^2/2$. Let $u_j^\star(x)$ denote either the DG stage polynomial or an FV reconstruction on $I_j$, with average $\bar u_j^\star$. Then
\begin{equation}\label{eq:scalar-cos-radius}
\theta_j^{\mathrm O}:=\lambda_j^{\mathrm{COS}}=\exp\!\left(-\frac{\beta_j^{\mathrm{COS}}\Delta t}{h_j}\,\sigma_j^{\mathrm{COS}}\right),
\end{equation}
where
\[
\beta_j^{\mathrm{COS}}=\max_{j-1\le \ell\le j+1}|f'(\bar u_\ell^\star)|,\qquad \sigma_j^{\mathrm{COS}}=\sigma_j^-+\sigma_j^+,
\]
and
\[
\sigma_j^{\pm}=\begin{cases}
C_k\dfrac{\|u_{j\pm1}^\star-u_j^\star\|_{L^2(I_{j\pm1})}^2}{\|u_{j\pm1}^\star-\bar u_j^\star\|_{L^2(I_{j\pm1})}^2},
&
\|u_{j\pm1}^\star-\bar u_j^\star\|_{L^2(I_{j\pm1})}^2\ge \varepsilon_j
\ \text{and}\ 
\chi_{j\pm\frac12}=1,
\\[1.0em]
0,&\text{otherwise}.
\end{cases}
\]
Here $\chi_{j+\frac12}$ is the local COS marker \eqref{eq:cos-local-criterion-1d} specialized to the scalar case; in the global COS variant one simply sets $\chi_{j\pm\frac12}=1$. The oscillation-suppressing update is therefore
\[
\mathcal F^{\mathrm{COS}}(u_h^\star)\big|_{I_j}=(1-\theta_j^{\mathrm O})\bar u_j^\star+\theta_j^{\mathrm O}u_j^\star(x),
\]
which is equally valid whether $u_j^\star$ is a DG polynomial or an FV reconstruction.

\subsection{Compressible Euler equations}\label{sec:Euler}
Consider the one-dimensional compressible Euler equations
\[
\mathbf U=
\begin{pmatrix}
\rho\\ m\\ E
\end{pmatrix},
\qquad
\mathbf F(\mathbf U)=
\begin{pmatrix}
m\\
\dfrac{m^2}{\rho}+p\\
\dfrac{m}{\rho}(E+p)
\end{pmatrix},
\]
with ideal-gas pressure
\[
p(\mathbf U) = (\gamma-1)\left(E-\frac{m^2}{2\rho}\right).
\]

\subsubsection{Numerical admissible set}

The physical admissible set is
\[
G_{\rm phys} = \{ \rho>0,\ p(\mathbf U)>0\},
\]
which is open. For numerical limiting we work with
\begin{equation}\label{eq:Euler-Geps}
G_\varepsilon
=
\left\{
(\rho,m,E)^T:\ \rho\ge \varepsilon,\ p(\mathbf U)\ge \varepsilon
\right\}
\end{equation}
for some fixed $\varepsilon>0$.  $G_\varepsilon$ is closed and convex. Therefore it fits Assumption~\ref{ass:G}.

\subsubsection{Geometric radius and the Zhang--Shu positivity limiter}

The geometric radius $\theta_j^{\mathrm P}$ is the largest scalar such that
\[
\bar{\mathbf U}_j^\star
+
\theta(\mathbf U_{j,\nu}^\star-\bar{\mathbf U}_j^\star)\in G_\varepsilon
\qquad \forall \nu.
\]
In practice this may be computed in several equivalent ways; see, for example, the original Zhang--Shu positivity limiter \cite{ZhangShuPP2010}.

\subsubsection{Positivity-first entropy radius}

For the Euler equations, the mathematically primary construction is the positivity-first entropy radius. First compute the geometric radius $\theta_j^{\mathrm P}$ so that every state on the truncated ray remains in $G_\varepsilon\subset D_\eta$. Then define
\[
\mathbf U_j^{\mathrm P}:=\mathcal S_{\bar{\mathbf U}_j^\star}(\theta_j^{\mathrm P};\mathbf U_j^\star),
\]
and compute the largest secondary parameter $\vartheta_j^{\mathrm E|P}\in[0,1]$ such that
\[
\sum_{\nu=1}^L \omega_\nu
\eta\!\left(
\bar{\mathbf U}_j^\star+\vartheta\bigl(\mathbf U_{j,\nu}^{\mathrm P}-\bar{\mathbf U}_j^\star\bigr)
\right)
\le
B_j^n.
\]
The resulting Euler entropy radius is
\[
\theta_j^{\mathrm{PE}}=\theta_j^{\mathrm P}\,\vartheta_j^{\mathrm E|P}.
\]
In general this scalar problem is solved numerically by bisection or Newton iteration, but it remains only one-dimensional and is therefore inexpensive. If the full candidate ray is known \emph{a priori} to remain in $D_\eta$, one may instead compute the auxiliary full-ray radius $\theta_j^{\mathrm E}$ directly and then recover $\theta_j^{\mathrm{PE}}=\min\{\theta_j^{\mathrm P},\theta_j^{\mathrm E}\}$. For the mathematical entropy associated with ideal-gas dynamics, one may take, up to an affine scaling and additive constant,
\[
\eta(\mathbf U)
=
-\rho s,
\qquad
s=\log p - \gamma \log \rho,\qquad {\mathcal Q} = - m s 
\]
and $\eta(\mathbf U)$ is convex on $G_{\rm phys}$ and hence on $G_\varepsilon$.

\subsubsection{Oscillation radius}

For the Euler equations, the canonical oscillation radius is again the COS coefficient, now built from entropy-induced distances. Let $\mathbf U_j^\star(x)$ be either the DG stage polynomial or an FV reconstruction on $I_j$, with average $\bar{\mathbf U}_j^\star$. Choose a convex oscillation entropy $g$, for example the physical mathematical entropy or the frozen-Hessian surrogate of \textsf{COS(DG)}. Then
\begin{equation}\label{eq:euler-cos-radius}
\theta_j^{\mathrm O}:=\lambda_j^{\mathrm{COS}}=\exp\!\left(-\frac{\beta_j^{\mathrm{COS}}\Delta t}{h_j}\,\sigma_j^{\mathrm{COS}}\right),
\end{equation}
with $\beta_j^{\mathrm{COS}}$ given by \eqref{eq:cos-lambda-1d} and
\[
\sigma_j^{\mathrm{COS}}=\sigma_j^-+\sigma_j^+,
\qquad
\sigma_j^{\pm}=\begin{cases}
C_k\dfrac{\|\mathbf U_{j\pm1}^\star-\mathbf U_j^\star\|_{L_g^2(I_{j\pm1})}^2}{\|\mathbf U_{j\pm1}^\star-\bar{\mathbf U}_j^\star\|_{L_g^2(I_{j\pm1})}^2},& \|\mathbf U_{j\pm1}^\star-\bar{\mathbf U}_j^\star\|_{L_g^2(I_{j\pm1})}^2\ge \varepsilon_j,\\[1.0em]
0,&\text{otherwise}.
\end{cases}
\]
 The oscillation-suppressing update is
\[
\mathcal F^{\mathrm{COS}}(\mathbf U_h^\star)\big|_{I_j}=(1-\theta_j^{\mathrm O})\bar{\mathbf U}_j^\star+\theta_j^{\mathrm O}\mathbf U_j^\star(x).
\]
This is the precise Euler realization of the COS component used by EPO. If a Zhang--Shu positivity limiter is also enforced before or together with COS, one recovers the combined bound-preserving and oscillation-suppressing scaling familiar from \textsf{COS(DG)}.

\subsubsection{Final EPO limiter}

The final Euler EPO limiter is
\[
\theta_j^{\mathrm{EPO}}
=
\min\bigl\{\theta_j^{\mathrm{PE}},\theta_j^{\mathrm O}\bigr\},
\qquad
\mathbf U_{j,\nu}^{n+1}
=
\bar{\mathbf U}_j^\star
+
\theta_j^{\mathrm{EPO}}
(\mathbf U_{j,\nu}^\star-\bar{\mathbf U}_j^\star).
\]
If the auxiliary direct radius $\theta_j^{\mathrm E}$ is available on the whole candidate ray, then this reduces to
\[
\theta_j^{\mathrm{EPO}}=
\min\bigl\{\theta_j^{\mathrm P},\theta_j^{\mathrm E},\theta_j^{\mathrm O}\bigr\}.
\]
Hence the same nodal update enforces the admissibility constraint, the entropy budget, and the oscillation constraint. For a finite family of entropy pairs, one computes one entropy radius per pair and takes the minimum before combining with $\theta_j^{\mathrm O}$.

\subsection{A practical EPO algorithm}

For reference we summarize the cellwise implementation in algorithmic form.

\begin{procedure}[Cellwise EPO limiter]\label{alg:EPO}
Given candidate nodal values $\mathbf U_{j,\nu}^\star$ and average $\bar{\mathbf U}_j^\star$ in cell $I_j$:
\begin{enumerate}[label=\arabic*.,leftmargin=2em]
\item Compute or import the weak entropy budget. For one entropy pair this is a single number $B_j^n$ satisfying \eqref{eq:weakE}. For several prescribed entropy pairs, compute one budget $B_j^{n,(r)}$ for each pair. For SSP Runge--Kutta methods, these should be the stagewise budgets from Theorem~\ref{thm:ssp-budget}; for SSP multistep methods, use the single end-of-step budget from Theorem~\ref{thm:sspms-budget}.
\item Compute the geometric radius $\theta_j^{\mathrm P}$ from \eqref{eq:thetaP}.
\item Compute the positivity-first entropy radius. For one entropy pair, compute $\theta_j^{\mathrm{PE}}$ by Proposition~\ref{prop:thetaE-after-P}. For several prescribed entropy pairs, compute $\theta_j^{\mathrm{PE},(r)}$ for each pair and set
\[
\theta_j^{\mathrm{PE,all}}:=\min_{1\le r\le M}\theta_j^{\mathrm{PE},(r)}.
\]
If the full candidate ray is known \emph{a priori} to remain in $D_\eta$, one may instead compute the auxiliary direct radius $\theta_j^{\mathrm E}$ from \eqref{eq:thetaE}; for several entropies this gives one direct radius per entropy pair.
\item Compute the oscillation radius $\theta_j^{\mathrm O}$. In the canonical COS realization one sets $\theta_j^{\mathrm O}=\lambda_j^{\mathrm{COS}}$ from \eqref{eq:cos-lambda-1d} in one dimension or \eqref{eq:2d-cos-lambda} in two dimensions. More generally, if a larger COS-compatible closed convex set $\mathcal O_j^\star$ is used, compute $\theta_j^{\mathrm O}$ from \eqref{eq:thetaO}.
\item Set
\[
\theta_j^{\mathrm{EPO}}=
\begin{cases}
\min\{\theta_j^{\mathrm{PE}},\theta_j^{\mathrm O}\},&\text{for one entropy pair},\\[0.4em]
\min\{\theta_j^{\mathrm{PE,all}},\theta_j^{\mathrm O}\},&\text{for several entropy pairs}.
\end{cases}
\]
\item Update each node by
\[
\mathbf U_{j,\nu}^{n+1}
=
\bar{\mathbf U}_j^\star+\theta_j^{\mathrm{EPO}}(\mathbf U_{j,\nu}^\star-\bar{\mathbf U}_j^\star).
\]
\end{enumerate}
\end{procedure}

\begin{remark}
Procedure~\ref{alg:EPO} is local. It requires only the candidate DG/FV cellwise state, its weighted nodal values, the updated cell average, and the neighboring-cell data entering the weak budgets and the oscillation radius. The conservative update of the cell average is not modified. In the canonical COS realization the oscillation radius is explicit, while the positivity-first entropy radius is a scalar one-dimensional search. For several prescribed entropy pairs, the only change is that one computes one entropy radius per pair and then takes the minimum.
\end{remark}

\section{Numerical experiments}\label{sec:num}

We now turn to numerical experiments for the compressible Euler equations; see Section~\ref{sec:Euler}. The examples are chosen to assess the practical performance of the proposed framework, with particular emphasis on the entropy-stable correction and the full EPO procedure.

Unless stated otherwise, all computations use a third-order discontinuous Galerkin discretization with $P^2$ polynomials and the third-order SSP multistep time discretization \eqref{eq:sspms-third-budget}.

\begin{example}[Accuracy test]\label{ex:accuracy}
	We first verify that the entropy module preserves the designed order of accuracy for smooth solutions. Since the positivity-preserving limiter of Zhang--Shu \cite{ZhangShuPP2010} and the COS procedure of Cao--Huang--Li--Wu \cite{CaoHuangLiWu2026} are already known not to destroy high-order accuracy, this test isolates the effect of the entropy correction.
	
	We consider the one-dimensional compressible Euler equations on the periodic domain $[0,1]$. The initial data are
	\[
	\rho(x,0)=1.0+0.2\sin(2\pi x),\qquad
	u(x,0)=0.7,\qquad
	p(x,0)=0.1.
	\]
	Because the velocity and pressure are constant, the exact solution is a smooth translation of the initial profile:
	\[
	\rho(x,t)=1.0+0.2\sin(2\pi(x-0.7t)),\qquad
	u(x,t)=0.7,\qquad
	p(x,t)=0.1.
	\]
	The computation is advanced to the final time $T=1.0$ with $\mathrm{CFL}=0.02$.
	
	Table~\ref{tab:accuracy_results} reports the errors and observed convergence rates. The results show uniform third-order convergence in the $L_1$, $L_2$, and $L_\infty$ norms. This confirms that the entropy module, like the positivity-preserving and COS modules, does not degrade the formal accuracy for smooth solutions.
\end{example}

\begin{table}[htbp]
	\centering
	\caption{Errors and observed convergence rates for the smooth Euler accuracy test at $T=1.0$.}
	\label{tab:accuracy_results}
	\vspace{2mm}
	\begin{tabular}{rcccccc}
		\toprule
		$N$ & $L_1$ error & $L_1$ order & $L_2$ error & $L_2$ order & $L_\infty$ error & $L_\infty$ order \\
		\midrule
		16   & 6.25e-05 & --   & 8.27e-05 & --   & 2.94e-04 & --   \\
		32   & 7.60e-06 & 3.03 & 1.01e-05 & 3.03 & 3.32e-05 & 3.14 \\
		64   & 9.43e-07 & 3.01 & 1.41e-06 & 2.84 & 7.45e-06 & 2.15 \\
		128  & 8.22e-08 & 3.52 & 1.13e-07 & 3.64 & 4.57e-07 & 4.02 \\
		256  & 1.03e-08 & 3.00 & 1.41e-08 & 3.00 & 5.72e-08 & 2.99 \\
		512  & 1.28e-09 & 3.00 & 1.76e-09 & 3.00 & 7.16e-09 & 2.99 \\
		1024 & 1.61e-10 & 2.99 & 2.20e-10 & 2.99 & 8.99e-10 & 2.99 \\
		\bottomrule
	\end{tabular}
\end{table}

We next consider a standard set of one-dimensional benchmark problems for the compressible Euler equations. These tests probe complementary aspects of the proposed EPO framework, including shock resolution, contact capturing, robustness under strong wave interaction, near-vacuum behavior, and the evolution of the global discrete entropy.

All computations below are carried out using the $P^2$ DG scheme with the EPO framework and the third-order SSP multistep time discretization \eqref{eq:sspms-third-budget}. For each example, we report the density, velocity, and pressure at the final time, together with the time evolution of the global discrete entropy
\begin{equation}\label{eq:global-discrete-entropy-uniform}
	E_\eta
	:=
	\Delta x\sum_j \mathcal E_j(\mathbf U_j).
\end{equation}

\begin{example}[Sod shock tube]\label{ex:sod}
	The Sod problem is the standard Riemann test containing a left rarefaction, a contact discontinuity, and a right-moving shock. The computational domain is $[-5,5]$, the mesh consists of $256$ uniform cells, and the numerical solution is reported at the final time $t=1.3$. The initial data are
	\[
	(\rho,u,p)(x,0)=
	\begin{cases}
		(1,\,0,\,1), & x<0,\\
		(0.125,\,0,\,0.1), & x>0.
	\end{cases}
	\]
	Figure~\ref{fig:sod-sol} shows the density, velocity, and pressure at the final time, together with the time evolution of the global discrete entropy $E_\eta$. The numerical solution reproduces the expected wave pattern of the Sod problem without visible spurious oscillations near the discontinuities. The monotone decay of $E_\eta$ is consistent with the dissipative character of the fully discrete scheme in the presence of shocks.
\end{example}

\begin{figure}[htbp]
	\centering
	\includegraphics[width=0.99\textwidth]{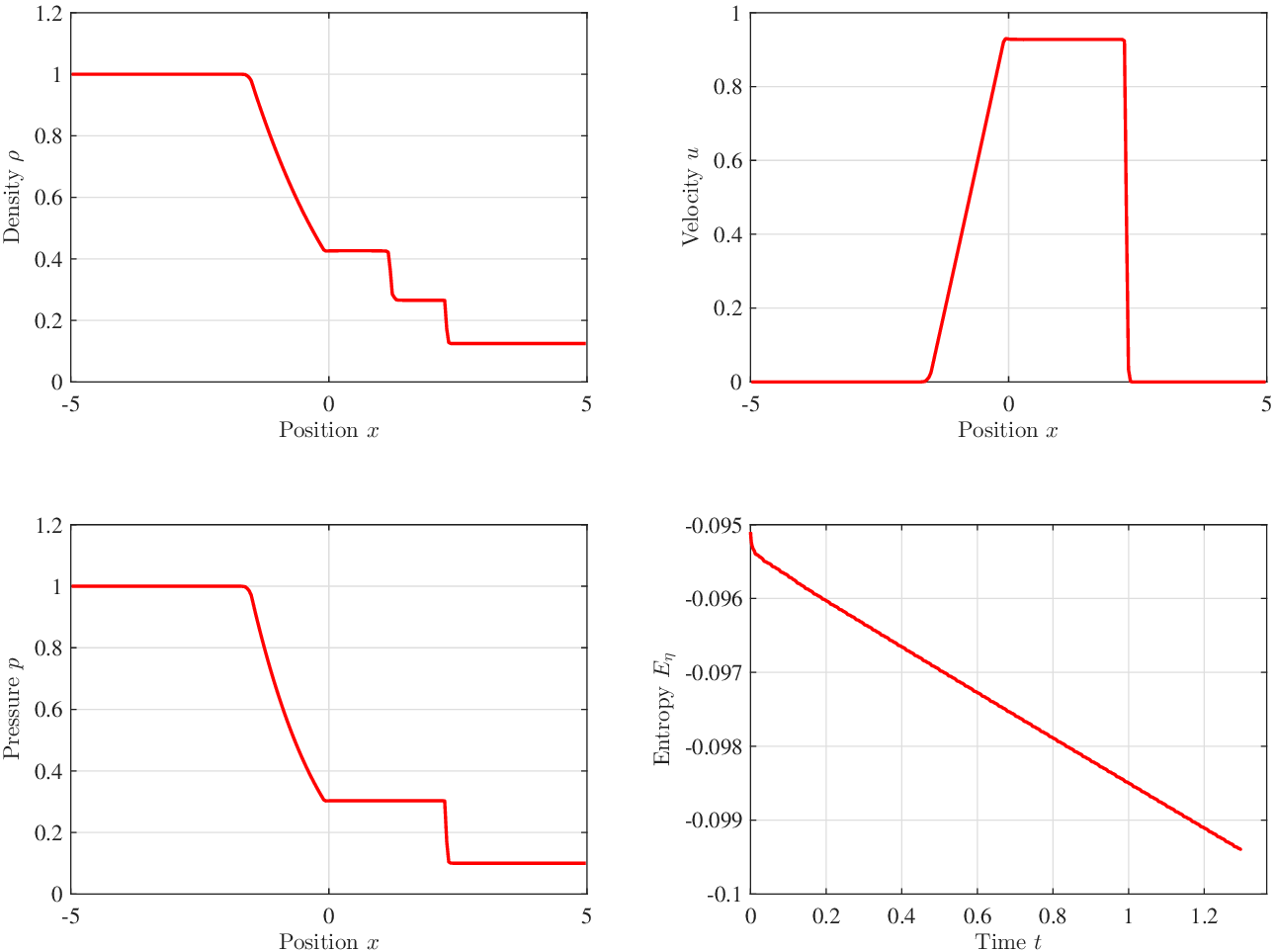}
	\caption{Sod shock tube problem computed using the $P^2$ DG scheme with the EPO framework on $256$ uniform cells. Shown are the density, velocity, and pressure at the final time $t=1.3$, together with the time evolution of the global discrete entropy $E_\eta$ on $0\le t\le 1.3$.}
	\label{fig:sod-sol}
\end{figure}

\begin{example}[Lax problem]\label{ex:lax}
	The Lax problem is a sharper Riemann test with stronger compression and a thinner post-shock structure. The computational domain is $[-5,5]$, the mesh consists of $256$ uniform cells, and the numerical solution is reported at the final time $t=1.3$. The initial data are
	\[
	(\rho,u,p)(x,0)=
	\begin{cases}
		(0.445,\,0.698,\,3.528), & x<0,\\
		(0.5,\,0,\,0.571), & x>0.
	\end{cases}
	\]
	Figure~\ref{fig:lax-sol} displays the density, velocity, and pressure at the final time, together with the time evolution of the global discrete entropy $E_\eta$. The figure shows that the method resolves the compressed region and the contact structure without obvious ringing. The stronger decay of $E_\eta$ than in the Sod test reflects the larger amount of entropy dissipation associated with the sharper shock structure.
\end{example}

\begin{figure}[htbp]
	\centering
	\includegraphics[width=0.99\textwidth]{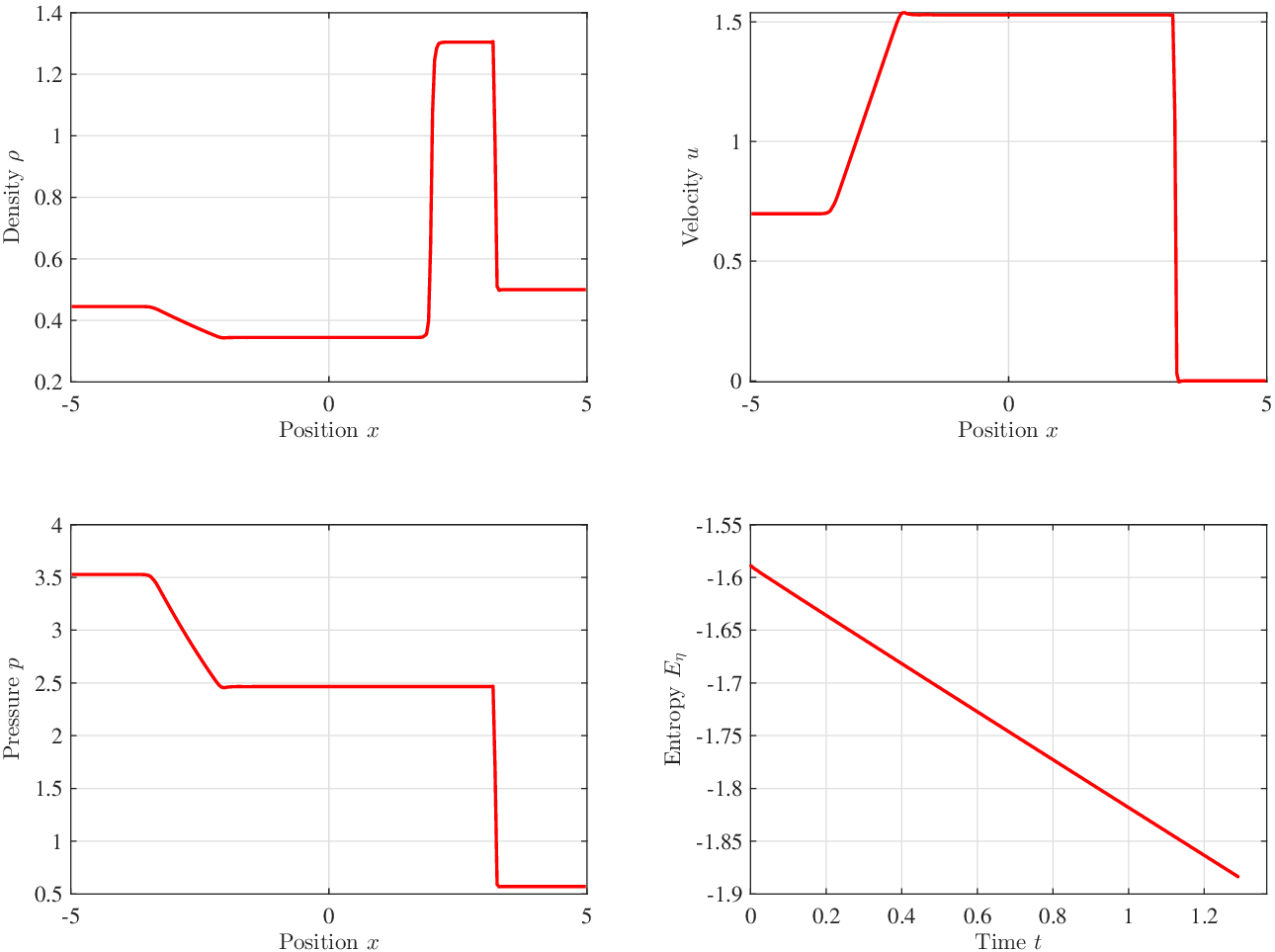}
	\caption{Lax problem computed using the $P^2$ DG scheme with the EPO framework on $256$ uniform cells. Shown are the density, velocity, and pressure at the final time $t=1.3$, together with the time evolution of the global discrete entropy $E_\eta$.}
	\label{fig:lax-sol}
\end{figure}

\begin{example}[Two-blast-wave interaction]\label{ex:blast}
	The two-blast-wave interaction problem is a severe test involving repeated shock reflections and strong internal wave interaction. The computational domain is $[0,1]$, the mesh consists of $960$ uniform cells, and the numerical solution is reported at the final time $t=0.038$. The initial data are
	\[
	(\rho,u,p)(x,0)=
	\begin{cases}
		(1,\,0,\,1000), & 0\le x<0.1,\\
		(1,\,0,\,0.01), & 0.1<x<0.9,\\
		(1,\,0,\,100), & 0.9<x\le 1.
	\end{cases}
	\]
	Figure~\ref{fig:blast-sol} shows the density, velocity, and pressure at the final time, together with the time evolution of the global discrete entropy $E_\eta$. The computed profiles remain stable even in regions with extremely steep gradients and narrow internal structures. The entropy decreases rapidly during the main interaction phase and then levels off, indicating that most of the dissipation is concentrated in the strongest compression events.
\end{example}

\begin{figure}[htbp]
	\centering
	\includegraphics[width=0.99\textwidth]{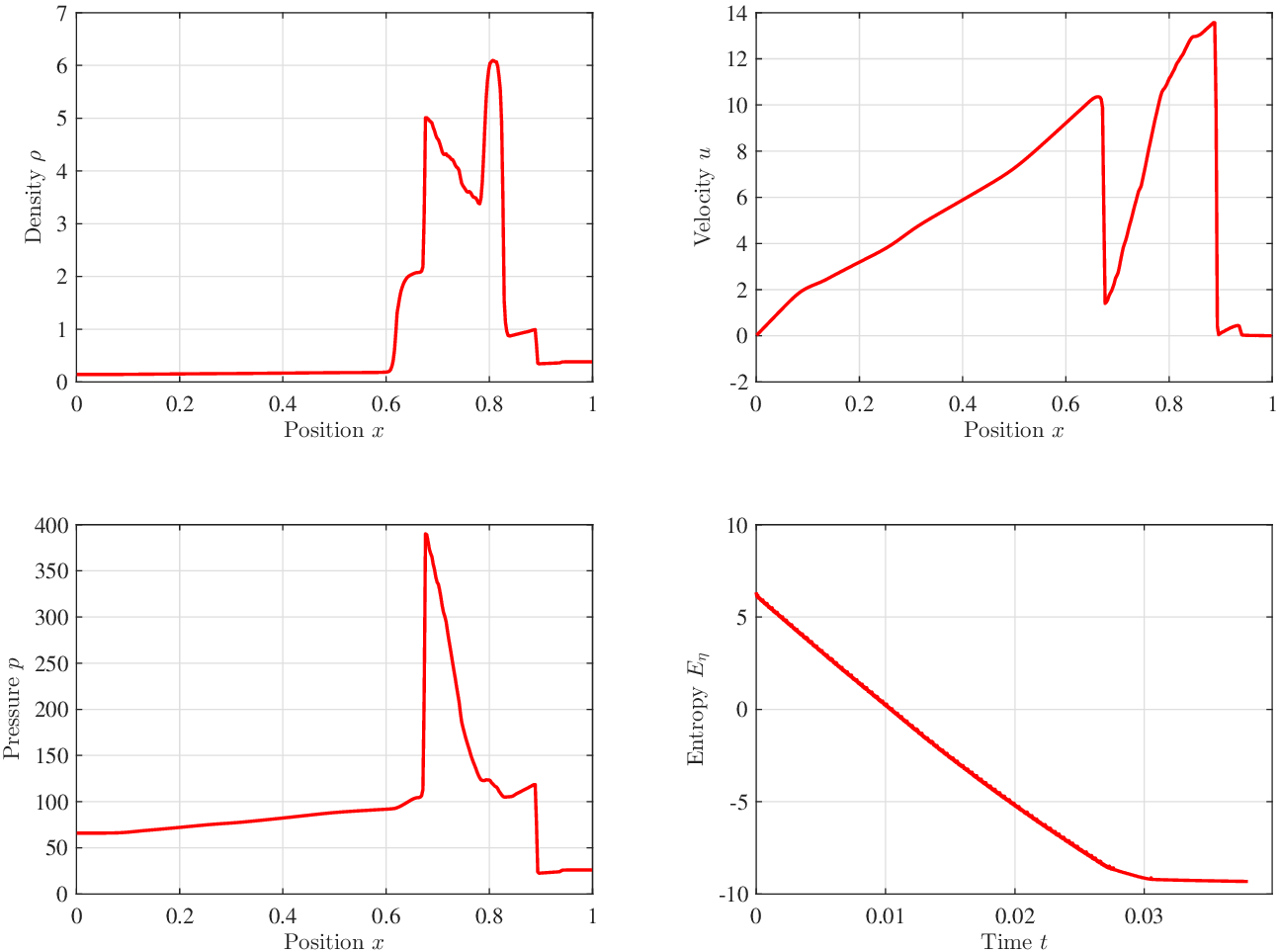}
	\caption{Two-blast-wave interaction problem computed using the $P^2$ DG scheme with the EPO framework on $960$ uniform cells. Shown are the density, velocity, and pressure at the final time $t=0.038$, together with the time evolution of the global discrete entropy $E_\eta$.}
	\label{fig:blast-sol}
\end{figure}

\begin{example}[Shu--Osher problem]\label{ex:shuosher}
	The Shu--Osher problem couples a shock with a smooth oscillatory density field and is a standard test of the ability to retain fine-scale post-shock structure while controlling nonphysical oscillations. The computational domain is $[-5,5]$, the mesh consists of $200$ uniform cells, and the numerical solution is reported at the final time $t=1.8$. The initial data are
	\[
	(\rho,u,p)(x,0)=
	\begin{cases}
		(3.857143,\,2.629369,\,10.333333), & x<-4,\\
		(1+0.2\sin(5x),\,0,\,1), & x>-4.
	\end{cases}
	\]
	Figure~\ref{fig:shuosher-sol} shows the density, velocity, and pressure at the final time, together with the time evolution of the global discrete entropy $E_\eta$. The figure shows that the method captures the main shock sharply while retaining the oscillatory post-shock structure behind it. The smooth decay of $E_\eta$ indicates that the entropy correction stabilizes the shock without wiping out the physically relevant fine-scale waves.
\end{example}

\begin{figure}[htbp]
	\centering
	\includegraphics[width=0.99\textwidth]{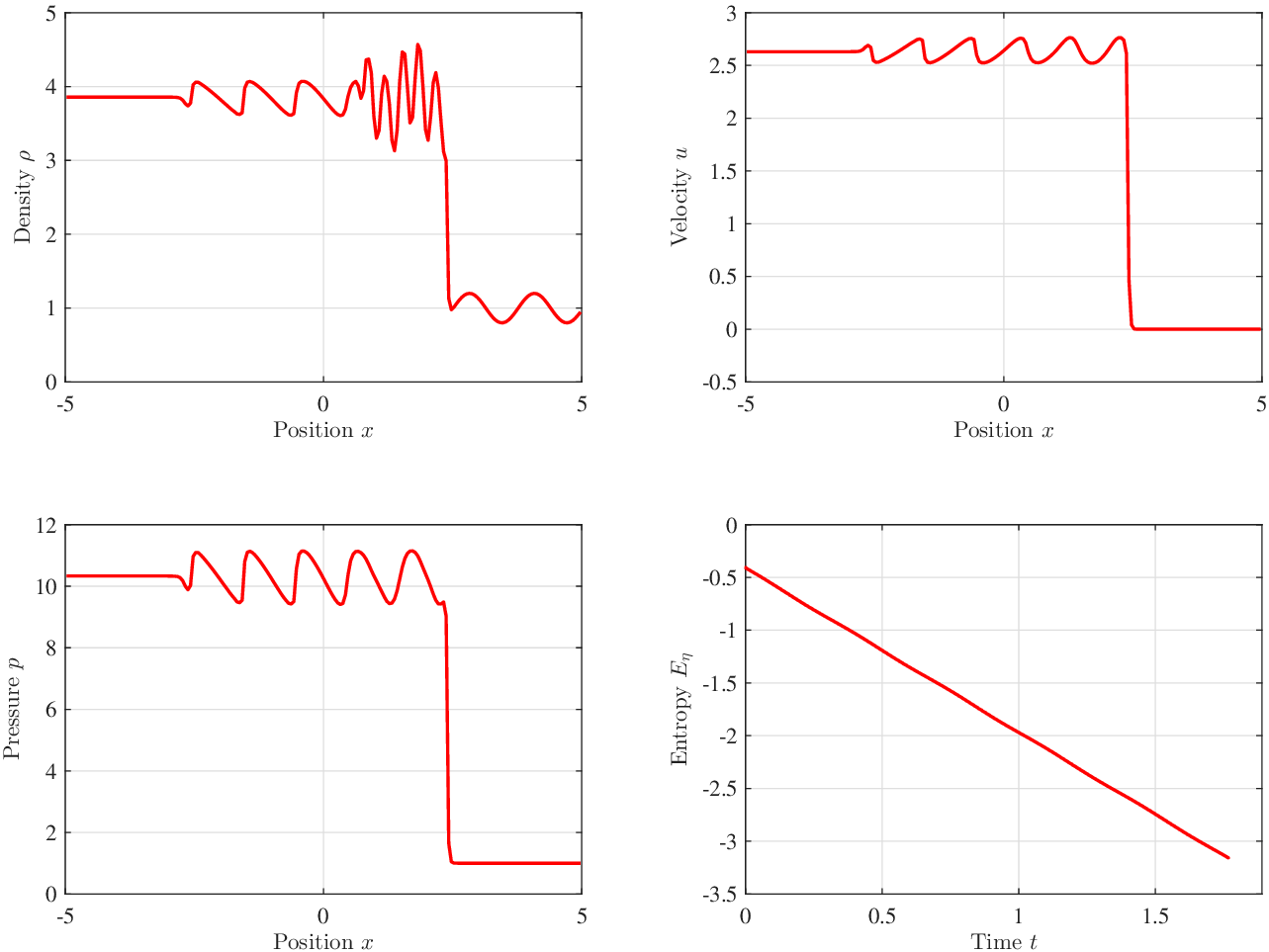}
	\caption{Shu--Osher problem computed using the $P^2$ DG scheme with the EPO framework on $200$ uniform cells. Shown are the density, velocity, and pressure at the final time $t=1.8$, together with the time evolution of the global discrete entropy $E_\eta$.}
	\label{fig:shuosher-sol}
\end{figure}

\begin{example}[Sedov blast wave]\label{ex:sedov}
	The Sedov problem is a stringent test for robustness in the presence of a highly concentrated energy deposition. The computational domain is $[-2,2]$, the mesh consists of $201$ uniform cells, and the numerical solution is reported at the final time $t=0.001$.
	
	The initial data are prescribed in conservative variables. The density and velocity are
	\[
	\rho(x,0)=1,\qquad u(x,0)=0,
	\]
	and the total energy is given by
	\[
	E(x,0)=
	\begin{cases}
		E_0/\Delta x, & |x|\le \tfrac12\Delta x,\\
		10^{-12}, & |x|>\tfrac12\Delta x,
	\end{cases}
	\qquad E_0=3.2\times 10^6,
	\]
	where $\Delta x$ is the mesh size. Thus the initial blast energy is deposited in the central cell, while the rest of the domain carries a very small background energy.
	
	Figure~\ref{fig:sedov-sol} shows the density, velocity, and pressure at the final time, together with the time evolution of the global discrete entropy $E_\eta$. The solution exhibits the expected blast-wave structure with two strong outgoing fronts and a low-density region around the center. The fact that the density and pressure remain nonnegative throughout this severe test provides direct evidence of robustness in a near-vacuum regime.
\end{example}

\begin{figure}[htbp]
	\centering
	\includegraphics[width=0.99\textwidth]{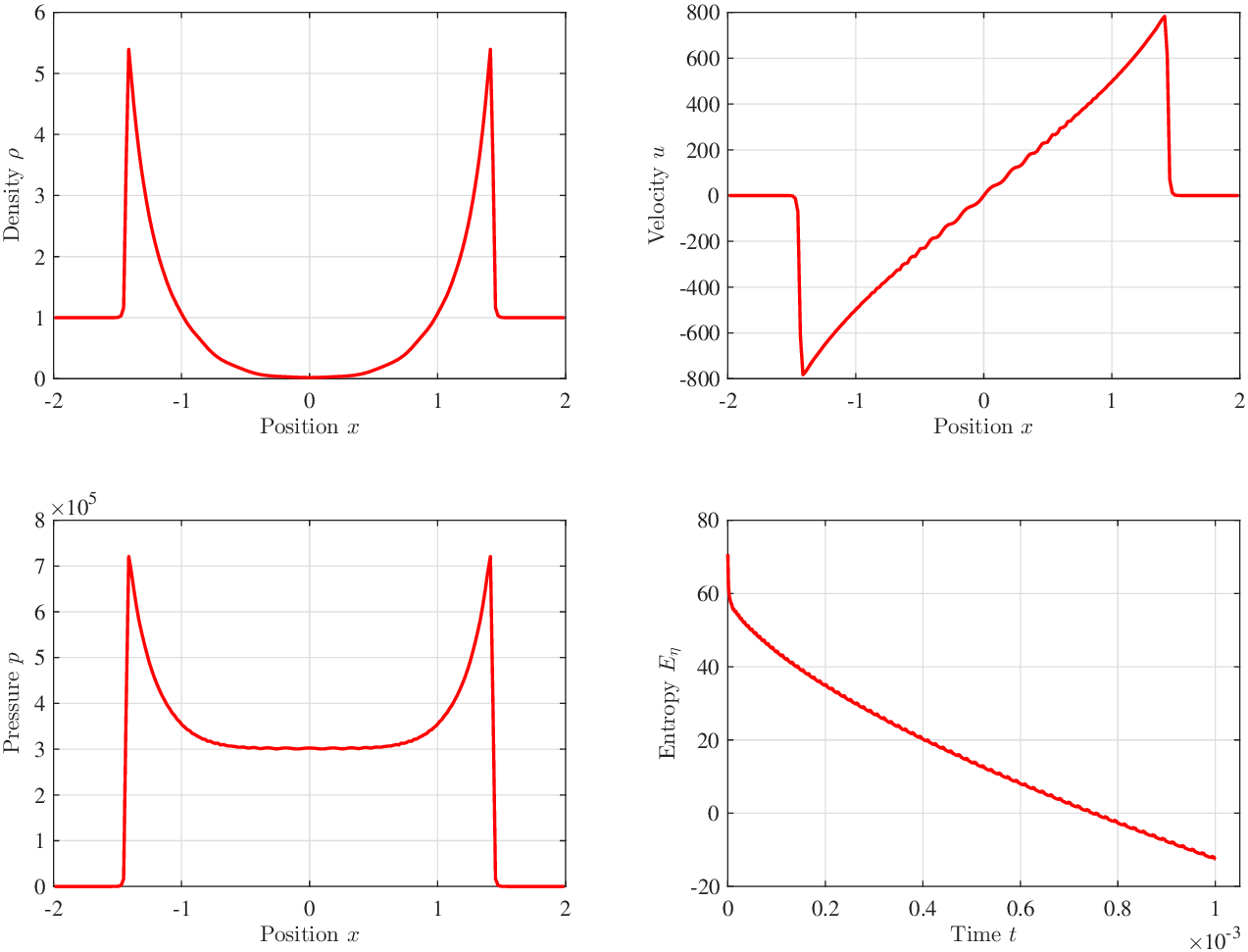}
	\caption{Sedov blast-wave problem computed using the $P^2$ DG scheme with the EPO framework on $201$ uniform cells. Shown are the density, velocity, and pressure at the final time $t=0.001$, together with the time evolution of the global discrete entropy $E_\eta$ on $0\le t\le 0.001$.}
	\label{fig:sedov-sol}
\end{figure}

\begin{example}[Leblanc shock tube]\label{ex:leblanc}
	The Leblanc problem is a severe Riemann test with extremely large jumps in density and pressure. It is particularly useful for assessing robustness in very low-density regions. The computational domain is $[-10,10]$, the mesh consists of $6400$ uniform cells, and the numerical solution is reported at the final time $t=10^{-4}$. The initial data are
	\[
	(\rho,u,p)(x,0)=
	\begin{cases}
		(2,\,0,\,10^{9}), & x<0,\\
		(10^{-3},\,0,\,1), & x>0.
	\end{cases}
	\]
	Figure~\ref{fig:leblanc-sol} shows the density, velocity, and pressure at the final time, together with the time evolution of the global discrete entropy $E_\eta$. The computed profiles remain physically admissible across several orders of magnitude in density and pressure. The monotone decay of $E_\eta$, together with the absence of visible undershoots in the low-density region, indicates that the scheme remains robust in this very demanding regime.
\end{example}

\begin{figure}[htbp]
	\centering
	\includegraphics[width=0.99\textwidth]{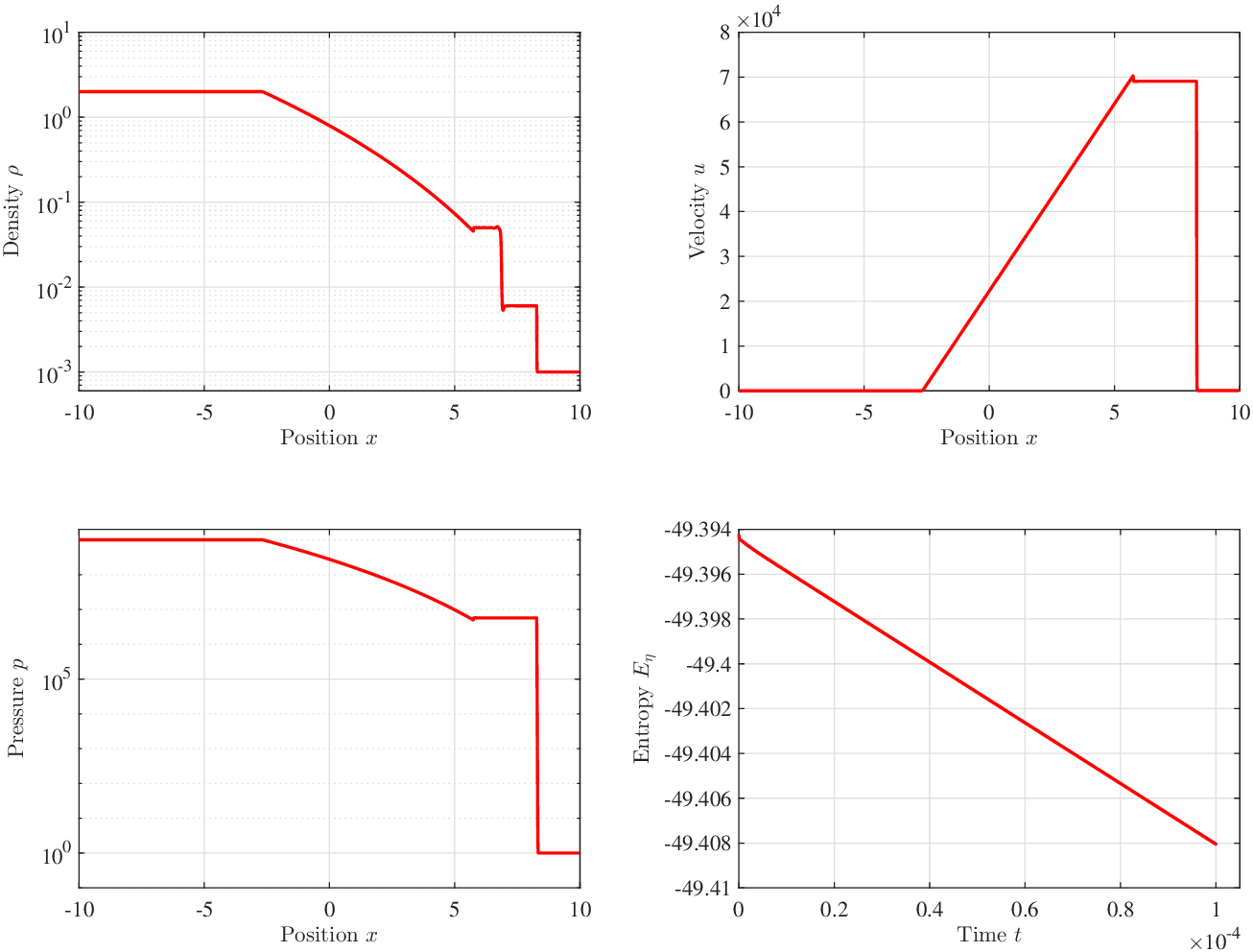}
	\caption{Leblanc shock tube computed using the $P^2$ DG scheme with the EPO framework on $6400$ uniform cells. Shown are the density, velocity, and pressure at the final time $t=10^{-4}$, together with the time evolution of the global discrete entropy $E_\eta$ on $0\le t\le 10^{-4}$.}
	\label{fig:leblanc-sol}
\end{figure}

\section{Conclusions and outlook}\label{sec:conclusion}

This paper has proposed \EPO{}, a unified framework for fully discrete entropy stability, positivity/bound preservation, and spurious oscillation control,  built on one scaling ray anchored at the updated cell average. The admissible-state constraint gives the geometric radius, each prescribed entropy pair gives an entropy radius, and the oscillation constraint gives an oscillation radius. The final limiter uses the minimum of these quantities. The main analytical input is the weak entropy stability of the updated cell average, obtained here from a two-point Lax--Friedrichs/Riemann-average entropy inequality and then lifted to a strong quadrature-based entropy inequality by radial scaling.

This viewpoint is distinguished from the classical entropy-stable literature based on Tadmor flux differencing, SBP/SAT operators, split forms, and suitable quadrature constructions \cite{Tadmor1987,Tadmor2003,FisherCarpenter2013,Gassner2013,CarpenterFisherNielsenFrankel2014,SvardNordstrom2014,DelReyFernandezHickenZingg2014,ChenShu2017,ChanFernandezCarpenter2019}. Those approaches build entropy stability into the spatial discretization, typically for one chosen entropy pair. \EPO{} does not replace them and can be combined with them, but it serves a different purpose: given a candidate FV/DG update and suitable weak budgets, it produces a fully discrete nodal entropy-stable correction. In particular, any prescribed finite family of entropy pairs can be enforced simultaneously by computing one entropy radius for each pair and taking the minimum.

We also discussed two time-discrete realizations. Stagewise SSP Runge--Kutta inherits forward-Euler weak budgets stage by stage. SSP multistep provides an end-of-step budget and therefore supports a single end-of-step entropy correction without reducing the designed temporal order. The two-dimensional rectangular and triangular analyses show that the same local geometry persists once the corresponding weak budgets have been established.

Further work includes extensions to balance laws, nonconservative systems, and multiphysics coupling systems, such as MHD equations.

\appendix

\section{Explicit formulas}\label{app:explicit}

\subsection{Affine-constraint formula for the geometric radius}

Suppose the admissible set is given by finitely many affine or concave inequalities
\[
G = \{ U\in \mathbb R^m : \ell_r(U)\ge 0,\ r=1,\dots,R\},
\]
where each $\ell_r$ is affine or concave. Assume $\bar{\mathbf U}_j^\star\in G$. Then the geometric radius \eqref{eq:thetaP} can be defined as 
\begin{equation}\label{eq:thetaP-affine}
\theta_j^{\mathrm P}
=
\min\Biggl\{
1,\
\min_{\substack{1\le \nu\le L,\ 1\le r\le R\\ \ell_r(\mathbf U_{j,\nu}^\star)<0}}
\frac{\ell_r(\bar{\mathbf U}_j^\star)}
{\ell_r(\bar{\mathbf U}_j^\star)-\ell_r(\mathbf U_{j,\nu}^\star)}
\Biggr\},
\end{equation}
where the inner minimum is understood to be omitted if there are no violating pairs $(\nu,r)$.

\subsection{Quadratic entropy formula}

For the quadratic entropy
\[
\eta(U)=\frac12 |U|^2
\]
in a Euclidean state space, the entropy profile is
\[
\Psi_j(\theta)
=
\frac12 |\bar U_j^\star|^2
+
\frac{\theta^2}{2}
\sum_{\nu=1}^L \omega_\nu |U_{j,\nu}^\star-\bar U_j^\star|^2.
\]
Hence the entropy radius is
\begin{equation}\label{eq:thetaE-quadratic}
\theta_j^{\mathrm E}
=
\begin{cases}
1,
& \Psi_j(1)\le B_j^n,\\[0.6em]
\sqrt{
\dfrac{2\bigl(B_j^n-\frac12|\bar U_j^\star|^2\bigr)}
{\sum_{\nu=1}^L \omega_\nu |U_{j,\nu}^\star-\bar U_j^\star|^2}
},
& \Psi_j(1)>B_j^n.
\end{cases}
\end{equation}
The proof is immediate from the identity
\[
\sum_{\nu=1}^L \omega_\nu (U_{j,\nu}^\star-\bar U_j^\star)=0.
\]

\bibliographystyle{siamplain}
\bibliography{epo_refs}

\end{document}